\documentclass[10pt]{amsart}

\calclayout

\usepackage{amsmath, amssymb}
\usepackage{amsfonts}
\usepackage{mathrsfs}
\usepackage[color, arrow,matrix,curve,cmtip,ps]{xy}
\usepackage{paralist}
\usepackage{quiver}
\usepackage{amsthm}
\usepackage{caption}
\usepackage{tikz-cd}
\usetikzlibrary{arrows,calc,matrix}
\tikzset{
curvarr/.style={
  to path={ -- ([xshift=2ex]\tikztostart.east)
    |- (#1) [near end]\tikztonodes
    -| ([xshift=-2ex]\tikztotarget.west)
    -- (\tikztotarget)}
  }
}
\tikzset{%
    symbol/.style={%
        draw=none,
        every to/.append style={%
            edge node={node [sloped, allow upside down, auto=false]{$#1$}}}
    }
}

\usepackage[final]{pdfpages}
\usepackage{dsfont,txfonts}
\usepackage{tikz}
\usepackage{rotating}
\usepackage{mathrsfs}
\PassOptionsToPackage{hyphens}{url}\usepackage{hyperref}
\usepackage{enumitem}
\usepackage{graphicx}
\usepackage{mathtools}
\usetikzlibrary{arrows,chains,matrix,positioning,scopes}

\usepackage[T1]{fontenc}
\usepackage[variablett]{lmodern}
\usepackage{xcolor}
\usepackage[citestyle=alphabetic,bibstyle=alphabetic]{biblatex}
\usepackage{lipsum}
\usepackage[most]{tcolorbox}

\newtheorem{theorem}{Theorem}[section]
\theoremstyle{definition}
\newtheorem{lemma}[theorem]{Lemma}

\newtheorem{conjecture}[theorem]{Conjecture}
\newtheorem{observation}[theorem]{Observation}
\newtheorem{warning}[theorem]{Warning}

\newtheorem{proposition}[theorem]{Proposition}
\newtheorem{corollary}[theorem]{Corollary}
\newtheorem{definition}[theorem]{Definition}

\newtheorem{remark}[theorem]{Remark}

\newtheorem{example}[theorem]{Example}

\newtheorem{construction}[theorem]{Construction}

\everymath{\displaystyle}
\allowdisplaybreaks

\newcommand{\pt}{\text{pt}}

\newcommand{\Fun}{\text{Fun}}

\newcommand{\Sh}{\mathrm{Sh}}
\newcommand{\Cosh}{\mathrm{Cosh}}
\newcommand{\Sp}{\mathrm{Sp}}
\newcommand{\An}{\mathrm{An}}
\newcommand{\colim}{\mathrm{colim}}
\newcommand{\Map}{\mathrm{Map}}
\newcommand{\Cat}{\mathrm{Cat}}
\newcommand{\PR}{\mathrm{Pr}}
\newcommand{\CAlg}{\mathrm{CAlg}}
\newcommand{\Ind}{\mathrm{Ind}}
\newcommand{\perf}{\mathrm{perf}}
\newcommand{\patch}{\mathrm{patch}}
\newcommand{\cont}{\mathrm{cont}}
\newcommand{\disc}{\mathrm{disc}}
\newcommand{\Alex}{\mathrm{Alex}}
\newcommand{\op}{\mathrm{op}}
\newcommand{\dual}{\mathrm{dual}}
\newcommand{\st}{\mathrm{st}}
\newcommand{\ca}{\mathrm{ca}}

\DeclareFieldFormat{postnote}{#1}
\DeclareFieldFormat{multipostnote}{#1}

\begin{document}
\title{Algebraic $K$-theory of stably compact spaces}
\author{Georg Lehner}
\subjclass[2020]{Primary: 19D99, Secondary: 06D22, 54B40, 18F20, 18F70}
\keywords{Stably compact spaces, Verdier duality, patch topology, algebraic $K$-theory, $\infty$-categories of sheaves on a space.}
\begin{abstract}
We compute the value of finitary localizing invariants, including algebraic $K$-theory, on categories of sheaves over stably locally compact spaces $X$. Our formula simultaneously generalizes the cases of locally compact Hausdorff and coherent (spectral) spaces and recovers several smaller $K$-theory calculations as special instances.
\end{abstract}
\maketitle

\begin{figure}[h!]
    \centering
    \includegraphics[width=0.45\textwidth]{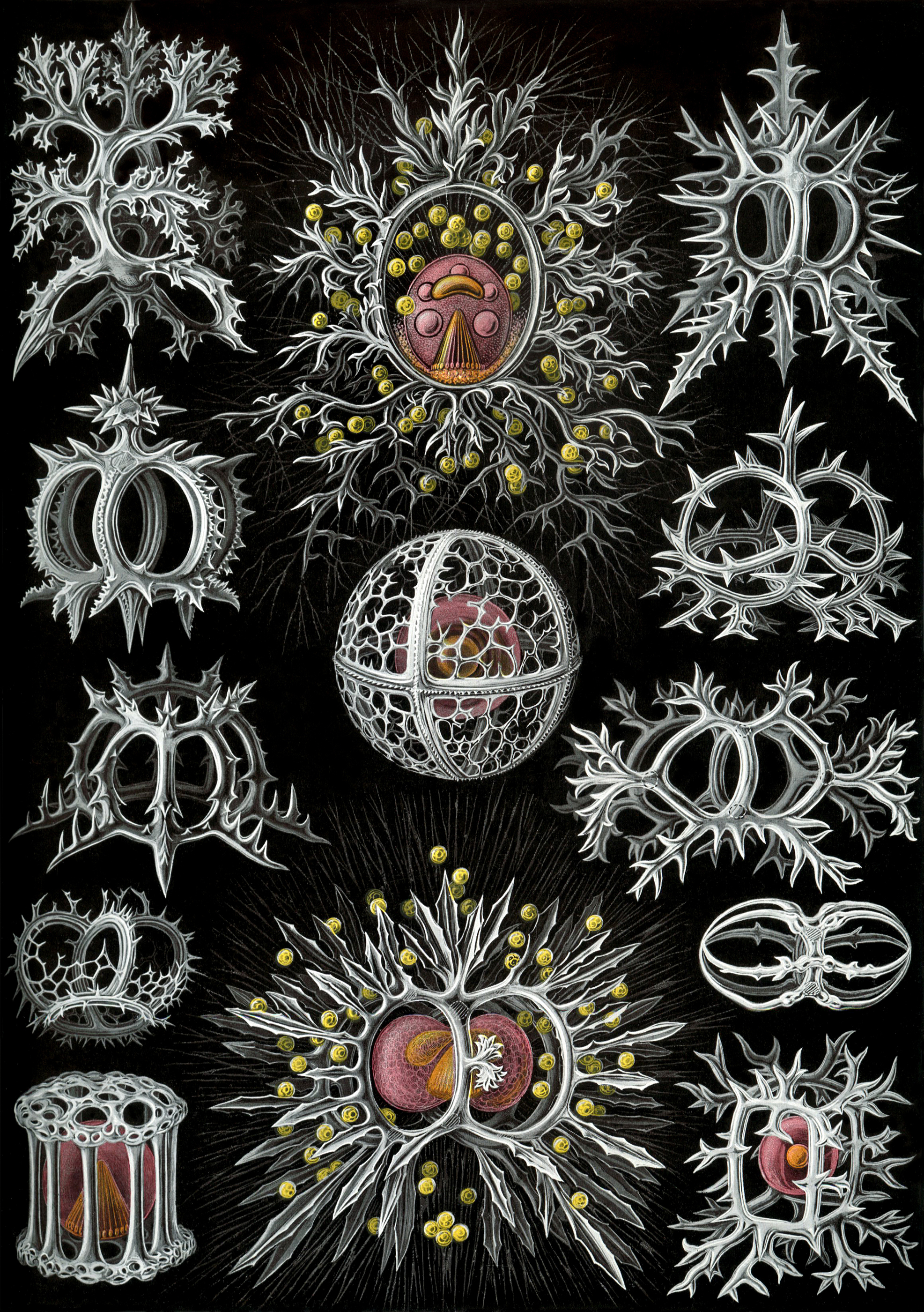}
    \caption*{\tiny Ernst Haeckel, \textit{Kunstformen der Natur}, 1904, plate 71: Stephoidea, Public domain, via Wikimedia Commons}
\end{figure}

\tableofcontents

\section{Introduction}

Recent advances in the $K$-theory of large categories as laid out by the articles \cite{efimov2025ktheorylocalizinginvariantslarge}, \cite{efimov2025localizinginvariantsinverselimits}, \cite{efimov2025rigiditycategorylocalizingmotives} and \cite{krause_nikolaus_puetzstueck}, see also the lecture \cite{nikolaus2023geometric} by Nikolaus, have raised the exciting possibility of providing a conceptual framework for assembly conjectures, such as the Farrell-Jones, Novikov, and Borel conjectures using the language of sheaf theory. It also seems possible to finally bridge the gap between the (algebraic) Farrell-Jones conjecture and its operator-theoretic counterpart, the Baum-Connes conjecture. The central insight is that localizing invariants, such as algebraic $K$-theory, can be extended from small stable $\infty$-categories to the class of \emph{dualizable} stable $\infty$-categories. This includes $\infty$-categories of spectral sheaves $\Sh(X, \Sp)$ on a locally compact Hausdorff space $X$, which play an analogous role to $C^*$-algebras of the form $C_0(X,\mathbb{C})$ in the context of operator theory. Whilst the general picture is still very much work in progress, an important computation that motivated this project is the following. 

\begin{theorem}[\cite{efimov2025ktheorylocalizinginvariantslarge} Theorem 6.11, see also \cite{krause_nikolaus_puetzstueck}  Theorem 3.6.1] \label{ktheorylch}
Let $X$ be a locally compact Hausdorff space. Then
$$K^{\cont}( \Sh(X, \Sp) ) \simeq H_{cs}(X; K(\mathbb{S}) )$$
where $\Sp$ is the $\infty$-category of spectra and the right-hand side refers to compactly supported sheaf cohomology of $X$ with respect to the local system given by the $K$-theory of the sphere spectrum. 
\end{theorem}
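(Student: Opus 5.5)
The plan is to show that the functor $U \mapsto K^{\cont}(\Sh(U,\Sp))$ on open subsets of $X$ is a cosheaf with values in spectra, and that it has the right stalk behaviour to be identified with compactly supported cohomology. Two inputs from the theory of localizing invariants of dualizable categories are needed: $K^{\cont}$ sends short exact sequences of dualizable categories (with strongly continuous functors) to fibre sequences, and it commutes with filtered colimits of dualizable categories along strongly continuous functors.

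The first step is to set up localization sequences. For an open $U\subseteq X$ with closed complement $Z$, the recollement
$$\Sh(U,\Sp)\xrightarrow{j_!}\Sh(X,\Sp)\xrightarrow{i^*}\Sh(Z,\Sp)$$
is a short exact sequence of dualizable categories. Here $j_!$ and $i^*$ are strongly continuous, because their right adjoints $j^*$ and $i_*$ preserve colimits on locally compact Hausdorff spaces. Applying $K^{\cont}$ gives the fibre sequence
$$K^{\cont}(\Sh(U))\to K^{\cont}(\Sh(X))\to K^{\cont}(\Sh(Z)).$$
This is exactly the shape of the excision sequence for $H_{cs}$. Combined with continuity in filtered unions of opens, where $\Sh(X)\simeq\colim\Sh(U_i)$ along $j_!$, it shows that $U\mapsto K^{\cont}(\Sh(U))$, covariant along $j_!$, is a $K(\mathbb{S})$-linear cosheaf. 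The same property gives it compact support: it is determined by its values on relatively compact opens.

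The second step is to construct the comparison map and reduce to a local computation. There is a natural map
$$\Gamma_c(X;\underline{K(\mathbb{S})})\to K^{\cont}(\Sh(X,\Sp)).$$
One way to get it is from the symmetric monoidal unit and the $\Sh(X)$-module structure, using $K^{\cont}(\Sp)=K(\mathbb{S})$ on points. Another is to build it as the unique map of cosheaves extending the tautological map on a basis. Both sides are cosheaves on $X$ that satisfy the excision sequences and commute with filtered unions. So it suffices to check the map on a basis and on points, or on finite-dimensional pieces, and then pass to general $X$ by continuity. I would first treat $X$ compact metrizable, writing it as an inverse limit of finite CW complexes or of finite simplicial complexes. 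Alternatively, I would reduce via the Cantor set surjection $C\to X$: for a profinite $C=\lim C_n$, $\Sh(C)$ is the colimit of $\prod_{C_n}\Sp$ in dualizable categories, so $K^{\cont}(\Sh(C))=\colim\bigoplus_{C_n}K(\mathbb{S})=C(C,K(\mathbb{S}))$. This agrees with $H_{cs}$.

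The main obstacle is the vanishing result $K^{\cont}(\Sh([0,1)))=0$, equivalently $K^{\cont}(\Sh(\mathbb{R}))\simeq\Sigma^{-1}K(\mathbb{S})$. Once we have it, excision and the cosheaf property propagate the identification to all locally compact Hausdorff spaces by induction over cells and filtered colimits. I would prove it with an Eilenberg swindle: on half-open intervals, pushforward along a contracting family of maps provides a strongly continuous endofunctor $F$ with $F\oplus\mathrm{id}\simeq F$, which kills $K^{\cont}$. A subtle point is that $K^{\cont}$ is not obviously homotopy invariant in the naive sense, so the swindle must be realised by strongly continuous functors. Infinite sums of pushforwards along shrinking maps are fine in sheaves on $[0,1)$ because they are locally finite.
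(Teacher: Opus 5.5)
Your Step 1 is correct. It is exactly the closed-descent input the paper uses (Proposition \ref{opencloseddecomposition}, Corollary \ref{closedexcision}). The argument breaks at the step you yourself flag as the main obstacle. Local finiteness makes $\Phi=\bigoplus_{n\ge 0}(f^n)_*$ a well-defined colimit-preserving endofunctor of $\Sh([0,1),\Sp)$; take, say, $f(x)=(1+x)/2$. But $\Phi$ is \emph{not} strongly continuous, so it induces nothing on $K^{\cont}$. A strongly continuous functor preserves compact morphisms. A diagonal sum $\bigoplus_n a_n\colon\bigoplus_n A_n\to\bigoplus_n B_n$, however, is compact only if almost all $a_n$ vanish. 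To see this, factor it through some stage of $\bigoplus_n B_n=\colim_k\bigoplus_{n\le k}B_n$ and project to $B_m$ with $m>k$; this forces $a_m=0$. Each $(f^n)_*$ is pushforward along a closed embedding, hence fully faithful. So $\Phi$ sends the nonzero compact morphism $j_{[0,1/4)!}\mathbb{S}\to j_{[0,1/3)!}\mathbb{S}$ to a non-compact one, and the same objection applies to any swindle built from infinite sums of this kind. Separately, your construction only gives $\Phi\simeq\mathrm{id}\oplus f_*\Phi$, not $\Phi\simeq\mathrm{id}\oplus\Phi$. Closing the loop would need $f_*$ to act as the identity on $K^{\cont}$, which is itself a homotopy-invariance statement of the kind you are trying to prove.

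Even granting $K^{\cont}(\Sh([0,1),\Sp))=0$, the propagation ``by induction over cells and filtered colimits'' does not reach a general locally compact Hausdorff space. The Hilbert cube, a solenoid and $\beta\mathbb{N}$ have no cell structure. The continuity from Step~1 is covariant continuity along increasing unions of opens, which does not help with such spaces. You gesture at inverse limits of finite complexes, but you never show that $K^{\cont}(\Sh(-,\Sp))$ turns cofiltered limits of compact Hausdorff spaces into filtered colimits; this is Corollary \ref{cofiltereddescent} plus finitariness. Nor do you construct a comparison map natural in proper maps. There is no ``tautological'' map on a basis, and checking on points is only legitimate when $\Sh(X,\An)$ has enough points, e.g.\ in finite covering dimension. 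The Cantor-set route would additionally require $K^{\cont}$ to commute with the totalization along the \v{C}ech nerve of $C\to X$, which is not available.

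The paper fills exactly this hole. It obtains the statement as the Hausdorff case of Theorem \ref{maintheoremmain2}, via Clausen's argument (Theorem \ref{closedandprofinite}, generalized in Theorem \ref{properandcofilteredexcision}). For compact $X$, closed and cofiltered descent make $K\mapsto K^{\cont}(\Sh(K,\Sp))$ on closed subsets a $\mathcal{K}$-sheaf. This is a sheaf $F^X$ with global sections $K^{\cont}(\Sh(X,\Sp))$ and stalks $K(\mathbb{S})$, and the map from the constant sheaf is the comparison map. It is an equivalence on finite-dimensional cubes by stalks (Lemma \ref{maintheoremforhypercomplete}). It extends to (directed) Hilbert cubes by cofiltered descent (Proposition \ref{alphahilbertcube}), and to every compact $X$ by restriction along an embedding (Lemma \ref{inheritanceembedding}). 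The locally compact case then follows by one-point compactification. There the vanishing for $[0,1)$ is an output, and no swindle is needed.
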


There are some extensions that are possible for Theorem \ref{ktheorylch}. For one, the theorem holds not just for $K$-theory, but for arbitrary finitary localizing invariants, as long as the target of the localizing invariant is a dualizable $\infty$-category. It also generalizes to non-constant coefficients, in the sense that an analogous formula holds when one considers a presheaf $\underline{\mathcal{C}}$ on $X$ valued in dualizable categories.

We propose a generalization of Theorem \ref{ktheorylch} by enlarging the class of spaces $X$ considerably. As was already observed by Efimov in \cite[Remark 6.2]{efimov2025ktheorylocalizinginvariantslarge}, Hausdorffness of a space $X$ is not essential to guarantee that $\Sh(X, \Sp)$ is dualizable, rather a weaker notion suffices. Recall the notion of the way-below relation between open subsets of a topological space $X$: Two open sets $U,V$ satisfy the relation $U \ll V$ iff for all directed sets of opens $W_i, i\in I$ it holds that
$$ \bigcup_{i\in I} W_i \supset V \text{ implies } \exists i \in I : W_i \supset U. $$ 
(In other words, the inclusion $U \subset V$ is a compact morphism in the poset $\mathcal{O}(X)$ of open sets of $X$.)

\begin{definition}
A topological space $X$ is called \emph{stably locally compact} if it is sober, and the way-below relation $\ll$ satisfies the two conditions:
\begin{itemize}
\item For all open $U$, we have $U = \bigcup_{V \ll U} V$.
\item The way-below relation is stable under intersection, in the sense that $U \ll V_1$ and $U \ll V_2$ implies $U \ll V_1 \cap V_2$.
\end{itemize}
A continuous map $f : X \rightarrow Y$ will be called \emph{perfect} if $f^{-1}$ preserves the way-below relation. A partial map $f : X \rightarrow Y$ will be called \emph{partial perfect}, if its domain of definition is open, and $f$ restricted to its domain is a perfect map. We define the category $\mathrm{SLC}_p$ of stably locally compact spaces and partial perfect maps. 
\end{definition}

Any locally compact Hausdorff space $X$ is in particular an example of a stably locally compact space. Moreover, the inclusion of the full subcategory of locally compact Hausdorff spaces into stably locally compact spaces has a right adjoint
\[\begin{tikzcd}
	{\mathrm{LCH}_p} & {\mathrm{SLC}_p}
	\arrow[""{name=0, anchor=center, inner sep=0}, hook, from=1-1, to=1-2]
	\arrow[""{name=1, anchor=center, inner sep=0}, "\patch", curve={height=-12pt}, from=1-2, to=1-1]
	\arrow["\dashv"{anchor=center, rotate=-90}, draw=none, from=0, to=1]
\end{tikzcd}\]
which equips a stably locally compact space $X$ with its \emph{patch topology}. (See Section \ref{patchtopology})

If $X$ is stably locally compact, the category of sheaves $\Sh(X,\Sp)$ is dualizable (Corollary \ref{sheavesonstablylocallycompact}), and hence it makes sense to apply localizing invariants, such as $K$-theory. We have obtained the following generalization of Theorem \ref{ktheorylch}.

\begin{theorem}[See Theorem \ref{maintheoremmain2}] \label{maintheorem}
Let $X$ be stably locally compact, let $\mathcal{C}$ be a dualizable stable $\infty$-category and let $F$ be a finitary localizing invariant $ \Cat^{\perf} \rightarrow  \mathcal{E}$ with values in a dualizable stable $\infty$-category $\mathcal{E}$. Then
$$F^{\cont}(\Sh(X,\mathcal{C})) \simeq H_{cs}(X^{\patch}; F^{\cont}(\mathcal{C})).$$
\end{theorem}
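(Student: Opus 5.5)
The strategy is to reduce to Theorem \ref{ktheorylch}, in its generalization to non-constant coefficients, applied to the locally compact Hausdorff space $X^{\patch}$. The reduction goes through a Verdier-type description of $\Sh(X,\mathcal{C})$ as a retract, or localization, of sheaves on the patch space.

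The identity map $p: X^{\patch}\to X$ is continuous, since every open of $X$ is patch-open. The first step is to identify $\Sh(X,\mathcal{C})$ with a full subcategory of $\Sh(X^{\patch},\mathcal{C})$. The identification sends a sheaf to $p^*$ of it. I expect the essential image to consist of those sheaves whose restrictions behave "upward closed" with respect to the specialization order $\le$ of $X$. Concretely, $X^{\patch}$ carries a closed partial order $\le$, making it a locally compact pospace (Nachbin). Sheaves on $X$ should then be the sheaves $G$ on $X^{\patch}$ for which the natural comparison maps along the order are equivalences, for instance $G(U)\simeq G(\uparrow U)$-type conditions, or, dually, cosheaves on the downset topology.

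The key structural input is the following claim: the functor $p^*:\Sh(X,\mathcal{C})\to \Sh(X^{\patch},\mathcal{C})$ is a strongly continuous fully faithful functor, i.e. its right adjoint preserves colimits. This claim should follow from properness-type behaviour of $p$, since $p$ maps compact saturated sets nicely, using the stability of $\ll$ under intersection. Its cokernel should then be identified with sheaves on $X^{\patch}$ supported away from the diagonal in a suitable sense. Better, the cokernel can be identified with $\Sh$ of the open subset $\{(x,y): x\not\le y\}$ of $X^{\patch}\times X^{\patch}$, or some such locally compact Hausdorff space.

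An alternative, cleaner route is to exhibit $\Sh(X,\mathcal{C})$ via dualizable categories: write $\Sh(X,\mathcal{C})\simeq \Sh(X^{\patch},\mathcal{C})\otimes_{\Sh(X^{\patch})}\mathcal{A}$, and show that $F^{\cont}$ of the kernel of $\Sh(X^{\patch})\to\Sh(X)$ vanishes. The vanishing would come from a swindle: the order cone $\{(x,y): x\le y\}$ is contractible in the fibre direction, so Eilenberg swindles along the order kill the relevant contribution. Concretely, this uses the $\mathbb{A}^1$/interval-type invariance $F^{\cont}(\Sh([0,1)\times Y,\mathcal{C}))=0$ implicit in Theorem \ref{ktheorylch}, since $H_{cs}([0,1)\times Y)=0$.

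Assembling these pieces: $F^{\cont}$ sends the short exact sequence of dualizable categories to a fibre sequence. The middle term gives $H_{cs}(X^{\patch};F^{\cont}(\mathcal{C}))$ by Theorem \ref{ktheorylch}, and the error term vanishes by the swindle/cone argument, yielding the formula. Naturality in partial perfect maps comes from the adjunction $\mathrm{LCH}_p\rightleftarrows \mathrm{SLC}_p$.

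I expect the main obstacle to be the middle step: constructing the localization sequence relating $\Sh(X)$ and $\Sh(X^{\patch})$, and identifying its kernel precisely enough that its continuous $K$-theory visibly vanishes. I would first test this on the Sierpiński space and on finite posets. For a finite poset, $X^{\patch}$ is discrete, and the expected formula $F(\mathcal{C})^{\oplus |X|}$ matches the known semiorthogonal decomposition of $\Fun(P,\mathcal{C})$ into $|P|$ copies of $\mathcal{C}$. This suggests the general proof should be a continuous version of that semiorthogonal decomposition, obtained via stratification along the order, approximated by finite posets through an inverse limit and using finitarity of $F$.
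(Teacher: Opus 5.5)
Your argument breaks at its central step. The map $p\colon X^{\patch}\to X$ is perfect, so $p^*\colon\Sh(X,\mathcal{C})\to\Sh(X^{\patch},\mathcal{C})$ is strongly continuous. However, $p^*$ is not fully faithful, and neither $p^*$ nor $p_*$ exhibits $\Sh(X,\mathcal{C})$ as a full subcategory or a Bousfield localization of $\Sh(X^{\patch},\mathcal{C})$.

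This already fails for the Sierpinski space, where $\Sh(\mathbf{S},\mathcal{C})\simeq\mathcal{C}^{\Delta^1}$ and $\mathbf{S}^{\patch}$ is discrete. There $p^*$ sends $(a\to b)$ to its pair of stalks $(a,b)$ and forgets the structure map. For example, $(\mathrm{id}_a,0)\colon p^*(a\to 0)\to p^*(a\xrightarrow{\mathrm{id}}a)$ is not induced by any map of sheaves. Likewise $p_*(c,d)=(c\times d\to c)$ has an extra factor $\mathrm{Map}(c,d')$ in its mapping spaces, and its right adjoint is not fully faithful either.

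So there is no Verdier sequence $\Sh(X,\mathcal{C})\to\Sh(X^{\patch},\mathcal{C})\to(\text{cokernel})$ to feed into $F^{\cont}$. In this example $F^{\cont}(p^*)$ is an equivalence because of additivity, i.e.\ a semiorthogonal decomposition indexed by the patch strata. It is not because some kernel is $F$-acyclic.

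Your proposed cokernel is also inconsistent with this example. For $\mathbf{S}$ the set $\{(x,y): x\not\le y\}$ is a single point. Its sheaf category is $\mathcal{C}$, and $F^{\cont}(\mathcal{C})\neq 0$ in general, so no swindle can make it vanish.

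The fallback via finite posets does not work either. Cofiltered limits of finite $T_0$ spaces are exactly the coherent spaces, and $I_\leq$, for instance, is not coherent. A general stably compact space is only a retract of a coherent space. Transporting the formula along such retracts is essentially the open Conjecture \ref{ktheoryrightkan}.

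The missing idea is to avoid any direct comparison functor and instead characterize $X\mapsto F^{\cont}(\Sh(X,\mathcal{C}))$ on $\mathrm{SC}^{\op}$ by descent. The paper proves excision for pairs of perfect subspaces, which are exactly the patch-closed subsets, in four stages:
\begin{enumerate}
\item For closed subsets, it uses the open--closed Verdier sequence and the second isomorphism theorem for locales.
\item For saturated compact subsets, it uses Verdier--de Groot duality $\Sh(X,\mathcal{C})\simeq\Cosh(X^\vee,\mathcal{C})$, which swaps saturated compacts and closed sets.
\item For elementary compacts $C\cup S$, it uses the cube lemma.
\item For arbitrary patch-closed sets, it uses Escard\'o's basis, the comparison lemma, and cofiltered continuity, which comes from finitarity.
\end{enumerate}
A Clausen-type argument (Theorem \ref{properandcofilteredexcision}) then identifies any such functor with $\Gamma((-)^{\patch};F(\mathrm{pt}))$. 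Restricting to perfect subspaces gives a $\mathcal{K}$-sheaf on $X^{\patch}$. Its comparison map from the constant sheaf is shown to be an equivalence first on directed Hilbert cubes, using stalks in finite dimension, homotopy invariance and cofiltered limits. It then holds on every $X$ via the Urysohn embedding $X\hookrightarrow I_\leq^S$. The stably locally compact case follows by passing to $X^+$ and using $(X^+)^{\patch}\cong(X^{\patch})^*$.
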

Here, the right-hand side refers to the \emph{compactly supported cohomology} of $X^{\patch}$. We also obtain the following perhaps surprising duality statement. Given $X$ and $\mathcal{C}$ as above, define the $\infty$-category of cosheaves on $X$ as $$ \Cosh(X,\mathcal{C}) = \mathrm{Fun}^L( \Sh(X, \Sp), \mathcal{C}). $$

\begin{corollary}[See Corollary \ref{sheavesequalcosheaves}]
Let $X$ be stably locally compact, let $\mathcal{C}$ be a dualizable stable $\infty$-category and let $F$ be a finitary localizing invariant $ \Cat^{\perf} \rightarrow  \mathcal{E}$ with values in a presentable stable $\infty$-category $\mathcal{E}$. Then
$$F^{\cont}(\Sh(X,\mathcal{C})) \simeq F^{\cont}(\Cosh(X,\mathcal{C})).$$
\end{corollary}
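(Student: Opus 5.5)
The strategy is to apply the main theorem (Theorem \ref{maintheorem}) to both sides and reduce the claim to identifying $\Cosh(X,\mathcal{C})$ with sheaves on some stably locally compact space that has the same patch topology as $X$.

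The natural candidate is the \emph{de Groot dual} $X^{\dual}$. This is the space with the same underlying set whose open sets are complements of compact saturated subsets of $X$. Classically, $X^{\dual}$ is again stably locally compact, $(X^{\dual})^{\dual} = X$, and $(X^{\dual})^{\patch} = X^{\patch}$ as topological spaces. I would quote these facts from Section \ref{patchtopology}, or prove them there if they are not yet available.

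The argument then has three steps.

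\textbf{Step 1 (Verdier-type duality).} Show that there is an equivalence $\Sh(X,\Sp)^{\vee} \simeq \Sh(X^{\dual},\Sp)$ in $\PR^L_{\st}$.

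Since $\Sh(X,\Sp)$ is dualizable (Corollary \ref{sheavesonstablylocallycompact}), its dual is $\Fun^L(\Sh(X,\Sp),\Sp) = \Cosh(X,\Sp)$. So this step amounts to identifying cosheaves on $X$ with sheaves on $X^{\dual}$.

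I would try to prove it through the $K$-sheaf description. A cosheaf is determined by its values on compact saturated sets $K$, via $K \mapsto \lim_{U \supset K} \mathcal{G}(U)$ or dually. Under $K \mapsto X\setminus K$, these values correspond to a sheaf on the opens of $X^{\dual}$. The stability of $\ll$ under finite intersections is what makes the excision and continuity conditions transfer correctly.

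Granting Step 1, we get
$$\Cosh(X,\mathcal{C}) = \Fun^L(\Sh(X,\Sp),\mathcal{C}) \simeq \Sh(X,\Sp)^{\vee}\otimes\mathcal{C} \simeq \Sh(X^{\dual},\Sp)\otimes \mathcal{C} \simeq \Sh(X^{\dual},\mathcal{C}).$$
The last equivalence uses that tensoring a dualizable sheaf category with $\mathcal{C}$ yields $\mathcal{C}$-valued sheaves, which was established earlier in the paper when passing from $\Sp$ to $\mathcal{C}$ coefficients.

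\textbf{Step 2 (apply the main theorem).} Theorem \ref{maintheorem} gives
$$F^{\cont}(\Sh(X,\mathcal{C})) \simeq H_{cs}(X^{\patch};F^{\cont}(\mathcal{C})) = H_{cs}((X^{\dual})^{\patch};F^{\cont}(\mathcal{C})) \simeq F^{\cont}(\Sh(X^{\dual},\mathcal{C})).$$
One caveat concerns the target $\mathcal{E}$, which the corollary only assumes to be presentable stable rather than dualizable. I would handle this in one of two ways. The first is to observe that the main theorem's proof only needs dualizability to form $H_{cs}$ as a functor of $\mathcal{E}$-valued sheaves. The second is to avoid the cohomology formula altogether: the proof of Theorem \ref{maintheorem} presumably reduces $\Sh(X,\mathcal{C})$ to $\Sh(X^{\patch},\mathcal{C})$ through localization sequences that are intrinsic to the patch space, so the chain of equivalences holds for arbitrary $\mathcal{E}$ because $X$ and $X^{\dual}$ share the same patch space.

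\textbf{Main obstacle.} I expect Step 1 to be the main difficulty, in particular checking that the duality pairing $\Sh(X)\otimes\Sh(X^{\dual}) \to \Sp$ is perfect. My first attempt would be to write the pairing as compactly supported global sections of the restriction to the diagonal on $X^{\patch}$. The unit and counit can then be checked on generators of the form $j_!$ of opens and their duals. Here the way-below relation $U \ll V$, through the existence of an intermediate compact saturated $K$ with $U\subset K\subset V$, provides the compact morphisms that witness dualizability.
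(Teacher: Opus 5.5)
Your plan works when $X$ is stably compact. In that case $X^\vee$ is stably compact with $(X^\vee)^{\patch}=X^{\patch}$ (Remark \ref{deGrootdualitypatch}), and Step 1 is exactly Theorem \ref{verdierduality} applied to $X^\vee$, which gives $\Cosh(X,\mathcal{C})\simeq\Sh(X^\vee,\mathcal{C})$; nothing needs to be reproved, and Step 2 goes through.

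The gap is the non-compact case. There the facts you plan to quote are false; the paper states them only for stably compact spaces. If $X$ is not compact, the complements of compact saturated sets do not form a topology, since $\emptyset=X\setminus X$ would require $X$ to be compact. This is the missing bottom element of $Q(X)^{\op}$ noted in Remark \ref{deGrootdualitysubtelty}. Adjoining $\emptyset$ by hand does not help:
\begin{enumerate}
\item For $X=\mathbb{R}$ you get the cocompact topology, which is not sober.
\item Its sobrification is $(X^+)^\vee$, a \emph{compact} space whose patch topology is $(X^{\patch})^*\cong S^1$ rather than $\mathbb{R}$.
\item Its sheaf category is dual to $\Sh(X^+,\Sp)$, not to $\Sh(X,\Sp)$.
\item For dualizable $\mathcal{E}$, finitary localizing invariants give $\Gamma(S^1;E)\simeq E\oplus\Omega E$ on it, whereas $F^{\cont}(\Cosh(\mathbb{R},\mathcal{C}))\simeq H_{cs}(\mathbb{R};E)\simeq\Omega E$, with $E=F^{\cont}(\mathcal{C})$.
\end{enumerate}
By classical Verdier duality, the correct ``dual'' of $\mathbb{R}$ is $\mathbb{R}$ itself. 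So Step 1 fails as formulated, and so do your assertions that $X^{\dual}$ is stably locally compact with $(X^{\dual})^{\dual}=X$ and $(X^{\dual})^{\patch}=X^{\patch}$.

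The missing idea is to never dualize a non-compact space and instead reduce to the one-point compactification. For stably compact $X$, the paper realizes the comparison as the zig-zag $\Sh(X,\mathcal{C})\to\Sh(X^{\patch},\mathcal{C})\leftarrow\Sh(X^\vee,\mathcal{C})\simeq\Cosh(X,\mathcal{C})$. Its maps are equivalences on $F^{\cont}$ by Theorem \ref{maintheoremmain}, and it is natural in perfect maps. For stably locally compact $X$, take the open-closed Verdier sequence $\Sh(X,\mathcal{C})\to\Sh(X^+,\mathcal{C})\to\mathcal{C}$ and its image under duality, which is an autoequivalence of $\PR^L_{\dual}$ and hence preserves Verdier sequences. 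These exhibit $F^{\cont}(\Sh(X,\mathcal{C}))$ and $F^{\cont}(\Cosh(X,\mathcal{C}))$ as fibers of maps to $F^{\cont}(\mathcal{C})$ induced by the point $+$. Naturality of the compact-case equivalence for the perfect map $\mathrm{pt}\to X^+$ then identifies the two fibers.

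Your handling of a non-dualizable target is also not right. Compact assembly of the target is used essentially in Theorem \ref{properandcofilteredexcision}, to detect $\alpha_X$ stalkwise, not merely to form $H_{cs}$. Nor does the proof proceed through localization sequences intrinsic to $X^{\patch}$. The paper instead factors $F$ through the universal finitary localizing invariant $\mathcal{U}:\Cat^{\perf}\to\mathrm{Mot}$, whose target is dualizable by Efimov. It proves the statement for $\mathcal{U}$ and then applies the induced colimit-preserving functor $\mathrm{Mot}\to\mathcal{E}$.
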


In the case of $X$ being locally compact Hausdorff the above statement easily follows from Verdier duality. However, in the absence of Hausdorffness, there is no a priori existing functor comparing sheaves and cosheaves on $X$ directly. (There is a version of Verdier duality for stably compact spaces, which involves the \emph{de Groot dual} $X^\vee$ of $X$, a statement which we will discuss in Section \ref{sectionverdierduality}.)

\begin{remark}
By Efimov \cite{efimov2025rigiditycategorylocalizingmotives}, the target of the \emph{universal} finitary localizing invariant $\mathcal{U} :  \Cat^{\perf} \rightarrow  \mathrm{Mot}$ is a dualizable stable $\infty$-category. Since every finitary localizing invariant $F$ factors through $\mathcal{U}$, this means that at least in principle one knows the value of $\Sh(X, \mathcal{C})$ for any finitary localizing invariant $F : \Cat^{\perf} \rightarrow  \mathcal{E}$, even when $\mathcal{E}$ is not dualizable.
\end{remark}

\subsection{Motivation}

We should say that the purpose of this article is two-fold. For one, a simple motivation for Theorem \ref{maintheorem} is that it is widely applicable to many different examples. Let us elaborate on some of them.

\begin{itemize}
    \item In the case of $X$ being Hausdorff, we have $X = X^{\patch}$ and recover the known fact that
    $$K^{\cont}(\Sh(X,\mathcal{C})) \simeq H_{cs}(X; K^{\cont}(\mathcal{C}))$$
    given by Theorem \ref{ktheorylch}.
    \item The special case of $X = \mathbf{S}$ being the Sierpinski space, i.e. the topological space given by a two element set $\mathbf{2}$ with a topology with a unique open point, can be enlightening. We have $\mathbf{S}^{\patch} = \mathbf{2}^{\disc}$ is the two point space equipped with the discrete topology. There is a natural equivalence $\Sh(\mathbf{S},\mathcal{C}) \simeq \mathcal{C}^{\Delta^1}$ and the continuous map $p : \mathbf{2}^{\disc} \rightarrow \mathbf{S}$ induces the functor
    $$\begin{array}{rcc}
    p_* : \mathcal{C} \times \mathcal{C} & \rightarrow & \mathcal{C}^{ \Delta^1 } \\
            ( c , d ) & \mapsto & ( c \times d \rightarrow c ).
	\end{array}$$      
    The statement that $p_*$ induces an equivalence on localizing invariants recovers the well-known \emph{additivity theorem}. We thus would like to think of Theorem \ref{maintheorem} as an analytic generalization of the additivity theorem.
	\item The case $X = \overrightarrow{[0, \infty)}$, with topology given by opens of the form $[0,a)$ for $0 \leq a \leq \infty$, is also quite important. There is an equivalence of $\infty$-categories
	$$\Sh(\overrightarrow{[0, \infty)}, \mathcal{C}) \simeq \Sh(\mathbb{R}, \mathcal{C})_{\geq 0},$$
	using the terminology of Efimov \cite[p.\ 76]{efimov2025ktheorylocalizinginvariantslarge}. In the case $\mathcal{C} = \Sp$, the dualizable $\infty$-category $\Sh(\mathbb{R}, \Sp)_{\geq 0}$ corepresents the functor that sends a dualizable $\infty$-category $\mathcal{C}$ to $\mathbb{Q}$-indexed diagrams in $\mathcal{C}$ with compact transition morphisms whenever $a < b \in \mathbb{Q}$. Efimov shows that $\Sh(\mathbb{R}, \Sp)_{\geq 0}$ is an $\omega_1$-compact generator of $\PR^L_{\dual}$, \cite[Theorem D.1]{efimov2025ktheorylocalizinginvariantslarge}, and computes that $F^{\cont}( \Sh(\mathbb{R}, \mathcal{C})_{\geq 0} ) = 0$ for any finitary localizing invariant $F$, \cite[Proposition 4.21]{efimov2025ktheorylocalizinginvariantslarge}. We recover this result as a special case, as $\overrightarrow{[0, \infty)}^{\patch} = [0, \infty)$ with the ordinary metric topology, which has a contractible one-point compactification.
    \item In the case of a \emph{coherent space} $X$, also sometimes called \emph{spectral space}, the patch topology on $X$ agrees with the constructible topology. The statement of Theorem \ref{maintheorem} in this case was verified in the author's previous article \cite{lehner2025algebraicktheorycoherentspaces}.
     \item If $P$ is a \emph{locally finite poset}, which means that the down-sets $P_{\leq p}$ are finite for each $p \in P$, we can consider $X = P_{\Alex}$ to be the set $P$ equipped with the Alexandrov topology, which means that open sets are defined to be downward closed sets. This gives a locally coherent space (\cite[Section 3.1]{lehner2025algebraicktheorycoherentspaces}), and one can show that there is a natural equivalence
$$\Sh(X,\mathcal{C}) \simeq \mathrm{Fun}(P^{\op},\mathcal{C}),$$
see \cite[Proposition 5.18]{Aoki2023posets}.\footnote{Warning: We are using the opposite convention from Aoki for the definition of a locally finite poset.} One can see that $P_{\Alex}^{\patch} = P^{\disc}$ is just $P$ equipped with the discrete topology, hence we obtain
$$ K^{\cont}(\mathrm{Fun}(P^{\op},\mathcal{C})) \simeq  K^{\cont}(\Sh(X,\mathcal{C})) \simeq \bigoplus_{p \in P} K^{\cont}(\mathcal{C}), $$
a result that can of course also be verified independently using the fact that $\mathrm{Fun}(P^{\op},\mathcal{C})$ admits a $P$-indexed semiorthogonal decomposition, \cite[Proposition 4.14]{efimov2025ktheorylocalizinginvariantslarge}. Examples of locally finite posets are face posets of finite dimensional simplicial or polyhedral complexes, or variations thereof. They arise naturally as gadgets for homotopy theory (the case of finite posets models the homotopy theory of finite CW-complexes), or when considering stratifications of spaces, and sheaves on these spaces play a crucial role in combinatorial areas of geometry, e.g. Combinatorial Intersection Cohomology (See \cite{BBFK}, \cite{Karu2004} and \cite{Braden2006}), or the theory of Tropical (Co)-Homology, \cite{brugallé2015briefintroductiontropicalgeometry}. The ability to have these examples in the same framework as the infinitary analytic examples is one of the strengths of considering the general framework of sheaves on stably locally compact spaces, and not just locally compact Hausdorff spaces.
    \item Let $V$ be a finite-dimensional real vector space and $\gamma$ a non-zero proper convex closed cone. 		Kashiwara-Schapira \cite{kashiwara2002sheaves} define the $\gamma$-topology $V_\gamma$ on $V$, which is 		given by the set of open subsets $U$ of $V$ which satisfy $U + \gamma = U$. They show that 
      $$ \Sh_{V \times \gamma}( V,\mathcal{C} ) \simeq \Sh( V_\gamma,\mathcal{C} ),$$
    where $\Sh_{V \times \gamma}( V,\mathcal{C} )$ is the $\infty$-category of sheaves on $V$ with microsupport $V \times \gamma$. (See Theorem 1.5 in \cite{Kashiwara2017PersistentHA}.) 
    Zhang proved in \cite{zhang2025remarkcontinuousktheoryfouriersato} that the value of any finitary localizing invariant on this $\infty$-category vanishes. The topological space $V_\gamma$ is not sober, however its sobrification adds boundary points towards the direction of the cone $\gamma$. This results in a stably locally compact space. (We remark that $\infty$-categories of sheaves cannot distinguish between a space and its sobrification.) It would be an interesting avenue for future work to reprove Zhang's result using purely topological means; by geometrically understanding the patch topology of $(V_\gamma)^{sob}$ and 	applying Theorem \ref{maintheorem}.
\end{itemize}

The secondary purpose of this article is to illustrate that the apparent usefulness of the class of stably locally compact spaces given by the above list of examples is by no means an accident. One can think of it as a partial answer to the following question.

\begin{tcolorbox}[colback=gray!10, colframe=gray!30, boxrule=0pt, sharp corners, enhanced]
What is the largest (convenient) class of spaces $X$ such that $\Sh(X,\Sp)$ is dualizable?
\end{tcolorbox}

Since it is the case that $\Sh(X,\Sp) \simeq \Sh(X,\An) \otimes \Sp$, where $\An$ denotes the $\infty$-topos of anima/$\infty$-groupoids, one might ask instead for the slightly stronger condition that the $\infty$-topos $\Sh(X,\An)$ is a compactly assembled $\infty$-category. Let us mention the following facts.

\begin{itemize}
\item Given any Grothendieck topology $\tau$ on a poset $P$, which is closed under binary meets, there exists a natural frame $F = F(P,\tau)$ such that $\Sh(P,\tau,\An) \cong \Sh(F,\An)$. Such Grothendieck topologies appear naturally in many contexts.
\item If for a given frame $F$ the $\infty$-topos $\Sh(F,\An)$ is a compactly assembled $\infty$-category, then $F$ is isomorphic to the set of opens of a sober and locally compact space $X$. (See Theorem \ref{localicreflection} and Theorem \ref{spatialityoflocallycompact}.)
\item However, it is not true that for every sober and locally compact space $X$ the $\infty$-topos $\Sh(X,\An)$ is also compactly assembled. An example that already fails in the case of $1$-topoi is given in \cite[Example 5.5]{JOHNSTONE1982255}.
\item Under the additional assumption that $X$ is in fact stably locally compact this issue disappears, and $\Sh(X,\An)$ is compactly assembled. Stability of $X$ corresponds precisely to the additional property of $\Sh(X,\An)$ that compact morphisms are closed under products.
\end{itemize}

While it can still happen that for a topological space $X$ (or frame, for that matter) the category of sheaves $\Sh(X, \mathcal{C})$ is dualizable for a given dualizable $\mathcal{C}$, without $X$ itself being stably locally compact, the category of stably locally compact spaces is thus in a sense an optimal candidate for a category of spaces for which we get dualizable categories of sheaves.

\begin{remark} 
The condition of an $\infty$-topos $\mathcal{X}$ to be compactly assembled as an $\infty$-category is equivalent to $\mathcal{X}$ being \emph{exponentiable} in the $\infty$-category $\mathrm{RTop}$ of topoi, a fact proven independently by Anel-Lejay \cite{anel2018exponentiablehighertoposes}, \cite{anel_lejay_exponentiable_V2}, as well as Lurie \cite[Section 21.1.6]{lurieSAG}, and generalizes the analogous statement for $1$-topoi due to Johnstone-Joyal \cite{JOHNSTONE1982255}.
\end{remark}

On a deeper level, there are many close parallels between the theory of stably locally compact spaces and that of dualizable categories. Efimov originally observed shadows of this picture as several analogies between dualizable categories and compact Hausdorff spaces, see \cite[Appendix F]{efimov2025ktheorylocalizinginvariantslarge}. Among these are for example versions of Urysohn's lemma (in the form as stating that $[0,1]$ and $\Sh(\mathbb{R}, \Sp)_{\geq 0}$ are generators of the categories $\mathrm{CH}^{\op}$ and $\PR^L_{\dual}$ respectively), and the Tychonoff Theorem (stating that the inclusion functors $\mathrm{CH}^{\op} \rightarrow \mathrm{Top}^{\op}$ and $\PR^L_{\dual} \rightarrow \PR^L$ preserve colimits).

However, this analogy remains somewhat vague. When one switches the perspective to stably locally compact spaces, one can observe the following two similarities.

\begin{itemize}
\item A space $X$ is stably locally compact iff $X$ is sober and there exists a diagram of adjoints
\[\begin{tikzcd}
	{\mathcal{O}(X)} & {\Ind(\mathcal{O}(X))}
	\arrow[""{name=0, anchor=center, inner sep=0}, "y"', curve={height=12pt}, from=1-1, to=1-2]
	\arrow[""{name=1, anchor=center, inner sep=0}, "{\hat{y}}", curve={height=-12pt}, from=1-1, to=1-2]
	\arrow[""{name=2, anchor=center, inner sep=0}, "k"{description}, from=1-2, to=1-1]
	\arrow["\dashv"{anchor=center, rotate=-90}, draw=none, from=1, to=2]
	\arrow["\dashv"{anchor=center, rotate=-90}, draw=none, from=2, to=0]
\end{tikzcd}\]
internal to the $2$-category of lower bounded distributive lattices.
\item An accessible stable $\infty$-category $\mathcal{C}$ is dualizable iff there exists a diagram of adjoints
\[\begin{tikzcd}
	{\mathcal{C}} & {\Ind(\mathcal{C})}
	\arrow[""{name=0, anchor=center, inner sep=0}, "y"', curve={height=12pt}, from=1-1, to=1-2]
	\arrow[""{name=1, anchor=center, inner sep=0}, "{\hat{y}}", curve={height=-12pt}, from=1-1, to=1-2]
	\arrow[""{name=2, anchor=center, inner sep=0}, "k"{description}, from=1-2, to=1-1]
	\arrow["\dashv"{anchor=center, rotate=-90}, draw=none, from=1, to=2]
	\arrow["\dashv"{anchor=center, rotate=-90}, draw=none, from=2, to=0]
\end{tikzcd}\]
internal to the $(\infty,2)$-category of (large) stable $\infty$-categories.
\end{itemize}

Similarly, partial perfect maps between stably locally compact spaces, as well as strongly continuous functors can also be characterized in purely $2$-categorical terms. The formal setup for this is that both cases are instances of \emph{continuous algebras} for a lax-idempotent monad on an $(\infty,2)$-category.\footnote{The general theory of lax-idempotent monads on $(\infty,2)$-categories is still work in progress, with forthcoming results by Abellán--Blom \cite{abellanblom:laxidempotent}.}
The corresponding $2$-categorical version is sometimes also referred to as a KZ-monad. The requirement of the sobriety condition on $X$ is an indicator that truly, one should rather consider the ambient setting of frame/locales (of which $\mathcal{O}(X)$ is an example), rather than topological spaces, as the right formal analogue to the $\infty$-category $\PR^L_{\st}$. Frames are in fact the algebras for the $\Ind$-monad on distributive lattices, the same way presentable stable $\infty$-categories are (up to size issues) the algebras for the $\Ind$-monad on stable $\infty$-categories. The functor which assigns a frame $F$ to its $\infty$-category of sheaves $\Sh(F,\Sp)$ provides a bridge between the two cases, and descends to a well-defined functor from $(\mathrm{SLC}_p)^{\op}$ to $\PR^L_{\dual}$, which does in fact send the generator $\overrightarrow{[0, \infty)}$ (see Corollary \ref{urysohnstablylocallycompact}) to the generator $\Sh(\mathbb{R}, \Sp)_{\geq 0}$.

Aoki has observed that the relationship between frames and presentable categories becomes quite tight, which he formalized in his sheaves-spectrum adjunction.

\begin{theorem}[\cite{aoki2025sheavesspectrumadjunction}] \label{sheafspectrumadjunction}
There exists an adjunction
\[\begin{tikzcd}
	{\mathrm{Frm}} & {\CAlg(\mathrm{Pr^L})}
	\arrow[""{name=0, anchor=center, inner sep=0}, "{\Sh(-,\An)}", curve={height=-12pt}, from=1-1, to=1-2]
	\arrow[""{name=1, anchor=center, inner sep=0}, "{\mathrm{cIdem}}", curve={height=-12pt}, from=1-2, to=1-1]
	\arrow["\dashv"{anchor=center, rotate=-90}, draw=none, from=0, to=1]
\end{tikzcd}\]
between the category of frames $\mathrm{Frm}$ and the $\infty$-category of presentably symmetric monoidal $\infty$-categories, where $\Sh(F,\An)$ is the $\infty$-category of sheaves on a frame $F$, and $\mathrm{cIdem}(\mathcal{C})$ is the spectrum of coidempotents of a presentably symmetric monoidal $\infty$-category $\mathcal{C}$. 
\end{theorem}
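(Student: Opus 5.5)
The natural approach is to construct the two functors and the unit and counit directly, in the spirit of a "Stone-type" adjunction between frames and presentably symmetric monoidal $\infty$-categories.

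\emph{Left adjoint.} For a frame $F$, view $F$ as a site whose covers are joins. Then $\Sh(F,\An)$ is an $\infty$-topos, and its cartesian product makes it an object of $\CAlg(\PR^L)$. A frame morphism $f\colon F\to F'$ induces, by left Kan extension followed by sheafification, a colimit-preserving functor $\Sh(F,\An)\to\Sh(F',\An)$. This functor preserves finite limits, since it is the inverse image of a geometric morphism, so it is symmetric monoidal for the cartesian structure.

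\emph{Right adjoint.} For $\mathcal{C}\in\CAlg(\PR^L)$, let $\mathrm{cIdem}(\mathcal{C})$ be the poset of coidempotent objects, i.e. maps $u\colon e\to \mathbf{1}$ such that $e\otimes u$ is an equivalence. The first task is to show this poset is a frame:
\begin{itemize}
\item Coidempotents are determined by the full subcategory $e\otimes\mathcal{C}$, which is a colocalization, so $\mathrm{cIdem}(\mathcal{C})$ is a (possibly large, but presentability keeps it small) poset.
\item The meet of $e$ and $e'$ is $e\otimes e'$.
\item Joins of a family $e_i$ are obtained from the smallest colocalizing ideal containing all the $e_i$. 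Concretely, for finite joins one uses the pushout $e\otimes e'\rightrightarrows e,e'$, and for directed joins a colimit, in both cases checking that the result is again coidempotent.
\item Infinite distributivity then follows because $-\otimes e$ preserves colimits.
\end{itemize}
Functoriality is clear: symmetric monoidal left adjoints send coidempotents to coidempotents and preserve these joins and meets.

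\emph{Adjunction.} I would exhibit a natural equivalence
$$\Map_{\CAlg(\PR^L)}(\Sh(F,\An),\mathcal{C})\simeq \mathrm{Hom}_{\mathrm{Frm}}(F,\mathrm{cIdem}(\mathcal{C})).$$
\begin{itemize}
\item In $\Sh(F,\An)$ the subterminal objects are exactly the coidempotents for the cartesian product, and they correspond to $F$. Restricting a functor to these gives the map from left to right.
\item In the other direction, a frame map $\phi\colon F\to\mathrm{cIdem}(\mathcal{C})$ gives a functor $F\to\mathcal{C}$, via $U\mapsto\phi(U)$. This functor is lax/strong monoidal for meets, since $\phi(U\wedge V)=\phi(U)\otimes\phi(V)$.
\item Extending $\phi$ to presheaves by left Kan extension gives a symmetric monoidal functor for Day convolution, which on $\mathrm{PSh}(F)$ is the cartesian product since $F$ has meets.
\item It then remains to show that this extension inverts covering sieves, so that it factors through $\Sh(F,\An)$.
\end{itemize}

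\emph{Main obstacle.} The hardest step is the descent verification: for a cover $U=\bigvee U_i$, the Čech nerve colimit of the $\phi(U_{i_0}\wedge\dots\wedge U_{i_n})$ must be $\phi(U)$. I would reduce this to the case of finite covers together with directed joins. Finite covers are handled by pushout squares, which is exactly how joins were defined. Directed joins are handled by filtered colimits. The general case follows because these two types generate the Grothendieck topology.

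A second delicate point is showing that the space of symmetric monoidal functors is discrete, i.e. equivalent to the set of frame maps. For this I would use that an idempotent algebra/coalgebra structure is a property rather than extra data, so the mapping space from $\Sh(F,\An)$, which is generated under colimits by coidempotents, is $(-1)$-truncated over each object.
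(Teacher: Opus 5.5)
The paper gives no proof of this statement. It is imported from Aoki and used only as a black box (e.g.\ in Corollary~\ref{cofiltereddescent}), so there is no in-paper argument to compare with. Your outline is the standard route and is correct in substance:
\begin{itemize}
\item Binary joins $e\amalg_{e\otimes e'}e'$, directed joins as colimits, and the initial object as bottom make $\mathrm{cIdem}(\mathcal{C})$ a frame.
\item $\mathrm{PSh}(F)$ with its cartesian (equal to Day) structure is the free presentably symmetric monoidal category on $(F,\wedge)$.
\item Since sheafification is left exact, $\Sh(F)$ is a symmetric monoidal localization. Its local equivalences are generated by $\emptyset\to y_0$, by $y_U\amalg_{y_{U\wedge V}}y_V\to y_{U\vee V}$, and by $\colim_i y_{U_i}\to y_{\bigvee_i U_i}$ for directed families; this is the description of sheaves used in Section~\ref{locales}.
\end{itemize}
These generators are exactly the three ways joins in $\mathrm{cIdem}(\mathcal{C})$ are computed, so your descent step goes through. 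Note that the empty cover is not a pushout case: it is handled by $\phi(0)$ being the initial object.

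Two justifications need repair.

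\textbf{Discreteness.} The mechanism you cite (an idempotent coalgebra structure is a property) is the right one. However, the conclusion as phrased, that the mapping space is $(-1)$-truncated over the values on generators, is false. Take the frame $\{0<1\}$, where $\Sh=\An$ is initial in $\CAlg(\PR^L)$. The map $\ast\simeq\Map_{\CAlg(\PR^L)}(\An,\mathcal{C})\to\mathcal{C}^{\simeq}$ picking out $\mathbf{1}$ has fibre $\mathrm{Aut}(\mathbf{1})$, e.g.\ $GL_1(\mathbb{S})$ for $\mathcal{C}=\Sp$.

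What is true is that $\Map_{\CAlg(\Cat_\infty)}((F,\wedge),(\mathcal{C},\otimes))$ is itself a set. Argue as follows:
\begin{itemize}
\item Regarded as an oplax functor out of a cartesian structure, a symmetric monoidal functor $F\to\mathcal{C}$ is a functor into commutative coalgebras.
\item Since every diagonal in $F$ is invertible, it lands in idempotent commutative coalgebras.
\item These form the poset $\mathrm{cIdem}(\mathcal{C})$ (the dual of \emph{Higher Algebra}, \S4.8.2). Equivalently: since $\top$ is terminal, the functor factors canonically through $\mathcal{C}_{/\mathbf{1}}$, where coidempotents form a full sub-poset with cartesian (meet) structure.
\item Hence symmetric monoidal functors $F\to\mathcal{C}$ are exactly maps $F\to\mathrm{cIdem}(\mathcal{C})$ preserving finite meets.
\end{itemize}
The same argument is what makes your inverse construction $U\mapsto\phi(U)$ a coherent symmetric monoidal functor, rather than one that is monoidal only up to homotopy.

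\textbf{Smallness.} Smallness of $\mathrm{cIdem}(\mathcal{C})$ does not follow from presentability alone, since $\mathcal{C}$ has a proper class of objects. Instead, observe that $\mathrm{cIdem}(\mathcal{C})$ is the preimage of the equivalences under the accessible functor
\[
\mathcal{C}_{/\mathbf{1}}\to\Fun(\Delta^1,\mathcal{C}),\qquad e\mapsto(e\otimes e\to e).
\]
It is therefore an accessible $\infty$-category (\emph{HTT}, \S5.4.6), and an accessible poset is small.
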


He fleshed this adjunction out further in the article \cite{aoki2025schwartzcoidempotentscontinuousspectrum}, in a manner which we summarize by the following commutative diagram of adjunctions. Let us call a space $X$ \emph{stably compact} if it is compact and stably locally compact. Denote by $\mathrm{SC}$ the category of stably compact spaces and perfect maps, and by $\mathrm{CH}$ the full subcategory of compact Hausdorff spaces. (We note that any continuous map between compact Hausdorff spaces is automatically perfect.)

\[\begin{tikzcd}
	{\mathrm{Rig}} & {\CAlg(\PR^L_{\dual})} & {\CAlg(\PR^L_{\st})} & {\CAlg(\Cat^{\perf})} \\
	{\mathrm{CH}^{\op}} & {\mathrm{SC}^{\op}} & {\mathrm{Frm}} & {\mathrm{DLatt}_{\mathrm{bd}}}
	\arrow[""{name=0, anchor=center, inner sep=0}, color={rgb,255:red,92;green,92;blue,214}, hook, from=1-1, to=1-2]
	\arrow[""{name=0p, anchor=center, inner sep=0}, phantom, from=1-1, to=1-2, start anchor=center, end anchor=center]
	\arrow[""{name=0p, anchor=center, inner sep=0}, phantom, from=1-1, to=1-2, start anchor=center, end anchor=center]
	\arrow[""{name=1, anchor=center, inner sep=0}, "{\mathrm{Sm}^{rig}}", color={rgb,255:red,214;green,92;blue,92}, curve={height=-12pt}, from=1-1, to=2-1]
	\arrow[""{name=1p, anchor=center, inner sep=0}, phantom, from=1-1, to=2-1, start anchor=center, end anchor=center, curve={height=-12pt}]
	\arrow[""{name=2, anchor=center, inner sep=0}, color={rgb,255:red,214;green,92;blue,92}, curve={height=-12pt}, from=1-2, to=1-1]
	\arrow[""{name=2p, anchor=center, inner sep=0}, phantom, from=1-2, to=1-1, start anchor=center, end anchor=center, curve={height=-12pt}]
	\arrow[""{name=3, anchor=center, inner sep=0}, "{``\mathrm{add~duals}"}"', color={rgb,255:red,92;green,214;blue,92}, curve={height=12pt}, from=1-2, to=1-1]
	\arrow[""{name=3p, anchor=center, inner sep=0}, phantom, from=1-2, to=1-1, start anchor=center, end anchor=center, curve={height=12pt}]
	\arrow[""{name=4, anchor=center, inner sep=0}, "forget", color={rgb,255:red,92;green,92;blue,214}, from=1-2, to=1-3]
	\arrow[""{name=4p, anchor=center, inner sep=0}, phantom, from=1-2, to=1-3, start anchor=center, end anchor=center]
	\arrow[""{name=5, anchor=center, inner sep=0}, "{\mathrm{Sm}^{con}}", color={rgb,255:red,214;green,92;blue,92}, curve={height=-12pt}, from=1-2, to=2-2]
	\arrow[""{name=5p, anchor=center, inner sep=0}, phantom, from=1-2, to=2-2, start anchor=center, end anchor=center, curve={height=-12pt}]
	\arrow[""{name=6, anchor=center, inner sep=0}, "{\Ind}"{description}, color={rgb,255:red,214;green,92;blue,92}, curve={height=-12pt}, from=1-3, to=1-2]
	\arrow[""{name=6p, anchor=center, inner sep=0}, phantom, from=1-3, to=1-2, start anchor=center, end anchor=center, curve={height=-12pt}]
	\arrow[""{name=7, anchor=center, inner sep=0}, "{forget}"{description} , color={rgb,255:red,214;green,92;blue,92}, curve={height=12pt}, from=1-3, to=1-4]
	\arrow[""{name=7p, anchor=center, inner sep=0}, phantom, from=1-3, to=1-4, start anchor=center, end anchor=center, curve={height=12pt}]
	\arrow[""{name=8, anchor=center, inner sep=0}, "{\mathrm{Sm}}", color={rgb,255:red,214;green,92;blue,92}, curve={height=-12pt}, from=1-3, to=2-3]
	\arrow[""{name=8p, anchor=center, inner sep=0}, phantom, from=1-3, to=2-3, start anchor=center, end anchor=center, curve={height=-12pt}]
	\arrow[""{name=9, anchor=center, inner sep=0}, "{\Ind}"', color={rgb,255:red,92;green,92;blue,214}, from=1-4, to=1-3]
	\arrow[""{name=9p, anchor=center, inner sep=0}, phantom, from=1-4, to=1-3, start anchor=center, end anchor=center]
	\arrow[""{name=10, anchor=center, inner sep=0}, "{\mathrm{Idemp}}", color={rgb,255:red,214;green,92;blue,92}, curve={height=-12pt}, from=1-4, to=2-4]
	\arrow[""{name=11, anchor=center, inner sep=0}, "{\Sh}", color={rgb,255:red,92;green,92;blue,214}, hook, from=2-1, to=1-1]
	\arrow[""{name=11p, anchor=center, inner sep=0}, phantom, from=2-1, to=1-1, start anchor=center, end anchor=center]
	\arrow[""{name=12, anchor=center, inner sep=0}, color={rgb,255:red,92;green,92;blue,214}, hook, from=2-1, to=2-2]
	\arrow[""{name=12p, anchor=center, inner sep=0}, phantom, from=2-1, to=2-2, start anchor=center, end anchor=center]
	\arrow[""{name=12p, anchor=center, inner sep=0}, phantom, from=2-1, to=2-2, start anchor=center, end anchor=center]
	\arrow[""{name=13, anchor=center, inner sep=0}, "{\Sh}", color={rgb,255:red,92;green,92;blue,214}, hook, from=2-2, to=1-2]
	\arrow[""{name=13p, anchor=center, inner sep=0}, phantom, from=2-2, to=1-2, start anchor=center, end anchor=center]
	\arrow[""{name=14, anchor=center, inner sep=0}, "{}", color={rgb,255:red,214;green,92;blue,92}, curve={height=-12pt}, from=2-2, to=2-1]
	\arrow[""{name=14p, anchor=center, inner sep=0}, phantom, from=2-2, to=2-1, start anchor=center, end anchor=center, curve={height=-12pt}]
	\arrow[""{name=15, anchor=center, inner sep=0}, "\patch"{description}, color={rgb,255:red,92;green,214;blue,92}, curve={height=12pt}, from=2-2, to=2-1]
	\arrow[""{name=15p, anchor=center, inner sep=0}, phantom, from=2-2, to=2-1, start anchor=center, end anchor=center, curve={height=12pt}]
	\arrow[""{name=16, anchor=center, inner sep=0}, "forget", color={rgb,255:red,92;green,92;blue,214}, from=2-2, to=2-3]
	\arrow[""{name=16p, anchor=center, inner sep=0}, phantom, from=2-2, to=2-3, start anchor=center, end anchor=center]
	\arrow[""{name=17, anchor=center, inner sep=0}, "{\Sh}", color={rgb,255:red,92;green,92;blue,214}, from=2-3, to=1-3]
	\arrow[""{name=17p, anchor=center, inner sep=0}, phantom, from=2-3, to=1-3, start anchor=center, end anchor=center]
	\arrow[""{name=18, anchor=center, inner sep=0}, "{\Ind}"{description}, color={rgb,255:red,214;green,92;blue,92}, curve={height=-12pt}, from=2-3, to=2-2]
	\arrow[""{name=18p, anchor=center, inner sep=0}, phantom, from=2-3, to=2-2, start anchor=center, end anchor=center, curve={height=-12pt}]
	\arrow[""{name=19, anchor=center, inner sep=0}, "{forget}"', color={rgb,255:red,214;green,92;blue,92}, curve={height=12pt}, from=2-3, to=2-4]
	\arrow[""{name=20, anchor=center, inner sep=0}, "{\Sh(-,fin)^{\omega}}", color={rgb,255:red,92;green,92;blue,214}, hook, from=2-4, to=1-4]
	\arrow[""{name=21, anchor=center, inner sep=0}, "{\Ind}"{description}, color={rgb,255:red,92;green,92;blue,214}, from=2-4, to=2-3]
	\arrow["\dashv"{anchor=center, rotate=-90}, draw=none, from=0p, to=2p]
	\arrow["\dashv"{anchor=center, rotate=-90}, draw=none, from=4p, to=6p]
	\arrow["\dashv"{anchor=center, rotate=-90}, draw=none, from=3p, to=0p]
	\arrow["\dashv"{anchor=center, rotate=-90}, draw=none, from=9p, to=7p]
	\arrow["\dashv"{anchor=center, rotate=-90}, draw=none, from=12p, to=14p]
	\arrow["\dashv"{anchor=center, rotate=0}, draw=none, from=11p, to=1p]
	\arrow["\dashv"{anchor=center, rotate=-90}, draw=none, from=15p, to=12p]
	\arrow["\dashv"{anchor=center, rotate=-90}, draw=none, from=16p, to=18p]
	\arrow["\dashv"{anchor=center}, draw=none, from=13p, to=5p]
	\arrow["\dashv"{anchor=center, rotate=-0}, draw=none, from=17p, to=8p]
	\arrow["\dashv"{anchor=center}, draw=none, from=20, to=10]
	\arrow["\dashv"{anchor=center, rotate=-90}, draw=none, from=21, to=19]
\end{tikzcd}\]

A few comments. 
\begin{itemize}
	\item The functors $\Sh$ and $\Sh(-,fin)$ in this diagram refer to sheaves with values in spectra.
    \item The category $\CAlg(\PR^L_{\dual})$ refers to presentably symmetric monoidal stable $\infty$-categories, whose underlying category is dualizable, with compact $1$ and such that $\otimes$ preserves compact morphisms in both variables.
    \item The category $\mathrm{Rig}$ refers to \emph{rigid} presentably symmetric monoidal categories. For a reference, see \cite{krause_nikolaus_puetzstueck} and \cite{ramzi2024locallyrigidinftycategories}.
    \item All blue arrows in the diagram are left adjoints, and all squares composed of them commute. The red arrows are all right adjoints to the blue ones, and the green arrows are all left adjoints to the blue ones.
    \item  The central columns are given as algebras and continuous algebras, respectively for the $\Ind$-monad.
    \item The images of the right most columns under $\Ind$ give the case of coherent spaces and compactly generated stable $\infty$-categories. These can be understood as \emph{free} continuous algebras, which generate the rest under retracts.
    \item The relationship of $\mathrm{CH}$ and $\mathrm{Rig}$ to the rest of the picture will not be further discussed in this paper, with the exception of the patch-topology functor, but is highly interesting.
\end{itemize}

\begin{remark}
On a more philosophical perspective on why one could expect Theorem \ref{maintheorem} to be true, we remark that $K$-theory can be considered as a form of \emph{universal measure} - In fact, there are several close analogies between measure theory and the $K$-theory of categories of sheaves over certain classes of locales, as investigated to some degree in the author's previous articles \cite[7.2]{lehner2025algebraicktheorycoherentspaces} and \cite[Remark 1.20]{lehner2025measuretheorylocales}, motivated by the observation that both areas of math are morally speaking just formalizations of the inclusion-exclusion principle together with the principle of exhaustion from below. We can define a \emph{measure} on a locale $L$ to be a continuous valuation $\mu : \mathcal{O}(L) \rightarrow [0, \infty]$. If $L$ corresponds to a locally compact Hausdorff space $X$, then \emph{locally finite} measures on $L$ agree with the classical notion of \emph{Radon measure} on $X$, see \cite[Theorem 9.4 and Section 12.2]{lehner2025measuretheorylocales}. It is in fact true that locally finite measures on a stably locally compact space $X$ extend uniquely to locally finite measures on $X^{\patch}$, as proven in \cite[Theorem 8.3]{keimel_lawson}. Conversely, any locally finite measure on $X^{\patch}$ produces a locally finite measure on $X$ via pushforward along the perfect map $X^{\patch} \rightarrow X$, resulting in an isomorphism
$$ \mathrm{Meas}_{\mathrm{lf}}(X) \cong \mathrm{Meas}_{\mathrm{lf}}(X^{\patch}),$$
natural in partially defined perfect maps of stably locally compact spaces with open support. Analogously, one could expect $K$-theory to behave in the same way. This was one of the motivations for the author to investigate the formula given in Theorem \ref{maintheorem}.
\end{remark}

\subsection{The proof strategy}

The proof of Theorem \ref{maintheorem} rests on a meta theorem which is a generalization of an argument due to Clausen in the case of compact Hausdorff spaces, see \cite[Theorem 3.6.11]{krause_nikolaus_puetzstueck}.

\begin{theorem}[See Theorem  \ref{properandcofilteredexcision}] Let $\mathcal{D}$ be a compactly assembled presentable $\infty$-category, and let $$F : \mathrm{SC}^{\op} \rightarrow \mathcal{D}$$ be a contravariant functor on the category of stably compact spaces such that:
\begin{enumerate}
    \item $F(\emptyset) = 1.$
    \item \emph{Perfect descent:} Whenever $K, L \subset X$ are two perfect embeddings, then \[\begin{tikzcd}
	{F(K \cup L)} & {F(K)} \\
	{F(L)} & {F(K \cap L)}
	\arrow[from=1-1, to=1-2]
	\arrow[from=1-1, to=2-1]
	\arrow["\lrcorner"{anchor=center, pos=0.125}, draw=none, from=1-1, to=2-2]
	\arrow[from=1-2, to=2-2]
	\arrow[from=2-1, to=2-2]
\end{tikzcd}\]
is a pullback.
\item \emph{Cofiltered descent:} Whenever $X_i, i \in I,$ is a cofiltered system in $\mathrm{SC},$ then
$$F( \lim_{i \in I} X_i ) \simeq \colim_{i \in I} F(X_i).$$
\end{enumerate}
Then there is a natural equivalence of functors
$$F \simeq \Gamma( (-)^{\patch}; F(\mathrm{pt}) ).$$
\end{theorem}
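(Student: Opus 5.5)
We follow Clausen's argument for compact Hausdorff spaces and insert a reduction from stably compact spaces to their patch spaces.

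\textbf{Step 1: construct the comparison map.} Both sides are contravariant functors on $\mathrm{SC}$. On a compact Hausdorff space $Y$, global sections $\Gamma(Y; F(\pt))$ is the value of the constant sheaf. Since $\mathcal{D}$ is compactly assembled, $\Gamma(-;F(\pt))$ is itself a functor on compact Hausdorff spaces that satisfies the three axioms. Closed covers and the empty set are clear. Cofiltered limits hold by the continuity of sheaf cohomology for compactly assembled coefficients, which is where compact assembly is used.

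We build the comparison map in two stages.
\begin{itemize}
\item For a finite discrete space $S$, perfect descent and $F(\emptyset)=1$ give $F(S)\simeq \prod_S F(\pt)$.
\item Every compact Hausdorff space is a cofiltered limit of finite discrete spaces, but only in pro-objects. So we instead use profinite spaces $T=\lim T_i$, for which cofiltered descent gives $F(T)\simeq \colim F(T_i)\simeq \Gamma(T;F(\pt))$.
\end{itemize}
For a general compact Hausdorff $Y$, choose a hypercover by profinite sets $T_\bullet\to Y$, for instance the one generated by the Stone--Čech construction. We then need descent of $F$ along such hypercovers. Clausen obtains this by writing $Y$ as a cofiltered limit of finite CW-type pieces glued from closed subsets, that is, by reducing to finite closed covers plus cofiltered limits.

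\textbf{Step 2: work with the left Kan extension from finite data.} A cleaner route is to show that both functors, restricted to $\mathrm{CH}$, are determined by their values on finite discrete spaces.
\begin{itemize}
\item Any compact Hausdorff space is a cofiltered limit of compact metrizable ones.
\item A compact metrizable space is a quotient of the Cantor set by a closed equivalence relation.
\item Perfect descent, applied repeatedly along finite closed covers, lets us compute $F$ on nerves of covers.
\end{itemize}
Together these give an equivalence $F|_{\mathrm{CH}}\simeq \Gamma(-;F(\pt))$ on compact Hausdorff spaces.

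\textbf{Step 3: pass from $X$ to $X^{\patch}$.} This is the genuinely new part. The perfect map $X^{\patch}\to X$ induces $F(X)\to F(X^{\patch})$, and we must show it is an equivalence.
\begin{itemize}
\item Use the paper's fact that stably compact spaces are retracts, or cofiltered limits, of finite stably compact spaces, namely finite posets with the Alexandrov topology, along perfect maps. Cofiltered descent then reduces the claim to finite $T_0$ spaces $P$.
\item For finite $P$, the closed subsets $\overline{\{p\}}$ are compact and perfectly embedded. Induct on $|P|$: take a maximal open point $p$. Then $P=\overline{\{p\}}\cup (P\setminus\{p\})$ is a cover by perfect embeddings, though not a disjoint one.
\item It is simpler to use that the Sierpiński space satisfies $F(\mathbf S)\simeq F(\pt)^2$. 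This holds because $\mathbf S = \overline{\{1\}}\cup\{0\}$ as a union of perfect (patch-closed) subsets with empty intersection, the point $\{0\}$ being compact open and hence patch-clopen.
\item Generalizing, the decomposition of $P$ into its points, each locally closed and patch-clopen, gives $F(P)\simeq\prod_P F(\pt)=F(P^{\disc})=F(P^{\patch})$. The key observation is that the descent squares only require embeddings that are perfect, and patch-closed subsets count as such.
\end{itemize}

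\textbf{Main obstacle.} I expect the hardest step to be the presentation of an arbitrary stably compact space as a cofiltered limit of finite spaces along perfect maps, compatibly with taking patch topologies. This should follow since the patch functor is a right adjoint and hence preserves limits. The finite case also needs care, because a point of $P$ is not itself a union of two perfect subspaces.
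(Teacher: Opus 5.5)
\textbf{The gap is in Step 3.} Your Steps 1--2 are an incomplete sketch of Clausen's theorem; the profinite hypercovers, Cantor-set quotients and nerves of covers are not a proof as written. You can instead cite Theorem \ref{closedandprofinite} for $F|_{\mathrm{CH}}$. Its hypotheses hold: closed embeddings between compact Hausdorff spaces are perfect, and the inclusion $\mathrm{CH}\subset\mathrm{SC}$ preserves cofiltered limits. Your skeleton is then sound: it suffices to show that $F(X)\to F(X^{\patch})$, induced by the natural counit, is an equivalence. Your finite-space computation is also correct. Every subset of a finite $T_0$ space is patch-closed, so $F(P)\simeq\prod_P F(\pt)\simeq F(P^{\disc})$, and cofiltered descent extends this to coherent spaces.

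The first bullet of Step 3 is false, however. Cofiltered limits of finite spaces along perfect maps are exactly the coherent spaces. The retraction of $X$ off the coherent space with frame $\Ind(\mathcal{O}(X))$ (Example \ref{coherentspaces}) does not live in $\mathrm{SC}$. Its section corresponds to the frame map $k$, whose right adjoint $y$ does not preserve directed joins. So the section is not perfect: $F$ cannot be evaluated on it, and $\patch$ is not functorial along it. No such presentation exists at all. A perfect retract of a coherent space is coherent, because perfect frame maps send compact opens to compact opens. But $I_\leq$ has only $\emptyset$ and $I_\leq$ as compact opens. $I_\leq$ is also the decisive case: $I_\leq^{\patch}=[0,1]$, so $F(I_\leq)\simeq F(\pt)$ encodes contractibility of the interval, which finite-space combinatorics cannot detect. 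Your ``main obstacle'' is therefore impossible, not merely hard, and the fact that $\patch$ preserves limits does not help.

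\textbf{What is missing} is a mechanism that works on each $X$ directly. Perfect subspaces of $X$ are exactly the patch-closed subsets. So by perfect descent, cofiltered descent (for filtered intersections) and $F(\emptyset)=1$, the assignment $K\mapsto F(K)$ is a $\mathcal{K}$-sheaf on $X^{\patch}$. It therefore gives a sheaf $F^X\in\Sh(X^{\patch},\mathcal{D})$ with global sections $F(X)$ and stalks $F(\pt)$, and the map $\alpha_X$ from the constant sheaf is a stalkwise equivalence. The argument then proceeds in three steps:
\begin{enumerate}
\item Since $\mathcal{D}$ is compactly assembled, stalks detect equivalences when $X^{\patch}$ has finite covering dimension.
\item Cofiltered descent plus homotopy invariance in the cube directions then gives that $\alpha$ is an equivalence for every directed Hilbert cube $I_\leq^S$.
\item Because $\alpha$ restricts along perfect embeddings (Lemma \ref{inheritanceembedding}), the Urysohn embedding $X\hookrightarrow I_\leq^S$ (Theorem \ref{urysohnembedding}) finishes the proof.
\end{enumerate}
The generator of $\mathrm{SC}$ that matters is $I_\leq$, not finite posets. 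The sheaf $F^X$ is what lets you pass from the cube to its arbitrary perfect subspaces.
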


The proof works formally almost the same as the one given by Clausen, along with one crucial observation: The set of perfect embeddings $K \subset X$ is simply isomorphic to the set of closed subsets $C \subset X^{\patch}$. Hence for each given stably compact space $X$, the restriction of $F$ to the set of perfect embeddings of $X$ produces a $K$-sheaf on $X^{\patch}$. The corresponding sheaf $F^X$ under the equivalence of $K$-sheaves with sheaves (Theorem \ref{sheavesandksheaves}) satisfies that its compactly supported sections are computed as $F^X(X) = F(X)$. Now one needs to show that this sheaf is locally constant. We verify this in detail in Section \ref{descent}.

It is not too hard to verify that $K$-theory, or any finitary localizing invariant for that matter, satisfies (1) and (3). (See Corollary \ref{cofiltereddescent}.) However, showing perfect descent for finitary localizing invariants needs careful analysis. We reduce it to two different descent conditions: \emph{Closed} and \emph{Saturated compact} descent, and then use an inductive argument to extend it to all perfect embeddings in Section \ref{descentlocalizinginvariants}. This rests on a concrete description of the closed sets of the patch topology as generated under intersections by pairwise unions of closed and saturated compact sets, see Section \ref{patchtopology}.

\begin{remark}
As the reader will observe, the proof of Theorem \ref{maintheorem} needs little in the sense of actual $K$-theory computations. Most of the difficulty relies in understanding the correct ``point-set topological'' features of stably locally compact spaces and their $\infty$-categories of sheaves.
\end{remark}

\begin{remark}
We originally hoped that the proof of Theorem \ref{maintheorem} could be reduced to the following claim:
\begin{conjecture} \label{ktheoryrightkan}
The universal finitary localizing invariant $\mathcal{U} :  \CAlg(\PR^L_{\dual}) \rightarrow \CAlg(\mathrm{Mot})$ is right Kan extended from its restriction to $ \CAlg(\PR^L_{\st,\omega}).$
\end{conjecture}
The analogous statement without reference to symmetric monoidality is true by Efimov \cite[Theorem 4.16]{efimov2025ktheorylocalizinginvariantslarge}. If Conjecture \ref{ktheoryrightkan} were in fact true, one could use the adjointability of the square of adjoints 
\[\begin{tikzcd}
	{(\mathrm{CohSpc})^{\op}} & {\CAlg(\PR^L_{\st,\omega})} \\
	{(\mathrm{SC})^{\op}} & {\CAlg(\PR^L_{\dual})}
	\arrow[""{name=0, anchor=center, inner sep=0}, "{\Sh}", from=1-1, to=1-2]
	\arrow[""{name=1, anchor=center, inner sep=0}, hook', from=1-1, to=2-1]
	\arrow[""{name=2, anchor=center, inner sep=0}, "{\mathrm{Sm}^{con}}", color={rgb,255:red,92;green,92;blue,214}, curve={height=-12pt}, from=1-2, to=1-1]
	\arrow[""{name=3, anchor=center, inner sep=0}, hook', from=1-2, to=2-2]
	\arrow[""{name=4, anchor=center, inner sep=0}, color={rgb,255:red,92;green,92;blue,214}, curve={height=12pt}, from=2-1, to=1-1]
	\arrow[""{name=5, anchor=center, inner sep=0}, "{\Sh}", from=2-1, to=2-2]
	\arrow[""{name=6, anchor=center, inner sep=0}, color={rgb,255:red,92;green,92;blue,214}, curve={height=12pt}, from=2-2, to=1-2]
	\arrow[""{name=7, anchor=center, inner sep=0}, "{\mathrm{Sm}^{con}}", color={rgb,255:red,92;green,92;blue,214}, curve={height=-12pt}, from=2-2, to=2-1]
	\arrow["\dashv"{anchor=center, rotate=-90}, draw=none, from=0, to=2]
	\arrow["\dashv"{anchor=center}, draw=none, from=1, to=4]
	\arrow["\dashv"{anchor=center}, draw=none, from=3, to=6]
	\arrow["\dashv"{anchor=center, rotate=-90}, draw=none, from=5, to=7]
\end{tikzcd}\]
to show that $\mathcal{U}( \Sh(-))$ is right Kan extended along its restriction to $(\mathrm{CohSpc})^{\op}$, where $\mathrm{CohSpc}$ is the category of \emph{coherent spaces}, also called spectral spaces. From here, we have the existence of the adjointable square
\[\begin{tikzcd}
	{\mathrm{ProFin}} & {\mathrm{CohSpc}} \\
	{\mathrm{CH}} & {\mathrm{SC}}
	\arrow[""{name=0, anchor=center, inner sep=0}, hook, from=1-1, to=1-2]
	\arrow[hook', from=1-1, to=2-1]
	\arrow[""{name=1, anchor=center, inner sep=0}, "{\mathrm{\mathrm{const}}}", curve={height=-12pt}, from=1-2, to=1-1]
	\arrow[hook', from=1-2, to=2-2]
	\arrow[""{name=2, anchor=center, inner sep=0}, hook, from=2-1, to=2-2]
	\arrow[""{name=3, anchor=center, inner sep=0}, "{\patch}", curve={height=-12pt}, from=2-2, to=2-1]
	\arrow["\dashv"{anchor=center, rotate=-90}, draw=none, from=0, to=1]
	\arrow["\dashv"{anchor=center, rotate=-90}, draw=none, from=2, to=3]
\end{tikzcd}\]
which reduces the statement to the result that 
$$\mathcal{U}( \Sh( X ; \Sp )) \simeq H^\bullet( X^{\mathrm{const}}, \mathcal{U}( \Sp ) ), $$
for a coherent space $X$, as proven by the author in \cite{lehner2025algebraicktheorycoherentspaces}, together with a statement about whether the sheaf cohomology functor is the right Kan extension of its restriction along $(\mathrm{ProFin})^{\op} \rightarrow (\mathrm{CH})^{\op}$, a question that is dealt with using the newly developed setup of condensed cohomology, see  \cite{ScholzeCondensed}. As it stands, we are not sure whether Conjecture \ref{ktheoryrightkan} is correct, which is why we are left with the more granular proof strategy as presented in this article.
\end{remark}

\begin{remark}
In the case of $X$ being locally compact Hausdorff, Efimov \cite[Theorem 6.11]{efimov2025ktheorylocalizinginvariantslarge} proves a more general statement, namely that for any presheaf $\underline{\mathcal{C}} : \mathcal{O}(X)^{\op} \rightarrow \PR^L_{\dual}$ on $X$ with values in dualizable $\infty$-categories it holds that
$$\mathcal{U}( \Sh(X, \underline{\mathcal{C}} ) ) \simeq H^\bullet_c( X, \mathcal{U}(\underline{\mathcal{C}})^\# ).$$
It is plausible that Theorem \ref{maintheorem} holds in this greater generality as well for stably locally compact spaces, however we have not attempted this during the work on this article. Perhaps the main issue would be in understanding why $\mathcal{U}(\underline{\mathcal{C}})$ should lift to a presheaf on $X^{\patch}$, where $\mathcal{U}$ denotes the universal localizing invariant. This point is something to be addressed in future work.
\end{remark}

\subsection{Acknowledgements}
We want to thank Thomas Nikolaus, Maxime Ramzi, Thorger Geiß, Phil Pützstück, Dustin Clausen, Bingyu Zhang, Mathieu Anel and Bastiaan Cnossen for helpful comments and discussions. Furthermore, the author was funded by the Deutsche Forschungsgemeinschaft (DFG, German Research Foundation) – Project-ID 427320536–SFB 1442, as well as under Germany's Excellence Strategy EXC 2044/2 - 390685587, Mathematics Münster: Dynamics-Geometry-Structure.

\section{Preliminaries}

We will largely follow the conventions on $\infty$-category theory as laid out in \cite{luriehtt}, \cite{lurieha} and \cite{lurieSAG}. The symbol $[1]$ refers to the poset $\{0 \leq 1\}$. The symbol $\An$ denotes the $\infty$-category of \emph{anima} (equivalently \emph{spaces}, or $\infty$-groupoids). We denote the $\infty$-category of small $\infty$-categories by $\Cat_\infty$, and the $\infty$-category of large $\infty$-categories as $\widehat{\Cat_\infty}$. If $\mathcal{C}$ is an $\infty$-category, then $\mathcal{C}^{\op}$ refers to the $\infty$-category obtained by reversing the $1$-morphisms. The symbol $\PR^L$ refers to the $\infty$-category of presentable $\infty$-categories together with left adjoint functors as morphisms. This $\infty$-category is symmetric monoidal when equipped with the Lurie tensor product $ \mathcal{C} \otimes \mathcal{D} = \mathrm{Fun}^R( \mathcal{C}^{\op}, \mathcal{D})$, with unit $\An$ and which preserves colimits in both variables. The corresponding internal hom is given by $\mathrm{Fun}^L( \mathcal{C}, \mathcal{D})$. Commutative algebra objects in $(\PR^L,\otimes)$ are given by presentable symmetric monoidal $\infty$-categories $(\mathcal{C}, \otimes)$ such that the given tensor product preserves colimits in both variables. A functor $F : \mathcal{C} \rightarrow \mathcal{D}$ in $\PR^L$ with fully faithful right adjoint is called a \emph{Bousfield localization}. We remark that $F$ is a Bousfield localization iff for every other presentable $\infty$-category $\mathcal{E}$ precomposition with $F$ induces a fully faithful functor
$$ \Fun^L( \mathcal{D}, \mathcal{E} ) \rightarrow \Fun^L( \mathcal{C}, \mathcal{E} ),$$
see \cite[Proposition 5.5.4.20]{luriehtt}. Using the tensor-hom adjunction, it is clear that Bousfield localizations are closed under tensor products in $\PR^L$.

We will use light $2$-categorical techniques occasionally. If $\mathcal{C}$ is an $(\infty,2)$-category, it makes sense to speak of an \emph{internal adjunction} between objects $c,d$,
\[\begin{tikzcd}
	c & d
	\arrow[""{name=0, anchor=center, inner sep=0}, "g"', curve={height=6pt}, from=1-1, to=1-2]
	\arrow[""{name=1, anchor=center, inner sep=0}, "f"', curve={height=6pt}, from=1-2, to=1-1]
	\arrow["\dashv"{anchor=center, rotate=-90}, draw=none, from=1, to=0]
\end{tikzcd}\]
defined analogously to how one defines an adjunction between $\infty$-categories, see e.g.\ \cite[Ch.~12]{gaitsgory2019study}. An internal left adjoint $f$ is called an \emph{internal embedding} if the unit $ id_d \Rightarrow g f $ is an equivalence. It is called an \emph{internal quotient} if the counit $ f g \Rightarrow id_c $ is an equivalence.  Note that these notions only depend on the underlying homotopy $2$-category of $\mathcal{C}$. The following lemma will be useful on occasion.

\begin{lemma}
If $F : \mathcal{C} \rightarrow \mathcal{D}$ is a $2$-functor between $(\infty,2)$-categories it preserves internal adjunctions, internal embeddings and internal quotients.
\end{lemma}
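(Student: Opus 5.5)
The plan is to reduce everything to the fact that a $2$-functor is compatible with composition of $1$-morphisms and of $2$-morphisms (both vertical and horizontal whiskering) up to coherent equivalence, and preserves identities. Since the notions of internal adjunction, embedding and quotient only depend on the underlying homotopy $2$-category, as remarked above, I would first pass to homotopy $2$-categories: $F$ induces a (pseudo)functor $hF : h_2\mathcal{C} \rightarrow h_2\mathcal{D}$ of ordinary bicategories. It then suffices to prove the three claims for pseudofunctors between bicategories.

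For internal adjunctions, take the adjunction data $f \dashv g$ with unit $\eta : \mathrm{id}_d \Rightarrow gf$ and counit $\varepsilon : fg \Rightarrow \mathrm{id}_c$ satisfying the two triangle identities $(\varepsilon f)\circ(f\eta) = \mathrm{id}_f$ and $(g\varepsilon)\circ(\eta g) = \mathrm{id}_g$. Define the candidate unit of $F(f) \dashv F(g)$ as the composite $\mathrm{id}_{Fd} \simeq F(\mathrm{id}_d) \xRightarrow{F(\eta)} F(gf) \simeq F(g)F(f)$, using the compositor and unitor $2$-isomorphisms of $F$, and the counit analogously. Then apply $F$ to the triangle identities. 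Because $F$ is functorial on $2$-morphisms and compatible with whiskering up to the coherence isomorphisms, the image of each triangle identity becomes the corresponding triangle identity for the new unit and counit, once the compositors are inserted. The coherence axioms of a pseudofunctor (associativity and unit coherence) are what make the inserted compositors cancel. This bookkeeping is the only real obstacle. I would handle it either by appealing to the strictification theorem for bicategories, so that one may assume $hF$ is a strict $2$-functor and the identities are literally preserved, or, more invariantly, by using the characterization of adjunctions via the walking adjunction $\mathrm{Adj}$. In the latter approach an adjunction in $\mathcal{C}$ is a $2$-functor $\mathrm{Adj} \rightarrow \mathcal{C}$ (Riehl–Verity, or \cite[Ch.~12]{gaitsgory2019study}), and postcomposing with $F$ gives a $2$-functor $\mathrm{Adj} \rightarrow \mathcal{D}$, hence an adjunction whose left and right adjoints are $F(f)$ and $F(g)$.

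For embeddings and quotients, I would observe that in the construction above the unit of $F(f)\dashv F(g)$ is, up to composing with invertible coherence $2$-cells, equal to $F(\eta)$. A $2$-functor sends invertible $2$-morphisms to invertible ones, since it is functorial on the hom-categories. So if $\eta$ is an equivalence, so is the new unit, and $F(f)$ is an internal embedding. The same argument applied to $\varepsilon$ shows that $F$ preserves internal quotients.
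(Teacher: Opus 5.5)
Your proposal is correct, and your ``more invariant'' alternative is essentially the paper's proof. The paper's entire argument is that internal adjunctions, embeddings and quotients are each equivalent to $2$-functors out of a basic $2$-categorical shape: the walking adjunction $\mathrm{Adj}$, and its variants in which the unit, respectively the counit, is invertible. Since a composite of $2$-functors is a $2$-functor, postcomposing with $F$ handles all three cases at once.

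Your main route differs. You pass to the homotopy bicategory and build the unit and counit of $F(f)\dashv F(g)$ from $F(\eta)$, $F(\varepsilon)$ and the compositor and unitor cells. You then verify the triangle identities using pseudofunctor coherence, or strictification after replacing the domain by a biequivalent $2$-category. Embeddings and quotients follow because $F$ is functorial on hom-categories and so preserves invertible $2$-cells.

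This is more elementary. It avoids the $(\infty,2)$-categorical theorem that adjunctions are corepresented by $\mathrm{Adj}$, as well as any corepresentability statement for embeddings and quotients. The price is the coherence bookkeeping and reliance on the remark that these notions are detected in the homotopy $2$-category. The paper's version, by contrast, is a one-liner once those representability results are granted.

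One small point is worth making explicit. Whether the unit is invertible does not depend on which adjunction data you choose for $F(f)\dashv F(g)$, since two units for the same left adjoint differ by an automorphism of the right adjoint. So checking invertibility for your particular construction is enough.
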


The proof of this lemma reduces to the observation that internal adjunctions, internal embeddings and internal quotients are characterized as being equivalent to $2$-functors out of basic $2$-categorical diagram shapes, and composition of $2$-functors gives $2$-functors. We will occasionally use the following standard lemma.

\begin{lemma} \label{existenceofadjoint} Suppose $\mathcal{C}$ is an $(\infty,2)$-category and the diagram
\[\begin{tikzcd}
	{c_1} & {d_1} \\
	{c_0} & {d_0}
	\arrow[""{name=0, anchor=center, inner sep=0}, "{f_1}"', from=1-1, to=1-2]
	\arrow[""{name=1, anchor=center, inner sep=0}, "{i_c^L}"', curve={height=12pt}, two heads, from=1-1, to=2-1]
	\arrow[""{name=2, anchor=center, inner sep=0}, "{f_1^L}"', curve={height=12pt}, from=1-2, to=1-1]
	\arrow[""{name=3, anchor=center, inner sep=0}, "{i_d^L}"', curve={height=12pt}, two heads, from=1-2, to=2-2]
	\arrow[""{name=4, anchor=center, inner sep=0}, "{i_c}"', hook, from=2-1, to=1-1]
	\arrow["{f_0}"', from=2-1, to=2-2]
	\arrow[""{name=5, anchor=center, inner sep=0}, "{i_d}"', hook, from=2-2, to=1-2]
	\arrow["\dashv"{anchor=center}, draw=none, from=1, to=4]
	\arrow["\dashv"{anchor=center, rotate=-90}, draw=none, from=2, to=0]
	\arrow["\dashv"{anchor=center}, draw=none, from=3, to=5]
\end{tikzcd}\]
is a diagram in $\mathcal{C}$ with $i^L_c$ and $i^L_d$ being internal quotients and the square involving $i_c$, $i_d$, $f_0$ and $f_1$ being commutative. Then $f_0$ has a left adjoint given by $ i_c^L f_1^L i_d : d_0 \rightarrow c_0$.
\end{lemma}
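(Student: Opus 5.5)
To show that $f_0$ has left adjoint $g := i_c^L f_1^L i_d$, I will exhibit a unit $\eta : \mathrm{id}_{d_0} \Rightarrow f_0 g$ and a counit $\varepsilon : g f_0 \Rightarrow \mathrm{id}_{c_0}$ and check the triangle identities. All of this can be done in the homotopy $2$-category, because internal adjunctions only depend on it. I read the diagram as follows: $i_c^L \dashv i_c$ and $i_d^L \dashv i_d$ are adjunctions with $i_c, i_d$ fully faithful (the counits $i_c^L i_c \simeq \mathrm{id}$ and $i_d^L i_d \simeq \mathrm{id}$ are equivalences, which is the internal quotient condition), together with $f_1^L \dashv f_1$ and an equivalence $\alpha : i_d f_0 \simeq f_1 i_c$.

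The first step is to identify $f_0$ with a composite built from the right adjoints. Using the counit equivalence of $i_d^L \dashv i_d$ and then $\alpha$, we get
\[ f_0 \simeq i_d^L i_d f_0 \simeq i_d^L f_1 i_c . \]
This factors $f_0$ as $i_c$, then $f_1$, then $i_d^L$. Of these, $i_c$ and $f_1$ are right adjoints, but $i_d^L$ is a left adjoint, so one cannot simply compose adjoints. Instead I will compute mapping data directly.

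For objects $x : e \to d_0$ and $y : e \to c_0$ (working representably in $\mathcal{C}(e,-)$), I would produce a natural chain of equivalences
\[ \mathrm{Hom}(i_c^L f_1^L i_d x, y) \simeq \mathrm{Hom}(f_1^L i_d x, i_c y) \simeq \mathrm{Hom}(i_d x, f_1 i_c y) \simeq \mathrm{Hom}(i_d x, i_d f_0 y) \simeq \mathrm{Hom}(x, f_0 y). \]
The steps are justified in order:
\begin{itemize}
\item the first uses $i_c^L \dashv i_c$;
\item the second uses $f_1^L \dashv f_1$;
\item the third uses $\alpha$;
\item the last uses full faithfulness of $i_d$, which is equivalent to the counit $i_d^L i_d \Rightarrow \mathrm{id}$ being invertible.
\end{itemize}
This chain is natural in $x$ and $y$ and compatible with precomposition by $1$-morphisms $e' \to e$. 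By the $2$-categorical Yoneda lemma it therefore yields $g \dashv f_0$.

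Alternatively, one can write the unit and counit explicitly. The unit $\mathrm{id}_{d_0} \Rightarrow f_0 g$ is the composite
\[ \mathrm{id}_{d_0} \simeq i_d^L i_d \Rightarrow i_d^L f_1 f_1^L i_d \Rightarrow i_d^L f_1 i_c i_c^L f_1^L i_d \simeq i_d^L i_d f_0 g \simeq f_0 g , \]
where the two non-invertible arrows are the units of $f_1^L \dashv f_1$ and $i_c^L \dashv i_c$. The counit $g f_0 \Rightarrow \mathrm{id}_{c_0}$ is the composite
\[ i_c^L f_1^L i_d f_0 \simeq i_c^L f_1^L f_1 i_c \Rightarrow i_c^L i_c \simeq \mathrm{id}_{c_0} , \]
using the counit of $f_1^L \dashv f_1$.

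The main obstacle is checking the triangle identities, or equivalently the naturality of the hom-chain, coherently. The difficulty is that the invertible $2$-cells (the counit equivalences and $\alpha$) must cancel correctly when the adjunction data are combined. In the representable formulation this is automatic, since each step of the chain is a natural equivalence, so I would present the proof that way and only mention the explicit unit and counit.
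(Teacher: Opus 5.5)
Your proof is correct and takes the paper's route: the paper reduces to the homotopy $2$-category and cites the standard $2$-categorical fact, and your representable chain of hom-equivalences, natural in the test object $e$, is a valid proof of exactly that fact. Your argument only uses that $i_d^L$ is an internal quotient (i.e.\ that $i_d$ is fully faithful), so the hypothesis on $i_c^L$ is never needed.
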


The proof follows directly from the same statement in the situation of an ordinary $2$-category, which is a standard fact. Of course a dual version, demanding that $i^L_c$ and $i^L_d$ are internal embeddings, holds true as well.

\subsection{Compactly assembled $\infty$-categories}

The notion of a compactly assembled $\infty$-category is central to this article. We note that if specialized to 1-category theory, a compactly assembled category corresponds to what is also known as a \emph{continuous} category.

\begin{definition}[See also \cite{lurieSAG} Section 21.1.2, \cite{efimov2025ktheorylocalizinginvariantslarge} Definition 1.9 and 1.10] \label{compactlyassembled}
Let $\mathcal{C}$ be an accessible $\infty$-category. We call $\mathcal{C}$ \emph{compactly assembled} if the canonical inclusion $y : \mathcal{C} \rightarrow \Ind(\mathcal{C})$ admits two further left adjoints $\hat{y} \dashv k \dashv y$,
\[\begin{tikzcd}
	{\mathcal{C}} & {\Ind(\mathcal{C}).}
	\arrow[""{name=0, anchor=center, inner sep=0}, "y"', curve={height=12pt}, hook', from=1-1, to=1-2]
	\arrow[""{name=1, anchor=center, inner sep=0}, "{\hat{y}}", curve={height=-12pt}, hook, from=1-1, to=1-2]
	\arrow[""{name=2, anchor=center, inner sep=0}, "k"{description}, from=1-2, to=1-1]
	\arrow["\dashv"{anchor=center, rotate=-90}, draw=none, from=1, to=2]
	\arrow["\dashv"{anchor=center, rotate=-90}, draw=none, from=2, to=0]
\end{tikzcd}\]
A functor $F : \mathcal{C} \rightarrow \mathcal{D}$ will be called \emph{strongly continuous} if the commuting square
\[\begin{tikzcd}
	{\mathcal{C}} & {\Ind(\mathcal{C})} \\
	{\mathcal{D}} & {\Ind(\mathcal{D})}
	\arrow["y", from=1-1, to=1-2]
	\arrow["F"', from=1-1, to=2-1]
	\arrow["{\Ind(F)}", from=1-2, to=2-2]
	\arrow["y"', from=2-1, to=2-2]
\end{tikzcd}\]
is twice left-adjointable, i.e.\ the resulting natural transformations $ k \Ind(F) \rightarrow  F k$ and $\hat{y} F \rightarrow \Ind(F) \hat{y}$ are equivalences.

The resulting subcategory of $\widehat{\Cat_\infty}$ of large $\infty$-categories given by compactly assembled $\infty$-categories and strongly continuous functors will be denoted by $\Cat_{\ca}$. Further, we denote by $\PR^L_{\ca}$ the $\infty$-category of presentable and compactly assembled $\infty$-categories, and strongly continuous left adjoint functors between them, as a (non-full) subcategory of $\widehat{\Cat_\infty}$.
\end{definition}

Note that $y : \mathcal{C} \rightarrow \Ind(\mathcal{C})$ is fully faithful, and therefore there is always a natural transformation $$\hat{y} \rightarrow y.$$

\begin{definition}
Let $\mathcal{C}$ be a compactly assembled $\infty$-category. A map $f : c \rightarrow d$ in $\mathcal{C}$ is called \emph{compact} if $y(f) : y(c) \rightarrow y(d)$ factors through the natural map $\hat{y}(d) \rightarrow y(d)$.
\end{definition}

\begin{remark} Left adjointability of the square in Definition \ref{compactlyassembled} for a functor $F : \mathcal{C} \rightarrow \mathcal{D}$ simply expresses that $F$ preserves filtered colimits. If $F$ itself has a further right adjoint $F^R$, then the statement that $F$ is strongly continuous is equivalent to $F^R$ preserving filtered colimits.
\end{remark}

\begin{example}
If $\mathcal{C}$ is an $\infty$-category with filtered colimits, denote by $\mathcal{C}^\omega$ the full subcategory of \emph{compact objects}, i.e. those $c \in \mathcal{C}$ such that $\mathrm{Map}_{\mathcal{C}}(c, - )$ preserves filtered colimits. Then $\mathcal{C}$ is called \emph{compactly generated}, if $\mathcal{C} \simeq \Ind( \mathcal{C}^\omega )$. If this is the case, $\mathcal{C}$ is in particular compactly assembled with $\hat{y}$ obtained by applying $\Ind$ to the inclusion $\mathcal{C}^\omega \hookrightarrow \mathcal{C}$.
\end{example}

\begin{theorem}[\cite{lurieSAG}, Section 21.1.2] \label{closureunderretracts}
An $\infty$-category $\mathcal{C}$ admitting filtered colimits is compactly assembled iff it is a retract via filtered colimit preserving functors of a compactly generated $\infty$-category. In particular, if $\mathcal{C}$ is a retract of a compactly assembled $\infty$-category $\mathcal{D}$ via filtered colimit preserving functors, then $\mathcal{C}$ is also compactly assembled.
\end{theorem}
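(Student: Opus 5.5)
We prove the two implications separately; the second sentence will follow from the first.

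\emph{Compactly generated $\Rightarrow$ retracts.} Suppose $\mathcal{C} \xrightarrow{i} \Ind(\mathcal{A}) \xrightarrow{r} \mathcal{C}$ exhibits $\mathcal{C}$ as a retract via filtered colimit preserving functors, with $ri \simeq \mathrm{id}$. We propose
$$\hat{y} := \Ind(r)\circ \hat{y}_{\Ind(\mathcal{A})} \circ i : \mathcal{C} \rightarrow \Ind(\mathcal{C}),$$
where $\hat{y}_{\Ind(\mathcal{A})}$ is $\Ind$ of the inclusion $\mathcal{A} \hookrightarrow \Ind(\mathcal{A})$, and we check that $\hat{y} \dashv k$, where $k$ is the colimit functor. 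Given $c \in \mathcal{C}$ and a formal filtered colimit $\text{``}\colim\text{''}\, d_j$ in $\Ind(\mathcal{C})$, write $i(c) = \colim a_\alpha$ with $a_\alpha \in \mathcal{A}$. Then
$$\Map(\hat{y}c, \text{``}\colim\text{''}\, d_j) = \lim_\alpha \colim_j \Map(r a_\alpha, d_j).$$
We want to compare this with $\Map(c, \colim_j d_j)$. This is exactly the argument of Lurie \cite[21.1.2]{lurieSAG}. The cleanest route is to use that $\mathcal{C}$ is a retract of $\Ind(\mathcal{A})$ in the $2$-category of $\infty$-categories with filtered colimits and filtered-colimit-preserving functors. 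By functoriality of $\Ind$, $\Ind(\mathcal{C})$ is then a retract of $\Ind\Ind(\mathcal{A})$, compatibly with $y$ and $k$.

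On $\Ind(\mathcal{A})$ the functor $k$ has the left adjoint $\Ind(\iota)$, where $\iota : \mathcal{A} \hookrightarrow \Ind(\mathcal{A})$. We then transport this left adjoint through the retraction. The key point is that $k$ is natural with respect to filtered colimit preserving functors, so $k_{\mathcal{C}} = r\, k_{\Ind\mathcal{A}}\, \Ind(i)$. Using $ri \simeq \mathrm{id}$ and the triangle identities, we verify that the unit and counit of $\hat{y}_{\Ind\mathcal{A}} \dashv k_{\Ind\mathcal{A}}$ descend to $\Ind(r)\,\hat{y}\, i \dashv k_{\mathcal{C}}$. This step, checking that a retract of an adjunction is an adjunction when only the right adjoints are known to be compatible, is the main obstacle. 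We will handle it by the explicit mapping-space computation above, identifying $\lim_\alpha \colim_j \Map(ra_\alpha, d_j)$ with $\Map(c, \colim d_j)$. To do so, we first map $\Map(c, \colim_j d_j)$ into $\Map(ic, \colim_j i d_j)$, and then use compactness of the $a_\alpha$ together with $r$ preserving filtered colimits.

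\emph{Compactly assembled $\Rightarrow$ retract.} Conversely, if $\mathcal{C}$ is compactly assembled, then $\hat{y} : \mathcal{C} \to \Ind(\mathcal{C})$ preserves filtered colimits, being a left adjoint, and $k\hat{y} \simeq \mathrm{id}$ since $\hat{y}$ is fully faithful. Hence $\mathcal{C}$ is a retract of $\Ind(\mathcal{C})$, which is compactly generated. There is a size issue: $\mathcal{C}$ is accessible, so we should restrict to $\Ind(\mathcal{C}^\kappa)$ for some $\kappa$. We handle this by noting that $\hat{y}$ lands in $\Ind(\mathcal{C}^\kappa)$ for $\kappa$ large.

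\emph{The second sentence.} If $\mathcal{C}$ is a retract of a compactly assembled $\mathcal{D}$, then by the converse direction $\mathcal{D}$ is a retract of a compactly generated category. Composing the two retractions exhibits $\mathcal{C}$ as a retract of a compactly generated category, and the first implication applies.
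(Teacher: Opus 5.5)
\paragraph{Gap in the first implication.} The argument fails at exactly the step you flag as the main obstacle. The functor $\Ind(r)\circ\hat{y}_{\Ind(\mathcal{A})}\circ i$ is in general \emph{not} left adjoint to $k$, so the identification of $\lim_\alpha\colim_j\Map(ra_\alpha,d_j)$ with $\Map(c,\colim_j d_j)$ cannot be proved. Your two constructions are: apply $k$ and use $ri\simeq\mathrm{id}$; or apply $i$, factor $a_\alpha\to\colim_j i(d_j)$ through some $i(d_j)$ by compactness, then apply $r$. They give natural maps $\Psi\colon\Map(c,k(-))\to\Map(\Ind(r)\hat{y}_{\Ind(\mathcal{A})}i(c),-)$ and $\Phi$ in the other direction with $\Phi\Psi\simeq\mathrm{id}$. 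However, $\Psi\Phi$ is merely an idempotent. Equivalently, the unit and counit you transport satisfy the triangle identity for $k$ but not the one for $\hat{y}$.

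Here is a counterexample. Take $\mathcal{C}=\mathrm{Ab}$, the compactly generated category $\mathrm{Ab}\times\mathrm{Ab}$, $i(c)=(c,c\otimes\mathbb{Z}[1/p])$ and $r(x,y)=x\oplus(y\otimes\mathbb{Z}/p^\infty)$. Both functors preserve filtered colimits, and $ri\simeq\mathrm{id}$ because $\mathbb{Z}[1/p]\otimes\mathbb{Z}/p^\infty=0$. Present $\mathbb{Z}[1/p]=\colim(\mathbb{Z}\xrightarrow{p}\mathbb{Z}\xrightarrow{p}\cdots)$. Then your candidate is $\Ind(r)\hat{y}_{\Ind(\mathcal{A})}i(\mathbb{Z})\simeq y(\mathbb{Z})\oplus Z$, where $Z=\text{``colim''}(\mathbb{Z}/p^\infty\xrightarrow{p}\mathbb{Z}/p^\infty\xrightarrow{p}\cdots)$. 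No composite of transition maps of $Z$ vanishes, so $\mathrm{id}_Z\neq 0$ in $\Ind(\mathrm{Ab})$. Hence the projection onto $Z$ is a nonzero element of $\Map(y(\mathbb{Z})\oplus Z,Z)$. On the other hand, $\Map(\mathbb{Z},k(Z))=\Map(\mathbb{Z},0)=0$. Of course the true $\hat{y}_{\mathrm{Ab}}(\mathbb{Z})$ is $y(\mathbb{Z})$. Your formula is correct only in special situations, e.g.\ when $r\dashv i$ with $i$ fully faithful. That is the special case the paper records right after the theorem, via Lemma \ref{existenceofadjoint}.

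\paragraph{How to repair it.} What your computation does prove is that $\Map_{\mathcal{C}}(c,k(-))$ is a natural retract of the corepresentable functor $\Map_{\Ind(\mathcal{C})}(\Ind(r)\hat{y}_{\Ind(\mathcal{A})}i(c),-)$. Since $\Ind(\mathcal{C})$ admits filtered colimits, it is idempotent complete. Therefore a retract of a corepresentable functor is again corepresentable, here by the splitting of the idempotent $\Psi\Phi$. Pointwise corepresentability then assembles into a left adjoint $\hat{y}\dashv k$. So $\hat{y}(c)$ is a retract of your candidate, not the candidate itself.

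With that repair the rest of your argument goes through. The converse works via $\mathcal{C}\xrightarrow{\hat{y}}\Ind(\mathcal{C})\xrightarrow{k}\mathcal{C}$, and the second sentence follows by composing retractions. For the size issue, you assert without argument that $\hat{y}$ lands in $\Ind(\mathcal{C}^\kappa)$. It is simpler to compose $\hat{y}$ with the restriction functor $\Ind(\mathcal{C})\to\Ind(\mathcal{C}^\kappa)$, the right adjoint to $\Ind$ of the inclusion. This functor is well defined because $\mathcal{C}^\kappa_{/c}$ is $\kappa$-filtered when $\mathcal{C}$ is $\kappa$-accessible. It preserves filtered colimits and is compatible with the colimit functors, so no claim about where $\hat{y}$ lands is needed.
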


Let us point out two special cases of Theorem \ref{closureunderretracts}.

\begin{itemize}
\item If we have an adjunction
\[\begin{tikzcd}
	{\mathcal{C}} & {\mathcal{D}}
	\arrow[""{name=0, anchor=center, inner sep=0}, "R"', curve={height=6pt}, hook, from=1-1, to=1-2]
	\arrow[""{name=1, anchor=center, inner sep=0}, "L"', curve={height=6pt}, two heads, from=1-2, to=1-1]
	\arrow["\dashv"{anchor=center, rotate=-90}, draw=none, from=1, to=0]
\end{tikzcd}\]
with $R$ being fully faithful, and both $L,R$ preserving filtered colimits, then $\hat{y}_\mathcal{C}$ is obtained as the composite $\Ind(L) \hat{y}_\mathcal{D} R$, as a direct application of Lemma \ref{existenceofadjoint}.
\item If we have an adjunction
\[\begin{tikzcd}
	{\mathcal{C}} & {\mathcal{D}}
	\arrow[""{name=0, anchor=center, inner sep=0}, "L", curve={height=-6pt}, hook', from=1-1, to=1-2]
	\arrow[""{name=1, anchor=center, inner sep=0}, "R", curve={height=-6pt}, two heads, from=1-2, to=1-1]
	\arrow["\dashv"{anchor=center, rotate=-90}, draw=none, from=0, to=1]
\end{tikzcd}\]
with $L$ being fully faithful, and both $L,R$ preserving filtered colimits, then the composite $\hat{y}_\mathcal{D} L$ factors through $\Ind(L)$, giving the functor $\hat{y}_\mathcal{C}$. Concretely, this means that for each $c \in \mathcal{C}$, the formal diagram $\hat{y}_\mathcal{D} L(c) = \text{``colim''}_{ i \in I} d_i$ has a cofinal system of objects $L(c_j), j \in J$ in the essential image of $L$, resulting in the diagram $\hat{y}_\mathcal{C} (c) = \text{``colim''}_{ j \in J} c_j$. This is typically inexplicit to compute in practice, but we remark that a map $f : c \rightarrow c'$ in $\mathcal{C}$ is compact iff $L(f)$ is compact in $\mathcal{D}$, see \cite[Lemma 3.26]{aoki2025schwartzcoidempotentscontinuousspectrum}.
\end{itemize}

Compactly assembled presentable $\infty$-categories are furthermore closed under tensor products.

\begin{theorem}[\cite{krause_nikolaus_puetzstueck} Proposition 2.12.2.]
The Lurie tensor product $\otimes$ restricts to a symmetric monoidal structure on $\PR^L_{\ca}$, which is closed.
\end{theorem}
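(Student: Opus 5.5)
Two things have to be shown: that the tensor product of two objects of $\PR^L_{\ca}$ is again compactly assembled, with tensor products of strongly continuous functors again strongly continuous, and that the resulting symmetric monoidal structure is closed. The plan is to reduce the first point to the compactly generated case via Theorem \ref{closureunderretracts}. Compactly generated presentable categories are closed under $\otimes$, since $\Ind(\mathcal{A}) \otimes \Ind(\mathcal{B}) \simeq \Ind(\mathcal{A} \otimes \mathcal{B})$ for small $\mathcal{A},\mathcal{B}$ with finite colimits, so the compact objects are generated under finite colimits and retracts by the $a \otimes b$.

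For the first point, let $\mathcal{C}, \mathcal{D} \in \PR^L_{\ca}$. I would realize each as a retract of a compactly generated category by maps in $\PR^L$ rather than merely filtered-colimit-preserving ones. The colimit functor $k : \Ind(\mathcal{C}) \to \mathcal{C}$ has the left adjoint $\hat{y}$, which preserves colimits, and $k\hat{y} \simeq \mathrm{id}$ because $\hat{y}$ is fully faithful. Moreover $\hat{y}$ preserves compact morphisms, and $k$ has the right adjoint $y$, which preserves filtered colimits, so $k$ is strongly continuous. Since $\mathcal{C}$ is only accessible, $\Ind(\mathcal{C})$ is not small-presentable, so I would instead use $\Ind(\mathcal{C}^\kappa)$ for a large enough regular cardinal $\kappa$, as in Lurie's proof. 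Tensoring the retract diagrams gives $\mathcal{C} \otimes \mathcal{D}$ as a retract of $\Ind(\mathcal{C}^\kappa) \otimes \Ind(\mathcal{D}^\kappa)$, which is compactly generated, via colimit-preserving functors. Theorem \ref{closureunderretracts} then shows that $\mathcal{C}\otimes\mathcal{D}$ is compactly assembled.

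Functoriality comes next. For strongly continuous $F : \mathcal{C} \to \mathcal{C}'$ we must show that the right adjoint of $F \otimes \mathrm{id}_\mathcal{D}$ preserves filtered colimits. I would use the criterion that a left adjoint between compactly assembled categories is strongly continuous iff it sends compact morphisms to compact morphisms. Compact morphisms in $\mathcal{C} \otimes \mathcal{D}$ are generated under composition and colimits by the images of the $f \otimes g$ with $f, g$ compact, using the retract description together with \cite[Lemma 3.26]{aoki2025schwartzcoidempotentscontinuousspectrum}. Since $F \otimes \mathrm{id}$ sends $f \otimes g$ to $F(f) \otimes g$, it preserves compact morphisms. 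The symmetric monoidal coherences are inherited, because $\PR^L_{\ca} \subset \PR^L$ is a subcategory containing the unit $\An$ and closed under $\otimes$ on both objects and morphisms.

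For closedness, I would take the internal hom to be $\Fun^L_{\mathrm{sc}}(\mathcal{D}, \mathcal{E})$, the strongly continuous functors. When $\mathcal{D}$ is compactly generated this is $\Fun(\mathcal{D}^\omega, \mathcal{E})$, which is compactly assembled because $\mathcal{E}$ is, being a functor category from a small category, and it satisfies the right universal property. For general $\mathcal{D}$, written as a retract of some $\Ind(\mathcal{A})$, I would exhibit the hom as a corresponding retract of $\Fun(\mathcal{A}, \mathcal{E})$. I expect this step to be the main obstacle, because one must check the adjunction $\Map(\mathcal{C}\otimes\mathcal{D},\mathcal{E}) \simeq \Map(\mathcal{C},\underline{\Hom}(\mathcal{D},\mathcal{E}))$ on strongly continuous maps, which needs the characterization of compact morphisms in the tensor product. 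Here I would rely directly on \cite[Proposition 2.12.2]{krause_nikolaus_puetzstueck}.
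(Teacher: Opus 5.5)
The paper gives no argument for this statement; it only cites \cite[Proposition 2.12.2]{krause_nikolaus_puetzstueck}. Your treatment of objects is fine, because Theorem \ref{closureunderretracts} only needs a retraction along filtered-colimit-preserving functors, and $\hat y\otimes\hat y$ and $k\otimes k$ lie in $\PR^L$. Two of your other claims are wrong, though.

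First, your aside that $k$ is strongly continuous is false. Its right adjoint, the restricted Yoneda embedding $\mathcal{C}\to\Ind(\mathcal{C}^\kappa)$, preserves only $\kappa$-filtered colimits. If $k$ were strongly continuous it would preserve compact objects, so $c\simeq k\hat y(c)$ would be a filtered colimit of compact objects. Every compactly assembled category would then be compactly generated.

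Second, in the functoriality step, saying that compact morphisms of $\mathcal{C}\otimes\mathcal{D}$ are ``generated under colimits'' by the $f\otimes g$ does not give preservation, since colimits of compact morphisms need not be compact. Use Lemma \ref{aokilemma} for the bifunctor $(c,d)\mapsto F(c)\otimes d$ instead. Then it suffices that $F(f)\otimes g$ is compact for compact $f,g$. This follows from \cite[Lemma 3.26]{aoki2025schwartzcoidempotentscontinuousspectrum} applied to the fully faithful, strongly continuous functor $\hat y\otimes\hat y$, because $\hat y(F(f))\otimes\hat y(g)$ factors through a tensor product of compact objects.

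The genuine gap is closedness. Your candidate $\Fun^L_{\mathrm{sc}}(\mathcal{D},\mathcal{E})$ cannot be the internal hom. For $\mathcal{D}=\An$ it is the essentially small, in general non-presentable, category $\mathcal{E}^\omega$ rather than $\mathcal{E}$. For compactly generated $\mathcal{D}$, strongly continuous functors correspond to $\Fun^{\mathrm{rex}}(\mathcal{D}^\omega,\mathcal{E}^\omega)$, not to $\Fun(\mathcal{D}^\omega,\mathcal{E})$. Strongly continuous functors $\mathcal{D}\to\mathcal{E}$ should appear as the points $\Map_{\PR^L_{\ca}}(\An,\underline{\mathrm{Hom}}(\mathcal{D},\mathcal{E}))$, i.e.\ as compact objects of the internal hom, not as its underlying category. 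Your retract plan for general $\mathcal{D}$ also breaks, because precomposition with $k$ does not preserve strong continuity, by the first point above.

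Switching to $\Fun^L(\mathcal{D},\mathcal{E})$ does not fix this for strongly continuous morphisms. Take $\mathcal{D}=\Fun(B\mathbb{Z},\An)$. The forgetful functor $\mathcal{D}\to\An$ corresponds to the free $\mathbb{Z}$-anima, which is a compact object of $\Fun^L(\mathcal{D},\An)\simeq\Fun(B\mathbb{Z},\An)$. But the forgetful functor is not strongly continuous, since its right adjoint $Y\mapsto Y^{\mathbb{Z}}$ does not preserve filtered colimits.

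So the right adjoint of $-\otimes\mathcal{D}$ on $\PR^L_{\ca}$ has no such naive description and has to be produced differently, e.g.\ abstractly via an adjoint functor theorem. That is exactly the content you, like the paper, take from \cite{krause_nikolaus_puetzstueck}. The explicit candidate and the retract plan should be dropped rather than offered as the argument.
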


Lastly, the following lemma will be useful for detecting strongly continuous functors.

\begin{lemma}[\cite{krause_nikolaus_puetzstueck}, Lemma 2.12.1] \label{retractsofcompactfunctors} Suppose $F : \mathcal{C} \rightarrow \mathcal{D}$ is a left adjoint functor in $\PR^L$. Then $F$ is strongly continuous iff there exists a commutative square in $\PR^L$
\[\begin{tikzcd}
	{\mathcal{C}} & {\mathcal{D}} \\
	{\mathcal{C}'} & {\mathcal{D}'}
	\arrow["F", from=1-1, to=1-2]
	\arrow["{L_\mathcal{C}}"', hook', from=1-1, to=2-1]
	\arrow["{L_\mathcal{D}}", hook', from=1-2, to=2-2]
	\arrow["{F'}", from=2-1, to=2-2]
\end{tikzcd}\]
such that $F' : \mathcal{C}' \rightarrow \mathcal{D}'$ is a left-adjoint functor preserving compact objects between compactly generated presentable $\infty$-categories $\mathcal{C}', \mathcal{D}'$, and $L_\mathcal{C}$ and $L_\mathcal{D}$ are internal embeddings in $\PR^L$, i.e.\ they are both fully faithful and have right adjoints preserving colimits.
\end{lemma}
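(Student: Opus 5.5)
The plan is to prove the two implications separately, using the characterisation from the preceding remark: since $F$ has a right adjoint $F^R$, strong continuity of $F$ is equivalent to $F^R$ preserving filtered colimits. Throughout, $\mathcal{C}$ and $\mathcal{D}$ are understood to be presentable and compactly assembled, as is implicit in the statement.

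\emph{($\Leftarrow$)} Suppose such a square is given. Since $F'$ preserves compact objects between compactly generated categories, its right adjoint $F'^R$ preserves filtered colimits. Passing to right adjoints in $L_{\mathcal{D}}F \simeq F' L_{\mathcal{C}}$ gives $F^R L_{\mathcal{D}}^R \simeq L_{\mathcal{C}}^R F'^R$. Precomposing with $L_{\mathcal{D}}$ and using the equivalence $L_{\mathcal{D}}^R L_{\mathcal{D}} \simeq \mathrm{id}$ (as $L_{\mathcal{D}}$ is fully faithful) yields
$$F^R \simeq L_{\mathcal{C}}^R F'^R L_{\mathcal{D}}.$$
Here $L_{\mathcal{D}}$ is a left adjoint, $F'^R$ preserves filtered colimits, and $L_{\mathcal{C}}^R$ preserves all colimits by assumption. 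Hence $F^R$ preserves filtered colimits, so $F$ is strongly continuous.

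\emph{($\Rightarrow$)} The idea is to take the square to be a size-bounded version of the square in Definition \ref{compactlyassembled}, with the vertical maps given by $\hat{y}$ instead of $y$. The functor $\hat{y}_{\mathcal{C}} \colon \mathcal{C} \to \Ind(\mathcal{C})$ is fully faithful, and its right adjoint $k$ is itself a left adjoint (of $y$), so it preserves all colimits. Strong continuity of $F$ gives exactly $\hat{y}_{\mathcal{D}} F \simeq \Ind(F)\,\hat{y}_{\mathcal{C}}$, and $\Ind(F)$ preserves compact objects. The only defect is that $\Ind(\mathcal{C})$ is not presentable. To fix this, I would choose a regular cardinal $\kappa$ satisfying three conditions:
\begin{itemize}
\item[(a)] $\mathcal{C}$ and $\mathcal{D}$ are $\kappa$-presentable;
\item[(b)] $F$ preserves $\kappa$-compact objects, which holds for $\kappa$ large since $F^R$ is accessible;
\item[(c)] $\hat{y}_{\mathcal{C}}$ and $\hat{y}_{\mathcal{D}}$ factor through $\Ind(\mathcal{C}^\kappa) \subset \Ind(\mathcal{C})$ and $\Ind(\mathcal{D}^\kappa) \subset \Ind(\mathcal{D})$ respectively.
\end{itemize}
For (c), I would use that $\hat{y}$ preserves colimits, so it suffices to check it on the small set of $\kappa$-compact generators and then enlarge $\kappa$. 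Alternatively, one can cite that $\hat{y}$ lands in $\Ind(\mathcal{C}^{\omega_1})$ for presentable compactly assembled $\mathcal{C}$. This cardinal bookkeeping is the main technical point of the argument.

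With $\kappa$ fixed, set $\mathcal{C}' = \Ind(\mathcal{C}^\kappa)$, $\mathcal{D}' = \Ind(\mathcal{D}^\kappa)$, $L_{\mathcal{C}} = \hat{y}_{\mathcal{C}}$, $L_{\mathcal{D}} = \hat{y}_{\mathcal{D}}$, and $F' = \Ind(F|_{\mathcal{C}^\kappa})$. These categories are compactly generated and presentable. By (b), $F'$ is well defined with values in $\mathcal{D}'$, and it preserves compact objects. The right adjoints of $L_{\mathcal{C}}$ and $L_{\mathcal{D}}$ are the restrictions of $k$, which still preserve colimits because the inclusions $\Ind(\mathcal{C}^\kappa) \subset \Ind(\mathcal{C})$ and $\Ind(\mathcal{D}^\kappa) \subset \Ind(\mathcal{D})$ preserve colimits. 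Finally, the square commutes because $F'$ is the restriction of $\Ind(F)$ and $\Ind(F)\,\hat{y}_{\mathcal{C}} \simeq \hat{y}_{\mathcal{D}} F$ by strong continuity.
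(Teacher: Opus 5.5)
Your argument is correct in both directions. The paper gives no proof of its own here; it only quotes \cite[Lemma 2.12.1]{krause_nikolaus_puetzstueck}. Your proof is the standard argument: for ($\Leftarrow$) you pass to right adjoints to get $F^R \simeq L_{\mathcal{C}}^R F'^R L_{\mathcal{D}}$, and for ($\Rightarrow$) you use the size-truncated square $\hat{y}\colon \mathcal{C} \to \Ind(\mathcal{C}^\kappa)$ with $F' = \Ind(F|_{\mathcal{C}^\kappa})$.

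Two minor simplifications are available. First, condition (b) holds for every regular $\kappa$, not just large ones, because strong continuity already says that $F^R$ preserves all filtered colimits. Second, in ($\Leftarrow$) you need not presuppose that $\mathcal{C}$ and $\mathcal{D}$ are compactly assembled, since this follows from the square by Theorem \ref{closureunderretracts}.
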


\subsection{Dualizable stable $\infty$-categories and localizing invariants}

Let us recall some basic properties of dualizable $\infty$-categories. The standard references are \cite{efimov2025ktheorylocalizinginvariantslarge}, \cite{krause_nikolaus_puetzstueck} and \cite{ramzi2024dualizablepresentableinftycategories}.

\begin{definition}
A compactly assembled presentable $\infty$-category $\mathcal{C}$ is called \emph{dualizable} if it is also stable. Denote by $\PR^L_{\dual} \subset \PR^L_{\ca}$ the full subcategory spanned by dualizable $\infty$-categories. A sequence
$$ \mathcal{C} \xrightarrow{F} \mathcal{D} \xrightarrow{G} \mathcal{E}$$
of dualizable $\infty$-categories $\mathcal{C}, \mathcal{D},  \mathcal{E}$ and strongly continuous left adjoints $F, G$ is called \emph{exact}, or also a \emph{Verdier sequence}, if it is a cofiber sequence in  $\PR^L_{\dual}$, and $F$ is fully faithful.
A functor $F : \PR^L_{\dual} \rightarrow \mathcal{E}$, where $\mathcal{E}$ is an accessible stable $\infty$-category, is called a \emph{localizing invariant}, if $F(0) = 0$ and $F$ maps exact sequences to fiber sequences in $\mathcal{E}$. If $\mathcal{E}$ is furthermore presentable, then $F$ will be called \emph{finitary} if $F$ preserves filtered colimits.
\end{definition}

\begin{theorem}[See \cite{efimov2025ktheorylocalizinginvariantslarge} Proposition 1.65, and \cite{krause_nikolaus_puetzstueck} Proposition 2.12.9]
The inclusion $\PR^L_{\dual} \rightarrow \PR^L_{\st}$ preserves and creates colimits.  Moreover, the Lurie tensor product $\otimes$ restricts to a symmetric monoidal structure on $\PR^L_{\dual}$, which is closed, and the stabilization functor $- \otimes \mathrm{Sp} : \PR^L_{\ca} \rightarrow \PR^L_{\dual}$ is strong monoidal and left adjoint to the inclusion of $\PR^L_{\dual}$ into $\PR^L_{\ca}$.
\end{theorem}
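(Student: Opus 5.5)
This statement is cited from Efimov and Krause--Nikolaus--Pützstück, so I would reconstruct it from the structural results stated above rather than prove anything new. It has three parts.

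\textbf{Colimits.} I would first show that colimits in $\PR^L_{\dual}$ are computed in $\PR^L_{\st}$. Colimits in $\PR^L$ are limits in $\PR^R$ along the right adjoints. Since the transition functors are strongly continuous, these right adjoints preserve filtered colimits, so the colimit can also be computed as the colimit in $\widehat{\Cat_\infty}$ of the diagram of left adjoints, at least for filtered shapes. For general shapes I would reduce to filtered colimits and finite direct sums.

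For direct sums, dualizability of $\mathcal{C}\oplus\mathcal{D}=\mathcal{C}\times\mathcal{D}$ is immediate: $\hat y$ is computed componentwise.

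For a filtered colimit $\mathcal{C}=\colim_i \mathcal{C}_i$, I would build $\hat y_\mathcal{C}$ by gluing the functors $\Ind(\iota_i)\hat y_{\mathcal{C}_i}$ along the insertions $\iota_i$. This is compatible with the transition maps precisely because each transition map is strongly continuous, i.e.\ satisfies $\hat y F \simeq \Ind(F)\hat y$. I would then check that the glued functor is left adjoint to $k$. This works because every object of $\mathcal{C}$ is a colimit of images $\iota_i\iota_i^R(c)$.

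Pushouts are cofibers of maps in the stable setting, so they reduce to the cokernel $\mathcal{C}\to\mathcal{D}\to\mathcal{D}/\mathcal{C}$. Here I would use Lemma \ref{retractsofcompactfunctors} to present the strongly continuous functor $\mathcal{C}\to\mathcal{D}$ as a retract of a compact-object-preserving functor between compactly generated categories. The quotient of the latter is compactly generated, and retracts pass to the quotient, so Theorem \ref{closureunderretracts} gives that $\mathcal{D}/\mathcal{C}$ is dualizable.

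Finally, the maps into the colimit are strongly continuous, and a cocone of strongly continuous functors induces a strongly continuous functor out of the colimit. Both facts follow from the right adjoints commuting with filtered colimits, and together they give that the inclusion creates colimits.

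\textbf{Tensor product.} The tensor product restricts to $\PR^L_{\dual}$ directly from the previously stated fact that $\otimes$ restricts to $\PR^L_{\ca}$ and that stability is preserved by $\otimes$. For closedness, I would show that $\mathcal{C}$ dualizable implies $\mathcal{C}$ is dualizable in $\PR^L_{\st}$ with dual $\mathcal{C}^\vee$. By retracts, this reduces to the compactly generated case, where $\mathcal{C}^\vee=\Ind(\mathcal{C}^{\omega,\op})$. Then
$$\Fun^L(\mathcal{C},\mathcal{D})\simeq \mathcal{C}^\vee\otimes\mathcal{D}$$
is dualizable. What remains is that evaluation and coevaluation are strongly continuous, which again is checked on compactly generated retracts.

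\textbf{Stabilization.} For the adjunction, a strongly continuous functor $\mathcal{C}\to\mathcal{D}$ with $\mathcal{D}$ stable extends uniquely to $\mathcal{C}\otimes\Sp$, by the universal property of stabilization in $\PR^L$. Strong continuity of the extension follows from Lemma \ref{retractsofcompactfunctors}, since $-\otimes\Sp$ preserves internal embeddings and compact-object-preserving functors. Strong monoidality holds because $\Sp\otimes\Sp\simeq\Sp$.

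\textbf{Main obstacle.} I expect the hardest step to be creation of colimits, specifically the compatibility of $\hat y$ with general, non-filtered colimits such as the Verdier quotient. That is why I would route it through the retract characterization rather than argue directly.
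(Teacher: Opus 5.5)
\textbf{Main gap: your closedness argument fails.} You take $\Fun^L(\mathcal{C},\mathcal{D})\simeq\mathcal{C}^\vee\otimes\mathcal{D}$ as the internal hom and propose to check that evaluation and coevaluation are strongly continuous. That amounts to claiming every dualizable $\mathcal{C}$ is a dualizable \emph{object} of $\PR^L_{\dual}$, which is false.

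The claim already fails for compactly generated $\mathcal{C}=\Ind(A)$, where it amounts to $A$ being smooth and proper. So checking on compactly generated retracts cannot rescue it. Two concrete failures:
\begin{itemize}
\item The coevaluation $\Sp\to\mathcal{C}^\vee\otimes\mathcal{C}\simeq\Fun^L(\mathcal{C},\mathcal{C})$ sends $\mathbb{S}$ to $\mathrm{id}_{\mathcal{C}}$, and a left adjoint out of $\Sp$ is strongly continuous iff it picks out a compact object. For $\mathcal{C}=\prod_{\mathbb{N}}\Sp$ we have $\Fun^L(\mathcal{C},\mathcal{C})\simeq\prod_{\mathbb{N}\times\mathbb{N}}\Sp$, and $\mathrm{id}_{\mathcal{C}}$ is the diagonal family. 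It has infinite support, so it is not compact.
\item For $\mathcal{C}=\mathrm{Mod}_{H\mathbb{Z}}$, the evaluation sends the compact generator of $\mathcal{C}\otimes\mathcal{C}^\vee$ to $\mathrm{End}(H\mathbb{Z})=H\mathbb{Z}$, which is not a finite spectrum.
\end{itemize}
Conceptually, maps $\Sp\to\underline{\mathrm{Hom}}(\mathcal{C},\mathcal{D})$ in $\PR^L_{\dual}$ must be the strongly continuous functors $\mathcal{C}\to\mathcal{D}$. Maps $\Sp\to\Fun^L(\mathcal{C},\mathcal{D})$, by contrast, are only the compact objects of $\Fun^L(\mathcal{C},\mathcal{D})$. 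The internal hom of $\PR^L_{\dual}$ is therefore a different and inexplicit object.

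The standard way to obtain it is abstract:
\begin{itemize}
\item $\PR^L_{\dual}$ is presentable (Ramzi's theorem, \cite[Appendix C]{efimov2025ktheorylocalizinginvariantslarge}, which the paper mentions in the remark after Corollary \ref{urysohncategorical}).
\item $-\otimes\mathcal{D}$ preserves colimits in $\PR^L_{\dual}$, because by the first part of the statement these colimits are computed in $\PR^L_{\st}$, where $\otimes$ is cocontinuous in each variable.
\item The adjoint functor theorem then supplies the right adjoint.
\end{itemize}
Your proposal has no input of this kind. (The paper itself gives no proof and only cites Efimov and Krause--Nikolaus--P\"utzst\"uck.)

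\textbf{Secondary issues: the colimit part is repairable, but two steps are wrong as written.}
\begin{itemize}
\item $\PR^L_{\st}$ is only semiadditive: its mapping spaces are not grouplike under $\oplus$. So pushouts do not reduce to cofibers. For example, the pushout of the fold map $\nabla\colon\Sp\oplus\Sp\to\Sp$ along itself is $\Fun(S^1,\Sp)$, while the cofiber of $(\nabla,\nabla)\colon\Sp\oplus\Sp\to\Sp\oplus\Sp$ is $0$.
\item Lemma \ref{retractsofcompactfunctors} by itself does not make $F$ a retract of $F'$ as an arrow, because the square formed by the right adjoints $L_{\mathcal{C}}^R$, $L_{\mathcal{D}}^R$ need not commute. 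So ``retracts pass to the quotient'' is not yet justified.
\end{itemize}
Both problems disappear if you use the canonical retraction $\mathcal{C}\xrightarrow{\hat y}\Ind(\mathcal{C}^{\omega_1})\xrightarrow{\colim}\mathcal{C}$, which is natural in strongly continuous functors. Applied termwise to an arbitrary diagram in $\PR^L_{\dual}$, it exhibits the $\PR^L_{\st}$-colimit as a retract, via left adjoints, of the compactly generated category $\Ind(\colim_i\mathcal{C}_i^{\omega_1})$. Theorem \ref{closureunderretracts} then handles all shapes at once, with no filtered/pushout case split. Your final step about insertions and induced functors completes that part.

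Incidentally, filtered colimits in $\PR^L$ are not colimits in $\widehat{\Cat_\infty}$ of the left adjoints. For instance, $\colim_n\Sp^{\oplus n}$ in $\PR^L$ is $\prod_{\mathbb{N}}\Sp$, not the category of finitely supported families. You do not actually rely on that claim.
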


Let $\Cat^{\perf}$ denote the $\infty$-category of stable, idempotent complete $\infty$-categories. The essential image of the functor $\Ind : \Cat^{\perf} \rightarrow \PR^L_{\dual}$ are called \emph{compactly generated} stable $\infty$-categories. There already exists a notion of localizing invariant on $\Cat^{\perf}$, see \cite{blumberg_gepner_tabuada}, which includes for example the algebraic $K$-theory functor, or topological Hochschild homology $\mathrm{THH}$. Efimov observed that the values of any localizing invariant on $\PR^L_{\dual}$ are determined uniquely by their restriction along $\Ind$.

\begin{theorem}[Efimov, see \cite{efimov2025ktheorylocalizinginvariantslarge} Theorem 0.1]
Restriction along $\Ind$
$$\mathrm{Fun}( \PR^L_{\dual} , \mathcal{E} ) \rightarrow \mathrm{Fun}( \Cat^{\perf} , \mathcal{E} )$$
induces an equivalence on the full subcategories of finitary localizing invariants.
\end{theorem}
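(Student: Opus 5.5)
The plan is to construct an explicit inverse to restriction along $\Ind$, and then show that the two composites are the identity on finitary localizing invariants. The basic device is the canonical presentation of a dualizable $\mathcal{C}$ as the kernel of a map of compactly generated categories. Fix the regular cardinal $\omega_1$. Since $\mathcal{C}$ is compactly assembled and stable, the functor $\hat{y} : \mathcal{C} \to \Ind(\mathcal{C})$ lands in $\Ind(\mathcal{C}^{\omega_1})$: one checks that $\hat{y}(c)$ can be written as a formal colimit of $\omega_1$-compact objects. This $\hat{y}$ is a strongly continuous, fully faithful left adjoint, giving a short exact sequence in $\PR^L_{\dual}$
$$\mathcal{C} \xrightarrow{\hat{y}} \Ind(\mathcal{C}^{\omega_1}) \longrightarrow \Ind\big(\mathrm{Calk}_{\omega_1}(\mathcal{C})\big),$$
where $\mathrm{Calk}_{\omega_1}(\mathcal{C})$ is the idempotent completion of the Verdier quotient $\mathcal{C}^{\omega_1}/\mathcal{C}^{\omega}$, computed inside $\Ind(\mathcal{C}^{\omega_1})$ modulo the image of $\hat{y}$. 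This sequence is functorial in strongly continuous functors, precisely because strongly continuous functors commute with $\hat{y}$ and preserve $\omega_1$-compact objects.

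For a finitary localizing invariant $F : \Cat^{\perf} \to \mathcal{E}$, I would define
$$F^{\cont}(\mathcal{C}) := \mathrm{fib}\big(F(\mathcal{C}^{\omega_1}) \to F(\mathrm{Calk}_{\omega_1}(\mathcal{C}))\big) \simeq \Omega F(\mathrm{Calk}_{\omega_1}(\mathcal{C})).$$
The identification on the right uses an Eilenberg swindle: $\mathcal{C}^{\omega_1}$ has countable coproducts, so $F(\mathcal{C}^{\omega_1}) = 0$. Three things then need checking.
\begin{enumerate}
\item If $\mathcal{C} = \Ind(\mathcal{A})$, then $F^{\cont}(\mathcal{C}) \simeq F(\mathcal{A})$. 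Here $\hat{y}$ restricted to compacts is the inclusion $\mathcal{A} \to \mathcal{C}^{\omega_1}$, so the sequence $\mathcal{A} \to \mathcal{C}^{\omega_1} \to \mathrm{Calk}$ is a Verdier sequence in $\Cat^{\perf}$, and $F$ applied to it gives a fiber sequence.
\item $F^{\cont}$ is independent of the choice of cardinal.
\item $F^{\cont}$ preserves filtered colimits in $\PR^L_{\dual}$. This should follow from the fact that $(-)^{\omega_1}$ and $\mathrm{Calk}_{\omega_1}$ commute with filtered colimits of dualizable categories, together with $F$ being finitary.
\end{enumerate}

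Uniqueness, i.e.\ full faithfulness of restriction, follows formally. For any localizing invariant $E$ on $\PR^L_{\dual}$, applying $E$ to the functorial sequence above gives a natural equivalence $E(\mathcal{C}) \simeq \mathrm{fib}\big(E(\Ind(\mathcal{C}^{\omega_1})) \to E(\Ind(\mathrm{Calk}_{\omega_1}(\mathcal{C})))\big)$. So $E$ is recovered from $E \circ \Ind$, naturally in $E$. This shows that restriction is fully faithful and that its inverse on objects is $F \mapsto F^{\cont}$.

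The main obstacle is showing that $F^{\cont}$ is localizing, i.e.\ that it sends a short exact sequence $\mathcal{A} \to \mathcal{B} \to \mathcal{C}$ of dualizable categories to a fiber sequence. I would first try to show that the induced sequence
$$\mathrm{Calk}_{\omega_1}(\mathcal{A}) \to \mathrm{Calk}_{\omega_1}(\mathcal{B}) \to \mathrm{Calk}_{\omega_1}(\mathcal{C})$$
is a Verdier sequence in $\Cat^{\perf}$, possibly after idempotent completion. This requires understanding $\omega_1$-compact objects of a quotient: every $\omega_1$-compact object of $\mathcal{C}$ should lift to an $\omega_1$-compact object of $\mathcal{B}$, up to retracts. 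I expect this lifting step to use the right adjoint of $\mathcal{B} \to \mathcal{C}$ preserving filtered colimits, together with a countable approximation argument on compact morphisms. Once this is in place, applying $F$ and looping gives the required fiber sequence.
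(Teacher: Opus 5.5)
The paper does not prove this statement; it quotes it from Efimov. Your outline follows Efimov's strategy: the canonical sequence $\mathcal{C}\xrightarrow{\hat y}\Ind(\mathcal{C}^{\omega_1})\to\Ind(\mathrm{Calk})$, the Eilenberg swindle, $F^{\cont}=\Omega F(\mathrm{Calk})$, and uniqueness by applying $E$ to that sequence.

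\textbf{The Calkin category is misidentified.} Your phrase ``modulo the image of $\hat y$'' names the right object, but identifying it with the Karoubi completion of $\mathcal{C}^{\omega_1}/\mathcal{C}^{\omega}$ is false unless $\mathcal{C}$ is compactly generated. In general $\hat y(\mathcal{C})\subset\Ind(\mathcal{C}^{\omega_1})$ is not generated by compact objects of $\Ind(\mathcal{C}^{\omega_1})$. The correct category is $\mathrm{Calk}^{\cont}_{\omega_1}(\mathcal{C})=\big(\Ind(\mathcal{C}^{\omega_1})/\hat y(\mathcal{C})\big)^{\omega}$. For $x,x'\in\mathcal{C}^{\omega_1}$ its mapping spectra are $\mathrm{cofib}\big(\mathrm{Hom}_{\Ind(\mathcal{C}^{\omega_1})}(yx,\hat y x')\to\mathrm{Hom}_{\mathcal{C}}(x,x')\big)$. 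So one kills compact \emph{morphisms}, not maps factoring through compact \emph{objects}.

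Read literally, your definition gives $F^{\cont}(\mathcal{C})=\mathrm{fib}\big(F(\mathcal{C}^{\omega_1})\to F(\mathcal{C}^{\omega_1}/\mathcal{C}^{\omega})\big)\simeq F(\mathcal{C}^{\omega})$, because $\mathcal{C}^{\omega}\to\mathcal{C}^{\omega_1}\to\mathcal{C}^{\omega_1}/\mathcal{C}^{\omega}$ is a Karoubi sequence. That is exactly the naive invariant the theorem is meant to replace. For $\mathcal{C}=\Sh(\mathbb{R},\Sp)$, which has no nonzero compact objects, it returns $0$ instead of $\Omega K(\mathbb{S})$ (Theorem~\ref{ktheorylch}). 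In that case your displayed sequence is not even a cofiber sequence, since the composite is $\hat y\neq 0$. So the uniqueness argument also depends on the correct definition. Your step (1) survives, because the two descriptions agree when $\mathcal{C}=\Ind(\mathcal{A})$.

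\textbf{Step (3) rests on a false claim.} For a filtered but not $\omega_1$-filtered colimit, $(\colim_i\mathcal{C}_i)^{\omega_1}$ is in general not $\colim_i\mathcal{C}_i^{\omega_1}$, and the same fails for $\mathrm{Calk}$. For example, in $\prod_{\mathbb{N}}\Sp=\colim_n\Sp^{n}$ the object $(\mathbb{S},\mathbb{S},\dots)$ is $\omega_1$-compact and nonzero in the Calkin category, but it comes from no stage. The repair goes through localization. The filtered colimit of the canonical sequences of the $\mathcal{C}_i$ is again short exact, since filtered colimits preserve fully faithful strongly continuous functors. Its middle and right terms are $\Ind(\colim_i\mathcal{C}_i^{\omega_1})$ and $\Ind(\colim_i\mathrm{Calk}(\mathcal{C}_i))$. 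Once $F^{\cont}$ is known to be localizing with $F^{\cont}\circ\Ind\simeq F$, finitarity of $F$ finishes the argument.

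\textbf{The localization property is only announced.} This is the heart of the theorem, and the lifting you describe gives at most essential surjectivity (up to retracts) of $\mathcal{B}^{\omega_1}/\mathcal{A}^{\omega_1}\to\mathcal{C}^{\omega_1}$. You also need full faithfulness. It comes from $\mathcal{A}$ being generated by $\mathcal{A}^{\omega_1}$, whose image in $\mathcal{B}$ consists of $\omega_1$-compact objects and is closed under countable colimits; this is an $\omega_1$-version of Thomason--Neeman localization. Exactness of the Calkin sequence then follows from a $3\times 3$ argument applied to the map of sequences given by $\hat y$. Full faithfulness of $\mathrm{Calk}(\mathcal{A})\to\mathrm{Calk}(\mathcal{B})$ comes from $\hat y_{\mathcal{B}}\circ i\simeq\Ind(i)\circ\hat y_{\mathcal{A}}$ together with the mapping-spectrum formula above.
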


We will denote the inverse under this functor of a localizing invariant $F$ defined on $\Cat^{\perf}$ by $F^{\cont}$. The following is a simple, but sometimes useful, observation.

\begin{lemma} \label{localizinginvariantreduction}
Let $F^{\cont} : \PR^L_{\dual} \rightarrow \mathcal{E}$ be a finitary localizing invariant, and $\mathcal{D}$ be a dualizable $\infty$-category. Then $F^{\cont}( - \otimes \mathcal{D} )$ is again a finitary localizing invariant.
\end{lemma}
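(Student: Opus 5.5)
We must check three things for $G := F^{\cont}(-\otimes\mathcal{D})$: it is well defined on $\PR^L_{\dual}$, it vanishes on $0$ and sends Verdier sequences to fiber sequences, and it preserves filtered colimits.

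Well-definedness comes from the theorem above: $\otimes$ restricts to a symmetric monoidal structure on $\PR^L_{\dual}$. Hence $-\otimes\mathcal{D}$ is a functor $\PR^L_{\dual}\to\PR^L_{\dual}$. Strongly continuous functors are sent to strongly continuous ones, since tensoring with $\mathrm{id}_{\mathcal{D}}$ is a map of the monoidal structure on $\PR^L_{\dual}$. Vanishing is immediate: $0\otimes\mathcal{D}\simeq 0$, because $\otimes$ preserves colimits in each variable and $0$ is the initial object. So $G(0)=F^{\cont}(0)=0$.

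For exactness, let $\mathcal{A}\xrightarrow{i}\mathcal{B}\xrightarrow{p}\mathcal{C}$ be a Verdier sequence in $\PR^L_{\dual}$. We show that $\mathcal{A}\otimes\mathcal{D}\to\mathcal{B}\otimes\mathcal{D}\to\mathcal{C}\otimes\mathcal{D}$ is again one. The cofiber condition holds because $-\otimes\mathcal{D}$ preserves colimits in $\PR^L_{\st}$, and the inclusion $\PR^L_{\dual}\to\PR^L_{\st}$ preserves and creates colimits; so the tensored sequence is a cofiber sequence in $\PR^L_{\dual}$.

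Full faithfulness of $i\otimes\mathcal{D}$ is the only point requiring thought. Since $i$ is strongly continuous, its right adjoint $i^R$ preserves colimits and so lies in $\PR^L$. Full faithfulness of $i$ means the unit $\mathrm{id}\Rightarrow i^Ri$ is an equivalence, i.e.\ $i$ is an internal embedding in the $(\infty,2)$-category $\PR^L$. The functor $-\otimes\mathcal{D}:\PR^L\to\PR^L$ is a $2$-functor, so by the lemma on $2$-functors it preserves internal adjunctions and internal embeddings. Hence $i\otimes\mathcal{D}$ has the colimit-preserving right adjoint $i^R\otimes\mathcal{D}$, with invertible unit, and is therefore fully faithful. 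Applying $F^{\cont}$ yields a fiber sequence.

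Finally, for finitary-ness: filtered colimits in $\PR^L_{\dual}$ are computed in $\PR^L_{\st}$, where $-\otimes\mathcal{D}$ commutes with them. Thus $G$ is a composite of filtered-colimit-preserving functors.

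The only mild obstacle is the full faithfulness step. It is handled by viewing fully faithful strongly continuous functors as internal embeddings in $\PR^L$, in the same way the preliminaries note that Bousfield localizations are stable under tensor products.
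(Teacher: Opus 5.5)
Your proof is correct and follows essentially the same route as the paper. In both, the only nontrivial step, full faithfulness of $i\otimes\mathcal{D}$, is handled by noting that $i^R$ preserves all colimits (filtered ones by strong continuity, finite ones by stability), so $i$ is an internal embedding in $\PR^L$ that the $2$-functor $-\otimes\mathcal{D}$ preserves; your remarks on well-definedness and on computing the cofiber sequence in $\PR^L_{\st}$ simply spell out what the paper states in one line.
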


\begin{proof} The operation of $- \otimes \mathcal{D}$ preserves the zero category and filtered colimits. It furthermore preserves cofiber sequences. It remains to argue that if $F : \mathcal{C} \hookrightarrow \mathcal{E}$ is a fully faithful strongly continuous left adjoint, then also $F \otimes \mathrm{id}_\mathcal{D}$ is fully faithful. This follows from the observation that between dualizable $\infty$-categories, the right adjoint $F^R$ of a strongly continuous left adjoint has a further right adjoint $F^{RR}$. In other words, $F$ is an internal embedding in $\PR^L$, and will be mapped to an internal embedding under the image of the $2$-functor $- \otimes \mathcal{D}$.
\end{proof}

\subsection{Topoi}

Let us introduce some general notions about higher topoi. The reference for (higher) topoi that we follow is \cite{luriehtt}. For the sake of this article, the word topos will denote what is usually called an $\infty$-topos, by which we mean a left exact accessible localization of an $\infty$-category of the form $\mathrm{Fun}(C^{\op}, \An)$, where $C$ is some small $\infty$-category. If $\mathcal{X}, \mathcal{Y}$ are two $\infty$-topoi, a geometric morphism $ f : \mathcal{X} \rightarrow \mathcal{Y} $ is defined via its \emph{pullback part} $f^* : \mathcal{X} \leftarrow \mathcal{Y}$, which is required to be a left adjoint functor preserving finite limits. Its right adjoint will be denoted as $f_* : \mathcal{X} \rightarrow \mathcal{Y}$ and called the \emph{direct image} part of $f$. The $\infty$-category of topoi and geometric morphisms will be denoted by $\mathrm{RTop}$, where $\mathrm{R}$ refers to the fact that we consider maps as directed along the right adjoint direct image functors.

If $\mathcal{X}$ is a topos, there exists a unique geometric morphism $\mathcal{X} \rightarrow \An$, whose direct and inverse image parts will be denoted as
\[\begin{tikzcd}
	{\mathcal{X}} & {\An.}
	\arrow[""{name=0, anchor=center, inner sep=0}, "{\mathcal{X}_*}"', curve={height=6pt}, from=1-1, to=1-2]
	\arrow[""{name=1, anchor=center, inner sep=0}, "{\mathcal{X}^*}"', curve={height=6pt}, from=1-2, to=1-1]
	\arrow["\dashv"{anchor=center, rotate=-90}, draw=none, from=1, to=0]
\end{tikzcd}\]
If $\mathcal{X} = \Sh(X, \An)$ is the $\infty$-category of sheaves on some topological space or locale $X$, we also use the notation $X^* \dashv X_*$.

\begin{definition}
Let $\mathcal{X}$ a topos and $\mathcal{E}$ a presentable $\infty$-category. We call the $\infty$-category
$$ \mathcal{X} \otimes \mathcal{E} = \mathrm{Fun}^R( \mathcal{X}^{\op}, \mathcal{E} )$$
the $\infty$-category of $\mathcal{E}$-valued sheaves over $\mathcal{X}$. (Compare with e.g.\ \cite[Section 2.2]{volpe2025operationstopology}.) Objects in the essential image of 
$$ \mathcal{E} \xrightarrow{ \mathcal{X}^* \otimes \mathrm{id}_\mathcal{E} } \mathcal{X} \otimes \mathcal{E} $$
are referred to as \emph{constant} $\mathcal{E}$-valued sheaves. If $E \in \mathcal{E}$, and the context is clear, we will use the notation $\underline{E}$ for the associated constant sheaf with value $E$. We denote the right adjoint to $ \mathcal{X}^* \otimes \mathrm{id}_ \mathcal{E} $ by
$$ \Gamma( \mathcal{X}; - ) : \mathcal{X} \otimes \mathcal{E} \rightarrow \mathcal{E}$$
and call it the \emph{global sections functor}.
\end{definition}

\begin{definition}
Let $f : \mathcal{X} \rightarrow \mathcal{Y}$ be a geometric morphism. We say:
\begin{itemize}
\item $f$ is \emph{contractible} if $f^*$ is fully faithful.\footnote{This is equivalent to  the statement that the relative shape of $\mathcal{X}$ over $\mathcal{Y}$ is trivial in the sense of \cite[Definition 3.1]{volpe2025operationstopology}.}
\item $f$ is \emph{essential} if $f^*$ admits a further left adjoint $f_!$. The functor $f_!$ is referred to as \emph{relative shape} of $\mathcal{X}$ over $\mathcal{Y}$.
\end{itemize}
A topos $\mathcal{X}$ is called \emph{contractible} iff the unique geometric morphism $\mathcal{X} \rightarrow \An$ is contractible, and \emph{locally contractible} iff $\mathcal{X} \rightarrow \An$ is essential. For a locally contractible topos $\mathcal{X}$, the morphism $\mathcal{X}_!$ is sometimes also written as $\Pi_\infty^\mathcal{X}$. The value $\Pi_\infty(\mathcal{X}) = \Pi_\infty^\mathcal{X}(1)$ is called the \emph{shape} of $\mathcal{X}$.
\end{definition}

For more information about shape theory of topoi, we refer the reader to \cite[Appendix A]{lurieha} as well as \cite{HOYOIS20181859}. We will need the following simple lemma about contractible and essential geometric morphisms.

\begin{lemma} \label{preservationofconstantobjects} Let $f : \mathcal{X} \rightarrow \mathcal{Y}$ be a geometric morphism and $\mathcal{E}$ a presentable $\infty$-category.  Then $f^* \otimes \mathrm{id}_\mathcal{E}$ preserves $\mathcal{E}$-valued constant sheaves. Moreover, if $f$ is a contractible and essential geometric morphism then the right adjoint to $f^* \otimes \mathrm{id}_\mathcal{E}$ also preserves constant sheaves.
\end{lemma}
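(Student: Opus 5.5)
The plan is to treat both claims formally, working in the $(\infty,2)$-category $\PR^L$ and using that $-\otimes \mathcal{E}$ is a $2$-functor on it. For the first claim, observe that $f^*\colon \mathcal{Y}\to\mathcal{X}$ is a geometric morphism over $\An$, so $f^*\circ \mathcal{Y}^* \simeq \mathcal{X}^*$ as functors $\An\to\mathcal{X}$. This holds because both are the unique colimit-preserving, terminal-object-preserving functors out of $\An$. Tensoring with $\mathcal{E}$ and using $\An\otimes\mathcal{E}\simeq\mathcal{E}$ gives
$$(f^*\otimes\mathrm{id}_\mathcal{E})\circ(\mathcal{Y}^*\otimes\mathrm{id}_\mathcal{E}) \simeq \mathcal{X}^*\otimes\mathrm{id}_\mathcal{E}.$$
Hence $f^*\otimes\mathrm{id}_\mathcal{E}$ sends $\underline{E}$ on $\mathcal{Y}$ to $\underline{E}$ on $\mathcal{X}$, and so it preserves the essential image of constant sheaves.

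For the second claim, let $f$ be essential with left adjoint $f_!\dashv f^*\dashv f_*$. In this case $f^*$ is itself a colimit-preserving left adjoint (its left adjoint is $f_!$), and $f_*$ is a right adjoint. The key observation is that in $\PR^L$ the adjunction $f_!\dashv f^*$ is an internal adjunction, since both functors lie in $\PR^L$. By the lemma on $2$-functors preserving internal adjunctions, applying $-\otimes\mathcal{E}$ yields an adjunction
$$f_!\otimes\mathrm{id}_\mathcal{E} \dashv f^*\otimes\mathrm{id}_\mathcal{E}.$$
I would caution that the right adjoint of $f^*\otimes \mathrm{id}$ in the claim is genuinely $(f^*\otimes\mathrm{id})^R$. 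In general this is not $f_*\otimes\mathrm{id}$, because $f_*$ need not preserve colimits. So a different route is needed to see that it preserves constant sheaves.

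Here contractibility enters. Since $f^*$ is fully faithful, the counit $f_!f^*\Rightarrow\mathrm{id}_\mathcal{Y}$ is an equivalence, so $f_!\dashv f^*$ is an internal quotient in $\PR^L$. Consequently $f_!\otimes\mathrm{id}$ is also an internal quotient, and $f^*\otimes\mathrm{id}$ is fully faithful. Now take a constant sheaf $\underline{E}=(\mathcal{Y}^*\otimes\mathrm{id})(E)$ on $\mathcal{Y}$. By the first part, $(f^*\otimes\mathrm{id})(\underline{E})$ is the constant sheaf $\underline{E}$ on $\mathcal{X}$. Every constant sheaf on $\mathcal{X}$ therefore lies in the essential image of the fully faithful functor $f^*\otimes\mathrm{id}$. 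The right adjoint $R$ of a fully faithful left adjoint $L$ satisfies $RL\simeq\mathrm{id}$ via the unit. It follows that
$$R(\underline{E}_\mathcal{X}) \simeq R\,(f^*\otimes\mathrm{id})(\underline{E}_\mathcal{Y}) \simeq \underline{E}_\mathcal{Y},$$
which is constant.

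The main obstacle is keeping the adjunctions straight. The argument needs full faithfulness of $f^*\otimes\mathrm{id}_\mathcal{E}$ after tensoring, and this does not follow from full faithfulness of $f^*$ alone, since tensor products in $\PR^L$ need not preserve fully faithful functors. This is precisely why essentiality is assumed. It makes $f^*$ a right adjoint inside $\PR^L$, so that full faithfulness becomes the $2$-categorical statement "the counit of $f_!\dashv f^*$ is invertible", and that statement is preserved by the $2$-functor $-\otimes\mathcal{E}$. I would verify carefully that the unit and counit of the tensored adjunction are the tensored unit and counit, which is exactly what the $2$-functoriality lemma provides.
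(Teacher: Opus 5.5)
Your proposal is correct and follows the paper's argument. For the first claim you tensor $f^*\mathcal{Y}^*\simeq\mathcal{X}^*$ with $\mathcal{E}$, a step the paper simply calls immediate and which you justify more explicitly. For the second claim you push the internal quotient $f_!\dashv f^*$ through the $2$-functor $-\otimes\mathcal{E}$ to make $f^*\otimes\mathrm{id}_\mathcal{E}$ fully faithful, and then use that the unit $\mathrm{id}\Rightarrow(f^*\otimes\mathrm{id}_\mathcal{E})^R(f^*\otimes\mathrm{id}_\mathcal{E})$ is invertible, exactly as in the paper's displayed chain of equivalences.
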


\begin{proof} The statement about $f^* \otimes \mathrm{id}_\mathcal{E}$ is immediate. Now assume that $f$ is contractible and essential, in other words we have an adjunction of the form
\[\begin{tikzcd}
	{\mathcal{X}} & {\mathcal{Y}}
	\arrow[""{name=0, anchor=center, inner sep=0}, "{f_!}", curve={height=-12pt}, two heads, from=1-1, to=1-2]
	\arrow[""{name=1, anchor=center, inner sep=0}, "{f_*}"', curve={height=12pt}, two heads, from=1-1, to=1-2]
	\arrow[""{name=2, anchor=center, inner sep=0}, "{f^*}"{description}, hook', from=1-2, to=1-1]
	\arrow["\dashv"{anchor=center, rotate=-90}, draw=none, from=0, to=2]
	\arrow["\dashv"{anchor=center, rotate=-90}, draw=none, from=2, to=1]
\end{tikzcd}\]
with $f^*$ fully faithful. Since $- \otimes \mathcal{E}$ is a 2-functor on $\PR^L$, the same holds for the diagram
\[\begin{tikzcd}
	{\mathcal{X} \otimes \mathcal{E}} && {\mathcal{Y} \otimes \mathcal{E}}
	\arrow[""{name=0, anchor=center, inner sep=0}, "{f_! \otimes \mathrm{id}_\mathcal{E}}", curve={height=-12pt}, two heads, from=1-1, to=1-3]
	\arrow[""{name=1, anchor=center, inner sep=0}, "{(f^* \otimes \mathrm{id}_\mathcal{E} )_*}"', curve={height=12pt}, two heads, from=1-1, to=1-3]
	\arrow[""{name=2, anchor=center, inner sep=0}, "{f^* \otimes \mathrm{id}_\mathcal{E}}"{description}, hook', from=1-3, to=1-1]
	\arrow["\dashv"{anchor=center, rotate=-90}, draw=none, from=0, to=2]
	\arrow["\dashv"{anchor=center, rotate=-90}, draw=none, from=2, to=1]
\end{tikzcd}\]
where $(f^* \otimes \mathrm{id}_\mathcal{E} )_*$ denotes the right adjoint to $f^* \otimes \mathrm{id}_\mathcal{E}$, and $f^* \otimes \mathrm{id}_\mathcal{E}$ is fully faithful. Therefore we see that
$$ (f^* \otimes \mathrm{id}_\mathcal{E} )_* (\mathcal{X}^* \otimes  \mathrm{id}_\mathcal{E} ) \simeq (f^* \otimes \mathrm{id}_\mathcal{E} )_* (f^* \otimes  \mathrm{id}_\mathcal{E} ) (\mathcal{Y}^* \otimes  \mathrm{id}_\mathcal{E} ) \simeq (\mathcal{Y}^* \otimes  \mathrm{id}_\mathcal{E} ),$$
in other words $(f^* \otimes \mathrm{id}_\mathcal{E} )_* $ preserves constant objects.
\end{proof}

\begin{example} \label{projectionhilbertcube} If $X$ is a paracompact topological space with the homotopy type of a CW-complex, then $X$ is locally contractible, and its shape $\Pi_\infty(X)$ agrees with the homotopy type of $X$, \cite[Remark A.1.4]{lurieha}. In particular, if $I = [0,1]$ is the closed interval and $X = I^S$ is a Hilbert cube for some set $S$, then $\Sh(I^S)$ is an example of a contractible and locally contractible topos.

If furthermore $N$ is a subset of $S$, then the canonical projection $I^S \rightarrow I^N$ arises as the product of the canonical map $I^{S \setminus N} \rightarrow \mathrm{pt}$ with $I^N$, and hence the induced geometric morphism $ \Sh( I^S ) \rightarrow \Sh( I^N )$ is contractible and essential.
\end{example}

\subsection{Locales} \label{locales}

We will use the notion of frames and locales as developed for example in \cite{picado_pultr} and \cite{johnstone1982stone}. Locales should be thought of as a suitable generalization of the notion of topological spaces,\footnote{While it is not technically true that all topological spaces embed fully faithfully into locales, typically in practice no mathematical content is lost by only considering sober spaces, which do embed.} that fix several pathological shortcomings that the notion of topological spaces has. Locales can be understood as the $0$-categorical analogue of topoi. (In fact locales embed fully faithfully into topoi by taking sheaves.) Let us recap some basic notions and fix notation.

A frame is a complete lattice $(F, \leq)$ such that binary meets distribute over arbitrary joins.\footnote{Or in the language of category theory, a cartesian closed presentable category that is a poset.} A left adjoint $f^* : F \rightarrow F'$ is called a \emph{partial frame homomorphism} if $f^*$ commutes with binary meets. It is further called a \emph{frame homomorphism} if $f^*$ also preserves the top element, equivalently all finite meets. Analogously to geometric morphisms, we will denote the corresponding right adjoint by $f_*$. We denote by $\mathrm{Frm}_{\mathrm{part}}$ and $\mathrm{Frm}$ the categories of frames with partial frame homomorphism, respectively frame homomorphisms, and define $\mathrm{Loc}_{\mathrm{part}} = (\mathrm{Frm}_{\mathrm{part}})^{\op} $ and $\mathrm{Loc} = \mathrm{Frm}^{\op}$ as the categories of locales and (partial) continuous maps between them. There exists an adjunction
\[\begin{tikzcd}
	{\mathrm{Top}} & {\mathrm{Loc}}
	\arrow[""{name=0, anchor=center, inner sep=0}, "{\mathcal{O}}", curve={height=-6pt}, from=1-1, to=1-2]
	\arrow[""{name=1, anchor=center, inner sep=0}, "{\mathrm{pts}}", curve={height=-6pt}, from=1-2, to=1-1]
	\arrow["\dashv"{anchor=center, rotate=-90}, draw=none, from=0, to=1]
\end{tikzcd}\]
where the left adjoint sends a topological space $X$ to its frame $\mathcal{O}(X)$, and a locale $L$ to the space $\mathrm{pts}(L)$ with underlying set $\Map_{Loc}(\mathrm{pt}, L)$ where $\mathrm{pt}$ is the locale obtained from the one point space, with frame $[1]$. This adjunction is idempotent, and identifies the subcategory of sober topological spaces with the full subcategory of spatial locales. Soberness is a mild separation condition, which implies $\mathrm{T}0$ and is implied by $\mathrm{T}2$, see \cite[Chapter I.1]{picado_pultr}. For the sake of notation, we sometimes also write $\mathcal{O}(L)$ for the associated frame representing a locale, and call it the \emph{frame of opens} of $L$. The notion of partial frame homomorphism corresponds to the notion of a \emph{partial continuous map} $f : X \rightarrow Y$, whose domain $U$ is open in $X$. We note that $\mathrm{Frm}_{\mathrm{part}}$ and $\mathrm{Frm}$ are naturally $(1,2)$-categories, since their hom-sets are naturally posets.

\begin{example}
If $F$ is a frame and $U \in F$ an open, then $F_{/U} = \{ V \in F ~|~ V \leq U \}$ is again a frame, called the \emph{open sublocale} associated with $U$. The inclusion $F_{/U} \subset F$ is a partial frame homomorphism. It has an right adjoint $ - \wedge U : F \rightarrow F_{/U}$, which is frame homomorphism (and hence an internal right adjoint in $\mathrm{Frm}_{\mathrm{part}}$), corresponding topologically to the inclusion of the open sublocale. 

Any other partial frame homomorphism $f^* : F \rightarrow F'$ naturally factors as
$$  F \xrightarrow{f^*} F'_{/f^*(1)} \rightarrow F'$$
where the left functor is a frame homomorphism. Topologically, this means that every partial map of locales has an open domain $f^*(1)$, on which it is an actual map of locales.
\end{example}

\begin{construction}[The non-Hausdorff one-point compactification] \label{onepointcompact}
The inclusion of the (wide) subcategory $\mathrm{Frm} \hookrightarrow \mathrm{Frm}_{\mathrm{part}}$ admits a left adjoint, which is given by sending a frame $F$ to the frame $F_\top$ given by adding a single new top element. Topologically, this adds a new point $+$ given by the frame homomorphism $ +^* : F_\top \rightarrow [1]$ with $+^*(U) = 0$ for all $U \in F$. This point is a \emph{co-generic closed} point, in the sense that its only open neighborhood is the top element $1$. The adjunction
\[\begin{tikzcd}
	{\mathrm{Frm}} & {\mathrm{Frm}_{\mathrm{part}}}
	\arrow[""{name=0, anchor=center, inner sep=0}, curve={height=12pt}, from=1-1, to=1-2]
	\arrow[""{name=1, anchor=center, inner sep=0}, "{(-)^+}"', curve={height=12pt}, from=1-2, to=1-1]
	\arrow["\dashv"{anchor=center, rotate=-90}, draw=none, from=1, to=0]
\end{tikzcd}\]
lifts for formal reasons to an adjunction
\[\begin{tikzcd}
	{\mathrm{Frm}_{/[1]}} & {\mathrm{Frm}_{\mathrm{part}}}
	\arrow[""{name=0, anchor=center, inner sep=0}, "{\mathrm{coim}}"', curve={height=6pt}, from=1-1, to=1-2]
	\arrow[""{name=1, anchor=center, inner sep=0}, "{(-)^+}"', curve={height=6pt}, hook', from=1-2, to=1-1]
	\arrow["\dashv"{anchor=center, rotate=-90}, draw=none, from=1, to=0]
\end{tikzcd}\]
where the right adjoint sends a frame homomorphism $f^* : F \rightarrow [1]$ to the frame $\mathrm{coim}(f^*) = (f^*)^{-1}(0) \subset F$, and the left adjoint $(-)_\top$ becomes fully faithful. Topologically, this corresponds to a fully faithful embedding $L \mapsto L^+$ of $\mathrm{Loc}_{\mathrm{part}}$ into pointed locales $\mathrm{Loc}_{\mathrm{pt}/}$. Its left adjoint sends a pointed locale to the (maximal) open complement of the chosen basepoint. We note that for the case of $L^+$, the newly added point $+$ is closed in $L^+$, with open complement given by the original top element of $\mathcal{O}(L)$.
\end{construction}	

Given a locale $L$, its $\infty$-category of sheaves $\Sh(L) = \Sh(L, \An)$ is given by the full subcategory of $\mathrm{Fun}(\mathcal{O}(L)^{\op},\An)$ consisting of those functors $\mathcal{F}$ such that:
\begin{itemize}
\item $\mathcal{F}(0) \simeq 1.$
\item For all $U, V \in \mathcal{O}(L)$ it holds that
\[\begin{tikzcd}
	{\mathcal{F}(U \vee V)} & {\mathcal{F}(U)} \\
	{\mathcal{F}(V)} & {\mathcal{F}(U \wedge V)}
	\arrow[from=1-1, to=1-2]
	\arrow[from=1-1, to=2-1]
	\arrow[from=1-2, to=2-2]
	\arrow[from=2-1, to=2-2]
\end{tikzcd}\]
is a pullback square.
\item For all directed sets $U_i, i \in I,$ the canonical map
$$ \mathcal{F}( \bigvee_{i \in I} U_i ) \rightarrow \lim_{i \in I} \mathcal{F}( U_i ) $$
is an equivalence.
\end{itemize}

The $\infty$-category $\Sh(L)$ is a left exact and accessible localization of $\mathrm{Fun}(\mathcal{O}(L)^{\op},\An)$ and hence a topos, \cite[Corollary 6.2.1.7 and Lemma 6.2.2.7]{luriehtt}. Moreover, if $f : L \rightarrow L'$ is a partial map of locales, it is straightforward to see that the precomposition $\mathcal{F} \mapsto 	( U \mapsto \mathcal{F}(f^*(U)) )$ preserves the sheaf condition, and hence gives a well-defined functor $f_* : \Sh(L) \rightarrow \Sh(L')$. It follows that $f_*$ has a left adjoint $f^*$, which is given by left Kan extension, followed by sheafification. In case $f$ is an actual map of locales, i.e.\ $f^* : \mathcal{O}(L') \rightarrow \mathcal{O}(L)$ preserves the top element, the induced left adjoint $f^*$ on sheaf $\infty$-categories preserves finite limits, i.e.\ $f_* : \Sh(L) \rightarrow \Sh(L')$ is a geometric morphism. For a partial map of locales $f : L \rightarrow L'$, we have the decomposition
$L \supset f^*(1) \xrightarrow{f} L',$ 
which means that the left adjoint $f^* : \Sh(L') \rightarrow \Sh(L)$ factors as
$$ \Sh(L') \rightarrow \Sh(L)_{/y_{f^*(1)}} \rightarrow \Sh(L)$$
hence does not preserve the terminal object, however pullbacks and binary products.\footnote{The statement that the forget functor from an overcategory preserves contractible limits is given as Lemma 2.2.7. in \cite{gepner2020inftyoperadsanalyticmonads}, see also \cite[Proposition 4.4.2.9]{luriehtt}. The fact that in this case also binary products are preserved is because $y_U \in \Sh(L)$ is a subobject of $1$. } We record this by the statement that there exist functors
$$ \Sh : (\mathrm{Loc}_{\mathrm{part}})^{\op} \rightarrow \PR^L \text{ and } \Sh : \mathrm{Loc} \rightarrow \mathrm{RTop} \hookrightarrow (\PR^L)^{\op}.$$
Topoi in the essential image of $\mathrm{Loc}$ in $\mathrm{RTop}$ are called \emph{localic topoi}.

The functor $\Sh$ has a left adjoint. Given any topos $\mathcal{X}$, the subcategory $\mathcal{X}_{\leq 0} = \mathrm{Sub}(1_\mathcal{X})$ of subobjects of $1$ is a frame, \cite[Proposition 6.4.5.4.]{luriehtt}. The assignment $\mathcal{X} \mapsto \mathcal{X}_{\leq 0} $ is in fact left adjoint to the sheaves functor, \cite[Proposition 6.4.5.7.]{luriehtt}, i.e.\ we have the adjunction\footnote{We remark that this adjunction arises via restriction from the more general sheaves-coidempotents adjunction due to Aoki, given in Theorem \ref{sheafspectrumadjunction}.}
\[\begin{tikzcd}
	{\mathrm{Loc}} & {\mathrm{RTop}.}
	\arrow[""{name=0, anchor=center, inner sep=0}, "{\Sh}"', curve={height=6pt}, hook', from=1-1, to=1-2]
	\arrow[""{name=1, anchor=center, inner sep=0}, "{(-)_{\leq 0}}"', curve={height=6pt}, from=1-2, to=1-1]
	\arrow["\dashv"{anchor=center, rotate=-90}, draw=none, from=1, to=0]
\end{tikzcd}\]
The unit of this adjunction is given by the geometric morphism $\mathcal{X} \rightarrow \Sh(\mathcal{X}_{\leq 0})$, whose pullback part is given by left Kan extending the inclusion $y : \mathcal{X}_{\leq 0} \hookrightarrow \mathcal{X}$. The functor $y$ has a left adjoint $\tau_{\leq -1}$, whose main properties we summarize here.

\begin{proposition} \label{subobjectframeadjunction}
Let $\mathcal{X}$ be a topos. Then there exists an adjunction
\[\begin{tikzcd}
	{\mathcal{X}_{\leq 0} } & {\mathcal{X}}
	\arrow[""{name=0, anchor=center, inner sep=0}, "y"', curve={height=6pt}, hook', from=1-1, to=1-2]
	\arrow[""{name=1, anchor=center, inner sep=0}, "{\tau_{\leq -1}}"', curve={height=6pt}, two heads, from=1-2, to=1-1]
	\arrow["\dashv"{anchor=center, rotate=-90}, draw=none, from=1, to=0]
\end{tikzcd}\]
where the left adjoint is given by sending $X \in \mathcal{X}$ to its \emph{support}, given by the canonical effective epimorphism-monomorphism factorization
\[\begin{tikzcd}
	X && 1 \\
	& {\tau_{\leq -1}(X).}
	\arrow["{\exists !}", from=1-1, to=1-3]
	\arrow[two heads, from=1-1, to=2-2]
	\arrow[hook, from=2-2, to=1-3]
\end{tikzcd}\]
The inclusion $y$ preserves filtered colimits and $\tau_{\leq -1}$ preserves finite products.
\end{proposition}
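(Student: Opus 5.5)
The plan is to exhibit $\mathcal{X}_{\leq 0} = \mathrm{Sub}(1_\mathcal{X})$ as a reflective subcategory of $\mathcal{X}$. The reflector will be given by $(-1)$-truncation, and the image factorization will identify it with the support construction. The properties of $y$ and $\tau_{\leq -1}$ are then read off from standard facts about truncation in a topos.

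First, I recall that subobjects of $1$ are exactly the $(-1)$-truncated objects of $\mathcal{X}$: a map to the terminal object is a monomorphism iff the object is $(-1)$-truncated. By \cite[Proposition 5.5.6.18]{luriehtt} the inclusion of $(-1)$-truncated objects $\tau_{\leq -1}\mathcal{X} \hookrightarrow \mathcal{X}$ admits a left adjoint $\tau_{\leq -1}$. This gives the adjunction $\tau_{\leq -1} \dashv y$.

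To identify $\tau_{\leq -1}(X)$ with the image of $X \to 1$, I use the (effective epi, mono) factorization system in an $\infty$-topos \cite[Example 5.2.8.16, Proposition 6.2.3.4]{luriehtt}. Factor $X \to 1$ as $X \twoheadrightarrow \mathrm{im} \hookrightarrow 1$. For any subterminal $U$, mapping spaces into $U$ are empty or contractible. Orthogonality of effective epimorphisms against the monomorphism $U \to 1$ shows that a map $X \to U$ exists iff a map $\mathrm{im} \to U$ exists. Hence $\mathrm{im}$ has the universal property of the reflection, so $\mathrm{im} \simeq \tau_{\leq -1}(X)$.

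For filtered colimits, I use that in an $\infty$-topos filtered colimits are left exact \cite[Example 7.3.4.7]{luriehtt}. A filtered colimit of monomorphisms $U_i \to 1$ is therefore again a monomorphism, since being a monomorphism means the diagonal $U \to U \times_1 U$ is an equivalence, and this condition is preserved. So $\mathcal{X}_{\leq 0}$ is closed under filtered colimits in $\mathcal{X}$, and $y$ preserves them. For finite products, $\tau_{\leq -1}(1) = 1$ holds trivially. For binary products, I invoke \cite[Lemma 6.5.1.2]{luriehtt}: truncation functors in an $\infty$-topos preserve finite products. Alternatively, effective epimorphisms are stable under pullback, hence under products. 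Then $X \times Y \to \tau_{\leq -1}X \times \tau_{\leq -1}Y$ is an effective epi, and its target is a mono into $1$, so it is the image factorization of $X \times Y \to 1$.

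The only potentially delicate step is the product compatibility, specifically ensuring that products of effective epimorphisms remain effective epimorphisms. I expect this to follow from pullback-stability together with the fact that composites of effective epimorphisms are effective epimorphisms.
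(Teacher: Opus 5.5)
Your proof is correct and follows the same route as the paper's. The paper likewise cites \cite[Proposition 5.5.6.18]{luriehtt} for the reflector, \cite[Lemma 6.5.1.2]{luriehtt} for finite products, and left exactness of filtered colimits \cite[Example 7.3.4.7]{luriehtt} to see that monomorphisms into $1$ are closed under filtered colimits. Two of your steps go beyond what the paper writes and are correct: the orthogonality argument identifying $\tau_{\leq -1}(X)$ with the image of $X \to 1$, which the paper only asserts, and the alternative product argument via pullback-stability and composition of effective epimorphisms.
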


\begin{proof} We cite the relevant parts from \cite{luriehtt}. The existence of the left adjoint is provided by Proposition 5.5.6.18. The statement that $\tau_{\leq -1}$ preserves finite products is given by \cite[Lemma 6.5.1.2]{luriehtt}. The claim that $y$ preserves filtered colimits follows from the statement that filtered colimits are exact in $\infty$-topoi (Example 7.3.4.7.), and hence monomorphisms are closed under filtered colimits.
\end{proof}

Let $(P, \tau)$ be a Grothendieck topology $\tau$ on a poset $P$. Define the poset $F( P, \tau )$ as the set of all $U \subset P$ that are downward closed and compatible with sieves, in the sense that if $S \subset \{q \in P ~|~ q \leq p \}$ is a covering sieve of $p \in P$, we have $p \in U$ iff $q \in U$ for all $q \in S$.

\begin{proposition}
Let $(P, \tau)$ be a site with $P$ a poset. Then $F( P, \tau )$ is a frame. If $P$ is closed under binary meets then it holds that
$$ \Sh(P, \tau) \simeq \Sh( F( P, \tau ) ).$$
\end{proposition}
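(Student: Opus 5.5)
We first check that $F(P,\tau)$ is a frame, and then compare the sheaf $\infty$-categories by passing through the frame of opens $F=F(P,\tau)$.

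\textbf{$F$ is a frame.} Order $F(P,\tau)$ by inclusion inside the power set of $P$. An arbitrary intersection of downward closed, sieve-compatible subsets is again downward closed and sieve-compatible. So $F$ is closed under arbitrary meets in $\mathcal{P}(P)$ and is a complete lattice. Moreover, $F$ is the fixed-point set of a closure operator $j$ on the frame $\mathcal{D}(P)$ of down-sets. Here $j(U)$ is the smallest sieve-compatible down-set containing $U$, obtained by transfinitely adding every $p$ that has a covering sieve contained in $U$. The key point is that $j$ is a \emph{nucleus}, i.e.\ $j(U\cap V)=j(U)\cap j(V)$. This follows from the pullback-stability axiom of a Grothendieck topology: if $S\subset U$ covers $p$ and $T\subset V$ covers $p$, then $S\cap T$ also covers $p$, since $T$ restricted along each element of $S$ is covering and transitivity applies. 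The induction on the transfinite construction is routine. Fixed points of a nucleus on a frame form a frame, with joins $j(\bigcup U_i)$ and meets given by intersection. So $F$ is a frame.

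\textbf{Comparison of sheaf categories.} Assume $P$ has binary meets. The map $p\mapsto \downarrow p$ lands in $F$. Indeed, $\downarrow p$ is sieve-compatible: if a sieve $S$ on $q$ lies in $\downarrow p$, then either $q\le p$ already, or $S$ is contained in $\downarrow(p\wedge q)$, so that $\downarrow(p\wedge q)$ is a covering sieve of $q$ of a representable kind. Restricting along this map gives a functor $\Sh(F)\to \Fun(P^{\op},\An)$. We show that it lands in $\tau$-sheaves and is an equivalence. The steps are as follows.
\begin{enumerate}
\item Every $U\in F$ is the join in $F$ of the elements $\downarrow p$ for $p\in U$. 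Hence $P$ is a generating basis of $F$.
\item A family $\{\downarrow q_i\}$ covers $\downarrow p$ in $F$ exactly when the sieve it generates on $p$ is $\tau$-covering. The backward direction is sieve-compatibility. For the forward direction, $\tau$-covering sieves on $p$ form a filter, and $j$ of their union contains $p$ only through covering sieves, by the transfinite description of $j$.
\item Both $\infty$-categories are then sheaves on a basis. The Čech nerves of covers agree, because binary meets in $P$ compute intersections: $\downarrow p\cap\downarrow q=\downarrow(p\wedge q)$. Applying the comparison lemma for sheaves on a basis (restriction to a dense subsite, as in \cite[Section 6.2.2]{luriehtt}, or by direct left/right Kan extension) gives the equivalence.
\end{enumerate}

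\textbf{Main obstacle.} The hardest step is the second one: identifying $F$-covers with $\tau$-covers. Closing a sieve under the nucleus $j$ could in principle add new covers. We would settle this by proving that the $\tau$-covering sieves on $p$ are exactly the sieves $S$ with $p\in j(S)$. One direction is immediate. For the converse, we run induction on the stage at which $p$ enters $j(S)$. At each stage we use transitivity of $\tau$: if $R$ covers $p$, and $S$ restricted to each $r\in R$ covers $r$, then $S$ covers $p$. A further subtlety is that we use $\infty$-sheaves rather than hypersheaves. This is harmless here, because both sides are defined by Čech descent for covering sieves on a poset. Binary meets make the Čech nerve of a cover of $\downarrow p$ consist of representables, which is precisely the hypothesis needed in the comparison step.
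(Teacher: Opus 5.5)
Your argument that $F(P,\tau)$ is a frame is fine. The comparison step, however, fails at its first claim. The map $p\mapsto{\downarrow p}$ lands in $F(P,\tau)$ only when $\tau$ is subcanonical, and the proposition assumes nothing of the sort.

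Sieve-compatibility of ${\downarrow p}$ requires $q\le p$ whenever some $\tau$-covering sieve of $q$ lies in ${\downarrow p}$. Your reasoning only shows that the principal sieve ${\downarrow(p\wedge q)}$ then covers $q$, and that does not force $q\le p$. For a concrete failure, let $P=\{0<1\}$ and declare $\{0\}$ a covering sieve of $1$; this is a Grothendieck topology, and $P$ has meets. Then $F(P,\tau)=\{\emptyset,P\}$, so ${\downarrow 0}=\{0\}\notin F(P,\tau)$. The restriction functor you want to use is not defined, and your step (1) fails. The proposition still holds here: the sheaf condition forces $\mathcal{F}(1)\simeq\mathcal{F}(0)$, so $\Sh(P,\tau)\simeq\An\simeq\Sh(F(P,\tau))$. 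Your proof just does not reach it.

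The step you flagged as the main obstacle is actually easy. For a down-set $U$, stability and transitivity give $j(U)=\{q : U\cap{\downarrow q}\text{ is $\tau$-covering for } q\}$ directly, with no transfinite induction.

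To repair the argument, work with the sheafified representables $\phi(p)=j({\downarrow p})$. By the formula above:
\begin{itemize}
\item $\phi$ preserves binary meets;
\item every $U\in F(P,\tau)$ equals $\bigvee_{p\in U}\phi(p)$;
\item $\{\phi(q_i)\}$ covers $\phi(p)$ iff the sieve on $p$ generated by the $p\wedge q_i$ is $\tau$-covering.
\end{itemize}
But $\phi$ is in general neither injective nor an order embedding: $\phi(p)\le\phi(q)$ only says that ${\downarrow(p\wedge q)}$ covers $p$. So the subsite comparison lemma does not apply as stated. You need one of two things:
\begin{itemize}
\item a comparison lemma for dense morphisms of sites; or
\item an extra step showing that $\tau$-sheaves invert $p\wedge q\le p$ whenever ${\downarrow(p\wedge q)}$ covers $p$. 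This reduces you to the subsite $\phi(P)\subseteq F(P,\tau)$, which is closed under meets.
\end{itemize}

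The paper bypasses all of this. It identifies $F(P,\tau)$ with the frame $\mathrm{Sub}(1)$ of the topos $\Sh(P,\tau)$, i.e.\ with $[1]$-valued sheaves via $\mathcal{F}\mapsto\mathcal{F}^{-1}(1)$, which gives the frame property for free. It then invokes Lemma 6.4.5.6 of \cite{luriehtt}, that $\Sh(P,\tau)$ is localic when $P$ has binary meets; this does not need subcanonicity. In the subcanonical case, your explicit comparison-lemma route is correct and a reasonable hands-on alternative.
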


\begin{proof} The poset $F( P, \tau )$ can be seen to agree with the localic reflection of $\Sh(P, \tau)$, which is given by $\mathrm{Sub}(1) \simeq \Sh(P, \tau; [1])$, by sending $ \mathcal{F} : P^{\op} \rightarrow [1]$ to the downward closed set $\mathcal{F}^{-1}(1)$. The rest of the claim is a special case of Lemma 6.4.5.6 in \cite{luriehtt}, which states that $\Sh(P, \tau)$ is localic if $P$ is closed under binary meets.
\end{proof}

\begin{warning}
The requirement that $P$ is closed under binary meets cannot be dropped in general. As an example, the trivial Grothendieck topology on $P$ leads to the frame $\Fun(P^{\op},[1])$, which is isomorphic to the set of open sets of the Alexandroff topology. However, it is not the case in general that $\Fun(P^{\op}, \An)$ is equivalent to $\Sh(P_{\Alex})$. Rather, the former agrees with the latter after hypercompletion, see \cite[Appendix A]{Aoki_2023} for a discussion.
\end{warning}

\subsection{Sublocales}

Let us collect some basic notions on sublocales.

\begin{definition} \label{definitionquotientsublocale} Let $f : L \rightarrow M$ be a continuous map of locales. We call $f$ a \emph{quotient map}, if $f^*$ is injective. We call $f$ an \emph{embedding} if $f^*$ is surjective. In this case $L$ will also be called a \emph{sublocale} of $M$.
\end{definition}

\begin{remark}
It is sensible to extend Definition \ref{definitionquotientsublocale} to the case where $f : L \rightarrow M$ is a partial map, and talk about partial quotients and partial embeddings. The notion of partial embedding however does not add anything new: A partial frame homomorphism $f^*$ such that $f^*$ is surjective automatically preserves $1$, since $1$ is obtained the supremum of all opens in $M$. 
\end{remark}

We remark that since frames are in particular posets, it holds that $f$ is an embedding iff the right adjoint $f_*$ is injective. The notion of sublocale can be expressed in multiple different forms.

\begin{proposition}[\cite{picado_pultr}, Chapter III, also \cite{johnstone1982stone}, Chapter II.2] 
Let $L$ be a locale. The following posets are isomorphic.
\begin{enumerate}
\item The set of isomorphism classes of embeddings $\{ S \rightarrow L \}$.
\item The set of isomorphism classes of regular monomorphisms $\{ S \rightarrow L \}$, that is, maps $S \rightarrow L$ that arise as equalizers of pairs of maps $f,g : L \rightarrow M$.
\item The set of subsets $S \subset \mathcal{O}(L)$ that are closed under arbitrary meets and such that for every $V \in S, U \in \mathcal{O}(L)$ it holds that the Heyting implication $U \rightarrow V \in S$.
\item The set of \emph{frame congruences} of $L$, that is, equivalence relations of $\mathcal{O}(L)$ that are subframes of $\mathcal{O}(L) \times \mathcal{O}(L) = \mathcal{O}(L \amalg L)$.
\item The set of \emph{nuclei} of $L$, that is functors $N : \mathcal{O}(L) \rightarrow \mathcal{O}(L)$ such that for all $U,V \in \mathcal{O}(L)$ it holds $U \leq N(U)$, $N^2 = N$ and $N(U \wedge V) = N(U) \wedge N(V)$. 
\end{enumerate}
Moreover, the poset of embeddings is in fact a co-frame, or dually the poset of nuclei (ordered pointwise) is a frame.
\end{proposition}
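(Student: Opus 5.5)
The plan is to build explicit bijections around the poset of nuclei (5), which is the most algebraic description, and to check at the end that all of them are order-preserving or order-reversing as appropriate. Starting from an embedding $f : S \rightarrow L$, so that $f^*$ is surjective with right adjoint $f_*$ injective, we set $N = f_* f^*$. Then $N$ is inflationary by the unit of the adjunction. It is idempotent because $f^* f_* = \mathrm{id}$, which follows from surjectivity of $f^*$. It preserves binary meets because both $f^*$ (a frame homomorphism) and $f_*$ (a right adjoint) do.

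Conversely, from a nucleus $N$ we take $S_N = N(\mathcal{O}(L))$, the set of fixed points. We show it is a frame with joins $N(\bigvee U_i)$ and meets computed in $\mathcal{O}(L)$, and that $N : \mathcal{O}(L) \rightarrow S_N$ is a surjective frame homomorphism. The meet-preservation of $N$ gives binary meets, and $N$ is a left adjoint onto its image, so it preserves joins. This produces the embedding, and the two constructions are mutually inverse up to isomorphism of embeddings. The comparison (1)$\Leftrightarrow$(5) therefore comes down to identifying $f_*(\mathcal{O}(S))$ with the fixed points of $f_*f^*$.

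For (3), we send $N$ to its fixed-point set. This set is closed under arbitrary meets, since $N$ is inflationary and monotone. It is also closed under Heyting implication: if $N(V) = V$, then $U \wedge N(U \rightarrow V) \leq N(U) \wedge N(U \rightarrow V) = N(U \wedge (U \rightarrow V)) \leq N(V) = V$, hence $N(U \rightarrow V) \leq U \rightarrow V$. In the other direction, given such a subset $S$, we define $N(U) = \bigwedge\{V \in S : U \leq V\}$. The key step is meet-preservation. Using the implication condition, one shows that $U \wedge U' \leq V$ with $V \in S$ implies $U \leq (U' \rightarrow V) \in S$, hence $N(U) \leq U' \rightarrow V$. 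From there one shows $N(U) \wedge U' \leq V$, and iterating gives $N(U) \wedge N(U') \leq V$.

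For (4), a congruence $R$ determines the quotient frame $\mathcal{O}(L)/R$ and hence an embedding. Conversely, an embedding gives the kernel relation $U \sim V \Leftrightarrow f^*U = f^*V$, which is a subframe of $\mathcal{O}(L)\times\mathcal{O}(L)$ because $f^*$ is a frame homomorphism. For (2), an embedding equalizes the pair of maps $L \rightrightarrows$ (the pushout $L \amalg_S L$), which dually means a surjection of frames is the coequalizer of its kernel pair. A regular monomorphism is an equalizer, whose frame dual is a coequalizer in $\mathrm{Frm}$. Such a coequalizer is computed as a quotient by the generated congruence and hence is surjective. I expect this to be the main obstacle: showing that coequalizers in $\mathrm{Frm}$ are surjective requires the existence of the least congruence containing a given relation, which is obtained as an intersection of congruences.

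Finally, the order on nuclei is pointwise, and meets of nuclei are computed as follows: the intersection of fixed-point sets in (3) is again closed under meets and implications. Hence nuclei form a complete lattice. To see that it is a frame, the plan is to verify $N \wedge \bigvee N_i = \bigvee (N \wedge N_i)$ via description (3), where joins of nuclei correspond to intersections of the sets $S_i$. Dually, this says that embeddings form a coframe. For this step I would follow the standard argument in \cite{picado_pultr} rather than reprove it.
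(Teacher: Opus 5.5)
Your proposal is correct and is essentially the standard argument of \cite{picado_pultr}, Chapter III. The paper gives no proof of its own: it only cites that source and afterwards records the same dictionary you use, namely $N = i_*i^*$, the kernel congruence, and $S$ as the equalizer of $L \rightrightarrows L \amalg_S L$. Deferring the coframe law to that reference is therefore in line with the paper.

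One slip to fix. In the pointwise order, intersecting fixed-point sets computes the \emph{join} of nuclei, not the meet, as you correctly say a sentence later. This is consistent with (4) and (5) being order-reversingly identified with (1)--(3).
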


Let us mention just a few key facts about how to change between these descriptions. An embedding $i : S \hookrightarrow L$ leads to the nucleus $N = i_* i^*$. The corresponding congruence can be understood as the quotient $p : L \amalg L \rightarrow L \amalg_S L$. Here the frame of $L \amalg_S L$ is given as
$$ \{ (U,V) \in \mathcal{O}(L) \times \mathcal{O}(L) ~|~ i^*(U) = i^*(V) \} \subset \mathcal{O}(L) \times \mathcal{O}(L),$$
and $S$ is recovered as the equalizer of the natural maps $i_1, i_2 : L \rightarrow L \amalg_S L$, with $(i_1)^*$ and $(i_2)^*$ being given by the projection onto the first or second variable, and $(i_1)_*(U) = (U,N(U))$. We remark here as well, that the pushforward $p_*$ is given by the formula
$$ p_*(U,V) = ( U \wedge N(V), V \wedge N(U) ),$$
a fact that will prove useful later on. 

\begin{lemma}
Let $i : S \hookrightarrow L$ be the embedding of a sublocale, and $\mathcal{C}$ be a presentable $\infty$-category. Then the induced functor
$$ i_* : \mathrm{Sh}(S,\mathcal{C}) \rightarrow  \mathrm{Sh}(L,\mathcal{C})$$
is fully faithful. 
\end{lemma}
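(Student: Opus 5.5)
The plan is to show that the counit $i^* i_* \to \mathrm{id}$ is an equivalence, first for $\mathcal{C} = \An$ and then for general $\mathcal{C}$ by tensoring. Throughout, $\mathcal{C}$-valued sheaves are identified with $\mathcal{C}$-valued functors on $\mathcal{O}(L)^{\op}$ satisfying the sheaf conditions. Under this identification, $i_*$ is precomposition with the surjective frame homomorphism $i^* : \mathcal{O}(L) \to \mathcal{O}(S)$.

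For the space-valued case, I would use the right adjoint $i_*^{\mathcal{O}} : \mathcal{O}(S) \to \mathcal{O}(L)$ on frames. Surjectivity of $i^*$ gives $i^* i_*^{\mathcal{O}} = \mathrm{id}_{\mathcal{O}(S)}$. The key steps are as follows.

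\begin{enumerate}
\item Compute the left adjoint $i^*$ on sheaves as left Kan extension along $i^*$ on frames, followed by sheafification.
\item Since $i^*$ on frames is a left adjoint, the left Kan extension of a presheaf $G$ on $\mathcal{O}(L)$ along it is simply precomposition with $i_*^{\mathcal{O}}$. Concretely, $(\mathrm{Lan}\, G)(W) = \colim_{W \le i^*(U)} G(U)$, and the indexing poset $\{U : W \le i^*U\}$ has the initial object $U = i_*^{\mathcal{O}}(W)$.
\item Apply this to $G = i_* \mathcal{F} = \mathcal{F} \circ i^*$. This gives $(\mathrm{Lan}\, i_*\mathcal{F})(W) = \mathcal{F}(i^* i_*^{\mathcal{O}} W) = \mathcal{F}(W)$.
\item Conclude that this presheaf is already a sheaf. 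Sheafification therefore does nothing, and the counit $i^* i_* \mathcal{F} \to \mathcal{F}$ is an equivalence, so $i_*$ is fully faithful.
\end{enumerate}

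I need to be careful about the variance convention in step 2, since the Kan extension is along the opposite functor $\mathcal{O}(L)^{\op} \to \mathcal{O}(S)^{\op}$.

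Alternatively, one can avoid Kan extensions and argue directly on mapping spaces. For sheaves $\mathcal{F}, \mathcal{G}$ on $S$, every natural transformation $\mathcal{F} \circ i^* \to \mathcal{G} \circ i^*$ is determined by, and recovered from, its restriction along $i_*^{\mathcal{O}}$. This holds because $i^*$ is essentially surjective with a section, and the precomposition functor $\mathrm{Fun}(\mathcal{O}(S)^{\op}, -) \to \mathrm{Fun}(\mathcal{O}(L)^{\op}, -)$ along a functor admitting a fully faithful right adjoint $i_*^{\mathcal{O}}$ is fully faithful. In other words, it is fully faithful on all presheaf categories, hence also on the full subcategories of sheaves. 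This second argument works verbatim for any $\mathcal{C}$, so it may be cleaner to use it from the start.

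The claim I need is the standard one: if $p : A \to B$ has a fully faithful right adjoint $r$, then $p^* : \mathrm{Fun}(B, \mathcal{C}) \to \mathrm{Fun}(A, \mathcal{C})$ is fully faithful. Here $p$ is the functor $\mathcal{O}(L)^{\op} \to \mathcal{O}(S)^{\op}$, and passing to opposites makes $r = (i_*^{\mathcal{O}})^{\op}$ a fully faithful left adjoint rather than a right adjoint. Either way, $p$ is a localization, namely a reflective or coreflective one. Precomposition with any localization of $\infty$-categories is fully faithful, and a (co)reflection is a localization at the maps inverted by $p$.

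The main obstacle is only bookkeeping of variance. The substantive input is the surjectivity of $i^*$, which gives $i^* i_*^{\mathcal{O}} = \mathrm{id}$. Because the argument is formal, no hypothesis on $\mathcal{C}$ beyond presentability is needed: presentability ensures $\mathrm{Sh}(-,\mathcal{C})$ is defined and $i_*$ exists.
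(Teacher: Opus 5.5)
Your second argument is correct and is essentially the paper's proof. The paper applies the $2$-functor of taking $\mathcal{C}$-valued presheaves to the internal quotient $i^* \dashv i_*^{\mathcal{O}}$ on frames (where $i^* i_*^{\mathcal{O}} = \mathrm{id}$). It concludes that precomposition with $i^*$ is fully faithful on presheaf $\infty$-categories, and then restricts to the full subcategories of sheaves. Your step ``precomposition along a (co)reflective localization is fully faithful'' is the same argument. You should use it from the start, because step 2 of your first route is false as stated.

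On presheaves the variance flips. Since $i^* \dashv i_*^{\mathcal{O}}$, the functor $(i^*)^{\op}\colon \mathcal{O}(L)^{\op}\to\mathcal{O}(S)^{\op}$ is a \emph{right} adjoint. So precomposition with $i_*^{\mathcal{O}}$ is the \emph{right} Kan extension along $(i^*)^{\op}$, not the left one.

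In the indexing poset $\{U : W \le i^*U\}$, computing the colimit would need a least element. Such an element generally does not exist, and $i_*^{\mathcal{O}}(W)$ is not one: it is the largest $U$ with $i^*U = W$. For instance, take the closed point $\{0\}\hookrightarrow\mathbb{R}$ and let $W$ be the top element of $\mathcal{O}(\{0\})$. Then $(\mathrm{Lan}\,G)(W) = \colim_{U\ni 0} G(U)$ is the stalk, not $G(\mathbb{R}) = G(i_*^{\mathcal{O}}(W))$.

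The conclusion of your step 3 does survive for presheaves of the form $\mathcal{F}\circ i^*$. The map $U\mapsto i^*U$ from $\{U : W\le i^*U\}$ to $\{W' : W'\ge W\}$ is final, since each comma poset $\{U : W\le i^*U\le W'\}$ has greatest element $i_*^{\mathcal{O}}(W')$. But this finality statement is just the presheaf-level full faithfulness of your second argument in disguise. The detour through left Kan extension and sheafification therefore buys nothing.
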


\begin{proof}
Taking presheaves is a $2$-functor, hence sends the internal quotient map
\[\begin{tikzcd}
	{\mathcal{O}(S)} & {\mathcal{O}(L)}
	\arrow[""{name=0, anchor=center, inner sep=0}, "{i_*}"', curve={height=6pt}, hook, from=1-1, to=1-2]
	\arrow[""{name=1, anchor=center, inner sep=0}, "{i^*}"', curve={height=6pt}, two heads, from=1-2, to=1-1]
	\arrow["\dashv"{anchor=center, rotate=-90}, draw=none, from=1, to=0]
\end{tikzcd}\]
to the adjunction
\[\begin{tikzcd}
	{\mathrm{Fun}(\mathcal{O}(S)^{\op}, \mathcal{C})} & {\mathrm{Fun}(\mathcal{O}(L)^{\op}, \mathcal{C})}
	\arrow[""{name=0, anchor=center, inner sep=0}, "{i_*}"', curve={height=10pt}, hook, from=1-1, to=1-2]
	\arrow[""{name=1, anchor=center, inner sep=0}, "{i^*}"', curve={height=10pt}, two heads, from=1-2, to=1-1]
	\arrow["\dashv"{anchor=center, rotate=-90}, draw=none, from=1, to=0]
\end{tikzcd}\]
with fully faithful right adjoint. Then $i_*$ on the level of sheaves is simply obtained by restriction to subcategories, hence still fully faithful.
\end{proof}

Let us highlight two main examples of sublocales.

\begin{example}
Let $L$ be a locale, and $U$ an open of $L$. Then $i^* : - \wedge U : \mathcal{O}(L) \rightarrow \mathcal{O}(L)_{/U}$ is a surjective frame homomorphism. The corresponding sublocale is called the \emph{open sublocale} of $L$ given by $U$. The functor $i^*$ has a further left adjoint $i_! : \mathcal{O}(L)_{/U} \rightarrow \mathcal{O}(L)$ which is given by inclusion. It is a partial frame homomorphism. On the level of $\infty$-categories of sheaves, we get induced adjunctions
\[\begin{tikzcd}
	{\mathrm{Sh}(U,\mathcal{C})} & {\mathrm{Sh}(L,\mathcal{C})}
	\arrow[""{name=0, anchor=center, inner sep=0}, "{i_!}", curve={height=-12pt}, hook, from=1-1, to=1-2]
	\arrow[""{name=1, anchor=center, inner sep=0}, "{i_*}"', curve={height=12pt}, hook, from=1-1, to=1-2]
	\arrow[""{name=2, anchor=center, inner sep=0}, "{i^*}"{description}, two heads, from=1-2, to=1-1]
	\arrow["\dashv"{anchor=center, rotate=-89}, draw=none, from=0, to=2]
	\arrow["\dashv"{anchor=center, rotate=-91}, draw=none, from=2, to=1]
\end{tikzcd}\]
with $i^*$ given by precomposition with $i_!$, and $i_*$ given by precomposition with $i^*$. Furthermore, $i_!$ is fully faithful. We remark that $\mathrm{Sh}(U,\An)$ can be identified with $\mathrm{Sh}(L;  \An)_{/y_U}$ and $i_!$ corresponds to the forget functor, \cite[Section 6.3.5]{luriehtt}.
\end{example}

\begin{example}
Let $L$ be a locale, and $U$ an open of $L$. Then $j^* : - \vee U : \mathcal{O}(L) \rightarrow \mathcal{O}(L)_{U/}$ is a surjective frame homomorphism. The corresponding sublocale is called the \emph{closed sublocale} of $L$ complementary to $U$.
\end{example}

The following simple statement will be very useful later on, hence we want to highlight it. We remark that meets of sublocales are computed as pullbacks, and open / closed sublocales are closed under pullback \cite[Section 6.3]{picado_pultr}.

\begin{proposition}[Second Isomorphism Theorem for Locales] \label{secondiso}
Let $L$ be a locale, $S \hookrightarrow L$ a sublocale and $C \hookrightarrow L$ a closed sublocale.
Then the inclusions $ S \wedge C \hookrightarrow S$ and $ C \hookrightarrow S \vee C $ are closed, and the inclusion
$$ S \hookrightarrow S \vee C $$
induces a homeomorphism on their respective open complements.
\end{proposition}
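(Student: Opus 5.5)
I plan to work entirely on the frame side, where the closed sublocale $C$ complementary to an open $U \in \mathcal{O}(L)$ has nucleus $N_C(W) = W \vee U$. A sublocale $S$ with embedding $i : S \hookrightarrow L$ has nucleus $N_S = i_*i^*$, and I view its frame of opens as the fixed points $\mathrm{Fix}(N_S) \subset \mathcal{O}(L)$. I take as known from \cite[Chapter III]{picado_pultr} that sublocales, viewed as subsets of $\mathcal{O}(L)$, are ordered by inclusion, and that the join $S \vee C$ of sublocales corresponds to the pointwise meet of nuclei. Hence
\[ N_{S\vee C}(W) = N_S(W) \wedge (W \vee U). \]
I also take as known that the meet $S \wedge C$ is the pullback of $C$ along $S \hookrightarrow L$.

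For the first claim, closed sublocales are stable under pullback, so $S \wedge C \hookrightarrow S$ is the closed sublocale of $S$ complementary to the open $i^*(U)$. For the second, $C \le S \vee C$, so $C$ is a sublocale of $S \vee C$. Write $j : S \vee C \hookrightarrow L$. The sublocale $C$ is the pullback of the closed sublocale $C \subset L$ along $j$, because $C \wedge (S\vee C) = C$. So $C$ is the closed sublocale of $S \vee C$ complementary to the open $j^*(U)$.

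The main step is to identify the two open complements with open sublocales of $S$ and of $S \vee C$, and then compare them. The open complement of $S \wedge C$ in $S$ is the open sublocale $S_{i^*U}$ of $S$, whose frame is $\mathrm{Fix}(N_S)_{/N_S(U)}$. Similarly, the open complement of $C$ in $S\vee C$ has frame $\mathrm{Fix}(N_{S\vee C})_{/N_{S\vee C}(U)}$. Here $N_{S\vee C}(U) = N_S(U) \wedge U = U$, using $U \le N_S(U)$. It therefore suffices to show that the frame homomorphism $W \mapsto N_S(W)$, restricted from $\{W \in \mathrm{Fix}(N_{S\vee C}) : W \le U\}$ to $\{V \in \mathrm{Fix}(N_S) : V \le N_S(U)\}$, is a bijection. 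I expect this to be the main obstacle; the rest is bookkeeping with nuclei.

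My candidate inverse is $V \mapsto V \wedge U$. To check that it lands in the right place, take $V = N_S(V) \le N_S(U)$. Then
\[ N_{S\vee C}(V\wedge U) = N_S(V \wedge U) \wedge ((V\wedge U) \vee U) = N_S(V) \wedge N_S(U) \wedge U = V \wedge U. \]
For the composite $V \mapsto V\wedge U \mapsto N_S(V\wedge U)$, I get $N_S(V \wedge U) = V \wedge N_S(U) = V$. For the other composite, take $W \le U$ fixed by $N_{S\vee C}$. Then $W = N_S(W) \wedge (W\vee U) = N_S(W) \wedge U$. Both maps are monotone, so they give an isomorphism of frames. Finally, this isomorphism is exactly the map induced by the inclusion $S \hookrightarrow S\vee C$ on open parts, since pulling back opens along $S \hookrightarrow S \vee C$ is given by $N_S$ on fixed points.
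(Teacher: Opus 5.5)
Your proposal is correct and follows essentially the same route as the paper: you identify the nucleus of $S \vee C$ as $N_S \wedge (-\vee U)$, realize the two open complements as $\mathcal{O}(S)_{/N_S(U)}$ and $\mathcal{O}(S\vee C)_{/U}$, and check that $N_S$ and $-\wedge U$ are mutually inverse. Two small differences from the paper's version are in your favour: you get $W = N_S(W)\wedge U$ directly from $W \vee U = U$ for $W \le U$, which is cleaner than the paper's distributivity argument, and you explicitly verify that $V \wedge U$ is fixed by $N_{S\vee C}$, which the paper leaves implicit.
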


\begin{proof}
The inclusions $ S \wedge C \hookrightarrow S$ and $ C \hookrightarrow S \vee C $ are obtained from pullback along the closed inclusion $C \hookrightarrow L$, hence are themselves closed. Now let $U$ denote the open complement of $C$, and denote the nucleus corresponding to $S$ by $N$. The nucleus corresponding to $S \vee C$ is then given by $N \wedge ( - \vee U )$. We can verify that the left adjoint to the inclusion
$$ S \hookrightarrow S \vee C  $$
is simply given by $N$, as for $V$ open we have
$$ N( N(V) \wedge (V \vee U) ) = N(V) \wedge N( V \vee U ) = N( V ),$$
as $N(V \vee U ) \geq N(V)$. This means the induced adjunction on the open complements is given as
\[\begin{tikzcd}
	{\mathcal{O}(S)_{ / N(U) } } & {\mathcal{O}(S \vee C)_{ / U } }
	\arrow[""{name=0, anchor=center, inner sep=0}, "{- \wedge U}"', curve={height=12pt}, from=1-1, to=1-2]
	\arrow[""{name=1, anchor=center, inner sep=0}, "N"', curve={height=12pt}, from=1-2, to=1-1]
	\arrow["\dashv"{anchor=center, rotate=-90}, draw=none, from=1, to=0]
\end{tikzcd}\]
We simply need to verify that these two adjoints are in fact inverse to each other. To see this, we verify the following:
\begin{itemize}
\item Let $V \in \mathcal{O}(S)_{ / N(U) }$, i.e.\ $V = N(V)$ and $V \leq N(U)$. Then
$$ N( V \wedge U ) = N(V) \wedge N(U) = N(V) = V $$
since $V \leq N(U)$ implies $N(V) \leq N(U)$.
\item Let $W \in \mathcal{O}(S \vee C)_{ / U }$, i.e.\ $W = N(W) \wedge ( W \vee U)$, and $W \leq U$. Then
$$ W = N(W) \wedge ( W \vee U) = ( N(W) \wedge W ) \vee ( N(W) \wedge U ) = W \vee ( N(W) \wedge U )$$
hence $ N(W) \wedge U \leq W$. But $W \leq U$ and $W \leq N(W)$ also imply $W \leq N(W) \wedge U$, hence $W = N(W) \wedge U$.
\end{itemize}
\end{proof}

A core property of sheaves on a locale is given by the existence of open-closed decompositions.\footnote{See also \cite[Appendix A.8]{lurieha} for more information.}

\begin{proposition}[Open-closed decomposition] \label{opencloseddecomposition}
Let $L$ be a locale, $i : U \hookrightarrow L$ an open sublocale, and $j : U^c \hookrightarrow L$ its closed complement. Let $\mathcal{C}$ be a presentable $\infty$-category. Then the sequence
\[\begin{tikzcd}
	{\Sh(U,\mathcal{C})} & {\Sh(L,\mathcal{C})} & {\Sh(U^c,\mathcal{C})}
	\arrow[""{name=0, anchor=center, inner sep=0}, "{i_!}", curve={height=-12pt}, from=1-1, to=1-2]
	\arrow[""{name=1, anchor=center, inner sep=0}, "{i^*}", curve={height=-12pt}, from=1-2, to=1-1]
	\arrow[""{name=2, anchor=center, inner sep=0}, "{j^*}", curve={height=-12pt}, from=1-2, to=1-3]
	\arrow[""{name=3, anchor=center, inner sep=0}, "{j_*}", curve={height=-12pt}, from=1-3, to=1-2]
	\arrow["\dashv"{anchor=center, rotate=-90}, draw=none, from=0, to=1]
	\arrow["\dashv"{anchor=center, rotate=-90}, draw=none, from=2, to=3]
\end{tikzcd}\]
is a cofiber sequence in $\PR^L$. If $\mathcal{C}$ is furthermore stable, it is an exact sequence in $\PR_{\st}^L$. 
\end{proposition}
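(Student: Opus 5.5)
We first treat $\mathcal{C} = \An$ and then pass to general presentable $\mathcal{C}$ by tensoring. The sequence is a cofiber sequence in $\PR^L$ exactly when three things hold: $i_!$ is fully faithful, $j^*$ is a Bousfield localization ($j_*$ fully faithful), and the kernel of $j^*$ equals the essential image of $i_!$. Equivalently, $\Sh(U^c,\An)$ is the pushout of $\Sh(L,\An) \leftarrow \Sh(U,\An) \rightarrow 0$. For non-stable $\mathcal{C}$ the right notion of "cofiber" is this pushout, where $0$ means the terminal category $\mathrm{pt}$. We will show that $j_*$ identifies $\Sh(U^c,\mathcal{C})$ with the full subcategory of $\Sh(L,\mathcal{C})$ right orthogonal to the image of $i_!$. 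This is the standard characterization of the cofiber of a left adjoint $i_!$ in $\PR^L$: the cofiber is the Bousfield localization at the maps $i_!(x) \to \ast$, i.e.\ its right adjoint picture is the full subcategory of $\mathcal{F}$ with $\mathcal{F}(i_! x) \simeq \mathcal{F}(\ast)$.

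Fully faithfulness of $i_!$ and of $j_*$ was recorded in the two examples and the lemma on sublocales above. The key computation is concrete. A sheaf $\mathcal{F}$ on $L$ is local with respect to $i_!$, i.e.\ $\Map(i_! x, \mathcal{F}) \simeq \Map(i_!\ast\text{-terminal}, \ldots)$, iff $\mathcal{F}(V) \simeq \ast$ for all $V \le U$. Since $\Sh(U,\An)$ is generated under colimits by representables $y_V$, $V \le U$, this reduces to the condition on opens below $U$. We must show these are exactly the sheaves in the image of $j_*$, which is precomposition with $j^* = - \vee U$.

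One inclusion is direct. If $\mathcal{F} = j_*\mathcal{G}$, then for $V \le U$ we get $\mathcal{F}(V) = \mathcal{G}(U)$, and $U$ is the bottom element of $\mathcal{O}(L)_{U/}$, so $\mathcal{G}(U) \simeq \ast$. For the converse, take $\mathcal{F}$ with $\mathcal{F}|_{\le U} \simeq \ast$. The sheaf condition on the pair $V, U$ gives a pullback $\mathcal{F}(V \vee U) \simeq \mathcal{F}(V) \times_{\mathcal{F}(V\wedge U)} \mathcal{F}(U) \simeq \mathcal{F}(V)$. Hence $\mathcal{F} \simeq j_* \mathcal{G}$ with $\mathcal{G} = \mathcal{F}|_{\mathcal{O}(L)_{U/}}$. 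It remains to check that $\mathcal{G}$ is a sheaf on the frame $\mathcal{O}(L)_{U/}$. Finite meets, binary joins and directed joins there are computed as in $\mathcal{O}(L)$, except that the empty join is $U$, where $\mathcal{G}(U) \simeq \ast$. So the sheaf conditions transfer.

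This identifies $j_*$ with the inclusion of the right orthogonal, so $j^*$ is the corresponding localization and the sequence is a cofiber sequence in $\PR^L$ for $\mathcal{C} = \An$. For general presentable $\mathcal{C}$, note that $\Sh(L,\mathcal{C}) \simeq \Sh(L,\An)\otimes \mathcal{C}$ compatibly with $i_!$ and $j^*$, since these are induced by $2$-functoriality of $\Fun^R((-)^{\op},\mathcal{C})$. The functor $-\otimes\mathcal{C}$ preserves colimits in $\PR^L$, hence cofiber sequences. It also preserves fully faithfulness of $i_!$, because $i_!$ is an internal embedding ($i^*$ is left adjoint to $i_*$ as well). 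Alternatively, the argument above runs verbatim with $\mathcal{C}$-valued sheaves.

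In the stable case, a cofiber sequence in $\PR^L_{\st}$ whose first map is fully faithful is exact. Here $i_!$ and $j^*$ admit the right adjoints $i^*$ and $j_*$ in $\PR^L$, and cofibers in $\PR^L_{\st}$ agree with those computed in $\PR^L$, so exactness follows. The main obstacle is the converse step, namely verifying that $\mathcal{G}$ satisfies the sheaf axioms on $\mathcal{O}(L)_{U/}$ and handling the correct meaning of "cofiber" in unstable $\PR^L$.
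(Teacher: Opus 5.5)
Your argument is the paper's argument. The decisive step is identical: a sheaf with $\mathcal{F}(V)\simeq\ast$ for all $V\le U$ restricts to a sheaf on $\mathcal{O}(L)_{U/}$, because non-empty joins are computed in $\mathcal{O}(L)$ and the empty join is $U$. The pullback square for the pair $W,U$ then shows that $\mathcal{F}(W\vee U)\to\mathcal{F}(W)$ is an equivalence, so the essential image of $j_*$ is exactly $\{\mathcal{F} : i^*\mathcal{F}\simeq\ast\}$. Reducing to $\An$ and tensoring with $\mathcal{C}$ is a harmless detour; the paper runs the same computation for $\mathcal{C}$-valued sheaves, with $\ast$ replaced by the terminal object of $\mathcal{C}$.

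You should, however, fix your description of the cofiber in unstable $\PR^L$, which as written is false.

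\emph{The correct description.} The pushout $\Sh(L,\An)\sqcup_{\Sh(U,\An)}\mathrm{pt}$ in $\PR^L$ is the pullback of right adjoints $\Sh(L,\An)\times_{\Sh(U,\An)}\mathrm{pt}$, taken along $i^*$ and the inclusion of the terminal object. Concretely, it is the full subcategory of those $\mathcal{F}$ with $i^*\mathcal{F}\simeq\ast$, equivalently $\Map(i_!x,\mathcal{F})\simeq\ast$ for all $x$. As a localization it inverts the maps $\emptyset\to i_!(x)$, not $i_!(x)\to\ast$.

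\emph{Why your version fails.} Localizing at $i_!(x)\to\ast$, i.e.\ imposing your condition $\mathcal{F}(i_!x)\simeq\mathcal{F}(\ast)$, would in particular invert $\emptyset=i_!(\emptyset)\to\ast$. That forces $\mathcal{F}(L)\simeq\ast$ on every local object. For $U=\emptyset$ it would produce the sheaves with contractible global sections instead of all of $\Sh(L,\An)$.

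\emph{The opening criterion.} Your ``three conditions'' characterization only works in the stable setting. Unstably, a Bousfield localization is not determined by which objects it sends to the initial object. For example, $\mathrm{pt}\to\An\xrightarrow{\tau_{\le 0}}\mathrm{Set}$ satisfies all three conditions, yet the cofiber of $\mathrm{pt}\to\An$ is $\An$.

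The condition you actually verify, $\mathcal{F}(V)\simeq\ast$ for $V\le U$, is the correct one. So the proof is complete once these two justifications are replaced by the pullback-of-right-adjoints description.
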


\begin{proof} The functor $i^*$ is obtained by precomposition with the inclusion $\mathcal{O}(L)_{/U}$, and the functor $j_*$ is obtained by precomposition with $ - \vee U : \mathcal{O}(L) \rightarrow \mathcal{O}(L)_{U/}$, hence it is clear that the composite $i^* j_*$, given by precomposition with the constant partial frame homomorphism $\mathcal{O}(L)_{/U} \rightarrow \mathcal{O}(L)_{U/}$ agrees with the constant functor given by the terminal object. We need to show that the essential image of $j_*$ actually agrees with the kernel of $i^*$.

Assume that $\mathcal{F}$ is a sheaf on $L$ such that $i^* \mathcal{F} \simeq 1$, or in other words, such that for all $V \leq U$ we have $\mathcal{F}(V) \simeq 1$. Define $\mathcal{F}'$ to be the restriction of $\mathcal{F}$ to $\mathcal{O}(L)_{U/}$. We claim this is a sheaf. Since the inclusion $i : \mathcal{O}(L)_{U/} \hookrightarrow \mathcal{O}(L)$ preserves non-empty suprema, the sheaf condition for non-empty coverings is satisfied. Moreover, the initial object is given by $U \in \mathcal{O}(L)_{U/}$, and by assumption $\mathcal{F}(U) \simeq 1$, hence also in this case the sheaf condition holds. Next we argue that there is a natural equivalence $\mathcal{F} \simeq j_* \mathcal{F}'$, or in other words, that for all $W \in \mathcal{O}(L)$ we have that the restriction maps $ \mathcal{F}(W \vee U) \rightarrow \mathcal{F}(W)$ are equivalences. But observe that we have the pullback squares
\[\begin{tikzcd}
	{\mathcal{F}(W \vee U)} & {\mathcal{F}(W)} \\
	{\mathcal{F}(U)} & {\mathcal{F}(W \wedge U).}
	\arrow[from=1-1, to=1-2]
	\arrow[from=1-1, to=2-1]
	\arrow["\lrcorner"{anchor=center, pos=0.125}, draw=none, from=1-1, to=2-2]
	\arrow[from=1-2, to=2-2]
	\arrow[from=2-1, to=2-2]
\end{tikzcd}\]
By assumption, both of the bottom terms are the terminal object, hence the top map is an equivalence.
\end{proof}

\begin{corollary} \label{closedinclusionperfect}
Let $j : C \hookrightarrow L$ be a closed inclusion of locales, and let $\mathcal{C}$ be a presentable $\infty$-category. Then $j_* : \Sh(C,\mathcal{C})  \rightarrow \Sh(L,\mathcal{C}) $ preserves filtered colimits.
\end{corollary}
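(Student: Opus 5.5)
The plan is to use the open-closed decomposition of Proposition \ref{opencloseddecomposition} to give $j_*$ a right adjoint that is visibly filtered-colimit preserving. Write $i : U \hookrightarrow L$ for the open complement of $C$.

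First I want a formula for the right adjoint $j^!$ of $j_*$. In the stable case the recollement provides this directly. For general presentable $\mathcal{C}$ I would avoid needing it and instead argue directly with sections, as follows.

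Filtered colimits in $\Sh(L,\mathcal{C})$ are computed as the sheafification of the pointwise colimit, and sheafification is not pointwise. So it suffices to show that the pointwise (presheaf) filtered colimit of sheaves in the essential image of $j_*$ is again a sheaf in that image.

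Let $\mathcal{F}_k$ be a filtered diagram in $\Sh(C,\mathcal{C})$. By definition,
$$(j_*\mathcal{F}_k)(W) = \mathcal{F}_k(W \vee U),$$
viewing $\mathcal{O}(C) = \mathcal{O}(L)_{U/}$. Equivalently, I would reduce to showing that filtered colimits in $\Sh(C,\mathcal{C})$ are compatible with pushforward, i.e. $j_*(\colim_k \mathcal{F}_k) \simeq \colim_k j_*\mathcal{F}_k$ in $\Sh(L,\mathcal{C})$.

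The cleanest route is to use the characterization from the proof of Proposition \ref{opencloseddecomposition}: the essential image of $j_*$ is exactly the kernel of $i^*$, and $j_*$ is fully faithful onto that kernel. Now $i^*$ is a left adjoint, so it commutes with all colimits. Hence the kernel of $i^*$ is closed under colimits in $\Sh(L,\mathcal{C})$: if each $i^*\mathcal{G}_k \simeq 1$, then
$$i^*\colim_k \mathcal{G}_k \simeq \colim_k 1 \simeq 1,$$
since filtered colimits of the terminal object are terminal. That last step uses filteredness, and it is also what is needed in the unstable case.

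So $\colim_k j_*\mathcal{F}_k$ lies in the image of $j_*$; write it as $j_*\mathcal{G}$. Full faithfulness of $j_*$ then identifies $\mathcal{G}$ with $\colim_k \mathcal{F}_k$ by a Yoneda argument. Concretely, mapping $j_*\mathcal{G}$ into any $j_*\mathcal{H}$ gives the same mapping spaces as mapping $\colim_k \mathcal{F}_k$ into $\mathcal{H}$. Therefore $j_*$ preserves filtered colimits.

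The main subtlety is the claim that the kernel of $i^*$ is closed under filtered colimits. This requires $i^*$ to preserve colimits, which holds because $i^* \dashv i_*$. It also requires the terminal object to be stable under filtered colimits. That is true because colimits over weakly contractible (in particular filtered) diagrams of a constant object return that object.
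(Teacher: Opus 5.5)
Your argument is correct and is essentially the paper's proof: identify $\Sh(C,\mathcal{C})$ via $j_*$ with the full subcategory of sheaves $\mathcal{G}$ on $L$ with $i^*\mathcal{G}\simeq 1$. That subcategory is closed under filtered colimits because $i^*$ preserves colimits and a filtered (hence weakly contractible) colimit of terminal objects is terminal. Your explicit Yoneda step, which deduces that $j_*$ preserves the colimit from its full faithfulness together with closure of its essential image, spells out what the paper leaves implicit; the opening remarks about $j^!$ and pointwise presheaf colimits are not needed and can be dropped.
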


\begin{proof}
This is immediate since $\Sh(C,\mathcal{C})$ can be identified with the full subcategory of sheaves $\mathcal{F}$ on $L$ such that $i^*( \mathcal{F} ) \simeq 1$. The functor $i^*$ preserves colimits, and filtered colimits with constant value the terminal object remain terminal.
\end{proof}

\begin{corollary}
Let $L$ be a locale, and assume that $\mathcal{C}$ is a presentable stable $\infty$-category. If $\mathrm{Sh}(L, \mathcal{C})$ is a dualizable $\infty$-category it then follows that for any open $U$ of $L$ the sequence given in Proposition \ref{opencloseddecomposition} is an exact sequence of dualizable $\infty$-categories.
\end{corollary}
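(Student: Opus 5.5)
We need to show: if $\Sh(L,\mathcal{C})$ is dualizable, then $\Sh(U,\mathcal{C}) \xrightarrow{i_!} \Sh(L,\mathcal{C}) \xrightarrow{j^*} \Sh(U^c,\mathcal{C})$ is an exact sequence of dualizable $\infty$-categories. By Proposition \ref{opencloseddecomposition} it is already an exact sequence in $\PR^L_{\st}$, with $i_!$ fully faithful. So what remains is two things. First, $\Sh(U,\mathcal{C})$ and $\Sh(U^c,\mathcal{C})$ must be dualizable. Second, $i_!$ and $j^*$ must be strongly continuous, meaning their right adjoints $i^*$ and $j_*$ preserve filtered colimits. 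Once this is in place, the sequence is a cofiber sequence in $\PR^L_{\dual}$, because the inclusion $\PR^L_{\dual}\to\PR^L_{\st}$ creates colimits; the cofiber in $\PR^L_{\st}$ is therefore also the cofiber in $\PR^L_{\dual}$.

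\textbf{Strong continuity.} The right adjoint of $i_!$ is $i^*$. It has a further right adjoint $i_*$, so it preserves all colimits. The right adjoint of $j^*$ is $j_*$, which preserves filtered colimits by Corollary \ref{closedinclusionperfect}. Both functors are therefore strongly continuous.

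\textbf{Dualizability of the two outer categories.} We use the retract criterion, Theorem \ref{closureunderretracts}. For the open part, $i_!$ is fully faithful and $i^* i_! \simeq \mathrm{id}$. Both $i_!$ and $i^*$ are left adjoints, so they preserve filtered colimits. Hence $\Sh(U,\mathcal{C})$ is a retract of the compactly assembled category $\Sh(L,\mathcal{C})$ via filtered-colimit-preserving functors, and so it is compactly assembled. It is also presentable and stable, hence dualizable.

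For the closed part, $j_*$ is fully faithful, so $j^* j_* \simeq \mathrm{id}$. The functor $j^*$ is a left adjoint, and $j_*$ preserves filtered colimits by Corollary \ref{closedinclusionperfect}. Thus $\Sh(U^c,\mathcal{C})$ is again a retract of $\Sh(L,\mathcal{C})$ via filtered-colimit-preserving functors, and is dualizable.

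\textbf{Main obstacle.} The only nontrivial input is that $j_*$ preserves filtered colimits, which Corollary \ref{closedinclusionperfect} provides. The rest is checking that, in the paper's definition, an exact sequence only requires a cofiber sequence in $\PR^L_{\dual}$ with a fully faithful first map, and this follows from the creation of colimits as noted above.
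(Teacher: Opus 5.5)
Your proposal is correct and follows the argument the paper intends. The paper states the corollary without a written proof, immediately after Corollary \ref{closedinclusionperfect}: $i^*$ preserves colimits because it has the further right adjoint $i_*$, and $j_*$ preserves filtered colimits by Corollary \ref{closedinclusionperfect}, so both outer categories are retracts of $\Sh(L,\mathcal{C})$ along filtered-colimit-preserving functors, $i_!$ and $j^*$ are strongly continuous, and the cofiber sequence of Proposition \ref{opencloseddecomposition} in $\PR^L_{\st}$ is therefore one in $\PR^L_{\dual}$.
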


Given any continuous map of locales $f : L \rightarrow M$, we can factor $f$ uniquely (up to the obvious notion of isomorphism) into
$$ L \twoheadrightarrow \mathrm{im}(f) \hookrightarrow M $$
where the map $f : L  \twoheadrightarrow \mathrm{im}(f)$ is a quotient map, and the map $\mathrm{im}(f) \hookrightarrow M$ is the inclusion of a sublocale corresponding to the nucleus $f_* f^* $, see \cite[IV.1.4.]{picado_pultr}. The existence of this image factorization has the following pleasant consequence.
\begin{lemma}[See \cite{johnstone2002sketches} A.1.3.1.] \label{sublocaleslimits} Let $L$ be a locale. The inclusion $$ \mathrm{Sub}(L) \hookrightarrow \mathrm{Loc}_{/L}$$
has a left adjoint given by $( f : M \rightarrow L) \mapsto \mathrm{im}(f)$.
\end{lemma}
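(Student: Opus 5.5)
We have to show that the inclusion $\mathrm{Sub}(L) \hookrightarrow \mathrm{Loc}_{/L}$ has left adjoint $f \mapsto \mathrm{im}(f)$. Concretely, for a map $f : M \rightarrow L$ and a sublocale $i : S \hookrightarrow L$, we show that $f$ factors through $i$ if and only if $\mathrm{im}(f) \leq S$ in $\mathrm{Sub}(L)$, and that such a factorization is unique. Since $\mathrm{Sub}(L)$ is a poset and embeddings are monomorphisms in $\mathrm{Loc}$, the mapping sets $\Map_{/L}(f, i)$ are either empty or a single point. So the adjunction reduces to this equivalence of conditions, together with the unit $M \rightarrow \mathrm{im}(f)$ supplied by the image factorization $M \twoheadrightarrow \mathrm{im}(f) \hookrightarrow L$.

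\textbf{Uniqueness.} Suppose $g, g' : M \rightarrow S$ satisfy $ig = ig' = f$. Then $g^* i^* = g'^* i^*$. Since $i^*$ is surjective, this gives $g^* = g'^*$, so $g = g'$.

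\textbf{If $\mathrm{im}(f) \leq S$, then $f$ factors through $i$.} The inequality $\mathrm{im}(f) \leq S$ means the embedding $\mathrm{im}(f) \hookrightarrow L$ factors through $S$. Composing with $M \twoheadrightarrow \mathrm{im}(f)$ gives the required factorization of $f$.

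\textbf{If $f = ig$, then $\mathrm{im}(f) \leq S$.} Work with nuclei: $N_S = i_* i^*$ and $N_f = f_* f^*$. A larger sublocale corresponds to a smaller nucleus, equivalently to a larger fixed set, since the fixed set of $N_S$ is $i_*\mathcal{O}(S)$. So it suffices to show $N_S \leq N_f$ pointwise. Using the unit $\mathrm{id} \leq g_* g^*$, we get
$$ N_S = i_* i^* \leq i_* g_* g^* i^* = f_* f^*. $$
Hence every fixed point of $N_f$ is a fixed point of $N_S$: if $N_f(U) = U$, then $U \leq N_S(U) \leq N_f(U) = U$. Therefore $\mathrm{im}(f) \leq S$.

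\textbf{Naturality.} Naturality in $f$ is automatic, because all hom-sets involved are propositions.

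The only step that needs care is the translation between the order on sublocales and the order on nuclei and fixed sets, including its direction. This is supplied by the standard dictionary in the earlier proposition on sublocales, under which a sublocale is identified with its fixed set $i_*\mathcal{O}(S) \subset \mathcal{O}(L)$.
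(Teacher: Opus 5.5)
Your proof is correct and complete. The paper gives no argument of its own. It takes the quotient--embedding factorization $M \twoheadrightarrow \mathrm{im}(f) \hookrightarrow L$ from Picado--Pultr and cites Johnstone for the general categorical fact that the image of a morphism (the least subobject through which it factors) is precisely the value of a left adjoint to $\mathrm{Sub}(L) \hookrightarrow \mathrm{Loc}_{/L}$.

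Your reduction to a statement about propositions is that general fact. Your inequality $i_*i^* \le i_*g_*g^*i^* = f_*f^*$ supplies the input the citations leave implicit, namely that $\mathrm{im}(f)$ is the least \emph{sublocale} through which $f$ factors. It is worth making this explicit, because in $\mathrm{Loc}$ monomorphisms need not be embeddings. For example, for a non-discrete $T_1$ space $X$ the frame map $\mathcal{O}(X) \to \mathcal{P}(X)$ is an epimorphism of frames but not surjective. So minimality genuinely has to be checked among sublocales, not arbitrary subobjects.

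An alternative to your nucleus argument is orthogonality of quotients against embeddings. Write $f = ig = eq$ with $i^*$ surjective and $q^*$ injective. The assignment $i^*(U) \mapsto e^*(U)$ is well defined, since $i^*U = i^*V$ gives $q^*e^*U = g^*i^*U = g^*i^*V = q^*e^*V$. It is a frame homomorphism $\mathcal{O}(S) \to \mathcal{O}(\mathrm{im}(f))$, and it exhibits $\mathrm{im}(f) \le S$ directly from the definition of the order as factorization of embeddings.

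That route avoids the dictionary between the order on embeddings and the pointwise order on nuclei. Yours trades that dictionary for a one-line inequality, and you use it in the correct direction: larger nucleus, smaller fixed set, smaller sublocale.
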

In particular, directed intersections of sublocales agree with limits computed in the category $\mathrm{Loc}$, a fact that is useful for the following corollary.

\begin{corollary} \label{cofiltereddescent}
Let $L$ be a locale, $S_i, i \in I$ a cofiltered family of sublocales of $L$, and $\mathcal{C}$ a presentable $\infty$-category. Denote by $S = \bigwedge_{i \in I} S_i$ the intersection of the sublocales $S_i$. Then
$$ \Sh(S,\mathcal{C}) \simeq \colim_{i \in I} \Sh( S_i,\mathcal{C} ),$$
where the colimit is taken in the category $\PR^L$.
\end{corollary}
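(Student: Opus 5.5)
The plan is to realize $S$ as an honest limit in $\mathrm{Loc}$ and then use that $\Sh(-,\mathcal{C})$ turns limits of sublocales into colimits in $\PR^L$. Since $\PR^L \simeq (\PR^R)^{\op}$, a colimit in $\PR^L$ along the left adjoints $\iota_{ij}^* : \Sh(S_i,\mathcal{C}) \to \Sh(S_j,\mathcal{C})$ (for $S_j \subset S_i$) is computed as the limit of the right adjoints $(\iota_{ij})_*$ in $\widehat{\Cat_\infty}$. So the claim is equivalent to the statement that the functor
$$ \Sh(S,\mathcal{C}) \longrightarrow \lim_{i \in I} \Sh(S_i,\mathcal{C}) $$
induced by the pushforwards along $S \hookrightarrow S_i$ is an equivalence. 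Each transition $(\iota_{ij})_*$ is fully faithful by the lemma on sublocales above. All the $\Sh(S_i,\mathcal{C})$ therefore sit as full subcategories of $\Sh(L,\mathcal{C})$ via pushforward, and the limit is just the intersection $\bigcap_i \Sh(S_i,\mathcal{C})$ of full subcategories of $\Sh(L,\mathcal{C})$.

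It thus suffices to show that a sheaf $\mathcal{F}$ on $L$ lies in the essential image of $\Sh(S,\mathcal{C})$ if and only if it lies in that of every $\Sh(S_i,\mathcal{C})$. One direction is immediate. For the converse, I would characterize the image of $i_*$ for a sublocale with nucleus $N$. A sheaf $\mathcal{F}$ lies in the image exactly when it is local with respect to the maps $y_U \to y_{N(U)}$, i.e.\ when $\mathcal{F}(N(U)) \to \mathcal{F}(U)$ is an equivalence for all $U$. This holds since $\mathcal{O}(S) = N(\mathcal{O}(L))$ and $i_*\mathcal{G}(U) = \mathcal{G}(i^*U)$, while conversely a sheaf with this invariance restricts to a sheaf on the fixed points of $N$.

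The nucleus of $S = \bigwedge S_i$ is the join, in the frame of nuclei, of the directed family $N_i$. For a directed family this join is computed as the transfinite iteration of the pointwise directed join $U \mapsto \bigvee_i N_i(U)$ (pointwise sup of a directed family of nuclei is a prenucleus, and its idempotent closure is the join). If $\mathcal{F}$ is $N_i$-invariant for all $i$, then it is invariant for $\bigvee_i N_i(U)$. Indeed, $\mathcal{F}(\bigvee_i N_i U) \simeq \lim_i \mathcal{F}(N_i U)$ by the directed sheaf condition, and this limit is a limit of equivalences to $\mathcal{F}(U)$, using directedness and the fact that each $N_i(U)\geq U$. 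Invariance is preserved under composition of such operators and under directed joins (again by the sheaf condition for directed covers), so transfinite induction gives invariance under $N$. This settles the key step, since the limit is the intersection of the images.

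The main obstacle is the bookkeeping in that step: justifying that the join of a directed family of nuclei is obtained by transfinitely iterating the pointwise sup, and that the locality criterion for the image of $i_*$ holds for general presentable $\mathcal{C}$ and not just $\An$. For the latter I would reduce via $\Sh(L,\mathcal{C}) = \Sh(L)\otimes\mathcal{C} = \Fun^R(\Sh(L)^{\op},\mathcal{C})$ to the condition on values on opens, which is formal. Alternatively one could invoke Lemma~\ref{sublocaleslimits} to identify $S$ with $\lim S_i$ in $\mathrm{Loc}$ and cite that $\Sh$ sends limits of locales to colimits in $\PR^L$. However, that is less elementary than the nucleus argument above, which I would present.
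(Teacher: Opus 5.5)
Your argument is correct, but it takes a genuinely different route from the paper's.

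The paper's proof goes as follows:
\begin{itemize}
\item It reduces to $\mathcal{C}=\An$, using that the Lurie tensor product preserves colimits in each variable.
\item It uses Lemma~\ref{sublocaleslimits} to identify $\bigwedge_i S_i$ with $\lim_i S_i$ in $\mathrm{Loc}$.
\item It invokes Aoki's sheaves--spectrum adjunction (Theorem~\ref{sheafspectrumadjunction}). Since $\Sh(-,\An)\colon\mathrm{Frm}\to\CAlg(\PR^L)$ is a left adjoint, it preserves filtered colimits of frames, and these are computed in $\PR^L$.
\end{itemize}
This is essentially the alternative you mention at the end.

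You instead compute the colimit by hand. After passing to right adjoints, the colimit becomes an intersection of replete full subcategories of $\Sh(L,\mathcal{C})$. This is legitimate because $I$ is cofiltered, hence weakly contractible. Each subcategory is cut out by $N_i$-invariance, and you push invariance through the prenucleus $P=\bigvee_i N_i$ and its transfinite iterates, using only the directed-join sheaf condition.

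The step you flag as the main obstacle is harmless. $P$ is inflationary and preserves binary meets. Its fixed set $\mathrm{Fix}(P)=\bigcap_i\mathrm{Fix}(N_i)$ is closed under arbitrary meets and under implications $U\to V$ with $V\in\mathrm{Fix}(P)$. So it is exactly $\mathcal{O}(S)$, and its nucleus sends $U$ to the least $P$-fixed element above $U$, which is where your iteration stabilizes.

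Comparing the two:
\begin{itemize}
\item Your version is elementary and self-contained, with no appeal to Aoki's theorem and no reduction to $\An$. It also gives an explicit description of the colimit.
\item The paper's version is shorter and strictly more general: it shows that $\Sh(-,\mathcal{C})$ sends \emph{arbitrary} cofiltered limits of locales to filtered colimits in $\PR^L$.
\end{itemize}
That generality is what is actually used later. Examples are cofiltered descent in Theorem~\ref{maintheoremmain} and $I^S_\leq=\lim_N I^N_\leq$ in Proposition~\ref{alphahilbertcube}, where the transition maps are projections rather than embeddings. Your argument relies on the transitions being sublocale inclusions, so that the pushforwards are fully faithful. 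It would not cover those cases without further work.
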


\begin{proof}
Since the Lurie tensor product preserves $\PR^L$-colimits in each variable, we can assume w.l.o.g.\ that $\mathcal{C} = \An$. Now the statement follows directly from Lemma \ref{sublocaleslimits}. The functor
$$ \Sh(-,\An) : (\mathrm{Loc}^{\op}) = \mathrm{Frm} \rightarrow \CAlg(\PR^L)$$
is a left adjoint by Theorem \ref{sheafspectrumadjunction}, and filtered colimits in $\CAlg(\PR^L)$ are computed as filtered colimits in $\PR^L$.
\end{proof}

\section{Stable local compactness}

In the following section we will discuss stably locally compact spaces, as well as sheaves on them. It will be convenient to introduce the more general notion of a stably locally compact topos first.

\subsection{Stably locally compact topoi}

\begin{definition} Let $\mathcal{X}$ be a topos. We call $\mathcal{X}$ \emph{compact} if $1 \in \mathcal{X}$ is a compact object. We call $\mathcal{X}$ \emph{locally compact} if $\mathcal{X}$ is a compactly assembled $\infty$-category. A geometric morphism $f : \mathcal{X} \rightarrow \mathcal{Y}$ between locally compact topoi is called \emph{perfect} if $f^*$ is strongly continuous. We call $\mathcal{X}$ \emph{stably locally compact} or also \emph{quasi-separated} if it is locally compact and the diagonal $\Delta : \mathcal{X} \rightarrow \mathcal{X} \otimes \mathcal{X}$ is a perfect geometric morphism. We call $\mathcal{X}$ \emph{stably compact} if it is stably locally compact and compact.\footnote{We remark that quasi-separation is much weaker than the notion of a topos $\mathcal{X}$ being \emph{separated} in the sense of \cite[Definition 6.1.]{Aoki2023posets}, which is the requirement that the diagonal is proper. As an example, the topos of sheaves on the Sierpinski space is quasi-separated but not separated.}
\end{definition}

Local compactness for a topos $\mathcal{X}$ is a pleasant property to have. It was shown independently by Anel and Lejay \cite{anel2018exponentiablehighertoposes}, \cite{anel_lejay_exponentiable_V2}, as well as Lurie \cite[Section 21.1.6 ]{lurieSAG} that a topos $\mathcal{X}$ is locally compact iff $\mathcal{X}$ is an exponentiable object in the $\infty$-category $\mathrm{RTop}$, i.e.\ the functor $\mathcal{X} \otimes - $ has a right adjoint. Moreover, it is clear that if a topos $\mathcal{X}$ is locally compact, and $\mathcal{C}$ is a dualizable stable $\infty$-category, the $\infty$-category of sheaves $\Sh(\mathcal{X},\mathcal{C}) = \mathcal{X} \otimes \mathcal{C}$ is dualizable, hence locally compact topoi give us a healthy supply of dualizable stable $\infty$-categories to work with. The property of stability should be thought of as a separation condition analogous to quasi-separatedness of schemes, and can be phrased in different ways.

\begin{proposition} \label{quasiseparated}
Let  $\mathcal{X}$ be a locally compact topos. Then the following are equivalent.
\begin{enumerate}
\item $\mathcal{X}$ is stably locally compact.
\item The functor $\hat{y} : \mathcal{X} \rightarrow \Ind( \mathcal{X} )$ preserves binary products. 
\item If $f, g$ are two compact maps in $\mathcal{X}$, then so is their product $f \times g$.
\end{enumerate}
\end{proposition}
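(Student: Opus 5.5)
The plan is to prove (1)$\Leftrightarrow$(2) by unwinding what it means for $\Delta^*$ to be strongly continuous, and then (2)$\Leftrightarrow$(3) via the factorization characterization of compact maps.

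\textbf{Identifying $\Delta^*$.} Under the equivalence $\mathcal{X} \otimes \mathcal{X} \simeq \mathrm{Fun}^R(\mathcal{X}^{\op},\mathcal{X})$, the pullback $\Delta^* : \mathcal{X} \otimes \mathcal{X} \to \mathcal{X}$ is the unique colimit-preserving functor determined on pure tensors by $x \boxtimes x' \mapsto x \times x'$. This uses that the product in a topos preserves colimits separately in each variable.

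\textbf{(1)$\Leftrightarrow$(2).} The tensor product $\mathcal{X}\otimes\mathcal{X}$ is compactly assembled, with $\hat{y}_{\mathcal{X}\otimes\mathcal{X}}(x\boxtimes x') = \hat{y}(x)\boxtimes\hat{y}(x')$. This holds because $\otimes$ restricts to $\PR^L_{\ca}$ and $\hat y$ is computed by the internal adjunction data, which the $2$-functor $-\otimes-$ preserves. Objects $x\boxtimes x'$ generate $\mathcal{X}\otimes\mathcal{X}$ under colimits. Both $\hat{y}\Delta^*$ and $\Ind(\Delta^*)\hat y$ preserve colimits, since $\hat y$ is a left adjoint. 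Hence the comparison $\hat y\Delta^*\to\Ind(\Delta^*)\hat y$ is an equivalence iff it is so on pure tensors, i.e.\ iff
\[\hat y(x\times x')\to \text{``colim''}_{(i,j)}\, x_i\times x'_j = \hat y(x)\times\hat y(x')\]
is an equivalence, where products in $\Ind(\mathcal{X})$ of formal colimits are computed levelwise over the product index. Left adjointability for $k$ is automatic: $\Delta^*$ preserves filtered colimits, as noted in the Remark after Definition~\ref{compactlyassembled}. Thus (1) is equivalent to $\hat y$ preserving binary products. For the terminal object, $\hat y(1)$ need not be $1$, so ``binary'' is exactly the right statement.

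\textbf{(2)$\Rightarrow$(3).} Let $f: a\to b$ and $g:a'\to b'$ be compact, so $y(f)$ factors through $\hat y(b)\to y(b)$ and $y(g)$ through $\hat y(b')$. Taking products in $\Ind(\mathcal{X})$, where $y$ preserves products, $y(f\times g)$ factors through $\hat y(b)\times\hat y(b')\simeq\hat y(b\times b')$. Hence $f \times g$ is compact.

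\textbf{(3)$\Rightarrow$(2).} This is the main obstacle. Write $\hat y(x)=\text{``colim''}_i x_i$, where the transition maps $x_i\to x_{i'}$ are compact; this is the standard property of $\hat y$ (cf.\ \cite[21.1.2]{lurieSAG}), and then $\hat y(x)\simeq \text{``colim''}_i \hat y(x_i)$. The canonical map $\hat y(x\times x')\to \hat y(x)\times\hat y(x')$ comes from adjunction, using $k(\hat y(x)\times\hat y(x')) \simeq x\times x'$: here $k$ preserves finite products because filtered colimits commute with finite limits in a topos.

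To construct an inverse, observe that the maps $x_i\times x'_j\to x_{i'}\times x'_{j'}$ are compact by (3). A filtered system with compact transitions and colimit $x\times x'$ maps to $\hat y(x\times x')$: each compact transition map $c\to d$ factors through $\hat y(d)\to y(d)$, giving an ind-system map $\text{``colim''}\,(x_i\times x'_j)\to\hat y(\colim) = \hat y(x\times x')$. I would then check that the two composites are the identity using the universal property of $\hat y$ as left adjoint to $k$, together with the fact that ind-objects with compact transitions are determined by their colimit in the sense that $\hat y(k(-))$ recovers them up to equivalence. The point needing most care is this last cofinality statement; I would try to cite it from Lurie's treatment of compactly assembled categories (the characterization of the essential image of $\hat y$ as ind-objects of compact maps).
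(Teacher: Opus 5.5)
Your argument is correct in outline, but it takes a partly different route from the paper. The paper's proof consists of two citations:
\begin{itemize}
\item (1)$\Leftrightarrow$(2) is Lemma~\ref{tensorlemma} applied to $\times\colon \mathcal{X}\otimes\mathcal{X}\to\mathcal{X}$. This is exactly your pure-tensor check, where $\hat y_{\mathcal{X}\otimes\mathcal{X}}(x\boxtimes x')=\hat y(x)\boxtimes\hat y(x')$.
\item (1)$\Leftrightarrow$(3) is Aoki's Lemma~\ref{aokilemma}: a bilinear cocontinuous functor induces a strongly continuous functor on the tensor product iff it sends pairs of compact maps to compact maps.
\end{itemize}
You instead prove (2)$\Rightarrow$(3) directly from the definition of compact maps, which is fine. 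For (3)$\Rightarrow$(2) you argue by hand, which amounts to reproving the special case of Aoki's lemma needed here. The paper's version is shorter. Yours is more self-contained and shows that the whole content of (3)$\Rightarrow$(2) is the description of the essential image of $\hat y$.

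One step in (3)$\Rightarrow$(2) should be repaired. Building an inverse $\text{``colim''}_{(i,j)}\, x_i\times x'_j\to\hat y(x\times x')$ out of individually chosen factorizations $y(c)\to\hat y(d)$ does not work as written in the $\infty$-setting. Such factorizations are not unique, and nothing makes them assemble into a coherent map of ind-objects.

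It is also unnecessary. Put $W=\hat y(x)\times\hat y(x')$; then $kW\simeq x\times x'$, and the canonical map $\hat y(x\times x')\to W$ is just the counit $\hat y(kW)\to W$, since both have adjunct the identity. So it suffices to know the following. If $W=\text{``colim''}_k\, z_k$ is such that every $z_k$ admits a compact transition $z_k\to z_{k'}$, then the counit is an equivalence. This only needs to hold cofinally: transitions that are the identity in one factor of your product index are not compact, so ``all transition maps are compact'' is false as stated.

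The claim has a short proof:
\begin{itemize}
\item $\hat y k$ preserves colimits, so the counit at $W$ is $\colim_k\bigl(\hat y(z_k)\to y(z_k)\bigr)$.
\item A compact $t\colon z_k\to z_{k'}$ gives $s\colon y(z_k)\to\hat y(z_{k'})$ with $\epsilon\circ s\simeq y(t)$.
\item Comparing adjuncts under $\hat y\dashv k$ also gives $s\circ\epsilon\simeq\hat y(t)$.
\item This interleaving makes $\mathrm{Map}(y(c),-)$ of the counit an isomorphism on all homotopy groups, hence an equivalence.
\end{itemize}

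You also need that $\hat y(x)=\text{``colim''}_i\, x_i$ has such compact transitions. This follows by evaluating the equivalence $\colim_i\hat y(x_i)\simeq\hat y(x)\simeq\colim_i y(x_i)$ on $y(x_i)$ and lifting the class of $\mathrm{id}_{x_i}$. With these two facts in place your route is a complete proof.
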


It will be useful to have the following lemma.

\begin{lemma}[\cite{aoki2025schwartzcoidempotentscontinuousspectrum}, Proposition 3.32] \label{aokilemma} Suppose $\mathcal{C}, \mathcal{D}, \mathcal{E} \in \PR^L_{\ca}$ and $F : \mathcal{C} \times \mathcal{D} \rightarrow \mathcal{E}$ is a functor preserving colimits in both variables. Then the induced left adjoint functor $\mathcal{C} \otimes \mathcal{D} \rightarrow \mathcal{E}$ is continuous iff for any two compact maps $f$ in $\mathcal{C}$ and $g$ in $\mathcal{D}$ the map $F(f,g)$ is a compact map in $\mathcal{E}$.
\end{lemma}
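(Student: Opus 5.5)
The plan is to reduce the statement to a one-variable criterion, and then to handle generators of $\mathcal{C} \otimes \mathcal{D}$ by hand. Write $G : \mathcal{C} \otimes \mathcal{D} \rightarrow \mathcal{E}$ for the induced left adjoint, so that $G(c \otimes d) \simeq F(c,d)$. There is always a comparison map $\hat{y} G \rightarrow \Ind(G) \hat{y}$, the mate of $k \Ind(G) \simeq G k$. Strong continuity means this map is an equivalence.

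The main input is the following standard description of $\hat{y}$ in a compactly assembled $\infty$-category $\mathcal{A}$. Suppose $x = \colim_{i} x_i$ is a filtered colimit in which every transition map $x_i \rightarrow x_j$ is compact. Then the formal colimit $\text{``colim''}_i x_i \in \Ind(\mathcal{A})$ is canonically equivalent to $\hat{y}(x)$. Moreover, every $\hat{y}(x)$ is of this form; for instance, one can take the canonical diagram of compact maps into $x$.

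The identification goes as follows.
\begin{itemize}
\item Since $\hat{y} \dashv k$, there is a map $\hat{y}(x) \rightarrow \text{``colim''}_i x_i$.
\item Compactness of each transition $x_i \rightarrow x_{i'}$ gives a factorization $y(x_i) \rightarrow \hat{y}(x_{i'}) \rightarrow \hat{y}(x)$. Together these assemble into an inverse map.
\end{itemize}
I would take this from \cite{lurieSAG} Section 21.1.2, or reprove it along these lines. I expect this to be the main technical point; the remaining steps are formal.

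For the direction ``$G$ strongly continuous $\Rightarrow$ $F(f,g)$ compact'', the first step is to check that $f \otimes g$ is compact in $\mathcal{C} \otimes \mathcal{D}$ whenever $f$ and $g$ are compact. To see this, write $y(c) \rightarrow \hat{y}(c')$ and $y(d) \rightarrow \hat{y}(d')$ for the factorizations. The canonical functor $\mathcal{C} \times \mathcal{D} \rightarrow \mathcal{C} \otimes \mathcal{D}$ is strongly continuous in each variable, by the theorem that $\otimes$ restricts to $\PR^L_{\ca}$. Hence $\hat{y}(c' \otimes d') \simeq \text{``colim''}_{i,j} c'_i \otimes d'_j$, where the index sets come from compact-transition presentations of $c'$ and $d'$. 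Each term $c'_i \otimes d'_j$ has transitions that are tensor products of compact maps. Tensoring the two factorizations then factors $y(f \otimes g)$ through $\hat{y}(c' \otimes d')$.

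Next, a strongly continuous $G$ preserves compact maps. One applies $\Ind(G)$ to the factorization $y(c) \rightarrow \hat{y}(d) \rightarrow y(d)$ and uses $\Ind(G) \hat{y} \simeq \hat{y} G$. It follows that $F(f,g) = G(f \otimes g)$ is compact.

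For the converse, both $\hat{y} G$ and $\Ind(G) \hat{y}$ preserve colimits. Indeed, $\hat{y}$ is a left adjoint, and $\Ind(G)$ preserves colimits because $G$ does. The objects $c \otimes d$ generate $\mathcal{C} \otimes \mathcal{D}$ under colimits, so it suffices to check that the comparison map is an equivalence on such objects.

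To check this, choose compact-transition presentations $c = \colim_i c_i$ and $d = \colim_j d_j$. By the previous paragraph, $\hat{y}(c \otimes d) \simeq \text{``colim''}_{i,j} c_i \otimes d_j$. Applying $\Ind(G)$ gives $\text{``colim''}_{i,j} F(c_i, d_j)$. This is a filtered system whose transition maps $F(f,g)$ are compact by hypothesis, and whose colimit in $\mathcal{E}$ is $F(c,d)$ because $F$ preserves colimits in each variable. By the characterization above, this ind-object is $\hat{y}(F(c,d)) = \hat{y} G(c \otimes d)$, compatibly with the comparison map. Hence $G$ is strongly continuous.
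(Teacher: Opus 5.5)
The paper gives no proof of this lemma. It is quoted from Aoki, and the only in-paper addition is the reformulation in Lemma \ref{tensorlemma}. So your argument is an independent route. In effect you show that the compact-map condition is equivalent to $\tilde F(\hat{y}\otimes\hat{y})\simeq\hat{y}F$ on pure tensors, by describing $\hat{y}$ through systems with compact transitions. That skeleton is sound. In particular, the observation that the mate $\hat{y}G\to\Ind(G)\hat{y}$ at $c\otimes d$ is just the counit of $\hat{y}\dashv k$ at $\Ind(G)\hat{y}(c\otimes d)$ is what makes the converse work. Three steps need repair, though; with them, both directions go through as you wrote them.

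First, your key lemma has the wrong hypothesis. An identity $\mathrm{id}_x$ is compact only when $x$ is a compact object, so ``every transition map is compact'' cannot be meant literally. If you exclude identities, the converse step fails instead. In the $I\times J$-indexed system the transitions $(i,j)\to(i,j')$ are $F(\mathrm{id}_{c_i},g_{jj'})$, and your hypothesis says nothing about these. State the lemma cofinally: for every $i$ there is $i\to j$ with $x_i\to x_j$ compact. Then the product system qualifies, by raising both indices at once. Moreover, every presentation $\hat{y}(x)\simeq\text{``colim''}_i x_i$ has this property. To see this, factor the structure map $y(x_i)\to\hat{y}(x)\simeq\colim_{i'}\hat{y}(x_{i'})$ through some $\hat{y}(x_{i'})$, compare the resulting compact map with the transition at a later stage, and use that compact maps are stable under composition with arbitrary maps. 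This also replaces your remark about ``the canonical diagram of compact maps into $x$'', whose transitions need not be compact.

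Second, the inverse map in your sketch cannot be built that way. Factorizations of $y(f)$ through $\hat{y}(b)\to y(b)$ are not unique, since $\Map(y(a),\hat{y}(b))\to\Map(a,b)$ is not a monomorphism in general. So homotopy-level choices do not assemble into a coherent map out of the formal colimit. Instead, prove that the canonical map $\hat{y}(x)\simeq\colim_i\hat{y}(x_i)\to\colim_i y(x_i)$ is an equivalence by mapping in from the generators $y(a)$. The chosen factorizations give interleavings $\hat{y}(x_i)\to y(x_i)\to\hat{y}(x_j)$ and $y(x_i)\to\hat{y}(x_j)\to y(x_j)$ that compose to the transition maps up to homotopy. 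For the first of these, this holds because a map out of $\hat{y}(x_i)$ is determined by its image under $k$. Such an interleaving induces isomorphisms on homotopy groups of the filtered colimits of mapping spaces. Alternatively, cite the statement, which is standard.

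Third, the claim that $\mathcal{C}\times\mathcal{D}\to\mathcal{C}\otimes\mathcal{D}$ is ``strongly continuous in each variable'' is false as stated. For instance, $-\otimes d:\Sp\to\Sp$ is strongly continuous only for compact $d$. What you actually use is the joint formula $\hat{y}(c\otimes d)\simeq\text{``colim''}_{i,j}\,c_i\otimes d_j$, valid for any presentations of $\hat{y}(c)$ and $\hat{y}(d)$. It holds for the following reason. The functor $\hat{y}\otimes\hat{y}$ is a strongly continuous fully faithful functor into the compactly generated $\Ind(\mathcal{C}^\kappa)\otimes\Ind(\mathcal{D}^\kappa)$, with colimit-preserving right adjoint $k\otimes k$. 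Hence $\hat{y}_{\mathcal{C}\otimes\mathcal{D}}\simeq\Ind(k\otimes k)\circ\hat{y}\circ(\hat{y}\otimes\hat{y})$, and $(\hat{y}\otimes\hat{y})(c\otimes d)$ is the filtered colimit of the compact objects $y(c_i)\otimes y(d_j)$. This is precisely Lemma \ref{tensorlemma} for the identity functor. With this formula, the forward direction does not need compact-transition presentations at all.
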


We will add a slight alteration to Lemma \ref{aokilemma}. Note that
$$\Ind(\mathcal{C}) \otimes \Ind(\mathcal{D}) \simeq \Ind( \mathcal{C} \otimes^{rex} \mathcal{D})$$
where $\mathcal{C} \otimes^{rex} \mathcal{D}$ is the tensor product of $\infty$-categories with \emph{finite} colimits, \cite[Section 4.8.1]{lurieha}. Since $\mathcal{C} \otimes^{rex} \mathcal{D} \hookrightarrow \mathcal{C} \otimes \mathcal{D}$ embeds full faithfully, we see that
$$ \Ind(\mathcal{C}) \otimes \Ind(\mathcal{D}) \hookrightarrow \Ind(\mathcal{C} \otimes \mathcal{D})$$
is a fully faithful, strongly continuous left adjoint. Now, if $F : \mathcal{C} \times \mathcal{D} \rightarrow \mathcal{E}$ is as above, we can consider the diagram
\[\begin{tikzcd}
	{\mathcal{C} \otimes \mathcal{D}} & {\mathcal{E}} \\
	{\Ind(\mathcal{C}) \otimes \Ind(\mathcal{D})} & {\Ind(\mathcal{E})} \\
	{\Ind(\mathcal{C} \otimes \mathcal{D})}
	\arrow["F", from=1-1, to=1-2]
	\arrow["{\hat{y} \otimes \hat{y}}"', hook, from=1-1, to=2-1]
	\arrow["{\hat{y}}", hook, from=1-2, to=2-2]
	\arrow["{\tilde{F}}", from=2-1, to=2-2]
	\arrow[hook, from=2-1, to=3-1]
	\arrow["{\Ind(F)}"', from=3-1, to=2-2]
\end{tikzcd}\]
where $\tilde{F}$ is defined as the composite $\Ind(F) i$. We obtain the following as a trivial consequence.

\begin{lemma} \label{tensorlemma} The functor $F$ is strongly continuous iff $\tilde{F} ( \hat{y} \otimes \hat{y} ) \simeq \hat{y} F$.
\end{lemma}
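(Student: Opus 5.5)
We unwind the definition of strong continuity for the left adjoint $F : \mathcal{C} \otimes \mathcal{D} \rightarrow \mathcal{E}$ (abusing notation for the induced functor). Since $F$ is a left adjoint, it preserves filtered colimits, so the first adjointability condition $k\,\Ind(F) \simeq F k$ is automatic. Strong continuity therefore reduces to the second condition: the Beck--Chevalley map $\hat{y}_\mathcal{E} F \rightarrow \Ind(F)\hat{y}_{\mathcal{C} \otimes \mathcal{D}}$ must be an equivalence.

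The plan is to identify $\hat{y}_{\mathcal{C}\otimes\mathcal{D}}$, viewed as a functor into $\Ind(\mathcal{C}\otimes\mathcal{D})$, with the composite $i \circ (\hat{y}_\mathcal{C} \otimes \hat{y}_\mathcal{D})$. Here $i : \Ind(\mathcal{C}) \otimes \Ind(\mathcal{D}) \hookrightarrow \Ind(\mathcal{C} \otimes \mathcal{D})$ is the fully faithful, strongly continuous left adjoint discussed above. This identification is the key step. The functor $\hat{y}_\mathcal{C} \otimes \hat{y}_\mathcal{D}$ is left adjoint to $k_\mathcal{C} \otimes k_\mathcal{D}$, because $-\otimes-$ is a $2$-functor on $\PR^L$. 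The right adjoint of $i$ is computed objectwise from the formula $\Ind(\mathcal{C})\otimes\Ind(\mathcal{D}) \simeq \Ind(\mathcal{C}\otimes^{rex}\mathcal{D})$. Composing, the right adjoint of $i(\hat{y}\otimes\hat{y})$ is a functor $\Ind(\mathcal{C}\otimes\mathcal{D}) \rightarrow \mathcal{C}\otimes\mathcal{D}$. We would check that it agrees with $k_{\mathcal{C}\otimes\mathcal{D}}$ by noting that both functors preserve filtered colimits and restrict to the identity along $y$: on formal colimits of pure tensors $c \otimes d$ they both take the actual colimit. Uniqueness of left adjoints then gives $\hat{y}_{\mathcal{C}\otimes\mathcal{D}} \simeq i(\hat{y}\otimes\hat{y})$.

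Granting this identification, $\Ind(F)\hat{y}_{\mathcal{C}\otimes\mathcal{D}} \simeq \Ind(F)\, i\,(\hat{y}\otimes\hat{y}) = \tilde{F}(\hat{y}\otimes\hat{y})$. The Beck--Chevalley condition thus becomes exactly $\hat{y}F \simeq \tilde{F}(\hat{y}\otimes\hat{y})$, which is the claim. We also need to check that the comparison maps agree, which amounts to naturality of the mate under the identification. For the equivalence to be witnessed by the canonical map, it suffices that both sides are obtained as mates of the same square.

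The main obstacle is the identification of $k_{\mathcal{C}\otimes\mathcal{D}}$. Pure tensors generate $\mathcal{C}\otimes\mathcal{D}$ under colimits, but $k$ only preserves filtered colimits, so one must argue more carefully. A cleaner route is to invoke Theorem~\ref{closureunderretracts} together with the description of $\hat{y}$ for retracts. Write $\mathcal{C}$ and $\mathcal{D}$ as retracts of compactly generated categories via $\hat{y}$ and $k$, tensor the retract diagrams, and use the first bullet after Theorem~\ref{closureunderretracts}. Since $\hat{y}$ in the compactly generated case is $\Ind$ of the inclusion of compact objects, compatibility with $\otimes$ there is clear.
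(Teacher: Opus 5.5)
Your strategy is the one the paper intends; the paper only calls the lemma a trivial consequence of the preceding diagram. Since $F$ is a left adjoint, only the $\hat{y}$-half of strong continuity is at stake. Everything then reduces to the identification $\hat{y}_{\mathcal{C}\otimes\mathcal{D}}\simeq i\circ(\hat{y}\otimes\hat{y})$, after which $\Ind(F)\hat{y}_{\mathcal{C}\otimes\mathcal{D}}\simeq\tilde{F}(\hat{y}\otimes\hat{y})$ is immediate. The gap is that neither of your justifications of this identification works as written.

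Your first route only shows that $(k\otimes k)\circ i^R$ and $k_{\mathcal{C}\otimes\mathcal{D}}$ agree on formal colimits of objects of $\mathcal{C}\otimes^{rex}\mathcal{D}$. A general object of $\Ind(\mathcal{C}\otimes\mathcal{D})$ is a formal colimit of arbitrary objects $x$, and $(k\otimes k)\,i^R y(x)\simeq\colim_{a\to x,\,a\in\mathcal{C}\otimes^{rex}\mathcal{D}}a$. So you would need a density statement that you do not prove.

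Your second route cites the first bullet after Theorem~\ref{closureunderretracts}. That bullet concerns a fully faithful \emph{right} adjoint preserving filtered colimits, whereas $\hat{y}\otimes\hat{y}$ is the \emph{left} adjoint of $k\otimes k$; the further right adjoint of $k\otimes k$ does not preserve filtered colimits in general.

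This is not pedantry: the formula $\hat{y}_{\mathcal{C}}\simeq\Ind(r)\,\hat{y}_{\mathcal{D}}\,s$ is false for a general retract $rs\simeq\mathrm{id}$ by filtered-colimit-preserving functors. Take $\mathcal{C}=\Sp$ and $\mathcal{D}=\Sp\times\Sp$. Set $s(X)=(X,X\otimes H\mathbb{Q})$ and $r(Y_1,Y_2)=Y_1\oplus(Y_2\otimes M)$, where $M=\mathrm{cofib}(\mathbb{S}\to\mathbb{S}[1/p])$. Then $\Ind(r)\hat{y}_{\mathcal{D}}s(\mathbb{S})\simeq y(\mathbb{S})\oplus\text{``colim''}(M\xrightarrow{2}M\xrightarrow{3}\cdots)$. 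The second summand is a nonzero Ind-object, because multiplication by a nonzero integer is never zero on $\pi_0M\cong\mathbb{Z}/p^\infty$. By contrast, $\hat{y}(\mathbb{S})=y(\mathbb{S})$. The formula needs $r$ or $s$ to be strongly continuous.

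The correct input is the second bullet, or directly the remark after Definition~\ref{compactlyassembled}. First, $\mathcal{C}\otimes\mathcal{D}$ is compactly assembled as a retract of $\Ind(\mathcal{C})\otimes\Ind(\mathcal{D})$. Second, $\hat{y}\otimes\hat{y}$ is a left adjoint whose right adjoint $k\otimes k$ preserves colimits, so $\hat{y}\otimes\hat{y}$ is strongly continuous. That is, $\hat{y}_{\Ind(\mathcal{C})\otimes\Ind(\mathcal{D})}\circ(\hat{y}\otimes\hat{y})\simeq\Ind(\hat{y}\otimes\hat{y})\circ\hat{y}_{\mathcal{C}\otimes\mathcal{D}}$. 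Applying $\Ind(k\otimes k)$ and using $(k\otimes k)(\hat{y}\otimes\hat{y})\simeq\mathrm{id}$ gives $\hat{y}_{\mathcal{C}\otimes\mathcal{D}}\simeq\Ind(k\otimes k)\circ\hat{y}_{\Ind(\mathcal{C})\otimes\Ind(\mathcal{D})}\circ(\hat{y}\otimes\hat{y})$.

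It remains to identify $\Ind(k\otimes k)\circ\hat{y}_{\Ind(\mathcal{C})\otimes\Ind(\mathcal{D})}$ with $i$. Under $\Ind(\mathcal{C})\otimes\Ind(\mathcal{D})\simeq\Ind(\mathcal{C}\otimes^{rex}\mathcal{D})$, the functor $\hat{y}_{\Ind(\mathcal{C})\otimes\Ind(\mathcal{D})}$ is $\Ind$ of the inclusion of $\mathcal{C}\otimes^{rex}\mathcal{D}$. Moreover, $k\otimes k$ restricts on $\mathcal{C}\otimes^{rex}\mathcal{D}$ to the canonical functor into $\mathcal{C}\otimes\mathcal{D}$, since both are the right exact functors induced by $(c,d)\mapsto c\otimes d$. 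Hence $\Ind(k\otimes k)\circ\hat{y}_{\Ind(\mathcal{C})\otimes\Ind(\mathcal{D})}\simeq i$. This is the ``compatibility with $\otimes$'' you allude to. With it in place, the rest of your argument goes through, including the identification of the comparison maps as mates.
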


\begin{proof}[Proof of Proposition \ref{quasiseparated}]
This is a special case of Lemma \ref{aokilemma} and Lemma \ref{tensorlemma} applied to $\times : \mathcal{X} \otimes \mathcal{X} \rightarrow \mathcal{X}$.
\end{proof}

\begin{example} \label{overtoposanima}
The $\infty$-topos $\An$ is stably compact, since $\hat{y} : \An \rightarrow \Ind(\An)$ is obtained by applying $\Ind$ to the inclusion $\An^\omega \hookrightarrow \An$, which is closed under finite products. More generally, if $X \in \An$, then the $\infty$-topos $\An_{/X} \simeq \mathrm{Fun}(X, \An)$ is always locally compact. It is compact iff $X \in \An^\omega$. However, stability can often fail: Consider $X = S^1$. Then $\hat{y}$ is given by applying $\Ind$ to the inclusion $\An^\omega_{/S^1} \hookrightarrow \An_{/S^1}$. This inclusion is not closed under binary products: The pullback of the inclusion of a point $\pt \rightarrow S^1$ against itself in $\An$ is $\mathbb{Z}$, which is not a compact anima.
\end{example}

\begin{example}
Let $D$ be a small $\infty$-category admitting binary products. Then $\mathrm{PSh}(D) = \mathrm{Fun}(D^{\op},\An)$ is stably locally compact. Under the identification $\mathrm{PSh}(D) \otimes \mathrm{PSh}(D) \simeq \mathrm{PSh}(D \times D)$, the diagonal $\Delta^*$ is given by left Kan extension of the product functor $\times : D \times D \rightarrow D$, and therefore is strongly continuous.
\end{example}

\begin{definition}
A geometric morphism $i : \mathcal{Y} \rightarrow \mathcal{X}$ is called a subtopos of $\mathcal{X}$ if $i_*$ is fully faithful. It is furthermore called a \emph{perfect} subtopos if $i_*$ preserves filtered colimits.
\end{definition}

\begin{proposition} \label{perfectsubtopoi}
Let $\mathcal{X}$ be a topos and $i : \mathcal{Y} \rightarrow \mathcal{X}$ a perfect subtopos.
\begin{itemize}
\item If $\mathcal{X}$ is compact, then so is $\mathcal{Y}$.
\item If $\mathcal{X}$ is locally compact, then so is $\mathcal{Y}$.
\item If $\mathcal{X}$ is stably locally compact, then so is $\mathcal{Y}$. 
\end{itemize}
\end{proposition}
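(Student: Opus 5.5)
We treat the three claims in order. Throughout, $i^* \dashv i_*$ with $i_*$ fully faithful and preserving filtered colimits, so $i^*$ is a left adjoint with $i^* i_* \simeq \mathrm{id}_{\mathcal{Y}}$. Hence $\mathcal{Y}$ is a retract of $\mathcal{X}$ via the filtered-colimit preserving functors $i_*$ and $i^*$.

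\emph{Compactness.} Since $i^*$ preserves finite limits, $1_\mathcal{Y} \simeq i^* 1_\mathcal{X}$, and as $i_*$ is a right adjoint also $i_* 1_\mathcal{Y} \simeq 1_\mathcal{X}$. For a filtered diagram $Y_j$ in $\mathcal{Y}$ we compute
$$\Map_\mathcal{Y}(1, \colim Y_j) \simeq \Map_\mathcal{X}(1, i_*\colim Y_j) \simeq \Map_\mathcal{X}(1, \colim i_* Y_j) \simeq \colim \Map_\mathcal{X}(1, i_*Y_j),$$
using full faithfulness of $i_*$, the assumption that $i_*$ preserves filtered colimits, and compactness of $1_\mathcal{X}$. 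The last term is $\colim \Map_\mathcal{Y}(1, Y_j)$, so $1_\mathcal{Y}$ is compact.

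\emph{Local compactness.} This follows directly from Theorem \ref{closureunderretracts}, since $\mathcal{Y}$ is a retract of the compactly assembled $\mathcal{X}$ via filtered-colimit preserving functors. By the first special case listed after that theorem, we also get the explicit formula $\hat{y}_\mathcal{Y} \simeq \Ind(i^*)\,\hat{y}_\mathcal{X}\, i_*$.

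\emph{Stable local compactness.} Here the plan is to use criterion (2) of Proposition \ref{quasiseparated}: we must show that $\hat{y}_\mathcal{Y}$ preserves binary products. The functor $i_*$ preserves products as a right adjoint, and $\hat{y}_\mathcal{X}$ preserves binary products by assumption. For $\Ind(i^*)$, note that $i^*$ preserves finite limits, and products in $\Ind$ of formal filtered colimits are computed termwise over the product index category, so $\Ind(i^*)$ preserves binary products. Composing these three facts shows that $\hat{y}_\mathcal{Y}$ preserves binary products, which is what we need.

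The main point requiring care is this last step: checking that the formula $\hat{y}_\mathcal{Y} \simeq \Ind(i^*)\,\hat{y}_\mathcal{X}\, i_*$ really applies (it is Lemma \ref{existenceofadjoint}), and that binary products in $\Ind(\mathcal{X})$ are computed as formal filtered colimits of pairwise products. The latter holds because a product of two filtered index categories is filtered, and filtered colimits in $\An$ commute with finite products.
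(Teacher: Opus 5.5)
Your proposal is correct and follows the paper's proof. The paper gets compactness from $1_\mathcal{Y} \simeq i^*1_\mathcal{X}$, citing that $i^*$ preserves compact objects; you unwind the same fact by hand. It gets local compactness from Theorem \ref{closureunderretracts} with $\hat{y}_\mathcal{Y} \simeq \Ind(i^*)\,\hat{y}_\mathcal{X}\, i_*$, and stability from this composite preserving binary products, exactly as you do. Your justification that $\Ind(i^*)$ preserves binary products (products of ind-objects computed over the product of the filtered index categories) is a detail the paper leaves implicit.
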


\begin{proof}
If $1 \in \mathcal{X}$ is compact, then so is $i^*(1) = 1 \in \mathcal{Y}$, proving the first claim. The second claim follows by Theorem \ref{closureunderretracts}. In particular $\hat{y}_\mathcal{Y}$ exists and is given by  $ \Ind(i^*)  \hat{y}_\mathcal{X}  i_*$. This implies the last claim, since if $\mathcal{X}$ is stably locally compact, the functor $\hat{y}_\mathcal{Y}$ is given as a composite of functors that preserve binary products.
\end{proof}

Stably locally compact topoi are also closed under passage to open subtopoi.

\begin{proposition}
Let $\mathcal{X}$ be a topos and $U \in \mathcal{X}$ be a subobject of $1$.
\begin{itemize}
\item If $\mathcal{X}$ is locally compact, then so is $\mathcal{X}_{/U}$.
\item If $\mathcal{X}$ is stably locally compact, then so is $\mathcal{X}_{/U}$.
\item If $\mathcal{X}$ is compact, and $U$ is a compact object, then $\mathcal{X}_{/U}$ is compact.
\end{itemize}
\end{proposition}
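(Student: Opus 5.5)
The plan is to realise $\mathcal{X}_{/U}$ as a retract of $\mathcal{X}$ in a way compatible with the structure $\hat{y}$, and then transport the three properties along it.

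\textbf{The adjunctions.} Write $i_! : \mathcal{X}_{/U} \to \mathcal{X}$ for the forgetful functor and $i^* = - \times U$ for its right adjoint. There is a further right adjoint $i_*$, given by the internal hom $X \mapsto X^U$. Since $U$ is a subobject of $1$, the functor $i_!$ is fully faithful: the fibre of $\Map(i_!X, U) \to \Map(i_!X,1)$ is contractible. Moreover $i^*$ preserves all colimits, because colimits in a topos are universal. So we have a fully faithful left adjoint $L = i_!$ whose right adjoint $R = i^*$ preserves filtered colimits, and $i_!$ preserves colimits as well.

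\textbf{Local compactness and stability.} By the second special case of Theorem \ref{closureunderretracts}, $\mathcal{X}_{/U}$ is compactly assembled. Concretely, $\hat{y}_{\mathcal{X}_{/U}}$ can be written as $\Ind(i^*)\,\hat{y}_\mathcal{X}\, i_!$: this composite is left adjoint to $k_{\mathcal{X}_{/U}}$, since $i^* k_\mathcal{X} \simeq k_{\mathcal{X}_{/U}} \Ind(i^*)$ by filtered-colimit preservation, and it uses $i^* i_! \simeq \mathrm{id}$. For stability I would use criterion (2) of Proposition \ref{quasiseparated} and check that each factor preserves binary products:
\begin{itemize}
\item $\Ind(i^*)$ preserves binary products because $i^*$ is a right adjoint.
\item $\hat{y}_\mathcal{X}$ preserves binary products by hypothesis.
\item $i_!$ does not preserve products, since $X \times_U Y$ is generally not $X \times Y$. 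However, for $X, Y$ over $U$ we have $i_!(X\times_U Y) = X \times_U Y \simeq X\times Y$, because $U \to 1$ is a monomorphism, so $U\times U \simeq U$. Hence $i_!$ does preserve binary products.
\end{itemize}
The composite therefore preserves binary products, which proves the second bullet.

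\textbf{Compactness.} The terminal object of $\mathcal{X}_{/U}$ is $U \to U$, and $i_!$ sends it to $U$. If $U$ is compact in $\mathcal{X}$, then for a filtered diagram $X_j$ over $U$ we have
$$\Map_{/U}(U, \colim X_j) = \mathrm{fib}\bigl(\Map(U,\colim X_j)\to \Map(U,U)\bigr).$$
Since $i_!$ preserves colimits, $U$ is compact, and filtered colimits commute with fibres over a point in $\An$, this fibre is $\colim_j \Map_{/U}(U, X_j)$. So $\mathcal{X}_{/U}$ is compact. In fact compactness of $\mathcal{X}$ itself is not even needed for this bullet.

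\textbf{Main obstacle.} The only delicate point is the formula for $\hat{y}$ via the second case of Theorem \ref{closureunderretracts}. The paper describes that case only as a cofinality statement, so I would verify the adjunction $\Ind(i^*)\hat{y}_\mathcal{X} i_! \dashv k_{\mathcal{X}_{/U}}$ directly:
$$\Map(\Ind(i^*)\hat{y}i_!X, Z) \simeq \Map(\hat{y}i_!X, \Ind(i_*)Z) \simeq \Map(i_!X, k\Ind(i_*)Z).$$
One then needs $k\,\Ind(i_*) \simeq i_*\,k$. This requires $i_*$ to preserve filtered colimits, which may fail. If it does, I would fall back on the cofinality description: filtered colimits and binary products in $\Ind(\mathcal{X}_{/U})$ are computed after applying the fully faithful, product-preserving $\Ind(i_!)$. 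Then $\hat{y}_{\mathcal{X}_{/U}}$ preserves products because $\hat{y}_\mathcal{X} i_!$ does.
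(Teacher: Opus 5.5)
Your proof is correct and follows the paper's route: you view $\mathcal{X}_{/U}$ through the fully faithful forgetful functor $i_!$ with colimit-preserving right adjoint $-\times U$, invoke the second case of Theorem \ref{closureunderretracts}, and use that $i_!$ preserves binary products because $U$ is subterminal (you phrase stability as $\hat{y}$ preserving products, while the paper uses products of compact maps; these are equivalent by Proposition \ref{quasiseparated}). The worry in your last paragraph is unfounded and no fallback is needed: the right adjoint of $\Ind(i^*)\,\hat{y}_{\mathcal{X}}\,i_!$ is $i^* k_{\mathcal{X}} \Ind(i_*) \simeq k_{\mathcal{X}_{/U}} \Ind(i^* i_*) \simeq k_{\mathcal{X}_{/U}}$, which uses only that $i^*$ preserves filtered colimits and that $i^* i_* \simeq \mathrm{id}$ (this, rather than $i^* i_! \simeq \mathrm{id}$, is the identity your first argument actually needs), so filtered-colimit preservation of $i_*$ never enters.
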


\begin{proof}
This is a direct corollary of Theorem \ref{closureunderretracts}, by noting that the forget functor $\mathcal{X}_{/U} \hookrightarrow \mathcal{X}$ is fully faithful, since $U$ is assumed to be a subobject of $1$, and has the colimit preserving left adjoint $- \times U$. Note that the remark following Theorem \ref{closureunderretracts} also states that an object/morphism in $\mathcal{X}_{/U}$ is compact iff it is compact when considered as an object/morphism. The forget functor preserves binary products, hence the statement about stable local compactness follows. The statement about compactness of $\mathcal{X}_{/U}$ is clear by the same reasoning.
\end{proof}

\begin{remark}
Compactness is of course not inherited by passing to arbitrary open subtopoi.
\end{remark}

\subsection{Stably locally compact spaces}

Let us define the main class of topological spaces of interest for the purposes of this article: The so-called \emph{stably locally compact spaces.} Standard References for the theory of stably locally compact spaces are \cite[Section VI-6]{Gierz_Hofmann_Keimel_Lawson_Mislove_Scott_2003}, \cite[Section 9]{Goubault-Larrecq_2013} and \cite{LAWSON_2011}.\footnote{We warn the reader about a conflict of terminology. The term \emph{coherent} as a property of spaces in the references \cite{Goubault-Larrecq_2013} and \cite{Gierz_Hofmann_Keimel_Lawson_Mislove_Scott_2003} refers to the property that the way-below relation is stable under intersection, whereas \emph{coherent space} used in \cite{lehner2025algebraicktheorycoherentspaces} refers to what is otherwise also known as a \emph{spectral space}, and is in accordance with the term \emph{coherent locale}, as used in \cite{johnstone1982stone}. This terminology matches the more general notion of a coherent topos, following \cite[Appendix A]{lurieSAG}.}

\begin{definition} \label{stablycompactframe}
Let $F$ be a frame. We say that $F$ is
\begin{itemize}
\item \emph{locally compact}, if $F$ is compactly assembled when considered as an $\infty$-category,
\item \emph{stably locally compact}, if furthermore $\hat{y} : F \rightarrow \Ind(F)$ is a partial frame homomorphism, and
\item \emph{stably compact}, if $\hat{y} : F \rightarrow \Ind(F)$ is a frame homomorphism.
\end{itemize}
A partial frame homomorphism $f^* : F \rightarrow F'$ between locally compact frames is called \emph{perfect} if it is strongly continuous. We define the categories:
\begin{itemize}
\item $\mathrm{SLCFrm}_{p}$ of stably locally compact frames and partial perfect frame homomorphisms.
\item $\mathrm{SCFrm}$ of stably compact frames and perfect frame homomorphisms.
\end{itemize}
\end{definition}

\begin{definition} \label{stablycompactspace}
A sober topological space $X$ is called (stably) (locally) compact if its corresponding frame $\mathcal{O}(X)$ is (stably) (locally) compact.\footnote{We note that our notion of local compactness for a space is sometimes referred to as \emph{core-compactness} in the literature and is a stronger notion than just the naive requirement that every point has a compact neighborhood.} A partially defined continuous map $f$ with open support is called \emph{perfect}, if the corresponding partial frame homomorphism $f^{-1}$ is. We define the categories:
\begin{itemize}
\item $\mathrm{SLC}_{p}$ of stably locally compact spaces and perfect partially defined continuous maps.
\item $\mathrm{SC}$ of stably compact spaces and perfect continuous maps.
\end{itemize}
\end{definition}

We can rephrase Definitions \ref{stablycompactframe} and \ref{stablycompactspace} into the following more classically known terms.  Recall the notion of the \emph{way below} relation between opens of a frame $F$: If $U \leq V \in F$ are two opens, we say that $U$ is \emph{way below} $V$, in symbols $U \ll V$, if for every directed family of opens $W_i, i \in I,$ it holds that
$$ V \leq \bigvee_{ i \in I } W_i \text{ implies } \exists i \in I : U \leq W_i. \footnote{This is just saying that $U \leq V$ is a \emph{compact morphism} when considering $F$ as a category.}$$
We call an open $U$ \emph{compact open} if $U \ll U$. A frame / space is compact iff $1$ is a compact open.

\begin{proposition}
Let $F$ be a frame.
\begin{itemize}
\item $F$ is locally compact iff for all opens $U \in F$, we have $ U = \bigvee_{V \ll U} V. $
\item $F$ is stably locally compact iff it is locally compact, and whenever $U \ll V_1$ and $U \ll V_2$ it follows that $U \ll V_1 \wedge V_2$.
\item $F$ is stably compact iff $F$ is stably locally compact and $1 \in F$ is a compact open.
\end{itemize}
Let $f^* : F \rightarrow F'$ be a partial frame homomorphism. Then $f^*$ is perfect iff it preserves the way-below relation $\ll$, or equivalently its right adjoint $f_*$ preserves directed joins.
\end{proposition}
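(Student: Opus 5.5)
The whole proposition amounts to unwinding the $\infty$-categorical definitions for the poset $F$. I will use throughout that, for a poset with directed joins, $\Ind(F)$ is the poset of ideals (directed down-closed subsets) of $F$, and $y(U) = {\downarrow} U$. A left adjoint $k$ to $y$ always exists and sends an ideal $I$ to $\bigvee I$. So $F$ is compactly assembled exactly when $k$ has a further left adjoint $\hat{y}$. Whenever such an adjoint exists it must be $\hat{y}(U) = \{V : V \ll U\}$. This holds because $\hat{y}(U) \subseteq I$ iff $U \leq \bigvee I$ for all ideals $I$, and this unwinds to the defining property of $\ll$.

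\textbf{Local compactness.} I would show that $\hat{y}(U) := {\twoheaddownarrow} U = \{V : V \ll U\}$ is an ideal (closed under finite joins by the standard argument), and that $\hat{y} \dashv k$ holds exactly when $\bigvee {\twoheaddownarrow} U = U$. Indeed, $U \leq \bigvee I$ implies ${\twoheaddownarrow} U \subseteq I$ by definition of $\ll$. Conversely, ${\twoheaddownarrow} U \subseteq I$ gives $U = \bigvee {\twoheaddownarrow} U \leq \bigvee I$ precisely under the approximation hypothesis. For the necessity direction, apply the adjunction with $I = \hat{y}(U)$ to get $U \leq \bigvee \hat{y}(U)$. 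Fullness of the subcategory inclusion is then automatic, since everything is a poset.

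\textbf{Stability and compactness.} The lattice operations in $\Ind(F)$ are intersection of ideals as binary meet and ${\downarrow}1$ as top element. The partial frame homomorphism condition on $\hat{y}$, which is a left adjoint and so preserves joins automatically, reduces to ${\twoheaddownarrow}(V_1 \wedge V_2) = {\twoheaddownarrow} V_1 \cap {\twoheaddownarrow} V_2$. The inclusion $\subseteq$ always holds, and $\supseteq$ is exactly the stated stability of $\ll$ under meets. Preservation of the top element means ${\twoheaddownarrow}1 = {\downarrow}1$, that is, $1 \ll 1$.

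\textbf{Perfect maps.} Since $f^*$ is a left adjoint, it preserves directed joins, so the first adjointability is automatic. By the Remark following Definition~\ref{compactlyassembled}, strong continuity is then equivalent to $f_*$ preserving directed joins. I would prove that this is in turn equivalent to $f^*$ preserving $\ll$.
\begin{itemize}
\item Suppose $f_*$ preserves directed joins, $U \ll V$, and $f^*V \leq \bigvee W_i$ for a directed family $W_i$. Then $V \leq f_*\bigvee W_i = \bigvee f_*W_i$, so $U \leq f_*W_i$ for some $i$, hence $f^*U \leq W_i$.
\item Conversely, suppose $f^*$ preserves $\ll$. For $U \ll f_*\bigvee W_i$ we get $f^*U \ll f^*f_*\bigvee W_i \leq \bigvee W_i$, so $f^*U \leq W_i$ for some $i$, that is, $U \leq f_*W_i$. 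Local compactness of the domain frame of $f_*$ then gives $f_*\bigvee W_i \leq \bigvee f_* W_i$.
\end{itemize}

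I expect no real obstacle here. The only point needing care is the identification of $\hat{y}$ with the way-below ideal, together with the requirement that the domain frame be locally compact in the last step.
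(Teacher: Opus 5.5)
Your proposal is correct and is the same direct unwinding the paper intends: the paper gives no proof of its own, calling these facts straightforward and pointing to Aoki's Propositions 4.4 and 4.6, and your identification of $\hat{y}(U)$ with the ideal $\{V : V \ll U\}$, combined with the Remark that turns strong continuity into $f_*$ preserving directed joins, is exactly that verification. One small slip: in the converse step for perfect maps, the local compactness you need is that of $F$, the domain of $f^*$ (equivalently the codomain of $f_*$), not of the domain of $f_*$, because the elements $U \ll f_*\bigvee_i W_i$ lie in $F$.
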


The proofs for these facts are straightforward, and can be found (with slight variation) in the proofs given in \cite[Proposition 4.4 and 4.6]{aoki2025schwartzcoidempotentscontinuousspectrum}.

\begin{remark}
It is tempting to refer to perfect maps as \emph{proper}, however, the terminology of \emph{proper map} often additionally assumes closedness in the literature. 
\end{remark}

\begin{proposition} \label{quasiseparatedlocale}
A sober, locally compact space $X$ is stably locally compact iff the diagonal
$$ \Delta : X \rightarrow X \times X $$
is a perfect map. Furthermore, $X$ is compact iff the canonical map $X \rightarrow \mathrm{pt}$ is perfect.
\end{proposition}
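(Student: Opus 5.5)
We work on the level of frames. The frame of $X \times X$ is the frame coproduct $\mathcal{O}(X) \otimes \mathcal{O}(X)$ (the product of sober spaces might not be spatial, so we interpret $X\times X$ as the locale product; for locally compact $X$ the product is spatial anyway). It is the localic analogue of the tensor product of $\Ind$-categories. The diagonal corresponds to the join-preserving map $\Delta^* : \mathcal{O}(X)\otimes \mathcal{O}(X) \rightarrow \mathcal{O}(X)$, determined by $U \otimes V \mapsto U \wedge V$ on basic rectangles. Perfectness of $\Delta$ means that $\Delta^*$ preserves the way-below relation, equivalently that $\Delta_*$ preserves directed joins.

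The plan is to mimic the proof of Proposition \ref{quasiseparated}, with the role of Lemma \ref{aokilemma} played by its $0$-categorical analogue for frames. Concretely, we first check that in $\mathcal{O}(X)\otimes\mathcal{O}(X)$, for $X$ locally compact, the rectangles $U'\otimes V'$ with $U' \ll U$ and $V' \ll V$ satisfy $U'\otimes V' \ll U \otimes V$. Moreover, every element is the directed join of finite joins of such rectangles. Hence $W' \ll W$ in the product holds iff $W'$ lies below a finite join of rectangles $U_k'\otimes V_k'$ with $U_k' \ll U_k$, $V_k' \ll V_k$ and $\bigvee_k U_k \otimes V_k \le W$. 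This is the standard description of the product of locally compact locales, e.g. as in \cite{johnstone1982stone}. Alternatively, it can be deduced by applying Lemma \ref{aokilemma} to the functor $\wedge : \mathcal{O}(X)\times\mathcal{O}(X) \to \mathcal{O}(X)$, viewing the frames as compactly assembled posets in $\PR^L_{\ca}$.

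Given this, $\Delta^*$ preserves $\ll$ iff for all $U'\ll U$ and $V' \ll V$ we have $U' \wedge V' \ll U \wedge V$, since $\ll$ is preserved by finite joins. If $\ll$ is stable under intersection, take $U'\ll U$ and $V'\ll V$. Then $U'\wedge V' \ll U$ and $U' \wedge V' \ll V$, as $\ll$ is stable under shrinking the left side. Hence $U'\wedge V'\ll U\wedge V$. Conversely, if $\Delta^*$ is perfect and $W\ll V_1$, $W\ll V_2$, then $W = W\wedge W \ll V_1 \wedge V_2$. So the first claim follows from the classical characterization in the preceding Proposition.

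For compactness, $X\to\pt$ corresponds to the frame map $[1]\to\mathcal{O}(X)$ sending $1\mapsto X$. Its right adjoint sends $U$ to $1$ iff $U = X$. This right adjoint preserves directed joins iff $X = \bigvee W_i$ directed implies $W_i = X$ for some $i$, i.e. iff $X \ll X$. This direction is routine.

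The main obstacle is the description of the way-below relation on the frame coproduct. The cleanest route is to use that $\Ind$ and the tensor product commute for compactly assembled posets, so that $\hat y$ on $\mathcal{O}(X)\otimes\mathcal{O}(X)$ is $\hat y\otimes\hat y$. We would then apply Lemma \ref{tensorlemma} to $\wedge$ exactly as in the proof of Proposition \ref{quasiseparated}.
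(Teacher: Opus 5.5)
Your proposal is correct and is essentially the paper's argument. The paper also reduces the first claim to Lemma~\ref{aokilemma} applied to $\wedge : \mathcal{O}(X)\times\mathcal{O}(X)\to\mathcal{O}(X)$; your explicit description of $\ll$ on rectangles $U'\otimes V'$ just unpacks that lemma in the posetal case. For compactness the paper uses the same observation about $\mathcal{O}(\mathrm{pt})$, and your check that the rectangle condition is equivalent to $\ll$ being stable under binary meets fills in a step the paper leaves implicit.
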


\begin{proof}
The inverse image part $\Delta^* : \mathcal{O}(X) \otimes \mathcal{O}(X) \rightarrow \mathcal{O}(X) $ is given by sending $U \otimes V$ to $ U \wedge V$. With this understood, we see that the first claim is a special case of Lemma \ref{aokilemma}. The statement about compactness follows by observing that $\mathcal{O}(\mathrm{pt}) =  \{ \emptyset \subset \{\mathrm{pt}\} \}$ has a single non-trivial compact element and thus $p : X \rightarrow \mathrm{pt}$ is perfect iff $p^{-1}(\{\mathrm{pt}\})$ is compact open.
\end{proof}

By definition, the notions of stably (locally) compact spaces make reference only to their corresponding frames. Since we also assumed sobriety, it follows that the opposite of the category of stably (locally) compact spaces embeds fully faithfully into stably (locally) compact frames. A variant of Birkhoff's completeness theorem guarantees that this is an equivalence of categories.

\begin{theorem}[\cite{johnstone1982stone}, page 311] \label{spatialityoflocallycompact}
A locally compact frame is spatial.
\end{theorem}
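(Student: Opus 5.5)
To prove that a locally compact frame $F$ is spatial, I would show that the points of $F$ separate its elements. Concretely: whenever $U \not\leq V$ in $F$, I would produce a frame homomorphism $p : F \rightarrow [1]$ with $p(U) = 1$ and $p(V) = 0$. Equivalently, I would produce a completely prime filter $P \subset F$ with $U \in P$ and $V \notin P$; the point is then $p = \chi_P$. The frame is spatial exactly when the comparison map $F \rightarrow \mathcal{O}(\mathrm{pts}(F))$ is injective, and separation gives this injectivity.

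\textbf{Step 1: an interpolating chain.} Since $U = \bigvee_{W \ll U} W$ and $U \not\leq V$, some $U_0 \ll U$ satisfies $U_0 \not\leq V$. Local compactness gives the interpolation property: if $A \ll B$, then there is $C$ with $A \ll C \ll B$. This follows because $B = \bigvee_{C \ll B} \bigvee_{D \ll C} D$ is a directed join of elements $D \ll C \ll B$. Iterating interpolation produces a sequence $U_0 \ll \dots \ll U_{n+1} \ll U_n \ll \dots \ll U$. This in turn gives a Scott-open filter
$$ \mathcal{G} = \{ W \in F ~|~ \exists n : U_n \leq W \}. $$
It is Scott-open because $U_{n+1} \ll U_n \leq \bigvee W_i$ forces $U_{n+1} \leq W_i$ for some $i$. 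It is a filter provided it is closed under binary meets. A cleaner way to get this is to take the filter generated by the $U_n$ directly: any $W$ with $U_n \leq W$ already contains $U_{n+1}$ as a way-below element. Note also $V \notin \mathcal{G}$, since $U_n \leq V$ would give $U_0 \leq V$.

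\textbf{Step 2: Zorn's lemma.} Consider the set of Scott-open filters containing $U_0$ (or containing $\mathcal{G}$) and not containing $V$. It is closed under unions of chains, since a union of Scott-open filters is again a Scott-open filter. Zorn's lemma then gives a maximal element $P$.

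\textbf{Step 3: $P$ is prime, hence completely prime.} This is the main obstacle. Suppose $A \vee B \in P$ but $A, B \notin P$. I would use the fact that $P$ is Scott-open together with local compactness to find $A' \ll A$ such that $A' \vee B \in P$; this is possible since $A = \bigvee_{A' \ll A} A'$ is a directed join. I would then enlarge $P$ to the Scott-open filter generated by $P$ and a way-below chain through $A'$. By maximality this enlarged filter must contain $V$. Doing the same for $B$ and using distributivity shows $V \geq$ some element of $P$, which is a contradiction.

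The delicate point in this step is making the enlarged filters genuinely Scott-open. For that I would work throughout with the sublocale $F_{V/}$, the closed complement of $V$, which is still locally compact. In that setting the filter becomes proper and the standard argument (Hofmann–Lawson / Johnstone) goes through. Finally, a Scott-open prime filter is completely prime: if $\bigvee W_i \in P$, then the finite joins of the $W_i$ form a directed family, so by Scott-openness some finite join lies in $P$, and primeness then puts some $W_i$ in $P$.
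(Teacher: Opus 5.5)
Steps 1 and 2 are sound. In Step 1, closure of $\mathcal{G}$ under binary meets is automatic, since the $U_n$ decrease: $U_n\le W$ and $U_m\le W'$ give $U_{\max(n,m)}\le W\wedge W'$.

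\textbf{The gap is in Step 3.} The filter generated by $P$ and the chain through $A'$, namely $\{W : W\ge p\wedge A_n \text{ for some } p\in P,\ n\}$, has no reason to be Scott-open. To prove it is, you would pick $p'\in P$ with $p'\ll p$ and then need $p'\wedge A_{n+1}\ll p\wedge A_n$. That is exactly stability of $\ll$ under binary meets, the extra axiom of \emph{stable} local compactness, which the statement does not assume.

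In general the filter generated by two Scott-open filters is not Scott-open. Take $X=\mathbb{N}\cup\{a,b\}$, where a set is open iff it meets $\mathbb{N}$ cofinitely whenever it contains $a$ or $b$. This is a sober locally compact space, and $\mathbb{N}\cup\{a\}$ and $\mathbb{N}\cup\{b\}$ are compact opens. The filter they generate is $\{W : W\supseteq\mathbb{N}\}$, which is not Scott-open, since $\mathbb{N}$ is the directed union of its finite (open) subsets.

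Passing to $F_{V/}$ does not help: it makes $V$ the bottom element but supplies no stability (take $V=0$). You give no argument specific to $P$ that rules this out. Replacing the generated filter by some larger Scott-open filter does not work either, because then $V$ lying in it no longer yields $V\ge p\wedge A_n$, and the distributivity step collapses.

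\textbf{The fix} is to apply Zorn's lemma to elements rather than filters, so you never have to enlarge a Scott-open filter. This is the standard argument; the paper itself gives no proof and just cites Johnstone.

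\begin{itemize}
\item Since $\mathcal{G}$ is Scott-open, the set $\{x\in F : x\ge V,\ x\notin\mathcal{G}\}$ is closed under directed joins and contains $V$. Zorn's lemma gives a maximal element $p$.
\item Suppose $x\wedge y\le p$ with $x,y\not\le p$. By maximality $x\vee p$ and $y\vee p$ lie in $\mathcal{G}$, hence so does $(x\vee p)\wedge(y\vee p)=(x\wedge y)\vee p=p$, a contradiction. So $p$ is prime, and $p\neq 1$ because $1\in\mathcal{G}$.
\item Then $\{W : W\not\le p\}$ is a completely prime filter. It contains $U$, since $U\in\mathcal{G}$ and $\mathcal{G}$ is upward closed, and it does not contain $V$, since $V\le p$.
\end{itemize}

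This uses only that $\mathcal{G}$ is a Scott-open filter. Applied with your maximal $P$ in place of $\mathcal{G}$, it also shows a posteriori that $P$ equals this completely prime filter, but Step 2 is then superfluous.
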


\begin{corollary} \label{stablycompactframesandspaces}
There exist equivalences of categories
$$(\mathrm{SLC}_{p})^{\op} \simeq \mathrm{SLCFrm}_{p} \text{ and } (\mathrm{SC})^{\op} \simeq \mathrm{SCFrm}, $$
given on objects by $X \mapsto \mathcal{O}(X)$.
\end{corollary}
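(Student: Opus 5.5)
The plan is to show that the functor $X \mapsto \mathcal{O}(X)$ is well defined and fully faithful into $\mathrm{SLCFrm}_{p}$ (resp.\ $\mathrm{SCFrm}$), and then to use Theorem \ref{spatialityoflocallycompact} to get essential surjectivity.

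\textbf{Well-definedness.} By Definition \ref{stablycompactspace}, a sober space $X$ is stably (locally) compact exactly when $\mathcal{O}(X)$ is a stably (locally) compact frame. A partial perfect map $f$ is, by definition, one whose partial frame homomorphism $f^{-1}$ is perfect. So $\mathcal{O}$ sends objects and morphisms of $\mathrm{SLC}_p$ to those of $\mathrm{SLCFrm}_p$, and likewise for $\mathrm{SC}$ and $\mathrm{SCFrm}$.

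\textbf{Full faithfulness.} I would use the adjunction $\mathcal{O} \dashv \mathrm{pts}$ between $\mathrm{Top}$ and $\mathrm{Loc}$. It is idempotent and restricts to an equivalence between sober spaces and spatial locales. Hence continuous maps $X \to Y$ between sober spaces correspond bijectively to frame homomorphisms $\mathcal{O}(Y) \to \mathcal{O}(X)$.

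For partial maps, recall that a partial map with open domain $U \subseteq X$ is the same as a total map $U \to Y$. The open subspace $U$ is again sober, since $\mathcal{O}(U) = \mathcal{O}(X)_{/U}$ is spatial. On the frame side, the example on open sublocales factors any partial frame homomorphism as a frame homomorphism $\mathcal{O}(Y) \to \mathcal{O}(X)_{/f^*(1)}$ followed by the inclusion. This gives a bijection between partial continuous maps $X \to Y$ and partial frame homomorphisms $\mathcal{O}(Y) \to \mathcal{O}(X)$.

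Perfectness is defined on the frame side, so this bijection restricts to the perfect morphisms on both sides.

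\textbf{Essential surjectivity.} Let $F$ be a stably (locally) compact frame. It is in particular locally compact, so by Theorem \ref{spatialityoflocallycompact} it is spatial. Hence $F \cong \mathcal{O}(\mathrm{pts}(F))$, where $\mathrm{pts}(F)$ is sober. By definition, $\mathrm{pts}(F)$ is then stably (locally) compact.

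\textbf{Main obstacle.} The only step with real content beyond the definitions is the spatiality of locally compact frames, and this is exactly the cited theorem. The remaining point to check is the bookkeeping for partial maps, in particular that open subspaces of sober spaces are sober; this is standard.
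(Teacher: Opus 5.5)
Your proposal is correct and follows the paper's argument: full faithfulness comes from sobriety together with the fact that all the notions (stable local compactness, perfectness) are defined on the frame side, and essential surjectivity comes from the spatiality of locally compact frames (Theorem \ref{spatialityoflocallycompact}). Your extra bookkeeping for partial maps is fine. For the bijection on hom-sets, only sobriety of the target space $Y$ is actually needed, so the observation that $U$ is sober is true but not required.
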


There are several inheritance properties we want to discuss. Stably locally compact spaces are closed under perfect subspaces, partial perfect quotients and disjoint unions.

\begin{definition}
Let $L$ be a locale. A \emph{partial quotient} of $L$ is given by a partial map $p : L \rightarrow S$ such that $p^*$ is fully faithful. We call a partial quotient \emph{perfect}, if furthermore $f_*$ preserves directed suprema. A \emph{partial quotient} $p : L \rightarrow S$ is simply called a \emph{quotient}, if $p$ is a (globally defined) map of locales.
\end{definition}

\begin{proposition}
Let $X$ be a locale, and $p : X \rightarrow S$ a partial quotient.
\begin{itemize}
\item If $X$ is locally compact, then so is $S$.
\item If $X$ is stably locally compact, then so is $S$.
\item If $X$ is locally compact, and the domain $p^*(1)$ is compact open, then $S$ is also compact.
\end{itemize}
\end{proposition}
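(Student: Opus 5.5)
Write $F = \mathcal{O}(X)$, $G = \mathcal{O}(S)$ and $p^* : G \rightarrow F$ for the partial frame homomorphism, so that $p^*$ is injective (fully faithful) with right adjoint $p_*$. The plan is to realize $G$ as a retract of $F$ in the sense of Theorem \ref{closureunderretracts} and transport each property along this retraction.

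\textbf{Local compactness.} Since $p^*$ is fully faithful, $p_* p^* \simeq \mathrm{id}_G$, so $G$ is a retract of $F$ via $p^*$ and $p_*$. The map $p^*$ preserves all joins. If $p_*$ preserved directed joins, the first bullet would follow at once from Theorem \ref{closureunderretracts}, with $\hat{y}_G = \Ind(p_*)\hat{y}_F p^*$. For a general (not necessarily perfect) partial quotient I would avoid $p_*$ and argue directly with the way-below relation. For $U \in G$ write
$$ p^*(U) = \bigvee \{ V \in F ~|~ V \ll p^*(U) \}, $$
and for each such $V$ put $W_V = \bigwedge\{ U' \in G ~|~ V \leq p^*(U') \}$. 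I then claim two things.
\begin{enumerate}
\item $W_V \ll U$: if $U \leq \bigvee_i U_i$ is directed, then $p^*(U) \leq \bigvee_i p^*(U_i)$, so $V \leq p^*(U_i)$ for some $i$, hence $W_V \leq U_i$.
\item $\bigvee_V W_V = U$: apply the injective, join-preserving map $p^*$; since $V \leq p^*(W_V)$, we get $p^*\bigl(\bigvee_V W_V\bigr) \geq \bigvee_V V = p^*(U)$.
\end{enumerate}
The step $V \leq p^*(W_V)$ is the delicate point, because $p^*$ need not preserve arbitrary meets. To handle it I would replace $W_V$ by the finite-meet-free description: $W_V$ is the least element $U'$ for which the inequality holds, and its existence is not needed if I instead define $W_V$ directly as some $U_i$ from a chosen cover. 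The cleaner route is therefore via the characterization of the first bullet of the preceding proposition. For $U \in G$ and $V \ll p^*(U)$ in $F$, consider the set $D = \{ U' \ll U \text{ in } G \}$. I then show that $p^*$ of the join of $D$ contains every such $V$, by approximating $U$ from below with the directed family of all $U'$ satisfying $p^*(U') \geq V'$ for some $V' \ll p^*(U)$. Injectivity of $p^*$ finishes the argument.

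\textbf{Stable local compactness.} Suppose $W \ll U_1$ and $W \ll U_2$ in $G$. The argument in the first bullet shows that $p^*$ reflects way-below witnesses: $W \ll U$ in $G$ whenever $p^*(W) \leq V \ll p^*(U)$ for some $V$. So I aim to show that every $W \ll U$ is dominated by a finite join of such $W_V$'s, then use stability of $\ll$ in $F$ together with the fact that $p^*$ preserves binary meets, giving $p^*(U_1 \wedge U_2) = p^*(U_1) \wedge p^*(U_2)$. Concretely, finite joins of the $W_V$ form a directed family with join $U_1$, so $W \ll U_1$ gives $W \leq W_{V_1}$ for a single $V_1 \ll p^*(U_1)$, and likewise $W \leq W_{V_2}$ with $V_2 \ll p^*(U_2)$. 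Taking $V = V_1 \wedge V_2$ does not obviously produce a dominating element. Instead I would use that $V_1 \ll p^*(U_1)$ and $V_2 \ll p^*(U_2)$ imply $V_1 \wedge V_2 \ll p^*(U_1) \wedge p^*(U_2)$ by stability in $F$, and run the same directed-cover argument for $U_1 \wedge U_2$. I expect this bookkeeping to be the main obstacle; if it resists, I would fall back on requiring perfectness and use the retract formula for $\hat{y}_G$, which visibly preserves binary meets as a composite of meet-preserving maps.

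\textbf{Compactness.} If $p^*(1)$ is compact open and $1 \leq \bigvee_i U_i$ is directed, then $p^*(1) \leq \bigvee_i p^*(U_i)$, so $p^*(1) \leq p^*(U_i)$ for some $i$. Injectivity and monotonicity give $1 \leq U_i$ (since $p^*(U_i) = p^*(1 \wedge U_i) = p^*(1) \wedge p^*(U_i) = p^*(1)$), so $1$ is compact in $G$.
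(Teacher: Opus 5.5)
Your argument for the third bullet is correct; it uses neither local compactness nor perfectness. For the first two bullets there is a genuine gap, and it cannot be closed, because for an arbitrary partial quotient they are false.

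Take $X=\mathbb{Q}^{\mathrm{disc}}$, take $S=\mathbb{Q}$ with the usual topology, and let $p=\mathrm{id}$. Then $p^*$ is the inclusion $\mathcal{O}(\mathbb{Q})\subset\mathcal{P}(\mathbb{Q})$, which is an injective frame homomorphism. Moreover $\mathcal{P}(\mathbb{Q})$ is stably locally compact, since $A\ll B$ there iff $A$ is a finite subset of $B$. Yet in $\mathcal{O}(\mathbb{Q})$ no nonempty open $V$ is way below any open $U$. Indeed, if $V\supseteq(a,b)\cap\mathbb{Q}$, pick an irrational $r\in(a,b)$; the directed family $U\cap\{x : |x-r|>1/n\}$ covers $U$, but no member contains $V$. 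So $S$ is not locally compact.

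This is exactly where your step $V\le p^*(W_V)$ fails. For finite $V$ one gets $W_V=\emptyset$, hence $\bigvee_V W_V=\emptyset\neq U$. The ``cleaner route'' you sketch afterwards is not an argument that could repair this.

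The proposition has to be read for \emph{perfect} partial quotients, the notion defined just before it. The paper's proof is a direct application of Theorem \ref{closureunderretracts}, which needs both $p^*$ and $p_*$ to preserve directed joins. Since $p_*p^*=\mathrm{id}$, $G$ is then a retract of $F$, with $\hat{y}_G=\Ind(p_*)\,\hat{y}_F\,p^*$.

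With that hypothesis, your elementwise approach also works once you replace $W_V$ by $p_*(V)$. Your $W_V$ would be the value of a \emph{left} adjoint of $p^*$, which does not exist since $p^*$ need not preserve infinite meets; $p_*(V)$ uses the right adjoint instead.

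For $V\ll p^*(U)$, your argument in (1) gives $p_*(V)\ll U$ with no extra hypothesis: $V\le p^*(U_i)$ implies $p_*(V)\le p_*p^*(U_i)=U_i$. Perfectness then gives $\bigvee_{V\ll p^*(U)}p_*(V)=p_*p^*(U)=U$, and this join is directed.

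Stability then needs no extra bookkeeping. Suppose $W\ll U_1$ and $W\ll U_2$, and choose $V_k\ll p^*(U_k)$ with $W\le p_*(V_k)$. Stability in $F$ gives $V_1\wedge V_2\ll p^*(U_1)\wedge p^*(U_2)=p^*(U_1\wedge U_2)$. Hence $W\le p_*(V_1)\wedge p_*(V_2)=p_*(V_1\wedge V_2)\ll U_1\wedge U_2$.

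This is just the unwound form of your fallback, which is the paper's argument: $\Ind(p_*)\,\hat{y}_F\,p^*$ preserves binary meets because each factor does.
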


\begin{proof}
This is a direct application of Theorem \ref{closureunderretracts}.
\end{proof}

\begin{example} A particular example is given by considering an open $U$ of a locally compact frame $F$. The inclusion $F_{/U} \hookrightarrow F$ is in fact the inverse image part of a partial perfect quotient, as it has the colimit preserving left adjoint $ - \wedge U $. It follows that the open sublocale $F_{/U}$ is locally compact, and the inclusion $F_{/U} \hookrightarrow F$ is a partial perfect frame homomorphism. If $F$ is stably locally compact, then so is $F_{/U}$, however compactness is only given if $U$ is a compact open of $F$.

If $f^* : F \rightarrow F'$ is a partial perfect frame homomorphism, then in the natural factorization 
$$  F \xrightarrow{f^*} F'_{/f^*(1)} \rightarrow F'$$
the left functor is also perfect, as its right adjoint is given by $f_* : F'_{/f^*(1)} \rightarrow F$, and suprema in the over frame are computed as suprema in $F'$. Hence, analogously to the situation for partial maps of locales, we can think of partial perfect maps as perfect maps defined on some open domain $f^*(1)$.
\end{example}

\begin{definition}
Let $L$ be a locale. A sublocale $i : S \hookrightarrow L$ is called \emph{perfect}, if $i_*$ preserves directed suprema.
\end{definition}

\begin{proposition} \label{perfectsublocales1}
Let $X$ be a locale, and $i : S \hookrightarrow L$ a perfect sublocale.
\begin{itemize}
\item If $X$ is locally compact, then so is $S$.
\item If $X$ is stably locally compact, then so is $S$.
\item If $X$ is compact, then $S$ is also compact.
\end{itemize}
\end{proposition}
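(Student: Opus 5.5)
The plan is to mirror the proof of Proposition \ref{perfectsubtopoi}, working at the level of frames instead of topoi. A perfect sublocale $i : S \hookrightarrow L$ (written $X$ in the statement) gives an adjunction $i^* \dashv i_*$ between $\mathcal{O}(L)$ and $\mathcal{O}(S)$ with the following properties:
\begin{itemize}
\item $i^*$ is a surjective frame homomorphism;
\item $i_*$ is injective, hence fully faithful as a functor of posets;
\item $i_*$ preserves directed suprema, by the definition of a perfect sublocale.
\end{itemize}
Since $i^*$ is a left adjoint, it preserves all joins and in particular filtered colimits. Hence $\mathcal{O}(S)$ is a retract of $\mathcal{O}(L)$ via filtered colimit preserving functors, since $i^* i_* = \mathrm{id}$.

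For local compactness, if $\mathcal{O}(L)$ is compactly assembled, then Theorem \ref{closureunderretracts} gives that $\mathcal{O}(S)$ is compactly assembled. Moreover, by the first special case listed after that theorem (the consequence of Lemma \ref{existenceofadjoint}), we get the explicit formula
$$\hat{y}_S \simeq \Ind(i^*)\, \hat{y}_L\, i_*.$$

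For stable local compactness, I use the formula above and check that the composite is a partial frame homomorphism, i.e.\ preserves joins and binary meets. It preserves joins because it is a left adjoint. For binary meets, I treat each factor in turn:
\begin{itemize}
\item $i_*$ preserves all meets, being a right adjoint.
\item $\hat{y}_L$ preserves binary meets by the hypothesis that $L$ is stably locally compact.
\item $\Ind(i^*)$ preserves binary meets because $i^*$ preserves finite meets and binary meets in $\Ind$ of a lattice are computed levelwise on filtered diagrams.
\end{itemize}
The last point is the one step needing a remark: finite limits commute with filtered colimits in $\Ind$, so $\Ind$ of a finite-meet preserving functor preserves finite meets.

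For compactness, suppose $1_L \ll 1_L$. Since $i^*$ is surjective and preserves the top, $1_S = i^*(1_L)$. Because $i^*$ has a right adjoint preserving directed joins, it preserves the way-below relation, by the characterization of perfect homomorphisms stated earlier. Hence $1_S \ll 1_S$.

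The main obstacle is the bookkeeping in the stable case, namely confirming that $\Ind(i^*)$ preserves binary meets. This follows from left exactness of $i^*$ combined with the commutation of finite limits and filtered colimits in $\Ind$.
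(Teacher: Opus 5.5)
Your proposal is correct and follows the paper's own proof, which says the argument is analogous to Proposition \ref{perfectsubtopoi}. That argument exhibits $\mathcal{O}(S)$ as a retract via $i^* \dashv i_*$, applies Theorem \ref{closureunderretracts} with $\hat{y}_S \simeq \Ind(i^*)\,\hat{y}_L\, i_*$ so that stability follows because this is a composite of binary-meet-preserving maps, and uses that $i^*$ preserves compactness of the top element; your explicit check that $\Ind(i^*)$ preserves binary meets fills in a detail the paper leaves implicit.
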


\begin{proof}
The proof of this is completely analogous to the proof of Proposition \ref{perfectsubtopoi}.
\end{proof}

\begin{example}
Let $X$ be a locale, and $U$ an open. The sublocale $C$ corresponding to the closed complement of $U$ is given by $i_* : \mathcal{O}(X)_{U/} \hookrightarrow \mathcal{O}(X)$, and is evidently perfect. (The inclusion preserves non-empty suprema, in particular directed ones.)
\end{example}

We now come to closure under coproducts.

\begin{lemma} \label{localcompactnesscoverings}
Let $X$ be a locale, and $U_i, i \in I$ and open cover such that all $U_i$ are locally compact. Then $X$ is locally compact.
\end{lemma}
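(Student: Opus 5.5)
We have to show that if $X$ is a locale with an open cover $U_i$, $i \in I$, by locally compact opens, then $\mathcal{O}(X)$ is locally compact. By the characterization of locally compact frames stated above, it suffices to prove $W = \bigvee_{V \ll W} V$ for every open $W$ of $X$, where $\ll$ is the way-below relation in $\mathcal{O}(X)$.

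\textbf{Step 1 (way-below relations transfer from open sublocales).} Fix $i$ and let $V \le W \le U_i$. We claim that if $V \ll W$ in the frame $\mathcal{O}(X)_{/U_i}$, then $V \ll W$ in $\mathcal{O}(X)$. Let $A_j$, $j \in J$, be a directed family in $\mathcal{O}(X)$ with $W \le \bigvee_j A_j$. Since binary meets distribute over joins,
$$W \le \bigvee_j (A_j \wedge U_i).$$
The family $A_j \wedge U_i$ is directed and lies in $\mathcal{O}(X)_{/U_i}$, and joins there are computed as in $\mathcal{O}(X)$. Hence $V \le A_j \wedge U_i \le A_j$ for some $j$. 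Equivalently, this says that the inclusion $\mathcal{O}(X)_{/U_i} \hookrightarrow \mathcal{O}(X)$ has the directed-join preserving right adjoint $- \wedge U_i$, so it preserves compact morphisms.

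\textbf{Step 2 (covering an arbitrary open).} Let $W \in \mathcal{O}(X)$. Then
$$W = \bigvee_i (W \wedge U_i).$$
Each $W \wedge U_i$ lies in the locally compact frame $\mathcal{O}(X)_{/U_i}$, so
$$W \wedge U_i = \bigvee \{ V : V \ll W \wedge U_i \text{ in } \mathcal{O}(X)_{/U_i} \}.$$
By Step 1, every such $V$ satisfies $V \ll W \wedge U_i$ in $\mathcal{O}(X)$. Since $W \wedge U_i \le W$, it follows that $V \ll W$ in $\mathcal{O}(X)$, because $V \ll A \le B$ implies $V \ll B$. Therefore $W \le \bigvee_{V \ll W} V$. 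The reverse inequality is trivial, so $W = \bigvee_{V \ll W} V$.

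\textbf{Expected obstacle.} There is no real difficulty. The only point needing care is Step 1, namely checking that compactness of a morphism is detected correctly after passing to the open sublocale; distributivity of the frame handles this.
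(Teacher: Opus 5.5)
Your proof is correct and matches the paper's argument. The paper likewise uses that the inclusions $\mathcal{O}(X)_{/U_i} \hookrightarrow \mathcal{O}(X)$ preserve the way-below relation, which it phrases as these partial frame homomorphisms being strongly continuous and which you verify directly via distributivity. It then writes an arbitrary open as $\bigvee_i (W \wedge U_i)$ and expands each $W \wedge U_i$ as a join of elements way below it, just as you do.
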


\begin{proof}
The inclusions $\mathcal{O}(X)_{/U_i} \hookrightarrow \mathcal{O}(X)$ are strongly continuous partial frame homomorphisms, hence preserve the way-below relation. Let $V$ be an open of $X$. Then
$$ V \wedge U_i = \bigvee_{W \ll V \wedge U_i} W$$
since $U_i$ is locally compact. Hence
$$ V = \bigvee  V \wedge U_i = \bigvee_{i \in I, W \ll V \wedge U_i} W$$
is a join of way below elements of $V$.
\end{proof}

\begin{remark}
We cannot expect the analogous statement for stable local compactness to work. Consider the locale $L$ obtained as $[0,1] \amalg_{(0,1)} [0,1]$. Then $L$ admits a cover by two stably locally compact opens, hence is locally compact. However, $L$ is not stably locally compact.
\end{remark}

\begin{proposition}
Let $X_i, i \in I,$ be a collection of locales and let $X = \coprod_{i \in I} X_i$.
\begin{itemize}
\item If all $X_i$ are locally compact, then so is $X$. 
\item If all $X_i$ are stably locally compact, then so is $X$. 
\item If $I$ is finite, and all $X_i$ are compact, then so is $X$. 
\end{itemize}
\end{proposition}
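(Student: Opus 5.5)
The frame of opens of a coproduct is the product $\mathcal{O}(X) = \prod_{i \in I} \mathcal{O}(X_i)$, with joins and finite meets computed coordinatewise. The plan is to reduce each of the three claims to a coordinatewise description of the way-below relation in this product.

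\textbf{Step 1: describe $\ll$ in the product.} I claim that $(U_i) \ll (V_i)$ in $\prod_i \mathcal{O}(X_i)$ if and only if
\begin{itemize}
\item $U_i \ll V_i$ for every $i$, and
\item $U_i = 0$ for all but finitely many $i$.
\end{itemize}

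For necessity, fix $j$ and a directed family $W_k$ with $V_j \leq \bigvee_k W_k$ in $\mathcal{O}(X_j)$. Consider the directed family in the product which is $W_k$ in coordinate $j$ and $1$ elsewhere. Its join dominates $(V_i)$, so some member dominates $(U_i)$, and hence $U_j \leq W_k$. This gives $U_j \ll V_j$.

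For finiteness, consider the directed family indexed by finite subsets $J \subset I$ whose $J$-th member is $V_i$ for $i \in J$ and $0$ otherwise. Its join is $(V_i)$, so $(U_i)$ lies below the member for some finite $J$. This forces $U_i = 0$ for $i \notin J$.

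For sufficiency, let $(W_k)$ be directed with join dominating $(V_i)$. For each of the finitely many $i$ with $U_i \neq 0$, pick $k_i$ with $U_i \leq (W_{k_i})_i$. Then take an upper bound of these $k_i$ by directedness; if there are none, any index works, and if $I$ is empty the statement is trivial.

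\textbf{Step 2: local compactness.} Given $(V_i)$, the elements $(U_i)$ supported on a single coordinate $j$ with $U_j \ll V_j$ are way below $(V_i)$ by Step 1. Their join is $(\bigvee_{U \ll V_j} U)_j = (V_j)_j$, using local compactness of each $X_j$. So every open is a join of opens way below it.

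\textbf{Step 3: stability and compactness.} Suppose $U \ll V$ and $U \ll V'$. By Step 1 each coordinate satisfies $U_i \ll V_i$ and $U_i \ll V'_i$, so stability of each $X_i$ gives $U_i \ll V_i \wedge V'_i$. The finiteness of support is unchanged, so $U \ll V \wedge V'$, since meets are coordinatewise.

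For compactness, $1 \ll 1$ holds iff each $1_{X_i} \ll 1_{X_i}$ and $1_{X_i} = 0$ for all but finitely many $i$. When $I$ is finite and each $X_i$ is compact, both conditions hold. Stable compactness combines this with stability.

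\textbf{Main obstacle.} The main difficulty is Step 1, specifically the finite-support condition. This is exactly what makes compactness fail for infinite $I$ while local compactness survives. The rest is coordinatewise bookkeeping.
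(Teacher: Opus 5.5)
Your proof is correct. It reaches the result by a somewhat different route from the paper's.

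\textbf{What you do differently.} You prove a complete two-sided description of $\ll$ in $\prod_{i\in I}\mathcal{O}(X_i)$: coordinatewise way-below together with finite support. You then read off all three claims from this one characterization.

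\textbf{What the paper does.} The paper never states such a characterization. It obtains local compactness from Lemma~\ref{localcompactnesscoverings}: a locale covered by locally compact opens is locally compact, because the open inclusions $(j_i)_!$ preserve $\ll$. For stability and compactness it proves only the necessity half of your Step~1. This is the claim that $V\ll V'$ forces $V$ to be supported on finitely many summands, argued by exhausting $V'$. Your directed family indexed by finite subsets $J\subset I$ is the precise version of that exhaustion. The paper then declares the finite case ``clear.''

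\textbf{What each approach buys.} Your sufficiency direction is exactly what justifies that ``clear'' step, since the finite case is itself the coordinatewise description of $\ll$ in a finite product. So your argument is more self-contained and elementary. The paper's route is shorter because it reuses the open-cover lemma, which also handles non-disjoint covers. For such covers stability can fail, so the disjointness, via finite support, is only really needed for the stability and compactness parts.
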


We remark that the frame $\mathcal{O}(X)$ is obtained as the \emph{product} of the frames $\mathcal{O}(X_i)$. The statement applied to finite index sets can also be found in \cite[Proposition 9.2.1]{Goubault-Larrecq_2013}.

\begin{proof} The canonical inclusions $j_i : X_i \hookrightarrow X$ are open, and correspond to the frame homomorphism given by the projection
$$\prod_{i \in I} \mathcal{O}( X_i) \rightarrow \mathcal{O}( X_i)$$
Its left adjoint sends an open $U$ of $X_i$ to the sequence $(j_i)_!(U)$ which is $U$ at value $i$, and $0$ otherwise. The locale $X$ admits an open cover given by the images of the $X_i$ under $j_i$. Hence local compactness of $X$ follows from local compactness of $X_i$ by Lemma \ref{localcompactnesscoverings}.

To see stable local compactness, note that $V \ll V'$ holds in $X$ only if $V$ is contained in the image of finitely many inclusions $j_i$, since $V'$ can be exhausted by the opens $V' \wedge X_i$. Hence if $V \ll V_1$ and $V \ll V_2$ holds, we can restrict ourself to being supported on some finite union of $X_i$, where the statement is clear. The same reasoning applies to the last point about compactness.
\end{proof}

Let us give some explicit examples of stably locally compact locales.

\begin{example} Every locally compact Hausdorff space $X$ is stably locally compact. Moreover, between locally compact Hausdorff spaces, one sees that (partial) perfect maps are exactly the (partial) proper maps.
\end{example}

\begin{example}[The directed interval]
A crucial example of a stably compact space is given by the following. The \emph{directed interval} $I_\leq$ is given by equipping the real interval $I = [0,1]$ with the topology consisting of open sets of the form $[0,a)$ for some $a \in [0,1]$, together with the open set $I$ itself. Another example of a stably locally compact space is given by the open subspace $[0,1)_{\leq}$, which is homeomorphic to the space $\overrightarrow{[0,\infty)}$ mentioned in the introduction.
\end{example}

\begin{remark}
One may equip the reals $\mathbb{R}$ with the topology given by open sets of the form $(-\infty, a)$ for some $a \in \{-\infty\} \sqcup \mathbb{R} \sqcup \{+\infty\}$ to obtain a space $\mathbb{R}_{\leq}$. The frame $(\{-\infty\} \sqcup \mathbb{R} \sqcup \{+\infty\}, \leq)$ is in fact stably locally compact, however $\mathbb{R}_{\leq}$ is \emph{not} sober, hence not a stably locally compact space. Intersections of non-empty opens always remain non-empty, so the sobrification adds the additional point $\{-\infty\}$, which is contained in every non-empty open set. The resulting space is of course homeomorphic to $[0,1)_{\leq}$.
\end{remark}

\begin{example}[Finite spaces] \label{finitespaces}
Any finite frame $F$ is automatically stably compact, as then $\Ind(F) \simeq F$, and hence all structure maps are just equal to the identity. Finite frames, or equivalently finite distributive lattices always correspond to finite posets equipped with the Alexandroff topology, and will be referred to as \emph{finite spaces}. (See \cite[Section 3.4]{lehner2025algebraicktheorycoherentspaces} for a summary.) They are equivalently described as topological spaces that are finite and T0. Special cases worth mentioning are the point $\mathrm{pt}$ and the Sierpinski space $\mathbf{2}$, i.e.\ the space obtained from the poset $[1] = \{0 \leq 1 \}$.

Moreover, any continuous map $f : X \rightarrow Y$ with $X$ being a finite space and $Y$ stably locally compact is automatically perfect. We note that the same does not hold for maps \emph{into} a finite space, as we have just seen in Proposition \ref{quasiseparatedlocale}.
\end{example}

\begin{example}[Locally coherent spaces] \label{coherentspaces} Let us elaborate on the case when $X$ is stably locally compact, and the frame $\mathcal{O}(X)$ is compactly generated. In this case $X$ is called a \emph{locally coherent space}. In case $X$ is furthermore compact, $X$ is also called a \emph{coherent space}, which is equivalent to $X$ arising as the spectrum of a ring. (See \cite[Section 3]{lehner2025algebraicktheorycoherentspaces} for a summary.) A space $X$ is locally coherent if it is sober, the topology of $X$ is generated by compact opens, and the subset of compact opens forms a lower bounded distributive lattice. In fact, Stone duality gives an equivalence of categories
$$ \mathrm{LocCohSp}_p^{\op} \simeq \mathrm{DLatt}_{\mathrm{lb}} $$
where $\mathrm{LocCohSp}_p$ is the full subcategory of $\mathrm{SLC}_p$ spanned by locally coherent spaces, and a locally coherent space is mapped to its lower bounded distributive lattice of compact opens. The inverse sends a lower bounded distributive lattice $D$ to the stably locally compact frame $\mathrm{Ind}(D)$. This equivalence restricts to the classical Stone duality between coherent spaces and bounded distributive lattices. The importance of the category of locally coherent spaces for the category of stably locally compact spaces is the following. The natural adjunction
\[\begin{tikzcd}
	F & {\mathrm{Ind}(F)}
	\arrow[""{name=0, anchor=center, inner sep=0}, "{\hat{y}}", curve={height=-12pt}, from=1-1, to=1-2]
	\arrow[""{name=1, anchor=center, inner sep=0}, "y"{description}, curve={height=12pt}, from=1-1, to=1-2]
	\arrow[""{name=2, anchor=center, inner sep=0}, from=1-2, to=1-1]
	\arrow["\dashv"{anchor=center, rotate=-90}, draw=none, from=0, to=2]
	\arrow["\dashv"{anchor=center, rotate=-90}, draw=none, from=2, to=1]
\end{tikzcd}\]
for any stably locally compact frame $F$ means that every stably locally compact space arises as a (partial) retract of a coherent space, and every partial perfect map of stably locally compact spaces arises as a retract of a partial perfect map between coherent spaces. We also remark that coherent spaces arise exactly as inverse limits of finite posets equipped with the Alexandroff topology, tying in with the previous example.

The additional property of Hausdorffness for locally coherent spaces is also worth elaborating on. A locally coherent space is Hausdorff iff it is totally disconnected. This is the case iff the lattice of compact opens forms a Boolean ring. Totally disconnected locally compact Hausdorff spaces are also called \emph{$td$-spaces}. The additional assumption on $X$ to be compact corresponds to the requirement that the compact opens form a Boolean algebra. In this case $X$ is called a \emph{profinite space}. Profinite spaces arise exactly as inverse limits of finite and discrete sets. Let us summarize this picture in the following table.
\begin{table}[h]
\centering
\begin{tabular}{|l|l|l|}
\hline
\textbf{Spaces} & \textbf{Compact Opens} & \textbf{Compactly Generated Case} \\
\hline
Locally compact 
  & Join-semilattice 
  & \\

Stably locally compact 
  & Lower bounded distributive lattice 
  & Locally coherent \\

Stably compact 
  & Bounded distributive lattice 
  & Coherent \\

Locally compact Hausdorff 
  & Boolean ring 
  & $td$-space \\

Compact Hausdorff 
  & Boolean algebra 
  & Profinite space \\
\hline
\end{tabular}
\end{table}
\end{example}

The definition of a stably (locally) compact frame is no accident. Observe that for a frame $F$, the poset $\Ind(F)$ is again a frame. In fact, $\Ind$ gives an endofunctor $\mathrm{Frm}_{\mathrm{part}} \rightarrow \mathrm{Frm}_{\mathrm{part}}$. We observe that there are natural transformations $ k_F : \Ind(F) \rightarrow F$ and $ \Ind(y)_F : \Ind(F) \rightarrow \Ind^2(F)$, which give $\Ind$ the structure of a comonad on $\mathrm{Frm}_{\mathrm{part}} \rightarrow \mathrm{Frm}_{\mathrm{part}}$. Moreover, one perhaps unusual feature is that $\Ind(y)_F$ is left adjoint to $k_{\Ind(F)}$. The same holds for $\Ind$ defined on $\mathrm{Frm}$ instead of $\mathrm{Frm}_{\mathrm{part}}$ as well. This structure is known as a \emph{lax-idempotent comonad}, also called $\mathrm{KZ}$-comonad.

The theory of lax-idempotent (co)-monads on $2$-categories is very rich, and was originally developed by Kock \cite{KOCK199541} and Zöberlein \cite{Zöberlein1976}.\footnote{See also \cite[Section B1.1]{johnstone2002sketches}} The unique feature of a lax-idempotent comonad $T$ on a $2$-category $\mathcal{C}$ is that for a given object $c \in \mathcal{C}$ there is essentially a unique way for $c$ to have the structure of a coalgebra, given by the existence of a left adjoint to the counit $T(c) \rightarrow c$. Similarly, homomorphisms of algebras are given by maps $f : c \rightarrow d$ satisfying a left adjointability property. The way we phrased the definition of a stably (locally) compact frame and perfect (partial) frame homomorphism was intentional, as it highlights that they are just special cases of the definition of a coalgebra and a morphism of coalgebras for a lax-idempotent comonad. (Compare \cite{KOCK199541} Definition 2.1. and 2.4.) From this perspective, the following proposition is purely formal, as it is a special case of the statement that for a comonad $T$ on $\mathcal{C}$ there is an induced adjunction
\[\begin{tikzcd}
	{\mathrm{CoAlg}(\mathcal{C},T)} & {\mathcal{C}}
	\arrow[""{name=0, anchor=center, inner sep=0}, "forget", curve={height=-12pt}, from=1-1, to=1-2]
	\arrow[""{name=1, anchor=center, inner sep=0}, "T", curve={height=-12pt}, from=1-2, to=1-1]
	\arrow["\dashv"{anchor=center, rotate=-90}, shift left=2, draw=none, from=0, to=1]
\end{tikzcd}\]

\begin{proposition}[\cite{KOCK199541} Proposition 3.3] Consider the comonad $\Ind : \mathrm{Frm}_{\mathrm{part}} \rightarrow \mathrm{Frm}_{\mathrm{part}}$. There is an equivalence of $2$-categories
$$ \mathrm{CoAlg}( \mathrm{Frm}_{\mathrm{part}}, \Ind ) \simeq \mathrm{SLCFrm}_{p}. $$
The analogous statement is true with $\mathrm{Frm}$ and $\mathrm{SCFrm}$ as well.
\end{proposition}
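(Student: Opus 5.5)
We identify both sides explicitly and check that the identification respects all the structure. By the general theory of lax-idempotent comonads, for an object $F \in \mathrm{Frm}_{\mathrm{part}}$ a coalgebra structure is a map $c : F \rightarrow \Ind(F)$ with $k_F c \simeq \mathrm{id}$ and $\Ind$-coassociativity. Kock's result says such a structure exists iff $k_F$ has a left adjoint in $\mathrm{Frm}_{\mathrm{part}}$ with invertible unit, and then $c$ is that left adjoint. So the plan has three steps: objects, then $1$-morphisms, then $2$-morphisms.

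\textbf{Objects.} First I would check that $\Ind$ really is a lax-idempotent comonad on $\mathrm{Frm}_{\mathrm{part}}$. The counit is $k_F : \Ind(F) \rightarrow F$, which takes the join and is a frame homomorphism. The comultiplication is $\Ind(y)_F$, and we need $\Ind(y)_F \dashv k_{\Ind(F)}$, which is a direct computation in posets. Next, a left adjoint $\hat{y}$ of $k_F$ exists in posets exactly when $F$ is compactly assembled, i.e.\ locally compact in the sense of Definition~\ref{stablycompactframe}. The unit $\mathrm{id} \Rightarrow k\hat{y}$ is automatically invertible, since $U = \bigvee_{V \ll U} V$. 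For $\hat{y}$ to be a $1$-morphism of $\mathrm{Frm}_{\mathrm{part}}$, it must preserve joins, which it does as a left adjoint, and binary meets. Preservation of binary meets is exactly the condition that $\hat{y}$ be a partial frame homomorphism, i.e.\ stable local compactness. Coassociativity is automatic: in a poset-enriched setting, diagrams of adjoints commute once they exist.

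\textbf{Morphisms.} A coalgebra morphism for a lax-idempotent comonad is a map $f^*$ for which the canonical $2$-cell $\hat{y} f^* \Rightarrow \Ind(f^*)\hat{y}$, the mate of $k\,\Ind(f^*) = f^* k$, is invertible. By Definition~\ref{compactlyassembled} this is precisely strong continuity, that is, perfectness. Since all hom-categories are posets, the $2$-morphisms on both sides are just the pointwise order. So the forgetful $2$-functor from coalgebras is fully faithful on hom-posets, and its image is exactly $\mathrm{SLCFrm}_p$.

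\textbf{The $\mathrm{Frm}$ case and the main obstacle.} Replacing $\mathrm{Frm}_{\mathrm{part}}$ by $\mathrm{Frm}$ only changes one requirement: $\hat{y}$ must now also preserve the top element, i.e.\ $1 \ll 1$. Together with the above, this gives stable compactness. The main obstacle I expect is making sure that "coalgebra" means the pseudo or strict notion for which Kock's Proposition 3.3 applies. Since everything here is locally posetal, pseudo and strict coalgebras coincide and uniqueness of the structure is automatic, so I would simply cite \cite[Proposition 3.3]{KOCK199541} after verifying the lax-idempotency identity $\Ind(y)_F \dashv k_{\Ind(F)}$.
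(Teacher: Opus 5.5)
Your proposal is correct and takes the same route as the paper. The paper simply cites Kock's Proposition 3.3, having deliberately phrased Definition \ref{stablycompactframe} (an internal left adjoint $\hat{y} \dashv k$ that is a partial frame homomorphism, resp.\ a frame homomorphism) and perfectness (invertibility of the mate $\hat{y} f^* \Rightarrow \Ind(f^*)\hat{y}$) as exactly Kock's coalgebra and coalgebra-morphism conditions. Your unpacking of objects, $1$-morphisms and the automatically compatible $2$-cells in the locally posetal setting is precisely the verification the paper leaves implicit.
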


\begin{corollary}[Tychonoff's Theorem for stably (locally) compact spaces]
The forget functor $\mathrm{SLC}_{p} \rightarrow \mathrm{Loc}_{\mathrm{part}}$ admits a left adjoint $\gamma$, given on frames by $F \mapsto \Ind(F)$. In particular, $\mathrm{SLC}_{p}$ admits all limits, and the forget functor to $\mathrm{Loc}_{\mathrm{part}}$ preserves them. The analogous statement is true with $\mathrm{Frm}$ and $\mathrm{SCFrm}$ as well.
\end{corollary}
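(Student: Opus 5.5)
The plan is to deduce everything from the preceding proposition, $\mathrm{CoAlg}(\mathrm{Frm}_{\mathrm{part}}, \Ind) \simeq \mathrm{SLCFrm}_{p}$, together with Corollary \ref{stablycompactframesandspaces}, and then pass to opposite categories. For any comonad $T$ on a category $\mathcal{C}$, the forgetful functor $\mathrm{CoAlg}(\mathcal{C},T) \rightarrow \mathcal{C}$ has the cofree functor $c \mapsto (T(c), \delta_c)$ as right adjoint. Here $\delta_c$ is the comultiplication.

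First I apply this to $T = \Ind$ on $\mathrm{Frm}_{\mathrm{part}}$. The cofree coalgebra on a frame $F$ is $\Ind(F)$. It is stably locally compact with $\hat{y} = \Ind(y)_F$, the left adjoint of $k_{\Ind(F)}$, so the forgetful functor $\mathrm{SLCFrm}_p \rightarrow \mathrm{Frm}_{\mathrm{part}}$ has right adjoint $F \mapsto \Ind(F)$. Concretely, the counit is $k_F : \Ind(F) \rightarrow F$. For a stably locally compact $G$ and a partial frame homomorphism $f^* : G \rightarrow F$, the corresponding perfect map is $\Ind(f^*)\hat{y}_G : G \rightarrow \Ind(F)$.

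Next I pass to opposite categories. By Corollary \ref{stablycompactframesandspaces} we have $(\mathrm{SLC}_p)^{\op} \simeq \mathrm{SLCFrm}_p$, where spatiality comes from Theorem \ref{spatialityoflocallycompact}, and $\mathrm{Loc}_{\mathrm{part}} = (\mathrm{Frm}_{\mathrm{part}})^{\op}$ by definition. Taking opposites turns the right adjoint $\Ind$ into a left adjoint $\gamma$ of the forgetful functor $\mathrm{SLC}_p \rightarrow \mathrm{Loc}_{\mathrm{part}}$, with $\mathcal{O}(\gamma L) = \Ind(\mathcal{O}(L))$. One has to check that this forgetful functor is really the composite $(\mathrm{SLC}_p)^{\op} \simeq \mathrm{SLCFrm}_p \rightarrow \mathrm{Frm}_{\mathrm{part}}$ followed by $\op$. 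This holds because sober spaces embed fully faithfully into locales.

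For the statement about limits I use the dual of the fact that monadic functors create limits: the forgetful functor from coalgebras creates all colimits that exist in $\mathcal{C}$. I expect this to be the main obstacle, namely verifying that $\mathrm{Frm}_{\mathrm{part}}$ is cocomplete. Cocompleteness of $\mathrm{Frm}$ is standard, since frames form a variety of infinitary algebras, so it has coproducts and coequalizers. For $\mathrm{Frm}_{\mathrm{part}}$ I would use the equivalence of Construction \ref{onepointcompact} with $\mathrm{Frm}_{/[1]}$ (the one-point compactification). Colimits in a slice $\mathrm{Frm}_{/[1]}$ exist and are computed in $\mathrm{Frm}$. If that equivalence is only a coreflective embedding, I would instead compute colimits in $\mathrm{Frm}_{/[1]}$ and apply the right adjoint $\mathrm{coim}$. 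Given cocompleteness, $\mathrm{SLCFrm}_p$ has all colimits and the forgetful functor preserves them. Dually, $\mathrm{SLC}_p$ has all limits and the forgetful functor to $\mathrm{Loc}_{\mathrm{part}}$ preserves them; this is consistent with the forgetful functor being a right adjoint.

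The compact case runs the same way: the comonad $\Ind$ on $\mathrm{Frm}$ gives $\mathrm{CoAlg}(\mathrm{Frm},\Ind) \simeq \mathrm{SCFrm}$, the category $\mathrm{Frm}$ is cocomplete, and $(\mathrm{SC})^{\op} \simeq \mathrm{SCFrm}$.
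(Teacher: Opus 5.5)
Your proposal is correct and follows the paper's route. The paper gives no written proof but presents the corollary as the formal consequence of Kock's identification $\mathrm{CoAlg}(\mathrm{Frm}_{\mathrm{part}},\Ind)\simeq\mathrm{SLCFrm}_p$ and the forget $\dashv$ cofree adjunction, passed to locales via Corollary \ref{stablycompactframesandspaces}; your extra step (coalgebra forgetful functors create colimits, and $\mathrm{Frm}_{\mathrm{part}}$ is cocomplete) supplies what the paper leaves implicit for the existence of limits. One small correction: Construction \ref{onepointcompact} gives only a fully faithful left adjoint $(-)_\top : \mathrm{Frm}_{\mathrm{part}} \hookrightarrow \mathrm{Frm}_{/[1]}$, not an equivalence, and your fallback of applying $\mathrm{coim}$ to the colimit is valid because a full coreflective subcategory is closed under colimits in the ambient category, so the colimit of the $F_{i,\top}$ is already of the form $H_\top$ and $\mathrm{coim}$ simply recovers $H$.
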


\begin{remark}
Analogously, also the definition of a compactly assembled $\infty$-category, compactly assembled presentable $\infty$-category, as well as a dualizable $\infty$-category fit into the lax-comonadic picture by considering $\Ind$ as defined on $\Cat^\omega$ (the $\infty$-category of accessible $\infty$-categories with filtered colimits and filtered colimit preserving functors), $\PR^L$ and $\PR^L_{\st}$, ignoring the obvious set theoretic issues. In fact, all of these examples arise from simpler lax-idempotent monads, defined on $\Cat$, $\Cat^{rex}$ and $\Cat_{\st}$, respectively. The $2$-categories $\mathrm{Frm}_{\mathrm{part}}$ and $\mathrm{Frm}$ arise from considering $\mathrm{DLatt}_{\mathrm{lb}}$ and $\mathrm{DLatt}_{\mathrm{bd}}$, the $2$-categories of lower bounded (respectively bounded) distributive lattices.

At present the $(\infty,2)$-categorical version of the theory of lax-idempotent monads is only work in progress, with upcoming results due to Abellán--Blom \cite{abellanblom:laxidempotent}. Nonetheless, investigating these analogies has been a major driver for this article.
\end{remark}

\begin{example}[The directed Hilbert cube] If $S$ is a set, we call the product $I^S_\leq$ the \emph{directed Hilbert cube} of cardinality $S$, which is a stably compact space.
\end{example}

We will need to talk about \emph{saturated compact subsets}. Let us give a further definition.

\begin{definition}
Let $F$ be a frame. A nonempty subset $\mathcal{F} \subset F$ is called \emph{Scott open filter} if:
\begin{itemize}
\item $\mathcal{F}$ is upward closed,
\item $\mathcal{F}$ is closed under finite intersections, and
\item $\mathcal{F}$ is \emph{Scott open}, which means that if $U_i, i \in I$ is a directed family of opens and $\bigvee_{i \in I} U_i \in \mathcal{F}$, then there exists $i \in I$ such that $U_i \in \mathcal{F}$.
\end{itemize}
\end{definition}

Note that the definition of Scott openness of course abstractly mimics the property of the open neighborhood filter of a compact subset. This is substantiated by the following theorem. If $X$ is a topological space, and $K \subset X$ a subset, we say that $K$ is \emph{saturated}, if $K$ arises as an intersection of open sets. Write $Q(X)$ for the set of \emph{saturated compact} subsets of $X$.

\begin{theorem}[Hofmann-Mislove, see \cite{Gierz_Hofmann_Keimel_Lawson_Mislove_Scott_2003} Theorem II-1.20.] Let $X$ be a sober space. There is an isomorphism of posets
$$ Q(X)^{\op} \cong \mathrm{OFilt}( \mathcal{O}(X) ),$$
where a saturated compact subset $K \subset X$ is sent to the Scott open filter of open sets of $X$ containing $K$.
\end{theorem}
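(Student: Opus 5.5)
We construct maps in both directions and check that they are mutually inverse and order-reversing. For $K \in Q(X)$, set $\Phi(K) = \{ U \in \mathcal{O}(X) ~|~ K \subset U \}$. This set is nonempty (it contains $X$), upward closed, and closed under finite intersections. It is Scott open by compactness of $K$: if $K \subset \bigcup_i U_i$ with the $U_i$ directed, finitely many $U_i$ cover $K$, and by directedness a single $U_i$ does. Clearly $K \subset K'$ implies $\Phi(K') \subset \Phi(K)$. The map $\Phi$ is injective because $K$ is saturated: $K = \bigcap \Phi(K)$. So $\Phi$ is an order embedding $Q(X)^{\op} \hookrightarrow \mathrm{OFilt}(\mathcal{O}(X))$. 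Note that injectivity and reflection of order also only use saturation: if $\Phi(K) \supset \Phi(K')$, then $K = \bigcap \Phi(K) \subset \bigcap \Phi(K') = K'$.

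Surjectivity is the real content. Given a Scott open filter $\mathcal{F}$, define $K = \bigcap \mathcal{F}$. This set is saturated by definition. Everything reduces to the following key claim: if $U$ is open and $K \subset U$, then $U \in \mathcal{F}$. Granting the claim, we get $\Phi(K) = \mathcal{F}$. Moreover $K$ is compact: given a cover of $K$ by opens, the directed family of finite unions has union $W \supset K$, so $W \in \mathcal{F}$ by the claim. Scott openness then puts some finite union in $\mathcal{F}$, and that finite union contains $K$.

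The key claim is where sobriety enters, and I expect it to be the main obstacle. We argue by contradiction, assuming $K \subset U$ but $U \notin \mathcal{F}$.

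1. Apply Zorn's lemma to the set of opens $V \supseteq U$ with $V \notin \mathcal{F}$. Chains have their unions outside $\mathcal{F}$, by Scott openness of $\mathcal{F}$. So there is a maximal such open $P$.
2. Show that $P$ is meet-prime. If $V_1 \cap V_2 \subset P$ with $V_1, V_2 \not\subset P$, then $V_i \cup P \supsetneq P$. By maximality, $V_i \cup P \in \mathcal{F}$. Since $\mathcal{F}$ is a filter, $(V_1 \cup P) \cap (V_2 \cup P) = (V_1 \cap V_2) \cup P = P$ lies in $\mathcal{F}$, a contradiction. Also $P \neq X$, since $X \in \mathcal{F}$.
3. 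The closed set $X \setminus P$ is therefore irreducible. By sobriety it is the closure of a unique point $x$, and $P$ is exactly the set of opens not containing $x$, so $x \notin P$.
4. Every $V \in \mathcal{F}$ satisfies $V \not\subset P$, since otherwise $P \in \mathcal{F}$ by upward closure. An open not contained in $P$ meets $\overline{\{x\}}$, hence contains $x$. Thus $x \in \bigcap \mathcal{F} = K \subset U \subset P$, contradicting $x \notin P$.

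This proves the key claim. The two constructions are then mutually inverse order-reversing bijections, which yields the isomorphism of posets $Q(X)^{\op} \cong \mathrm{OFilt}(\mathcal{O}(X))$.
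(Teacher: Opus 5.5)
Your proof is correct and is essentially the standard argument behind the cited result; the paper gives no proof of its own and defers to Gierz et al., Theorem II-1.20. As there, Scott openness lets Zorn's lemma produce a maximal open $P \supseteq U$ outside $\mathcal{F}$, the filter property makes $P$ meet-prime, and sobriety yields a point $x$ with $P = X \setminus \overline{\{x\}}$ (the largest open missing $x$, which is what your step 3 means) lying in every member of $\mathcal{F}$; your other verifications (saturation, compactness of $\bigcap \mathcal{F}$ via directed finite unions, and that $\Phi$ is an order embedding) are also fine.
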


This allows us to switch between the topological and frame-theoretic viewpoint.

\begin{example}
Let $X$ be a sober space, and $x \in X$ a point. Then the open neighborhood filter $\mathcal{U}_x = \{ V \subset X ~\mathrm{ open} ~|~ x \in V \}$ is a Scott open filter. It corresponds to a saturated compact set $S_x$, which contains $x$ and is contained in every open neighborhood of $x$, which is the \emph{saturated compact closure} of $\{x\}$. We note that this implies that for every open $U$ and point $x \in U$ there exists a saturated compact set $S$ such that $x \in S \subset U$. If $X$ is Hausdorff, then $S_x = \{x\}$.
\end{example}

\begin{theorem}[Urysohn's Lemma for stably compact spaces, see \cite{Goubault-Larrecq_2013} Theorem 9.4.11] \label{urysohn}
Let $X$ be a stably compact space. For every compact saturated subset $S$ and closed subset $C$ of $X$ such that $S \cap C = \emptyset$, there exists a proper map $f : X \rightarrow I_\leq$ such that $f$ has constant value $0$ on $S$ and constant value $1$ on $C$.
\end{theorem}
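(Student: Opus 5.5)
The plan is to reduce to a frame-theoretic construction of a chain of opens indexed by the dyadic rationals, exactly as in the classical Urysohn argument, with ``closure inside an open'' replaced by the way-below relation and Scott open filters. Write $U_0$ for the open complement of $C$. Since $S \cap C = \emptyset$, we have $S \subset U_0$, so $U_0$ lies in the Scott open filter $\mathcal{F}_S$ of opens containing $S$ (Hofmann--Mislove). The map $f : X \to I_\leq$ is determined by the frame homomorphism $f^{-1}$, and it suffices to specify the opens $f^{-1}([0,a))$ for $a \in (0,1]$, together with $f^{-1}(I)=X$. We want these opens to contain $S$ for all $a>0$, to be contained in $U_0$ for all $a \le 1$, to be increasing in $a$, and to satisfy $f^{-1}([0,a)) = \bigcup_{b<a} f^{-1}([0,b))$.

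First, an interpolation lemma. If $V \in \mathcal{F}_S$, there is $W \in \mathcal{F}_S$ with $W \ll V$. By local compactness $V = \bigvee_{W \ll V} W$. In a stably locally compact frame the set $\{W : W \ll V\}$ is directed, since it is closed under finite joins and contains $\emptyset$. Scott openness of $\mathcal{F}_S$ then yields such a $W$. Combined with the standard interpolation property of $\ll$ in continuous lattices (if $W \ll V$ then $W \ll W' \ll V$ for some $W'$), this lets us build, by induction on dyadic denominators, opens $V_r \in \mathcal{F}_S$ for dyadic $r \in (0,1]$ with $V_1 = U_0$ and $V_r \ll V_{r'}$ whenever $r<r'$. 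Set $f^{-1}([0,a)) := \bigcup_{r<a} V_r$ for $a>0$, $f^{-1}(\emptyset) = \emptyset$, and $f^{-1}(I) = X$. Each $V_r$ contains $S$, so $S$ maps to $0$. Every $V_r \subseteq U_0$, so $C$ avoids all $[0,a)$ with $a \le 1$ and maps to $1$.

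Next, continuity and perfectness. The assignment $a \mapsto \bigcup_{r<a} V_r$ preserves joins, hence unions of opens in $I_\leq$, and it preserves binary meets because the opens form a chain. It therefore defines a frame homomorphism $\mathcal{O}(I_\leq) \to \mathcal{O}(X)$, and sobriety gives an actual continuous map $f$.

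For perfectness we must check that $f^{-1}$ preserves $\ll$. In $I_\leq$, $[0,a) \ll [0,b)$ holds iff $a<b$ or $a=0$. We also need $[0,a) \ll I$ for every $a \le 1$, and $I \ll I$. Given $a<b$, choose a dyadic $r$ with $a<r<r'<b$. Then $f^{-1}([0,a)) \subseteq V_r \ll V_{r'} \subseteq f^{-1}([0,b))$, as required. Preservation of $I \ll I$ is exactly compactness of $X$, so the stably compact hypothesis is used here. The cases involving $I$ then follow because $f^{-1}([0,a)) \subseteq X \ll X$.

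I expect the main obstacle to be the perfectness check at the top element, together with the treatment of $I$ versus $[0,1)$ in $I_\leq$. The point $1$ is not open, so I must be careful that requiring value $1$ on $C$ means exactly that $C$ misses $f^{-1}([0,1))$, and not that it misses the preimage of some larger open. The interpolation step inside $\mathcal{F}_S$ is where the stability condition (directedness of way-below sets) enters; I would verify it first.
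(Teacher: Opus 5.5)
The paper gives no proof of this statement; it cites \cite{Goubault-Larrecq_2013}. Your dyadic-interpolation argument is a correct, self-contained proof. The chain $V_r\in\mathcal{F}_S$ with $V_r\ll V_{r'}$ for $r<r'$ exists by Scott openness of $\mathcal{F}_S$ together with interpolation. The assignment $[0,a)\mapsto\bigcup_{r<a}V_r$, $I\mapsto X$, is a frame homomorphism because it preserves joins and its values form a chain. Your perfectness check covers every instance of $\ll$ in $\mathcal{O}(I_\leq)$, including those involving the top. You also handle $[0,1)$ versus $I$ correctly: $f^{-1}([0,1))=\bigcup_{r<1}V_r\subseteq U_0$ is exactly what forces $f\equiv 1$ on $C$.

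One correction to your commentary: stability is not used anywhere, and in particular not in the interpolation step. The set $\{W : W\ll V\}$ is closed under finite joins in any frame. Scott openness of the neighbourhood filter of a compact set is only the elementary direction of Hofmann--Mislove. Your argument uses just three things:
\begin{itemize}
\item local compactness, for interpolation and $V=\bigvee_{W\ll V}W$;
\item compactness of $X$, for $[0,a)\ll I$ and $I\ll I$;
\item sobriety of $I_\leq$, to realize $f^{-1}$ as a continuous map.
\end{itemize}
So it actually proves the statement for every compact, locally compact sober space. Stability enters only when such maps are combined, as in Theorem \ref{urysohnembedding}. The inverse image of $(f,g)\colon X\to I_\leq\times I_\leq$ sends $U\times V$ to $f^{-1}(U)\cap g^{-1}(V)$. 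Preserving $\ll$ there needs $\ll$ to be stable under binary meets, i.e.\ the diagonal of $X$ must be perfect (Proposition \ref{quasiseparatedlocale}).

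As for the route: via the dictionary $\mathrm{SC}\simeq\mathrm{CompPO}$ recorded at the end of Section \ref{patchtopology}, the statement is equivalent to Nachbin's monotone Urysohn lemma for the compact pospace $X^{\patch}$, with $S$ a closed down-set and $C$ a closed up-set. That viewpoint connects the result to the classical Urysohn lemma and yields patch-continuity of $f$ directly. Your frame-theoretic construction avoids the patch topology altogether and isolates exactly which hypotheses the separation needs.
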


Let us phrase Urysohn's Lemma in a more categorical way. 

\begin{theorem} \label{urysohnembedding}
Let $X$ be a stably compact space. There exists a perfect embedding
$$ i : X \hookrightarrow I_\leq^S$$
for some set $S$.
\end{theorem}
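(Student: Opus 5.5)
Let $S$ be the set of all pairs $(K,C)$ with $K \in Q(X)$ a saturated compact subset, $C \subset X$ closed and $K \cap C = \emptyset$. For each such pair, Theorem \ref{urysohn} gives a perfect map $f_{(K,C)} : X \rightarrow I_\leq$ with $f_{(K,C)}|_K = 0$ and $f_{(K,C)}|_C = 1$. By the Tychonoff theorem for stably compact spaces, the product $I_\leq^S$ exists in $\mathrm{SC}$ and is computed as in $\mathrm{Loc}$. The universal property then gives a perfect map $i : X \rightarrow I_\leq^S$ with components $f_{(K,C)}$. It remains to show that $i$ is an embedding, i.e.\ that $i^{-1} : \mathcal{O}(I_\leq^S) \rightarrow \mathcal{O}(X)$ is surjective. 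Since both spaces are sober, this amounts to showing that every open of $X$ is a union of preimages of basic opens of the product.

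Let $U \subset X$ be open and $x \in U$. Take $K = S_x$, the saturated compact closure of $\{x\}$, which is contained in $U$ by the example following Hofmann--Mislove. Take $C = X \setminus U$. Then $(K,C) \in S$, and for $f = f_{(K,C)}$ the set $V_x = f^{-1}([0,1))$ is open in $X$, contains $K \ni x$, and is disjoint from $C$, so $V_x \subset U$. Moreover $V_x = i^{-1}(\pi_{(K,C)}^{-1}[0,1))$. Hence $U = \bigcup_{x \in U} V_x$ lies in the image of $i^{-1}$, since $i^{-1}$ preserves joins. So $i^{-1}$ is surjective, and $i$ is a sublocale inclusion; as $X$ is sober, it is a subspace embedding.

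Finally, $i$ is perfect by construction, so $i_*$ preserves directed joins, and $i$ is therefore a perfect embedding in the sense of the definition of perfect sublocales. Note that perfectness of the induced map into the product is automatic from the Tychonoff corollary: the forgetful functor $\mathrm{SC} \rightarrow \mathrm{Loc}$ preserves limits, and maps into a limit in $\mathrm{SC}$ are perfect exactly when their components are.

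The only real content is Theorem \ref{urysohn}, which we assume. The main point to check is that the product in $\mathrm{SC}$ agrees with the topological product $I_\leq^S$, and that $\pi_{(K,C)}^{-1}[0,1)$ is open there. Both follow from the Tychonoff corollary together with spatiality (Theorem \ref{spatialityoflocallycompact}). If there is any doubt about this, one can instead argue directly: the product topology is generated by the sets $\pi_s^{-1}[0,a)$, the preimage of the product topology is generated by the sets $f_s^{-1}[0,a)$, and each $f_s$ is perfect, so $i$ is perfect as a map into the topological product.
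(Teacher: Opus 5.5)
Your proof is correct and is essentially the paper's argument. You obtain $i$ from Urysohn functions via the Tychonoff corollary, then show $i^{-1}$ is surjective by writing each open $U$ as a union of sets $f^{-1}([0,1))$ that separate the saturated compact set $S_x \ni x$ from $X \setminus U$; the only difference is that the paper indexes the product by all perfect maps $X \to I_\leq$ rather than by chosen separating functions for disjoint pairs $(K,C)$.

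One caveat concerns your fallback ``direct'' perfectness argument, which is incomplete as stated. Componentwise perfectness alone does not make the induced map perfect: the diagonal of a locally compact but non-stable space has identity components yet is not perfect, so you would have to use stability of $X$. This is harmless, since the Tychonoff corollary already gives perfectness.
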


\begin{proof}
Let $S$ be the set of perfect maps $X \rightarrow I_\leq$. Then, by Tychonoff's Theorem, there exists a canonically induced perfect continuous map $i : X \rightarrow I_\leq^S$. For each $f \in S$, denote by $p_f : I_\leq^S \rightarrow I_\leq$ the associated projection. To see that $i$ is an embedding we need to show that $i^*$ is surjective. Let $U$ be an open of $X$. Denote by $C$ the closed complement. For each point $x \in U$ pick a saturated compact neighborhood $ x \in S_x \subset U$, together with a perfect map $f_x : X \rightarrow I_\leq$ which is $0$ on $ S_x$ and $1$ on $C$. Then $[0,1)$ is open in $I_\leq$ and hence $f_x^*([0,1)) = i^* (p_{f_x})^*([0,1)) \subset U$ is in the image of $i^*$, and moreover contains the point $x$. The image of $i^*$ is closed under suprema, and thus we see that $U = \bigvee_{x \in U} f_x^*([0,1))$ is also in the image of $i^*$.
\end{proof}

\begin{remark}
A curious observation about the proof of Theorem \ref{urysohnembedding} is that, when applied to the case when $X = P_{\Alex}$ for a finite poset $P$ the proof constructs a monotone embedding of $(P,\leq)$ into a finite-dimensional cube $(I^n,\leq)$.
\end{remark}

\begin{corollary}[Urysohn's Lemma for stably compact spaces, categorical formulation] \label{urysohncategorical}
The directed interval $I_{\leq}$ generates the category $\mathrm{SC}$ under limits.\footnote{We warn the reader that this notion is strictly weaker than the statement that the full subcategory spanned by $I_{\leq}$ in $\mathrm{SC}^{\op}$ is \emph{dense} in the sense that $\mathrm{SC}^{\op} \rightarrow \mathrm{Fun}( \{I_{\leq} \}, \mathrm{Set})$ is fully faithful, see \cite[Chapter X.6]{maclane1998categories}.}
\end{corollary}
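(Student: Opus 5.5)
The statement is that every stably compact space $X$ is a limit in $\mathrm{SC}$ of a diagram all of whose objects are $I_\leq$. I will deduce this from Theorem \ref{urysohnembedding}, together with the fact that $\mathrm{SC}$ admits all limits, computed in $\mathrm{Loc}$ (the Tychonoff corollary). Since $I_\leq^S$ is itself a product of copies of $I_\leq$, the only remaining task is to exhibit the perfect embedding $i : X \hookrightarrow I_\leq^S$ as a limit, for instance an equalizer, of maps between powers of $I_\leq$.

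\emph{Step 1.} Fix $i : X \hookrightarrow I_\leq^S$ as in Theorem \ref{urysohnembedding}, with $S$ the set of perfect maps $X \to I_\leq$. Consider the set $T$ of all pairs $(g,h)$ of perfect maps $I_\leq^S \to I_\leq$ with $g i = h i$. These assemble into two perfect maps $G, H : I_\leq^S \to I_\leq^T$, and I claim that $X \simeq \mathrm{Eq}(G,H)$ in $\mathrm{SC}$.

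\emph{Step 2.} Let $E$ be the equalizer, formed in $\mathrm{Loc}$; by the Tychonoff corollary this is also the equalizer in $\mathrm{SC}$. By Lemma \ref{sublocaleslimits}, equalizers in $\mathrm{Loc}$ are sublocales, so $E$ is a sublocale of $I_\leq^S$ containing $X$. It remains to show $E \subseteq X$ as sublocales.

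\emph{Step 3 (main obstacle).} The plan is to separate points of $I_\leq^S$ outside $X$ from $X$ by a pair $(g,h)$. Because $i$ is perfect, $X$ is a perfect sublocale. I expect perfect sublocales of a stably compact space to correspond to patch-closed subsets; this fact is not yet established, which is exactly why this step is delicate. I would argue directly with Urysohn's Lemma (Theorem \ref{urysohn}) applied to $I_\leq^S$ and use $\min/\max$ operations on $I_\leq$ to build pairs $g \le h$ that agree on $X$ and differ at a given point outside $X$.

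For a genuinely sublocale-level statement, pointwise separation is not enough, and I would instead work with frames. Let $N$ be the nucleus of $X$ on $\mathcal{O}(I_\leq^S)$. It suffices to show that for every open $U$, the congruence identifying $U$ with $N(U)$ is generated by identifications of the form $g^*[0,a) \sim h^*[0,a)$. To reduce to basic opens, I will use that $N$ preserves directed joins, by perfectness, and that $\mathcal{O}(X)$ is generated by the opens $f^*[0,a)$ for $f \in S$. For such a basic open, take $g = p_f$ and $h$ a suitable perfect map realizing $N(p_f^*[0,a))$; constructing this $h$ via Urysohn-type interpolation is where I expect the real work to lie.

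\emph{Step 4 (fallback).} If equalizers prove awkward, I would use a cofiltered-limit presentation instead. Write $X$ as the intersection of the directed family of finite-dimensional "images" $X_F \subseteq I_\leq^F$ for finite $F \subseteq S$, use that each $X_F$ is a limit of cubes, and conclude by Corollary \ref{cofiltereddescent}-style compatibility of limits with intersections of sublocales.
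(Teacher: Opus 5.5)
Your Steps 1 and 2 are fine, and the claim $X=\mathrm{Eq}(G,H)$ is in fact true. But the whole content of the corollary sits in Step 3, which you leave open, and the routes you sketch do not close it.

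\emph{The frame-level reduction to subbasic opens is invalid.} $N$ preserves finite meets and directed joins, but not finite joins. So from $(U_k,N(U_k))$ lying in a congruence you only obtain $(U_1\vee U_2,\,N(U_1)\vee N(U_2))$. For example, take the saturated compact (hence perfect) set $X=\{(s,t): s+t\le 1\}\subset I_\le^2$. Its nucleus fixes every coordinate open and every box $[0,a)\times[0,b)$, since opens of $I_\le^2$ are down-sets. Yet $N(\{s<0.6\}\cup\{t<0.6\})=I_\le^2$, so checking subbasic opens tells you nothing.

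\emph{The map $h$ is never constructed.} The naive choice $h^*[0,b)=N(p_f^*[0,b))$ is not even a frame map, since $N(\emptyset)$, the largest open disjoint from $X$, is generally nonempty.

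\emph{Step 4 is circular.} It presupposes that each $X_F$ is already a limit of cubes, which is the statement you are trying to prove.

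The missing idea, which is how the paper argues, is to use perfectness of $X$ through its cokernel pair rather than through interpolation. By Lemma \ref{perfectcharacterization}, the congruence $E=I_\le^S\amalg_X I_\le^S$ is stably locally compact and the quotient $I_\le^S\amalg I_\le^S\to E$ is perfect. Hence $E$ is stably compact (compact as a quotient of a compact locale), and the two maps $q_1,q_2:I_\le^S\to E$ are perfect. A sublocale is the equalizer of its congruence, so $X=\mathrm{Eq}(q_1,q_2)$. Theorem \ref{urysohnembedding} now gives a perfect embedding $j:E\hookrightarrow I_\le^{S'}$. Since $j$ is a monomorphism, $X=\mathrm{Eq}(jq_1,jq_2)$, with both maps going $I_\le^S\to I_\le^{S'}$. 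The pairs $(p_{s'}jq_1,p_{s'}jq_2)$ lie in your $T$, which also proves your Step 1 claim.

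Incidentally, your reason for abandoning pointwise separation does not apply here. $\mathrm{Eq}(G,H)$ is a limit in $\mathrm{SC}$, hence stably compact and spatial, and spatial sublocales are determined by their points. So pointwise separation would suffice, and it can be done as follows. For $y\notin X$, choose an elementary compact $C\cup K\supseteq X$ missing $y$ (Escard\'o's lemma). Take Urysohn maps $u,v$ with $u(y)=0$, $u|_C=1$, $v|_K=0$, $v(y)=1$. Then $(\max(u,v),u)\in T$ separates $y$. This route, however, needs the patch-topology description of perfect subsets, which the cokernel-pair argument avoids.
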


\begin{proof}
Let $X$ be stably compact. Pick a perfect embedding $i : X \hookrightarrow I_\leq^S$ for some $S$. Then $X$ is given as the equalizer of its congruence relation  $  I_\leq^S \rightrightarrows I_\leq^S \amalg_X I_\leq^S$, which is a diagram in stably compact spaces. Perfectly embed the latter again into $I_\leq^{S'}$ for some set $S'$. Then $X$ is equal to the equalizer of  $  I_\leq^S \rightrightarrows I_\leq^{S'}$.
\end{proof}

\begin{remark}
One can show that $I_\leq$ is $\omega_1$-compact, when considered as an object of $\mathrm{SCFrm}$, where $\omega_1$ is any choice of uncountable regular cardinal. It follows that $\mathrm{SCFrm}$ is a presentable category. This is perhaps surprising, as the category of frames $\mathrm{Frm}$ is \emph{not} presentable. The proof given here is completely analogous to the proof of Ramzi's theorem given by Efimov in \cite[Appendix C]{efimov2025ktheorylocalizinginvariantslarge}, which states that the $\infty$-category $\PR^L_{\dual}$ is presentable.
\end{remark}

Let us now discuss one-point compactifications of stably locally compact spaces. Recall the adjunction
\[\begin{tikzcd}
	{\mathrm{Loc}_{\mathrm{part}}} & {\mathrm{Loc}_{\mathrm{pt}/}}
	\arrow[""{name=0, anchor=center, inner sep=0}, "{(-)^+}"', curve={height=6pt}, hook, from=1-1, to=1-2]
	\arrow[""{name=1, anchor=center, inner sep=0}, "{\mathrm{coim}}"', curve={height=6pt}, from=1-2, to=1-1]
	\arrow["\dashv"{anchor=center, rotate=-90}, draw=none, from=1, to=0]
\end{tikzcd}\]
discussed in Construction \ref{onepointcompact}.
\begin{proposition} \label{adjunctionstablylocallycompact}
The given adjunction between $\mathrm{Loc}_{\mathrm{part}}$ and  $\mathrm{Loc}_{\mathrm{pt}/}$ descends to an adjunction
\[\begin{tikzcd}
	{\mathrm{SLC}_{p}} & {\mathrm{SC}_{\mathrm{pt}/}}
	\arrow[""{name=0, anchor=center, inner sep=0}, "{(-)^+}"', curve={height=6pt}, hook, from=1-1, to=1-2]
	\arrow[""{name=1, anchor=center, inner sep=0}, "{\mathrm{coim}}"', curve={height=6pt}, from=1-2, to=1-1]
	\arrow["\dashv"{anchor=center, rotate=-90}, draw=none, from=1, to=0]
\end{tikzcd}\]
with fully faithful right adjoint.
\end{proposition}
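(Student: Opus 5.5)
We must show three things: that $(-)^+$ sends stably locally compact spaces to stably compact pointed spaces, that $\mathrm{coim}$ sends stably compact pointed spaces to stably locally compact spaces, and that both functors respect perfect (partial) maps. Full faithfulness of the right adjoint then comes for free, since $\mathrm{SLC}_p \subset \mathrm{Loc}_{\mathrm{part}}$ and $\mathrm{SC}_{\mathrm{pt}/} \subset \mathrm{Loc}_{\mathrm{pt}/}$ are subcategories. Once the functors restrict, the unit and counit of Construction \ref{onepointcompact} restrict as well, and the triangle identities hold automatically. The argument is carried out entirely on frames, using the way-below characterizations of (stable) local compactness and perfectness.

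\textbf{The functor $(-)^+$.} Let $F$ be stably locally compact and consider $F_\top = F \sqcup \{\top\}$. Joins of nonempty families in $F_\top$ not reaching $\top$ are computed in $F$. A directed family has join $\top$ only if it contains $\top$, since $\top$ is a new element strictly above $1_F$. Consequently:
\begin{itemize}
\item $\top \ll \top$, so $F_\top$ is compact;
\item for $U, V \in F$, we have $U \ll V$ in $F_\top$ iff $U \ll V$ in $F$;
\item for $U \in F$, we have $U \ll \top$ in $F_\top$ for every $U$.
\end{itemize}
From this, local compactness and stability under binary meets follow directly: any meet with $\top$ is trivial. The basepoint $+ : \mathrm{pt} \to F^+$ is a map out of a finite space, hence perfect by Example \ref{finitespaces}.

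For a partial perfect map $f^* : G \to F$, the extension $f^*_\top : G_\top \to F_\top$ sends $\top \mapsto \top$. It preserves $\ll$ by the case list above: a relation $U \ll \top$ goes to something $\ll \top$, and all other relations lie in $G$, where they are preserved by perfectness of $f^*$.

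\textbf{The functor $\mathrm{coim}$.} Let $(X, x)$ be stably compact and pointed, with $x^* : \mathcal{O}(X) \to [1]$. Then $\mathrm{coim}$ is the open sublocale $U = X \setminus \overline{\{x\}}$, the largest open not containing $x$. By the Example following the perfect-quotient proposition, open sublocales of stably locally compact frames are stably locally compact, so $U \in \mathrm{SLC}_p$.

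For a pointed perfect map $g : (X, x) \to (Y, y)$, the induced partial map $U_X \to U_Y$ has frame map given by restricting $g^{-1}$ to $\mathcal{O}(Y)_{/U_Y} \to \mathcal{O}(X)_{/U_X}$. This is well-defined because $g(x) = y$: if $V \not\ni y$, then $g^{-1}V \not\ni x$. The map is perfect because the way-below relation in an open sublocale agrees with that of the ambient frame. This last fact is the point to be careful about. It follows since $\mathcal{O}(X)_{/U} \hookrightarrow \mathcal{O}(X)$ preserves and reflects $\ll$: it is strongly continuous with colimit-preserving left adjoint $-\wedge U$, and compact morphisms are detected by the left adjoint, as in the remark after Theorem \ref{closureunderretracts}.

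\textbf{Main obstacle.} The main subtlety is checking that the unit and counit are morphisms in the subcategories. The unit is $F \cong \mathrm{coim}(F_\top)$, which is an isomorphism. The counit is the pointed map $X \to (\mathrm{coim}\, X)^+$, with frame map $\mathcal{O}(U)_\top \to \mathcal{O}(X)$ sending $V \mapsto V$ and $\top \mapsto X$. Its perfectness follows from the case analysis above: relations $V \ll \top$ map to $V \ll X$, which holds because $X$ is compact and $V \ll U$-type relations are inherited. More precisely, one needs $V \ll X$ for every $V \le U$ with $V \ll$ something. In fact, every $V \le U$ satisfies $V \ll \top$ in $\mathcal{O}(U)_\top$. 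So it must be verified that $V \ll X$ holds for all opens $V \subseteq U$. This is true because $X \ll X$ implies $V \le X \ll X$, hence $V \ll X$.
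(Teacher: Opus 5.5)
Your proposal is correct and follows essentially the same route as the paper. Both work on frames and check that $(-)_\top$ and $\mathrm{coim}$ send (stably) (locally) compact frames to the right class and perfect maps to perfect maps. The only difference there is that you test perfectness by preservation of $\ll$, whereas the paper checks that the right adjoints $(f_*)_\top$ and $\mathrm{coim}(f)_*=f_*$ preserve directed joins; these are equivalent by the paper's characterization of perfect maps.

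You also go slightly beyond the paper by verifying that the comparison map $\mathcal{O}(U)_\top \to \mathcal{O}(X)$ is perfect. This is exactly where compactness of $X$ is needed, and it is required for the hom-bijection to restrict. The paper leaves this step implicit.

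Two points to tidy up:
\begin{itemize}
\item Your opening claim that the unit, counit and full faithfulness come for free ``since these are subcategories'' is inaccurate, because the subcategories are not full. What actually does the job is your final paragraph, combined with $\mathrm{coim}(F_\top)=F$ being the identity.
\item $-\wedge U$ is the right adjoint, not the left adjoint, of $\mathcal{O}(X)_{/U}\hookrightarrow\mathcal{O}(X)$.
\end{itemize}
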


\begin{proof}
We observe the following facts.
\begin{itemize}
\item If $F$ is a stably locally compact frame, then so is $F_\top$. Moreover, the newly added top element is unreachable from below by arbitrary joins, and so in particular compact, hence $F_\top$ is stably compact. (See also \cite[Proposition 9.1.10]{Goubault-Larrecq_2013}.)
\item The left adjoint $\mathrm{coim}$ assigns to a pointed stably compact space $x : \mathrm{pt} \rightarrow X$ the interior $U$ of the complement of $\{x\}$ (given as the join of $(x^*)^{-1}(0) \subset \mathcal{O}(X)$, or equivalently $x_*(0)$). The space $U$ is, as an open subspace of a stably compact space, again stably locally compact.
\item Given a partial frame homomorphism $f^* : F \rightarrow F'$ the associated pushforward on one-point compactifications is given by $(f_*)_\top : (F')_\top \rightarrow (F)_\top$, which agrees with $f_*$ on $F'$ and preserves the newly added top elements. If $f_*$ preserves directed suprema, so does $(f_*)_\top$, as can be checked by verifying two cases.
\item If $(F,x)$ and $(F',x')$ are two pointed frames with coimages $U = x_*(0)$ and $U' = {x'}_*(0)$, then a pointed frame homomorphism $f^*: F' \rightarrow F$ between pointed frames satisfies $f_*( x_* (0)) = {x'}_*(0)$, and therefore the induced right adjoint $\mathrm{coim}(f)_* : F_{/U} \rightarrow F'_{/U'}$ is simply given by $f_*$. If $f_*$ preserves directed suprema, so does $\mathrm{coim}(f)_*$.
\end{itemize}
\end{proof}

\begin{example} \label{scottopenofonepoint}
Assume that $X$ is stably locally compact and consider the one-point compactification $X^+$. Then any Scott open filter $\mathcal{F} \subset  \mathcal{O}(X)$ produces a Scott open filter of $\mathcal{O}(X^+)$, by adding the top element. All Scott open filters of $\mathcal{O}(X^+)$ are of this form, with the exception of $\{1\}$. Hence we see that $Q(X^+)$ arises from $Q(X)$ by \emph{freely adding a bottom element}.
\end{example}

\begin{corollary}[Urysohn's Lemma for stably locally compact spaces] \label{urysohnstablylocallycompact} Let $X$ be stably locally compact.
\begin{itemize}
\item Let $S \subset X$ be saturated compact, and $C \subset X$ closed, such that $S \cap C = \emptyset$. Then there exists a partial perfect map $f : X \rightarrow [0,1)_{\leq}$, which is identically $0$ on $S$ and has open domain of definition contained in $C^c$.
\item There exists a perfect embedding $X \hookrightarrow [0,1)_{\leq}^S$ for some set $S$.
\end{itemize}
In particular, the category $\mathrm{SLC}_p$ is generated under limits by $[0,1)_{\leq}$.
\end{corollary}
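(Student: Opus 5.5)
The plan is to reduce everything to the stably compact case, using the one-point compactification of Proposition \ref{adjunctionstablylocallycompact} and then Theorem \ref{urysohn} and Theorem \ref{urysohnembedding} applied to $X^+$.

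\textbf{First bullet.} Pass to $X^+$, which is stably compact. The set $S$ stays saturated compact in $X^+$: its Scott open filter extends by adding the top element, as in Example \ref{scottopenofonepoint}. The set $C' = C \cup \{+\}$ is closed in $X^+$, since its complement is the open $C^c \in \mathcal{O}(X)$, and $S \cap C' = \emptyset$. Theorem \ref{urysohn} then gives a perfect map $g : X^+ \to I_\leq$ with $g \equiv 0$ on $S$ and $g \equiv 1$ on $C'$. Now apply the functor $\mathrm{coim}$ from Proposition \ref{adjunctionstablylocallycompact} to the pointed map $(X^+,+) \to (I_\leq, 1)$; this is pointed because $g(+) = 1$. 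Since $\mathrm{coim}(X^+) = X$ (the right adjoint $(-)^+$ is fully faithful) and $\mathrm{coim}(I_\leq, 1) = [0,1)_\leq$, we obtain a partial perfect map $f : X \to [0,1)_\leq$. Its domain is $g^{-1}([0,1))$, which avoids $C$ because $g = 1$ there, and $f = g$ vanishes on $S$.

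\textbf{Second bullet.} Let $S$ be the set of partial perfect maps $X \to [0,1)_\leq$. By the Tychonoff corollary, limits in $\mathrm{SLC}_p$ exist and are computed as in $\mathrm{Loc}_{\mathrm{part}}$, so we get a canonical map $i : X \to [0,1)_\leq^S$. Its domain is the whole of $X$, because the domains of the maps $f_x$ constructed below cover $X$; one still has to unwind how products of partial maps behave here.

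To show $i^*$ is surjective, copy the proof of Theorem \ref{urysohnembedding}. For an open $U$ and a point $x \in U$, choose a saturated compact $S_x$ with $x \in S_x \subset U$, and apply the first bullet with $C = U^c$ to get $f_x$. Then $f_x^*([0,1)) = i^* p_{f_x}^*([0,1))$ contains $x$ and is contained in $U$, so $U = \bigvee_{x\in U} f_x^*([0,1))$ lies in the image of $i^*$.

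An alternative that avoids partial products is to embed $X^+ \hookrightarrow I_\leq^{S}$ via Theorem \ref{urysohnembedding}. Then take the open complement of the closed sub-box containing the image of $+$, using the Second Isomorphism Theorem (Proposition \ref{secondiso}).

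\textbf{Generation under limits.} This is then formal, exactly as in Corollary \ref{urysohncategorical}. Write $X$ as the equalizer of its cokernel pair $[0,1)_\leq^S \rightrightarrows [0,1)_\leq^S \amalg_X [0,1)_\leq^S$, and re-embed the pushout into a power of $[0,1)_\leq$. This needs the pushout to be stably locally compact, which holds because it is a perfect partial quotient or sublocale of a coproduct.

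\textbf{Main obstacle.} I expect the main difficulty to be the bookkeeping of partial maps in the product $[0,1)_\leq^S$ within $\mathrm{SLC}_p$. In particular one has to check that the product of partial maps is globally defined on $X$ and is an embedding rather than only a partial one. The fallback is the $X^+$-route described above.
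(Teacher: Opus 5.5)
Your proposal is correct and follows the paper's proof. The first bullet is done exactly as in the paper: apply the stably compact Urysohn lemma to $S$ and $C \cup \{+\}$ in $X^+$, then pass back along $\mathrm{coim}$. The embedding and the generation statement are, as the paper itself says, copied from Theorem \ref{urysohnembedding} and Corollary \ref{urysohncategorical}. You also check that the induced map into the $\mathrm{SLC}_p$-product $[0,1)_{\leq}^S$ is total: its top open is $\bigvee_s p_s^*([0,1))$, and the domains of the $f_x$ cover $X$. The paper leaves this detail implicit, and your argument for it is right.
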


\begin{proof}
Let $S \subset X$ be saturated compact, and $C \subset X$ closed be given, such that $S \cap C = \emptyset$. Then $S \subset X \subset X^+$ remains saturated compact. The set $C' = C \cup \{\infty\}$ is closed in $X^+$, and it holds that $S \cap C' = \emptyset$ as subsets of $X^+$. Applying the Urysohn lemma for stably compact spaces produces a perfect map $f : X^+ \rightarrow I_\leq$, which is identically $0$ on $S$ and identically $1$ on $C'$. In particular, it is a pointed map and thus corresponds to a partial perfect map $f : X \rightarrow [0,1)_{\leq}$ with open domain given by $f^*([0,1))$, which is contained in $C^c$. 

The remaining two claims now follow completely analogously to the same statements given for stably compact spaces.
\end{proof}

\begin{remark}
As an application of the patch topology functor, which we will introduce later in Section \ref{patchtopology}, we recover the classical statement that any compact Hausdorff space embeds as a closed subset of some Hilbert cube $[0,1]^S$ for some set $S$, when equipped with the usual (euclidean) topology. We also get as a direct corollary the slightly lesser known fact that any locally compact Hausdorff space $X$ embeds as a \emph{closed} subset in some half-open cube $[0,1)^S$.
\end{remark}

\begin{remark}
Whereas the use of stably compact spaces requires the use of the axiom of choice at several points for the development of the theory, the same is not necessary for the notion of stably compact frame/locale, which has a robust constructive theory, a fact that has been observed early on during the development of pointfree topology, \cite{johnstone1983point}.  This comment is not merely philosophical: Any constructively valid theorem gives statements that are true internally to any 1-topos - Important special cases would be given by working over a given base space, or by working equivariantly with respect to some fixed group $G$. These examples are crucial in algebraic and geometric topology. As an example of how far one can go with applying constructive techniques when working within algebraic geometry, see the work of Ingo Blechschmidt \cite{blechschmidt2021usinginternallanguagetoposes}.

However, we will not insist on this distinction during this article for purely pragmatic reasons. First of all, neither the theory of higher categories, nor the theory of $K$-theory of large categories has a worked out constructive version at the moment of writing. Second of all, the proof strategy for Theorem \ref{maintheoremmain} that we employ in this article is highly dependent on the existence of enough points, and thus only classically valid.
\end{remark}

\subsection{Sheaves on stably locally compact spaces}

Let us now come to the question of how the notion of stable local compactness of spaces interacts with the notion of stable local compactness of topoi. Recall that any topos $\mathcal{X}$ has an associated frame $\mathcal{X}_{\leq 0}$, given by the set of subobjects of $1$.

\begin{theorem} \label{localicreflection}
Let $\mathcal{X}$ be a topos. Consider the frame $\mathcal{X}_{\leq 0}$.
\begin{itemize}
\item If $\mathcal{X}$ is locally compact, then $\mathcal{X}_{\leq 0}$ is locally compact.
\item If $\mathcal{X}$ is stably locally compact, then $\mathcal{X}_{\leq 0}$ is stably locally compact.
\item If $\mathcal{X}$ is compact, then $\mathcal{X}_{\leq 0}$ is compact.
\end{itemize}
\end{theorem}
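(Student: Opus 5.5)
The plan is to realize $\mathcal{X}_{\leq 0}$ as a retract of $\mathcal{X}$ via filtered colimit preserving functors and then apply Theorem \ref{closureunderretracts}, in the form of the first bulleted special case following it. Proposition \ref{subobjectframeadjunction} provides the adjunction $\tau_{\leq -1} \dashv y$ with $y : \mathcal{X}_{\leq 0} \hookrightarrow \mathcal{X}$ fully faithful. The left adjoint $\tau_{\leq -1}$ preserves all colimits, and the same proposition says $y$ preserves filtered colimits. So if $\mathcal{X}$ is compactly assembled, so is $\mathcal{X}_{\leq 0}$ (viewed as an $\infty$-category, i.e.\ a poset), and Lemma \ref{existenceofadjoint} gives the explicit formula
$$ \hat{y}_{\mathcal{X}_{\leq 0}} \simeq \Ind(\tau_{\leq -1})\, \hat{y}_{\mathcal{X}}\, y .$$
This settles the first claim, local compactness.

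For stability I will use the formula above, following the template of Proposition \ref{perfectsubtopoi}. By Proposition \ref{quasiseparated}, $\hat{y}_\mathcal{X}$ preserves binary products. The inclusion $y$ preserves binary products (meets of subobjects of $1$ are products in $\mathcal{X}$). The functor $\Ind(\tau_{\leq -1})$ preserves binary products because $\tau_{\leq -1}$ does, again by Proposition \ref{subobjectframeadjunction}, and products in $\Ind$ of a category with finite products are computed levelwise on formal filtered colimits. Hence $\hat{y}_{\mathcal{X}_{\leq 0}}$ preserves binary meets. It also preserves joins, being a left adjoint, so it is a partial frame homomorphism, which is exactly stable local compactness in the sense of Definition \ref{stablycompactframe}.

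The main point needing care is the compatibility of $\Ind(\tau_{\leq -1})$ with products in $\Ind$. I would verify it by writing two ind-objects as $\text{``colim''}_i a_i$ and $\text{``colim''}_j b_j$. Their product is $\text{``colim''}_{(i,j)} a_i \times b_j$ over the product filtered category, and applying $\tau_{\leq -1}$ levelwise then commutes with this.

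For compactness I use that $\mathcal{X}_{\leq 0}$ has top element $1$. We have $\tau_{\leq -1}(1) = 1$, and $y$ preserves filtered colimits. So if a directed join $\bigvee U_i = 1$ holds in $\mathcal{X}_{\leq 0}$, then $\colim y(U_i) \simeq 1$ in $\mathcal{X}$. Compactness of $1$ in $\mathcal{X}$ then makes $\mathrm{id}_1$ factor through some $y(U_i)$, so $U_i = 1$. Thus $1 \ll 1$, and combined with the second claim this gives stable compactness whenever $\mathcal{X}$ is stably compact.
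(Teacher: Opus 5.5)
Your proposal is correct and follows essentially the same route as the paper: you use the retract $\tau_{\leq -1} \dashv y$ with $y$ preserving filtered colimits, get $\hat{y}_{\mathcal{X}_{\leq 0}} \simeq \Ind(\tau_{\leq -1})\,\hat{y}_{\mathcal{X}}\, y$ from Lemma \ref{existenceofadjoint}, and deduce stability because each factor preserves binary products. Your compactness argument is the unpacked form of the paper's observation that $\tau_{\leq -1}$ preserves compact objects and sends $1$ to $1$, and your explicit checks that $y$ and $\Ind(\tau_{\leq -1})$ preserve binary products supply details the paper leaves implicit.
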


\begin{proof}
By Proposition \ref{subobjectframeadjunction} we have an adjunction
\[\begin{tikzcd}
	{\mathcal{X}_{\leq 0}} & {\mathcal{X}}
	\arrow[""{name=0, anchor=center, inner sep=0}, "i"', curve={height=6pt}, hook, from=1-1, to=1-2]
	\arrow[""{name=1, anchor=center, inner sep=0}, "{\tau_{\leq -1}}"', curve={height=6pt}, two heads, from=1-2, to=1-1]
	\arrow["\dashv"{anchor=center, rotate=-90}, draw=none, from=1, to=0]
\end{tikzcd}\]
where the inclusion $i$ preserves filtered colimits. Hence the left adjoint $\tau_{\leq -1}$ preserves compact objects, proving (3). Points (1) and (2) follow by an application of Lemma \ref{existenceofadjoint}: If the adjoint $\hat{y}_\mathcal{X}$ exists, then $\hat{y}_{ \mathcal{X}_{\leq 0} }$ exists as well and is given by $    \Ind( \tau_{\leq -1} ) \hat{y}_\mathcal{X} i $, which, assuming that $\hat{y}$ commutes with binary products, also commutes with binary products.
\end{proof}

Let us now concern ourselves with the converse question. 

\begin{theorem}
Let $X, Y$ be a stably locally compact spaces and $f : X \rightarrow Y$ a partial perfect map.
\begin{itemize}
\item The topos $\Sh(X)$ is a stably locally compact.
\item If $X$ is compact, so is $\Sh(X)$. 
\item The induced functor $f^* : \Sh(Y) \rightarrow \Sh(X)$ is strongly continuous.
\end{itemize}
\end{theorem}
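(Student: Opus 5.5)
The plan is to realize everything as a retract of the compactly generated (locally coherent) case and then invoke Theorem \ref{closureunderretracts} and Lemma \ref{retractsofcompactfunctors}. Write $F = \mathcal{O}(X)$. Since $F$ is stably locally compact, the maps $\hat{y} \dashv k$ exhibit $F$ as a partial retract of $\Ind(F)$: $\hat{y} : F \rightarrow \Ind(F)$ is a partial frame homomorphism, and $k$ is a frame homomorphism whose right adjoint $y$ preserves directed joins. In locale language, $k$ gives an embedding $X \hookrightarrow \gamma X$, where $\gamma X$ is the locally coherent space with frame $\Ind(F)$, and $\hat{y}$ gives a partial map $\gamma X \rightarrow X$ retracting it. Applying the $2$-functor $\Sh : (\mathrm{Loc}_{\mathrm{part}})^{\op} \rightarrow \PR^L$ turns this internal adjunction $\hat{y} \dashv k$ into an adjunction between $\Sh(X)$ and $\Sh(\gamma X)$. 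The embedding gives $i_* : \Sh(X) \rightarrow \Sh(\gamma X)$, which is fully faithful by the lemma on sublocales. Its left adjoint $i^*$ is obtained from $\hat{y}$, meaning that $i_*$ is itself a pullback functor along a partial map and so has a further right adjoint. Consequently $i_*$ preserves colimits, in particular filtered ones, so $\Sh(X)$ is a perfect subtopos of $\Sh(\gamma X)$, in the sense that $i_*$ preserves filtered colimits.

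Next I need that $\Sh(\gamma X)$ is stably locally compact, and compact when $X$ is. For a locally coherent space, $\Sh(\gamma X)$ is compactly generated by the representables $y_U$ of compact opens $U$, by the coherent-topos results of \cite{lurieSAG} Appendix A, as used in \cite{lehner2025algebraicktheorycoherentspaces}. These compact opens are closed under binary intersection, so $\hat{y}$, being $\Ind$ of the inclusion of compact objects, preserves binary products once one checks that finite products of compact objects are compact. I would do this by noting that compact objects are finite colimits of $y_U$'s, that products commute with colimits in each variable in a topos, and that $y_U \times y_V = y_{U\cap V}$. Criterion (2) of Proposition \ref{quasiseparated} then applies. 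If $X$ is compact, $1 \in \Ind(F)$ is compact, so $\gamma X$ is coherent and $\Sh(\gamma X)$ is compact. Proposition \ref{perfectsubtopoi} then transfers all three properties to $\Sh(X)$.

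For the third claim, a partial perfect $f$ gives a strongly continuous $f^{-1} : \mathcal{O}(Y) \rightarrow \mathcal{O}(X)$, and $\Ind(f^{-1})$ is a partial perfect map $\gamma X \rightarrow \gamma Y$ of locally coherent spaces. It is compatible with the retractions because $f^{-1}$ commutes with $\hat{y}$, which is exactly twice left adjointability. On sheaves, $\Ind(f^{-1})^*$ sends $y_U$ with $U$ compact open to a compact representable, so it preserves compact objects. The embeddings $i^*_X$, $i^*_Y$ run the wrong way for Lemma \ref{retractsofcompactfunctors}. Instead I would use the left adjoints $\hat{y}^*$ induced from $\hat{y}$: these are fully faithful, with right adjoints $i^*$ that preserve colimits, so they are internal embeddings in $\PR^L$. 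Commutativity of the square $\hat{y}_X^* f^* \simeq \Ind(f^{-1})^* \hat{y}_Y^*$ follows from $\Ind(f^{-1})\hat{y}_Y = \hat{y}_X f^{-1}$ by functoriality of $\Sh$ on partial maps, and Lemma \ref{retractsofcompactfunctors} then concludes.

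The main obstacle I expect is the bookkeeping that $\Sh$ sends the frame-level adjunctions $\hat{y} \dashv k \dashv y$ to the correct adjunctions of sheaf categories, with full faithfulness intact. In particular I must confirm that the left adjoint $\hat{y}^*$ on sheaves, computed by left Kan extension followed by sheafification, really is fully faithful. To do this I would apply Lemma \ref{existenceofadjoint} together with the fact that $\Sh$ is a $2$-functor, as in the proof of Lemma \ref{preservationofconstantobjects}.
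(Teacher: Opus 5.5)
Your argument for the third bullet is essentially the paper's: exhibit $f^*$ as a retract of $\Ind(f^{-1})^*$ through the internal embeddings $\hat{y}^*$, then apply Lemma \ref{retractsofcompactfunctors}. Your check that $\Sh(\gamma X)$ is stably locally compact is also fine.

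The gap is in the first paragraph, and it breaks your proof of the first two bullets.
\begin{itemize}
\item The claim that $y : F \rightarrow \Ind(F)$ preserves directed joins is false. It holds only if every open of $X$ is compact.
\item $i_*$ has no further right adjoint, so $X \hookrightarrow \gamma X$ is not a perfect subtopos in general. What $2$-functoriality actually gives is the chain $r^* \dashv r_* \simeq i^* \dashv i_*$, where $r : \gamma X \rightarrow X$ is the partial map with frame map $\hat{y}$. It is $i^*$, not $i_*$, that is a pushforward along a partial map, so the extra adjoint sits on the left of $i^*$.
\end{itemize}
You can see that the claim proves too much. The top element $y(1)$ of $\Ind(F)$ is always compact, so $\gamma X$ is always coherent and $\Sh(\gamma X)$ is always compact. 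Proposition \ref{perfectsubtopoi} would then make $\Sh(X)$ compact for every $X$, for example $X = \mathbb{R}$.

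For a concrete failure, take $X = \mathbb{N}$ discrete. Then $\gamma X = \beta\mathbb{N}$, $i$ is the open inclusion $\mathbb{N} \hookrightarrow \beta\mathbb{N}$, and $\Gamma(\beta\mathbb{N}, i_*\mathcal{G}) = \prod_n \mathcal{G}(\{n\})$. Let $\mathcal{G}_m(\{n\}) = \ast$ for $n < m$ and $\emptyset$ otherwise. Each $\prod_n \mathcal{G}_m(\{n\})$ is empty, while the colimit of the $\mathcal{G}_m$ is the terminal sheaf. Since $\Gamma(\beta\mathbb{N}, -)$ commutes with filtered colimits, $i_*$ cannot.

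The repair uses the functor you already use at the end. $r^* = \hat{y}^*$ is fully faithful with colimit-preserving right adjoint $i^*$, and $i^* r^* \simeq \mathrm{id}$. So $\Sh(X)$ is a retract of $\Sh(\gamma X)$ through colimit-preserving functors, and is compactly assembled by Theorem \ref{closureunderretracts}.
\begin{itemize}
\item Stability: a map $g$ in $\Sh(X)$ is compact iff $r^*(g)$ is, by the second special case listed after Theorem \ref{closureunderretracts}. Since $r^*$ is pullback along a partial map, it preserves binary products. Criterion (3) of Proposition \ref{quasiseparated} therefore transfers from $\Sh(\gamma X)$.
\item Compactness: if $X$ is compact then $\hat{y}(1) = y(1)$, so $r^*(1)$ is compact. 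A fully faithful functor preserving filtered colimits reflects compact objects, so $1 \in \Sh(X)$ is compact.
\end{itemize}
Alternatively, do as the paper does. Establish compact assembly and the third bullet first. Then obtain stability and compactness by applying the third bullet to the perfect maps $\Delta : X \rightarrow X \times X$ and $X \rightarrow \mathrm{pt}$ (Proposition \ref{quasiseparatedlocale}).
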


\begin{proof}
Applying $\Sh(-)$ to the commuting squares of partial frame homomorphisms
\[\begin{tikzcd}
	{\mathcal{O}(X)} & {\Ind(\mathcal{O}(X))} \\
	{\mathcal{O}(Y)} & {\Ind(\mathcal{O}(Y))}
	\arrow[""{name=0, anchor=center, inner sep=0}, "{\hat{y}}"{description}, curve={height=-12pt}, hook, from=1-1, to=1-2]
	\arrow[""{name=1, anchor=center, inner sep=0}, "k"{description}, curve={height=-12pt}, two heads, from=1-2, to=1-1]
	\arrow["{f^*}", from=2-1, to=1-1]
	\arrow[""{name=2, anchor=center, inner sep=0}, "{\hat{y}}"{description}, curve={height=-12pt}, hook, from=2-1, to=2-2]
	\arrow["{\Ind(f^*)}"', from=2-2, to=1-2]
	\arrow[""{name=3, anchor=center, inner sep=0}, "k"{description}, curve={height=-12pt}, two heads, from=2-2, to=2-1]
	\arrow["\dashv"{anchor=center, rotate=-90}, draw=none, from=0, to=1]
	\arrow["\dashv"{anchor=center, rotate=-90}, draw=none, from=2, to=3]
\end{tikzcd}\]
shows that $f^* : \Sh(Y) \rightarrow \Sh(X)$ is a $\PR^L$-retract of the induced functor $\Sh(\Ind(\mathcal{O}(Y)) \rightarrow \Sh(\Ind(\mathcal{O}(X)))$ of $\Ind(f^*)$. The frame $\Ind(\mathcal{O}(X))$ is a coherent frame and thus $\Sh(\Ind(\mathcal{O}(X)))$ is a compactly generated $\infty$-category, see Example \ref{coherentspaces} and \cite[Corollary 3.17]{lehner2025algebraicktheorycoherentspaces}. Similarly for $\Ind(\mathcal{O}(Y))$. The left adjoint functor induced by $\Ind(f^*)$ on sheaves preserves compact generators by construction and is thus strongly continuous. Applying Lemma \ref{retractsofcompactfunctors}, we see that $f^* : \Sh(Y) \rightarrow \Sh(X)$ is a strongly continuous functor between compactly assembled $\infty$-categories. In particular, $\Sh(-)$ sends perfect maps between stably locally compact spaces to perfect geometric morphisms. We can directly apply this to get the remaining two open claims: By Proposition \ref{quasiseparatedlocale}, the diagonal $\Delta : X \rightarrow X \times X$ is a perfect map, and hence $ \Delta_* :  \Sh(X) \rightarrow \Sh(X) \otimes \Sh(X)$ is a perfect geometric morphism, which shows stability. The statement about compactness follows the same way by considering the map $X \rightarrow \mathrm{pt}$ instead.
\end{proof}

\begin{remark}
The corresponding statement that for locally compact sober spaces $X$, the topos $\Sh(X)$ is also locally compact is unfortunately not true. This failure is already present at the level of $1$-topoi, see \cite[Example 5.5]{JOHNSTONE1982255}. Johnstone-Joyal characterize those spaces $X$ such that $\Sh(X,\mathrm{Set})$ is compactly assembled as those $X$ that are \emph{metastably locally compact}. It is an interesting question whether an analogous criterion exists for the analogous questions with $\Sh(X,\An)$ instead.
\end{remark}

\begin{corollary}\label{sheavesonstablylocallycompact}  The functor $\Sh : \mathrm{Frm}_{\mathrm{part}} \rightarrow \PR^L$ descends to a well-defined functor
$$ \Sh : \mathrm{SLCFrm}_{p} \rightarrow \PR^L_{\ca}.$$
\end{corollary}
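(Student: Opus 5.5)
The corollary says two things: $\Sh$ sends stably locally compact frames to compactly assembled presentable $\infty$-categories, and it sends partial perfect frame homomorphisms to strongly continuous left adjoints. I would get both from the preceding theorem together with the equivalence $(\mathrm{SLC}_p)^{\op}\simeq \mathrm{SLCFrm}_p$ of Corollary \ref{stablycompactframesandspaces}.

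\emph{Objects.} Let $F$ be a stably locally compact frame. By Theorem \ref{spatialityoflocallycompact}, $F\cong\mathcal{O}(X)$ for a sober stably locally compact space $X$. The theorem just proven says $\Sh(X)=\Sh(F)$ is a stably locally compact topos, and in particular compactly assembled. Presentability is automatic, since $\Sh(F)$ is an accessible left exact localization of presheaves. So $\Sh(F)\in\PR^L_{\ca}$.

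\emph{Morphisms.} Let $f^*:F\to F'$ be a partial perfect frame homomorphism. It corresponds to a partial perfect map $f:X'\to X$ of spaces. The third bullet of the theorem then says that $\Sh(f^*)=f^*:\Sh(X)\to\Sh(X')$ is strongly continuous. The one point to check is that this bullet was proved for partial maps and not only for total ones. Its proof applied $\Sh$ to the retract squares of partial frame homomorphisms relating $f^*$ and $\Ind(f^*)$, and then invoked Lemma \ref{retractsofcompactfunctors}. That argument only uses $\Sh:\mathrm{Frm}_{\mathrm{part}}\to\PR^L$ as a 2-functor, so it covers partial maps as well.

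\emph{Functoriality.} Since $\PR^L_{\ca}\subset\widehat{\Cat_\infty}$ is a (non-full) subcategory, and $\Sh:\mathrm{Frm}_{\mathrm{part}}\to\PR^L$ is already a functor, composites and identities are inherited. It therefore suffices to know that the functor factors through this subcategory on objects and morphisms, which is what the two steps above show. Hence the restriction $\Sh:\mathrm{SLCFrm}_p\to\PR^L_{\ca}$ is well defined.

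The only step needing any care is the partial-map case in the morphism step. Its main ingredient is that $\Ind$ of a stably locally compact frame is coherent, so that $\Sh$ of it is compactly generated, and that ingredient was already supplied in the proof of the theorem.
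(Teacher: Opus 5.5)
Your proposal is correct and matches the paper. The paper states this corollary without separate proof, as an immediate consequence of the preceding theorem: the first bullet gives that $\Sh(F)$ is compactly assembled, the third gives that $f^*$ is strongly continuous, and functoriality is inherited from $\Sh:\mathrm{Frm}_{\mathrm{part}}\to\PR^L$. Your extra check that the retract argument covers partial maps is harmless but redundant, since the theorem is already formulated for partial perfect maps.
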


\begin{remark}
The restriction of this claim to stably compact frames was done by Aoki in \cite[Theorem 4.29]{aoki2025schwartzcoidempotentscontinuousspectrum}. The proof given here is completely analogous to the argument given by Aoki.
\end{remark}

In the case of sheaves on stably compact spaces, there exists further structure.

\begin{theorem}[\cite{aoki2025schwartzcoidempotentscontinuousspectrum}, Corollary 5.13]
There exists an adjunction
\[\begin{tikzcd}
	{\mathrm{SCFrm}} & {\mathrm{CAlg}(\PR^L_{\ca}).}
	\arrow[""{name=0, anchor=center, inner sep=0}, "{\mathrm{Sh}}", curve={height=-12pt}, from=1-1, to=1-2]
	\arrow[""{name=1, anchor=center, inner sep=0}, "{\mathrm{Sm}^{con}}", curve={height=-12pt}, from=1-2, to=1-1]
	\arrow["\dashv"{anchor=center, rotate=-90}, draw=none, from=0, to=1]
\end{tikzcd}\]
\end{theorem}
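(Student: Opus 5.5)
The plan is to restrict Aoki's sheaves--spectrum adjunction $\Sh \dashv \mathrm{cIdem}$ of Theorem \ref{sheafspectrumadjunction} to the two subcategories, and to put $\mathrm{Sm}^{con} = \mathrm{cIdem}$ on objects. This needs three things. First, $\Sh$ sends $\mathrm{SCFrm}$ into $\CAlg(\PR^L_{\ca})$. Second, $\mathrm{cIdem}(\mathcal{C})$ is a stably compact frame whenever $\mathcal{C} \in \CAlg(\PR^L_{\ca})$, and strongly continuous symmetric monoidal functors induce perfect frame homomorphisms. Third, under the bijection $\Map_{\CAlg(\PR^L)}(\Sh(F),\mathcal{C}) \simeq \Map_{\mathrm{Frm}}(F,\mathrm{cIdem}(\mathcal{C}))$, strongly continuous functors correspond exactly to perfect frame homomorphisms. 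Once these hold, the restricted mapping spaces match, naturally in both variables, and we get the adjunction.

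The first point is essentially already done. By Corollary \ref{sheavesonstablylocallycompact} and the preceding theorem, $\Sh(F)$ is compactly assembled, $1$ is compact when $F$ is compact, and perfect frame maps go to strongly continuous functors. The remaining condition is that $\otimes = \times$ preserves compact morphisms in both variables. This is stability of $\Sh(F)$, via Proposition \ref{quasiseparated}(3).

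For the second point, I would imitate the proof of Theorem \ref{localicreflection}. In $\mathcal{C}$, coidempotent coalgebras $U \to 1$ sit inside $\mathcal{C}_{/1}$ as a reflective-type piece: the inclusion $i : \mathrm{cIdem}(\mathcal{C}) \hookrightarrow \mathcal{C}$ preserves colimits. The first thing to try is to find a functor $r$ with $r \dashv i$ or $i \dashv r$ such that $i$ and $r$ preserve filtered colimits. Granting this, Lemma \ref{existenceofadjoint} produces $\hat{y}_{\mathrm{cIdem}} = \Ind(r)\,\hat{y}_{\mathcal{C}}\, i$. This gives local compactness, with $U \ll V$ exactly when $U \to V$ is compact in $\mathcal{C}$. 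Stability then follows because the meet in $\mathrm{cIdem}(\mathcal{C})$ is $U \otimes V$ and $\otimes$ preserves compact morphisms. Compactness of the top element $1$ follows from compactness of the unit. If no such $r$ is available, the fallback is to show directly that every $V$ is the directed join of the $U$ with $U \to V$ compact. One would build such $U$ from a presentation of $\hat{y}(V)$, by tensoring compact maps $c \to V$ against $V$ and taking the associated coidempotents.

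The third point is the main obstacle. One direction is formal: a strongly continuous $\Phi$ preserves compact morphisms, so it preserves $\ll$ on coidempotents. For the converse, start from a perfect $f^* : F \to \mathrm{cIdem}(\mathcal{C})$. By the retraction $F \rightleftarrows \Ind(F)$, $\Sh(F)$ is a retract of the compactly generated $\Sh(\Ind(F))$, whose compact generators are the representables of compact opens of $\Ind(F)$. So by Lemma \ref{retractsofcompactfunctors}, together with the criterion recalled after Theorem \ref{closureunderretracts} that a morphism is compact iff its image under the fully faithful $L$ is compact, it suffices to check that the induced functor sends the compact maps $y_U \to y_V$ with $U \ll V$ to compact maps. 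These generate the compact morphisms of $\Sh(F)$ under the operations relevant for Lemma \ref{aokilemma}. That they are sent to compact maps is exactly perfectness of $f^*$. Making this generation statement precise, namely reducing compactness of arbitrary maps in $\Sh(F)$ to way-below inclusions of opens, is where I expect the real work to lie.
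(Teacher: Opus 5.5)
The paper gives no proof of this statement; it is imported from Aoki. Judged on its own, your first step is fine, but the identification $\mathrm{Sm}^{con} = \mathrm{cIdem}$ has a genuine gap.

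\textbf{Where the gap is.} In your second step you assert that $U \ll V$ in $\mathrm{cIdem}(\mathcal{C})$ exactly when $U \to V$ is a compact morphism of $\mathcal{C}$. Your third step uses the same claim when it says that sending $y_U \to y_V$ to compact maps ``is exactly perfectness of $f^*$''. Only one direction holds: compactness in $\mathcal{C}$ implies way-below in the frame. Perfectness of $f^*$ only gives $f^*(U) \ll f^*(V)$ inside $\mathrm{cIdem}(\mathcal{C})$, which is weaker.

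\textbf{Counterexample.} Take $\mathcal{C} = \Sp$ and the coidempotent $U = \mathrm{fib}(\mathbb{S} \to \mathbb{S}[1/p])$.
Directed joins of coidempotents are filtered colimits. So given a directed family with $U \le \bigvee_i W_i$, you may replace $W_i$ by $W_i \otimes U$ and assume $\colim_i W_i \simeq U$. Put $A_i = \mathrm{cofib}(W_i \to \mathbb{S})$.
Then $\colim_i (W_i \otimes \mathbb{S}/p) \simeq U \otimes \mathbb{S}/p \simeq \mathbb{S}/p$ is compact, so for some $i$ the map $W_i \otimes \mathbb{S}/p \to \mathbb{S}/p$ has a section.
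Tensoring with $A_i$ kills the source, since $W_i \otimes A_i = 0$. Hence $A_i \otimes \mathbb{S}/p = 0$, so $A_i \simeq A_i \otimes \mathbb{S}[1/p]$.
Since $U \otimes \mathbb{S}[1/p] = 0$, this gives $U \otimes A_i = 0$, i.e.\ $U \le W_i$.
Thus $U \ll U$ in $\mathrm{cIdem}(\Sp)$, yet $U$ is not a compact spectrum, because $\pi_{-1}U \cong \mathbb{Z}/p^\infty$.

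\textbf{Why restriction cannot work.} This defeats the restriction strategy whether or not $\mathrm{cIdem}(\mathcal{C})$ happens to be stably compact.
Let $\mathbf{S}$ be the Sierpinski space with open point $o$. Then $\Sh(\mathbf{S},\An) \simeq \Fun(\Delta^1,\An)$ is compactly generated by $y_o$ and $1$.
By the sheaves--spectrum adjunction, a map $\Phi : \Sh(\mathbf{S},\An) \to \mathcal{C}$ in $\CAlg(\PR^L)$ is the same as a coidempotent $\Phi(y_o)$. Such a $\Phi$ is strongly continuous iff $\Phi(y_o)$ is a compact object of $\mathcal{C}$.
On the other side, a perfect frame map out of the finite frame $\{0 < o < 1\}$ is the same as a compact element of the target.
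So the adjunction forces the compact elements of $\mathrm{Sm}^{con}(\mathcal{C})$ to correspond to the coidempotents that are compact objects of $\mathcal{C}$.
The frame map $o \mapsto U$ into $\mathrm{cIdem}(\Sp)$ preserves $\ll$, but the functor corresponding to it under Aoki's bijection is not strongly continuous. The paper's introduction accordingly lists $\mathrm{Sm}^{con}$ and $\mathrm{Sm}$ as different functors.

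\textbf{What is missing.} You need a genuinely different right adjoint, whose way-below relation is governed by compactness of morphisms in $\mathcal{C}$ rather than by the lattice of all coidempotents.
One natural candidate is to keep only coidempotents that are exhausted by compact morphisms from smaller coidempotents. This is the shape of $\Phi(y_V) = \colim_{W \ll V} \Phi(y_W)$ for strongly continuous $\Phi$.
You would then have to prove that these form a stably compact frame and verify the universal property directly. That is the job of Aoki's continuous spectrum, and it does not follow formally from the sheaves--spectrum adjunction.

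Your auxiliary route through an adjoint $r$ to $\mathrm{cIdem}(\mathcal{C}) \hookrightarrow \mathcal{C}$ also fails outside the cartesian case. There, meets are $U \otimes V$ rather than products, and joins are pushouts over $U \otimes V$ rather than coproducts, so the inclusion has neither adjoint.
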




\subsection{de Groot and Verdier duality for stably compact spaces} \label{sectionverdierduality}

There is a very useful duality for stably compact spaces, that can be understood as exchanging the roles of openness with that of (co)-compactness.  Note that the poset $Q(X)^{\op} \cong \mathrm{OFilt}( \mathcal{O}(X) )$ of saturated compact sets, or equivalently Scott open filters, always has directed colimits (computed just as unions of their respective filters), and finite intersections. In case $X$ is stably compact however, more holds: It becomes a frame itself.

\begin{theorem}[de Groot duality, see \cite{Goubault-Larrecq_2013} Theorem 9.1.38] Let $X$ be a stably compact space. Then the set of complements of saturated compact subsets gives a topology on $X$. Denote the resulting topological space by $X^\vee$. The space $X^\vee$ is again stably compact, and it holds that $(X^\vee)^\vee = X$. Moreover, a perfect map $f : X \rightarrow Y$ is again a perfect map $f : X^\vee \rightarrow Y^\vee$ when considering the cocompact topologies.
\end{theorem}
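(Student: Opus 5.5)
The plan is to work entirely on the frame side. By Hofmann--Mislove, saturated compact subsets of $X$ correspond to Scott open filters of $\mathcal{O}(X)$, so the claims become statements about the poset $\mathrm{OFilt}(\mathcal{O}(X))$.

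\textbf{Step 1: complements of saturated compact sets form a topology.} I would check the closure properties directly.
\begin{itemize}
\item Finite intersections of complements correspond to finite unions of saturated compact sets, which are again saturated compact.
\item Arbitrary unions of complements correspond to arbitrary intersections of saturated compact sets. It suffices to treat filtered intersections, since a general intersection is the filtered intersection of its finite subintersections.
\item For finite intersections $K_1\cap K_2$, I would use stability. A neighbourhood basis of $K_1\cap K_2$ is given by sets $U_1\cap U_2$ with $U_i\ll$ neighbourhoods of $K_i$, and stability of $\ll$ under $\wedge$ shows that $K_1\cap K_2$ is compact.
\item For filtered intersections, I would use sobriety and the Hofmann--Mislove theorem: the union of a directed family of Scott open filters is again a Scott open filter, and it corresponds exactly to the intersection $\bigcap K_i$. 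Compactness of $X$ handles the empty intersection.
\end{itemize}

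\textbf{Step 2: $X^\vee$ is stably compact.} I would exhibit the way-below relation on $\mathcal{O}(X^\vee)$. For $X\setminus K\ll X\setminus K'$ the candidate witness is a pair $K'\subset U\subset K$ with $U$ open. This is the interpolation a stably compact space allows: every saturated compact set is the filtered intersection of compact neighbourhoods of the form $\uparrow$ of closures of $V\ll W$.

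Local compactness of $X^\vee$ then follows from the fact that each $K$ is the filtered intersection of saturated compact sets $K'$ with $K\subset \mathrm{int}(K')$. Stability and compactness of $X^\vee$ follow by checking that this relation is preserved by finite unions of the $K$'s, using once more that $\ll$ in $\mathcal{O}(X)$ is stable.

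\textbf{Step 3: $(X^\vee)^\vee=X$.} I would show that the compact saturated sets of $X^\vee$ are exactly the closed sets of $X$.
\begin{itemize}
\item A closed set $C$ of $X$ is compact in $X^\vee$. A cover by complements of $K_i$ means $\bigcap K_i\cap C=\emptyset$. Since $X$ is compact, each $K_i$ is in fact patch-closed, and the patch topology is compact Hausdorff, so finite subcovers exist.
\item Conversely, saturation in $X^\vee$ corresponds to being an intersection of cocompact sets. Using that every open $U$ of $X$ is a union of $\mathrm{int}$ of saturated compact subsets of $U$, this identifies the saturated sets of $X^\vee$ with the closed sets of $X$.
\end{itemize}
The cleanest route is probably through the patch topology: $X^{\patch}=(X^\vee)^{\patch}$, with the open sets of $X$ being precisely the patch-open upper sets for the specialization order, and the open sets of $X^\vee$ being the patch-open lower sets.

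\textbf{Step 4: perfect maps.} I would show that for a perfect $f$, the preimage $f^{-1}(K)$ of a saturated compact set is saturated compact. Because $f_*$ preserves directed joins, $f^{-1}$ sends Scott open filters to Scott open filters, which gives continuity of $f\colon X^\vee\to Y^\vee$. Perfectness of this dual map then follows by applying the same argument to $(X^\vee)^\vee=X$ via Step 3.

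\textbf{Main obstacle.} The hard part is Step 3 together with the compactness claim in Step 1, namely that intersections and closed sets behave compactly. I would first establish compact Hausdorffness of the patch topology and deduce everything from it.
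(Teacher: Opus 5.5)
The paper does not prove this statement; it only cites \cite{Goubault-Larrecq_2013}. So your outline has to stand on its own, and most of it does. Finite intersections of compact saturated sets are compact: the filter generated by the two neighbourhood filters is Scott open ($V_i\ll U_i$ gives $V_1\cap V_2\ll U_1\cap U_2$), and Hofmann--Mislove identifies its intersection with $K_1\cap K_2$. Filtered intersections come from directed unions of Scott open filters. In $\mathcal{O}(X^\vee)$ one has $X\setminus K\ll X\setminus K'$ iff $K'\subseteq\mathrm{int}(K)$. Step 4 works with the filter map $\mathcal{F}\mapsto (f_*)^{-1}(\mathcal{F})$ together with the fact that preimages of closed sets are closed.

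The genuine gap is \emph{sobriety of $X^\vee$}, which you never address and which is not the obstacle you name. In the paper's conventions a stably compact space is sober by definition. A frame-side argument only shows that $\mathcal{O}(X^\vee)$ is a stably compact frame, i.e.\ that the sobrification of $X^\vee$ is stably compact. The paper's own remark on $\mathbb{R}_{\leq}$ shows that a $T_0$ space with a stably locally compact frame need not be sober. You must show that every irreducible closed subset of $X^\vee$ is ${\uparrow}x$ for a point $x$, this being the $X^\vee$-closure of $x$. Such a subset is a nonempty compact saturated $K\subseteq X$ for which $K\subseteq K_1\cup K_2$, with $K_i$ compact saturated, forces $K\subseteq K_1$ or $K\subseteq K_2$.

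One way to do this: $K={\uparrow}\mathrm{Min}(K)$. Suppose $x\neq y$ were both minimal. Local compactness gives a compact saturated $Q$ with $y\in\mathrm{int}(Q)$ and $x\notin Q$. Then $K\subseteq{\uparrow}(K\setminus\mathrm{int}(Q))\cup Q$. By minimality $y$ is missing from the first set, and $x$ is missing from $Q$, which contradicts irreducibility. Alternatively, once Step 3 is established, $X^\vee$ is well-filtered: a filtered family of closed subsets of $X$ whose intersection misses a compact $K$ already has a member missing $K$. Being also locally compact, $X^\vee$ is then sober.

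A second, smaller problem is the converse half of Step 3 as written. The saturated subsets of $X^\vee$ are the down-sets for the specialization order of $X$, not its closed sets. For $X=I_{\leq}$, the set $(b,1]$ is even open in $X^\vee=I_{\geq}$ but is not closed in $I_{\leq}$. Your sentence also never uses compactness in $X^\vee$, which is the essential input. What is true is that \emph{compact} saturated subsets $A$ of $X^\vee$ are closed in $X$. Take $x\notin A$ and $a\in A$; since $A$ is a down-set, $x\not\leq a$. So there is a compact saturated $Q_a$ with $x\in\mathrm{int}(Q_a)$ and $a\notin Q_a$. Finitely many of the $X^\vee$-opens $X\setminus Q_a$ cover $A$, and the intersection of the corresponding $\mathrm{int}(Q_{a_k})$ is a neighbourhood of $x$ missing $A$.

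The first half of Step 3 does not need the patch topology either. A directed cover of a closed $C$ by sets $X\setminus K_i$ means $\bigcap_i K_i\subseteq X\setminus C$, and well-filteredness yields one $K_i\subseteq X\setminus C$. Relying on $X^{\patch}=(X^\vee)^{\patch}$ would be circular here anyway, since the paper derives that identity from de Groot duality (Remark \ref{deGrootdualitypatch}).
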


In other words, we obtain an involution $(-)^\vee : \mathrm{SC} \rightarrow \mathrm{SC}$. We note that $(-)^\vee$ is covariant in $1$-morphisms, but reverses $2$-morphisms.

\begin{example}
In the case of $X = I_\leq$ being the directed interval, we have $(I_\leq)^\vee = I_\geq$ is the downwards directed interval, with reversed topology.
\end{example}

\begin{example}
In the case of a coherent space $X$ corresponding to a bounded distributive lattice $D$ under Stone duality, the dual $X^\vee$ is again coherent and corresponds to $D^{\op}$. The restriction of de Groot duality to the subcategory of coherent spaces is also referred to as \emph{Hochster duality}.\footnote{de Groot duality can actually be understood as a formal extension of Hochster duality, using the perspective of lax-idempotent monads.}
\end{example} 

\begin{example}
If $X$ is compact Hausdorff, it is a standard fact that a subset $K \subset X$ is compact saturated iff it is closed. We thus see that for compact Hausdorff spaces it holds that $X^\vee = X$.
\end{example}

Crucial will be the following theorem, which explains the interaction of de Groot duality with the corresponding $\infty$-categories of sheaves.

\begin{theorem}[See \cite{aoki2025schwartzcoidempotentscontinuousspectrum}, 
Theorem 4.46.] \label{verdierduality}
Let $X$ be a stably compact space and let $\mathcal{C}$ be a presentable stable $\infty$-category. Then there is a canonical equivalence
$$ \mathrm{VD}: \Sh(X,\mathcal{C}) \rightarrow \Cosh(X^\vee,\mathcal{C})$$
pointwise given by
$$\mathrm{VD}(\mathcal{F})(X \setminus K) = \colim_{ U \supset K ~\mathrm{ open} } \mathrm{fib}( \mathcal{F}(X) \rightarrow \mathcal{F}(U) )$$
where $K$ is a compact saturated subset of $X$.
\end{theorem}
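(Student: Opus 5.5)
Since $\Sh(X,\mathcal{C}) \simeq \Sh(X,\Sp)\otimes\mathcal{C}$ and $\Cosh(X^\vee,\mathcal{C}) = \Fun^L(\Sh(X^\vee,\Sp),\mathcal{C}) \simeq \Sh(X^\vee,\Sp)^\vee \otimes \mathcal{C}$ whenever $\Sh(X^\vee,\Sp)$ is dualizable (Corollary \ref{sheavesonstablylocallycompact}), I first reduce to $\mathcal{C}=\Sp$. Concretely, I construct a canonical equivalence $\Sh(X,\Sp)\simeq \Sh(X^\vee,\Sp)^\vee$ and then tensor with $\mathcal{C}$. For a general presentable stable $\mathcal{C}$, the identification $\Fun^L(\mathcal{D},\mathcal{C})\simeq \mathcal{D}^\vee\otimes\mathcal{C}$ holds for dualizable $\mathcal{D}$, so this suffices. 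Equivalently, I produce a perfect pairing
$$\Sh(X,\Sp)\otimes \Sh(X^\vee,\Sp)\to \Sp,$$
namely a colimit-preserving functor exhibiting the duality with an explicit coevaluation.

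To build the pairing, I use the product $X\times X^\vee$, which is stably compact, together with the ``graph of the identity'' subset $\Delta\subset X\times X^\vee$. This subset is the intersection of closed-in-$X$ and cocompact data, so it is a perfect sublocale, and $\Delta^{\patch}$ recovers $X^{\patch}$. The evaluation is then $\Gamma_c$ of the restriction to $\Delta$: writing $\Sh(X)\otimes\Sh(X^\vee)\simeq\Sh(X\times X^\vee)$ (valid by local compactness), I set $\mathrm{ev}(\mathcal{F}\boxtimes\mathcal{G}) = \Gamma(\Delta;(\mathcal{F}\boxtimes\mathcal{G})|_\Delta)$, twisted by the fiber over the ``diagonal'' in the appropriate sense. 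The coevaluation is the corresponding pushforward of the unit along $\Delta\hookrightarrow X^\vee\times X$. Checking the triangle identities reduces, by the usual base-change and projection arguments, to computing the composite as an integral transform on $X\times X^\vee\times X$ and verifying that it is the identity kernel.

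Before building the pairing, I will read off the formula in the statement as a consistency check. Pairing $\mathcal{F}$ against the generators $j_!\Sp_{X\setminus K}$ of $\Sh(X^\vee)$, where $X\setminus K$ is open in $X^\vee$, gives $\mathrm{fib}(\mathcal{F}(X)\to \mathcal{F}(K))$. Here $\mathcal{F}(K) = \colim_{U\supset K}\mathcal{F}(U)$ is continuity of sections along the Scott open filter of $K$, which is what Hofmann–Mislove provides. This confirms that the proposed functor is the one named.

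I expect the main obstacle to be the triangle identities, equivalently full faithfulness and essential surjectivity of $\mathrm{VD}$. My first attempt is to reduce via retracts to the coherent case. Every stably compact $X$ is a retract of the coherent space with frame $\Ind(\mathcal{O}(X))$ through $\hat y\dashv k$. De Groot duality is functorial on perfect maps, so it is compatible with this retraction and sends it to Hochster duality. For a coherent $X$ with lattice $D$, we have $\Sh(X,\Sp)=\Ind(\Sh(X,\Sp)^\omega)$, and the compact objects are generated by $j_!\Sp_U$ for $U\in D$. On $X^\vee$ the corresponding generators are indexed by $D^{\op}$, and the pairing on generators becomes a finite combinatorial computation $\mathrm{fib}(\Sp_U(X)\to \Sp_U(V))$, checked to be perfect using distributivity of $D$. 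Finally, since equivalences are closed under retracts in $\PR^L$ and $\mathrm{VD}$ is natural in perfect maps, the stably compact case follows.
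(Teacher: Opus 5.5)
Your reduction to $\mathcal{C}=\Sp$ is fine, but the proposal never actually constructs the duality. The evaluation is left as ``$\Gamma(\Delta;(\mathcal{F}\boxtimes\mathcal{G})|_\Delta)$, twisted by the fiber over the diagonal in the appropriate sense''. The untwisted version is wrong: $\Delta\subset X\times X^\vee$ with the subspace topology is $X^{\patch}$, so pairing with the constant sheaf on $X^\vee$ gives $\Gamma(X^{\patch};p^*\mathcal{F})$ rather than $\mathcal{F}(X)$. Concretely, take the Sierpinski space $X=\{0,1\}$ with $\{1\}$ open, so $\{0\}$ is open in $X^\vee$. Write $\mathcal{F}=(A\to B)$ for its values on $X$ and $\{1\}$, and $\mathcal{G}=(A'\to C)$ for its values on $X^\vee$ and $\{0\}$. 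The diagonal gives $(A\otimes C)\oplus(B\otimes A')$. The stated formula, extended by colimits, instead forces $\langle\mathcal{F},\mathcal{G}\rangle=(A\otimes C)\times_{B\otimes C}(B\otimes A')$. These are sections over the graph of the specialization order, which contains the off-diagonal open point $(1,0)$ of $X\times X^\vee$. So your ``consistency check'' reads the formula off a pairing you have not specified, and the triangle identities you defer are not even formulated.

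The fallback retract argument also breaks. In $X\xrightarrow{s}Y\xrightarrow{r}X$ with $\mathcal{O}(Y)=\Ind(\mathcal{O}(X))$, only $r$ (inverse image $\hat{y}$) is perfect. The section $s$ has inverse image $k$, which does not preserve $\ll$: $y(U)\ll y(U)$ in $\Ind(\mathcal{O}(X))$, but $k(y(U))=U$ need not be compact. Hence $s$ has no de Groot dual, and the naturality of $\mathrm{VD}$ (Proposition \ref{naturality}, which is only for perfect maps) is unavailable for it. So $\mathrm{VD}_X$ is not exhibited as a retract of $\mathrm{VD}_Y$ in the arrow category.

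The coherent case is not a finite computation either. $\Ind(\mathcal{O}(X))$ typically has infinitely many compact opens, and the mapping spectra between the generators $j_!\Sp_U$ are sheaf cohomology of coherent spaces. Even after reducing to finite posets, you must still produce the non-obvious pairing exhibited above.

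The paper sidesteps all of this. By Theorem \ref{sheavesandksheaves}, $\Sh(X,\mathcal{C})\simeq\Sh_{\mathcal{K}}(X,\mathcal{C})$. For stable $\mathcal{C}$, the assignment $X\setminus K\mapsto\mathrm{fib}(\mathcal{F}(X)\to\mathcal{F}_{\mathcal{K}}(K))$ turns the $\mathcal{K}$-sheaf axioms (pullbacks along $K\cup K'$, colimits along filtered intersections) into the cosheaf axioms on the opens of $X^\vee$, and it is invertible by stability. The stated formula is then just the definition. That $\mathcal{K}$-sheaf description, which is where stable compactness actually enters, is the ingredient your route is missing.
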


There is a related version of Theorem \ref{verdierduality}, which works without the assumption of stability of $\mathcal{C}$, or the requirement that $X$ is compact, which is known in the Hausdorff case under the statement ``Sheaves = $\mathcal{K}$-sheaves'', see \cite[Section 7.3.4]{lurieha}.

\begin{definition}
Let $X$ be a stably locally compact space. A $\mathcal{K}$-\emph{sheaf} on $X$ with values in a presentable $\infty$-category $\mathcal{C}$ is a functor $ \mathcal{F} : Q(X)^{\op} \rightarrow \mathcal{C}$ such that
\begin{itemize}
\item $\mathcal{F}( \emptyset ) = 1$
\item For each $C, C' \in Q(X)$ the square
\[\begin{tikzcd}
	{\mathcal{F}( C \cup C' )} & {\mathcal{F}(C')} \\
	{\mathcal{F}(C)} & {\mathcal{F}(C \cap C')}
	\arrow[from=1-1, to=1-2]
	\arrow[from=1-1, to=2-1]
	\arrow[from=1-2, to=2-2]
	\arrow[from=2-1, to=2-2]
\end{tikzcd}\]
is a pullback square.
\item Whenever $C = \bigcap_{i \in I} C_i$ is a filtered intersection of saturated compact sets, the natural map
$$ \colim_{i \in I} \mathcal{F}(C_i) \xrightarrow{\sim} \mathcal{F}(C) $$
is an equivalence.
\end{itemize}
We define $\Sh_\mathcal{K}( X ,\mathcal{C} )$ as the full subcategory of $\mathrm{Fun}( Q(X)^{\op}, \mathcal{C} )$ spanned by $\mathcal{K}$-sheaves.
\end{definition}

\begin{theorem}[\cite{aoki2025schwartzcoidempotentscontinuousspectrum}, Proposition 4.36]  \label{sheavesandksheaves} Let $X$ be a stably compact space, and let $\mathcal{C}$ be a presentable $\infty$-category such that filtered colimits are exact. Then there is an equivalence of $\infty$-categories
$$ (-)_{\mathcal{K}} : \Sh( X ,\mathcal{C} ) \xrightarrow{\sim}  \Sh_\mathcal{K}( X ,\mathcal{C} ) $$
that is obtained on objects by $\mathcal{G}_{\mathcal{K}}( K ) = \colim_{ U \supset K ~\mathrm{ open} } \mathcal{G}(U)$. 
\end{theorem}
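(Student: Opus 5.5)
The plan is to realize $(-)_{\mathcal{K}}$ as the composite of a restriction functor and a left Kan extension, and to find its inverse by a formula going the other way. In the other direction, to a $\mathcal{K}$-sheaf $\mathcal{H}$ we attach the presheaf on opens
$$\mathcal{H}^{\mathcal{O}}(U) = \lim_{K \subset U,\ K \in Q(X)} \mathcal{H}(K),$$
and we show that the two constructions are mutually inverse. The key input throughout is the interpolation property of stably compact spaces. If $K \subset U$ with $K$ saturated compact and $U$ open, there exist an open $V$ and a saturated compact $K'$ with $K \subset V \subset K' \subset U$. This follows from local compactness via Hofmann--Mislove: the filter of $K$ is Scott open, so it contains some $V \ll U$, and $V \ll U$ yields a compact saturated $K'$ between them. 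Consequently the posets $\{U \supset K\}$ and $\{K' \supset K\}$ (the latter meaning neighbourhoods in the interior sense) are mutually cofinal. In a stably compact space, intersections of saturated compacts are again compact, so $Q(X)$ is closed under finite unions and intersections and under filtered intersections.

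\textbf{Step 1: $\mathcal{G}_{\mathcal{K}}$ is a $\mathcal{K}$-sheaf.} The empty condition is immediate, since $\emptyset$ is a neighbourhood of $\emptyset$. For continuity along a filtered intersection $K = \bigcap K_i$, any open $U \supset K$ already contains some $K_i$ by compactness, so the colimits agree by cofinality. For the pullback square with $K, K'$, write
$$\mathcal{G}_{\mathcal{K}}(K \cup K') = \colim_{(U,U')} \mathcal{G}(U \cup U'),$$
where $U \supset K$ and $U' \supset K'$. The pairs $U \cap U'$ are cofinal among the neighbourhoods of $K \cap K'$; this uses stability, because a neighbourhood $W \supset K \cap K'$ yields $U \supset K$ and $U' \supset K'$ with $U \cap U' \subset W$, by the compactness argument combined with the interpolation property. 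Then the Mayer--Vietoris square for $\mathcal{G}$ and exactness of filtered colimits in $\mathcal{C}$, which lets them commute with pullbacks, give the claim.

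\textbf{Step 2: $\mathcal{H}^{\mathcal{O}}$ is a sheaf.} The empty case and binary covers are handled as in Step 1. Saturated compacts inside $U \cup V$ are covered by the cofinal system of unions $K \cup L$ with $K \subset U$ and $L \subset V$; this uses the fact that a compact subset of $U \cup V$ splits. The $\mathcal{K}$-sheaf square then gives the pullback after passing to limits. For directed unions $U = \bigcup U_i$, each compact $K \subset U$ lies in some $U_i$, so the limit over $\{K \subset U\}$ is the limit of the limits over $\{K \subset U_i\}$.

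\textbf{Step 3: the two constructions are mutually inverse.} We need the natural equivalences
$$\colim_{U \supset K} \lim_{K' \subset U} \mathcal{H}(K') \simeq \mathcal{H}(K) \quad\text{and}\quad \lim_{K \subset U} \colim_{V \supset K} \mathcal{G}(V) \simeq \mathcal{G}(U).$$
By interpolation, each colimit–limit expression can be rewritten as a colimit along a filtered system of neighbourhoods with compact closure-like intermediate sets. The first equivalence then reduces to continuity of $\mathcal{H}$ applied to $K = \bigcap K'$, taken over the neighbourhoods $K'$ of $K$; this intersection identity holds for saturated compacts in a stably compact space. For the second, the system is cofinally equivalent to the limit over $V \ll U$ of $\mathcal{G}(V)$, which is $\mathcal{G}(U)$ by the sheaf condition, since $U$ is the directed union of the $V \ll U$.

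The main obstacle is Step 3, and in particular the swapping of the colimit and the limit. Exactness of filtered colimits handles only finite limits, whereas here the limits are infinite. I would first try to show directly, by a zig-zag comparison, that both composites are equivalent to the identity after restricting to the cofinal interleaved chains $K \subset V \subset K' \subset U$, avoiding any commutation of colimits with infinite limits. Failing that, I would reduce to the case of $\mathcal{C} = \An$, where the desired statement is a property of the adjoint functors, and tensor with $\mathcal{C}$, checking that the extra hypothesis on $\mathcal{C}$ is exactly what is needed for Step 1.
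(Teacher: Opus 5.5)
The paper gives no argument of its own here; it imports the statement from Aoki. The prototype is Lurie's treatment of locally compact Hausdorff spaces in \cite[Section 7.3.4]{luriehtt}. Your proposal is a self-contained adaptation of that Kan-extension argument to stably compact spaces, and it is correct.

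Steps 1 and 2 work as you describe. The inputs are interpolation, well-filteredness of sober spaces, and closure of $Q(X)$ under binary intersections. The last is needed for $\mathcal{G}_{\mathcal{K}}(K\cap K')$ in Step 1, and in Step 2 for the term $\lim_{(K,L)}\mathcal{H}(K\cap L)\simeq\mathcal{H}^{\mathcal{O}}(U\cap V)$. There, for each $N\subset U\cap V$, the pairs $(K,L)$ with $K\cap L\supset N$ form a nonempty set (it contains $(N,N)$) that is directed under unions. The hypothesis on $\mathcal{C}$ is used exactly once: for the Mayer--Vietoris square in Step 1, where the colimit over pairs $(U,U')$ is filtered.

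The one thing to correct is your diagnosis of Step 3. There is no colimit--limit interchange to perform, and the zig-zag comparison you list first is the whole proof.

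\emph{First identity.} Take the opens $U\supset K$ together with the compact saturated neighbourhoods $K'$ of $K$, ordered by reverse inclusion. Use the functor given by $\mathcal{H}$ on the $K'$ and by the right Kan extension $\mathcal{H}^{\mathcal{O}}$ on the $U$. By interpolation and closure of $Q(X)$ under finite unions and intersections, for each element the elements of the other kind contained in it form a nonempty directed set. So both kinds are cofinal, and
\[
\colim_{U\supset K}\mathcal{H}^{\mathcal{O}}(U)\simeq\colim_{K'}\mathcal{H}(K')\simeq\mathcal{H}(K).
\]
The last step is the continuity axiom, since the $K'$ form a filtered family with intersection $K$.

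\emph{Second identity.} Dually, take the compacts $K\subset U$ together with the opens $V$ satisfying $V\subset K\subset U$ for some $K$, with the functor now the left Kan extension of $\mathcal{G}$. Both kinds are coinitial in their union, so $\lim_{K\subset U}\mathcal{G}_{\mathcal{K}}(K)\simeq\lim_V\mathcal{G}(V)\simeq\mathcal{G}(U)$ by the directed-union sheaf axiom.

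Set everything up on the single preordered set of all opens and all compact saturated subsets, with $\mathcal{G}$ left Kan extended and $\mathcal{H}$ right Kan extended. Then these maps are the canonical comparison maps, hence natural. That is what upgrades your objectwise identifications to an equivalence of $\infty$-categories.

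Discard the fallback through $\An$. The $\mathcal{K}$-sheaf axioms involve filtered colimits, so they are not a limit condition, and $\Sh_{\mathcal{K}}(X,\mathcal{C})$ is not formally $\Sh_{\mathcal{K}}(X,\An)\otimes\mathcal{C}$. That route would not be easier.
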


\begin{remark}
Theorem \ref{sheavesandksheaves} remains true even if $X$ is only stably locally compact, with essentially the same proof as given by Aoki. However, we will not need this greater generality here.
\end{remark}

\begin{observation} \label{valuesofksheaf} 
Let $X$ be stably compact, and $\mathcal{G}$ a sheaf on $X$.
\begin{itemize} 
\item If $x \in X$ is a saturated point, then the value of $\mathcal{G}_{\mathcal{K}}$ at the saturated compact set $\{x\}$ agrees by definition with the stalk $\mathcal{G}_x$. Note that if $X$ is Hausdorff, any point is saturated.
\item The value $\mathcal{G}_{\mathcal{K}}(X) = \mathcal{G}(X) = \Gamma(X; \mathcal{G})$ is just the value of the global sections of $\mathcal{G}$.
\end{itemize}
\end{observation}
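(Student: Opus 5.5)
Both claims come straight from the formula $\mathcal{G}_{\mathcal{K}}(K) = \colim_{U \supset K \text{ open}} \mathcal{G}(U)$ of Theorem \ref{sheavesandksheaves}. For each one I would check that the relevant subset lies in $Q(X)$, identify the indexing poset of the colimit, and compare with the standard definition of the stalk, respectively of the global sections.

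\emph{First claim.} Saying that $x$ is a saturated point means that $\{x\}$ is an intersection of open sets. A singleton is trivially compact, so $\{x\} \in Q(X)$ and $\mathcal{G}_{\mathcal{K}}$ may be evaluated there. The opens containing the set $\{x\}$ are exactly the open neighbourhoods of $x$, so the indexing poset is the neighbourhood filter $\mathcal{U}_x$, ordered by reverse inclusion. This is the usual filtered colimit $\colim_{x \in U} \mathcal{G}(U)$ defining $\mathcal{G}_x$, i.e. the stalk $x^*\mathcal{G}$ for the point $x : \mathrm{pt} \to X$.

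For the Hausdorff remark, I would note that in a $\mathrm{T}_1$ space every singleton is closed. Hence $\{x\} = \bigcap_{y \neq x} (X \setminus \{y\})$ is an intersection of opens, so every point is saturated.

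\emph{Second claim.} $X$ is compact because it is stably compact, and it is saturated because it is itself open; so $X \in Q(X)$. The only open set containing $X$ is $X$, so the indexing poset is the single element $\{X\}$ and the colimit is $\mathcal{G}(X)$. It remains to identify $\mathcal{G}(X)$ with $\Gamma(X;\mathcal{G})$.
\begin{itemize}
\item For $\mathcal{C} = \An$: $\Gamma(X;-)$ is right adjoint to $X^*$, and $X^*$ sends an object to the sheafified constant presheaf. Since $X$ is the top element of $\mathcal{O}(X)$, the representable $y_X$ is the terminal sheaf. So $\Gamma(X;\mathcal{G}) = \Map(1, \mathcal{G}) = \mathcal{G}(X)$ by Yoneda.
\item For general $\mathcal{C}$: I would pass through $\Sh(X,\An) \otimes \mathcal{C} = \Fun^R(\Sh(X,\An)^{\op},\mathcal{C})$. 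There $\Gamma(X;-)$ is evaluation at the terminal object, which again is the value at $X$.
\end{itemize}

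I expect no real obstacle. The only points needing care are that singletons and $X$ genuinely lie in $Q(X)$, and the routine identification of $\Gamma$ with evaluation at the top open for $\mathcal{C}$-valued sheaves.
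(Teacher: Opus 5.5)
Your proposal is correct and is the same unwinding the paper has in mind: both points are read off directly from the formula $\mathcal{G}_{\mathcal{K}}(K) = \colim_{U \supset K} \mathcal{G}(U)$ of Theorem \ref{sheavesandksheaves}. For $K=\{x\}$ the indexing poset is the neighbourhood filter of $x$, which gives the stalk; for $K=X$ it is the one-element poset $\{X\}$, which gives $\mathcal{G}(X) = \Gamma(X;\mathcal{G})$. Your extra checks are the details the paper leaves implicit: that $\{x\}$ and $X$ lie in $Q(X)$, and that $\Gamma(X;-)$ is evaluation at the terminal object $y_X$.
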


\begin{remark}
Theorem \ref{verdierduality} is in fact directly obtained from Theorem \ref{sheavesandksheaves}, by noting that if $\mathcal{F}$ is a $\mathcal{K}$-sheaf on $X$ and the target $\mathcal{C}$ is stable, the fiber of $ \mathcal{F}(X)^{\mathrm{const}} \rightarrow \mathcal{F}$ is on the nose a cosheaf on $X^\vee$. The process of taking these fibers is an equivalence, again because $\mathcal{C}$ is assumed to be stable. Here, $\mathcal{F}(X)^{\mathrm{const}}$ refers to the constant functor $Q(X)^{\op} \rightarrow \mathcal{C}$ with value $\mathcal{F}(X)$.
\end{remark}

\begin{remark} \label{deGrootdualitysubtelty}
Plugging in $\mathcal{C} = [1]$ in Theorem \ref{sheavesandksheaves} recovers the statement that $ X = (X^\vee)^\vee$ on the level of frames for $X$ stably compact. For stably locally compact spaces this gives a generalization of de Groot duality. Note that in the absence of compactness $Q(X)^{\op}$ is not a frame, as it is lacking a bottom element, however, that is the only defect.
\end{remark}

We will need a slight strengthening of Theorems \ref{verdierduality} and Theorems \ref{sheavesandksheaves}, namely that the equivalences mentioned in both theorems are in fact natural in perfect maps. We first remark that de Groot duality implies the following lemma, by dualizing the definition of a compact subset.

\begin{lemma} \label{reversecompactness}
Let $X$ be stably compact, and $K = \bigcap_{i \in I} K_i$ a filtered intersection of saturated compact sets $K_i$. If $U$ is an open subset of $X$ such that $K \subset U$, then there exists $i \in I$ such that $K_i \subset U$.
\end{lemma}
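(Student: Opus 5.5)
Under de Groot duality, the complements $X \setminus K_i$ are open sets of $X^\vee$ and form a directed family. Their union is $X \setminus K$, which is also open in $X^\vee$. The complement $C = X \setminus U$ of $U$ is closed in $X$, and I will show that $C$ is compact saturated in $X^\vee$. With that, the condition $K \subset U$ reads $C \subset \bigcup_i (X\setminus K_i)$. This is a directed open cover of the compact set $C$ in $X^\vee$, so some single $X \setminus K_i$ already contains $C$, i.e.\ $K_i \subset U$.

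The order of steps is as follows. First, by the de Groot duality theorem, $X^\vee$ is stably compact and $(X^\vee)^\vee = X$. The open sets of $(X^\vee)^\vee$ are by definition the complements of saturated compact subsets of $X^\vee$. Since $(X^\vee)^\vee = X$ as topological spaces, every open $U$ of $X$ has the form $X \setminus C$ with $C$ saturated compact in $X^\vee$, and this $C$ is exactly $X\setminus U$. Second, I check that $\{X\setminus K_i\}_{i\in I}$ is directed: filteredness of $\{K_i\}$ means any two $K_i, K_j$ contain some $K_k$, hence $X\setminus K_k$ contains both complements. Third, compactness of $C$ in $X^\vee$ against the directed cover finishes the argument. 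If one prefers the frame-theoretic formulation, the same argument is that the Scott open filter of $X^\vee$-opens containing $C$ contains the directed join $\bigvee_i (X\setminus K_i)$, hence contains some member.

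An alternative route avoids the identification $(X^\vee)^\vee = X$ and argues directly. Suppose no $K_i$ is contained in $U$. Then $K_i \cap C \neq \emptyset$ for every $i$, and the sets $K_i \cap C$ form a filtered family. In a stably compact space the intersection of a compact saturated set with a closed set is compact in the patch sense. The $K_i\cap C$ are then nonempty and patch-closed in the compact patch topology, so their intersection $K \cap C$ is nonempty, contradicting $K\subset U$. This route needs patch-compactness, which appears only later in the paper, so I will use the first route.

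The main obstacle is the bookkeeping in the first step. It requires reading "$(X^\vee)^\vee = X$" literally as an equality of topologies, so that closed sets of $X$ are precisely the saturated compact sets of $X^\vee$. The cited theorem provides this, so the step is purely a matter of unwinding definitions. Everything else is routine.
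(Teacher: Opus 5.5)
Your first route is correct and is essentially the paper's argument: the paper says the lemma follows from de Groot duality ``by dualizing the definition of a compact subset''. Concretely, $X\setminus U$ is saturated compact in $X^\vee$ because $(X^\vee)^\vee = X$, and the sets $X\setminus K_i$ form a directed cover of it by opens of $X^\vee$, so one of them already contains it, exactly as you set it up (your patch-compactness alternative is also valid).
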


If $f : X \rightarrow Y$ is perfect, we see that for $K \subset Y$ a saturated compact subset, $f^{-1}(K) \subset U \subset X$ is again saturated compact. Hence we get a functor
$$ f^{-1} : Q(Y) \rightarrow Q(X)$$
preserving arbitrary intersections and finite unions.

\begin{lemma} \label{inverseopencompact}
Let $f : X \rightarrow Y$ be a perfect map, and $C \subset Y$ a saturated compact subset. Then for all open $U \supset f^{-1}(C)$ in $X$ there exists an open $V \subset C$ in $Y$ such that $U \supset f^{-1}(V) \supset f^{-1}(C)$.
\end{lemma}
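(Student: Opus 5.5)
Throughout I read the conclusion as asking for an open $V \supset C$ in $Y$ (the ``$V \subset C$'' in the statement should be $V \supset C$), with $X$ and $Y$ stably compact as in the surrounding discussion. The idea is to replace the open neighbourhoods of $C$ by saturated compact ones, pull these back along $f$, and then apply Lemma \ref{reversecompactness} in $X$.

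First I will show that $C$ is a filtered intersection of saturated compact sets, each of which is a neighbourhood of $C$. Let $V \supset C$ be open in $Y$. By local compactness, $V = \bigvee_{W \ll V} W$, and this join is directed. The opens containing $C$ form a Scott open filter (Hofmann--Mislove), so there is some $W \ll V$ with $C \subset W$.

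Next, the set $\{ W' \in \mathcal{O}(Y) ~|~ W \ll W' \}$ is upward closed and Scott open by interpolation of $\ll$. By the stability condition ($W \ll V_1, V_2 \Rightarrow W \ll V_1 \cap V_2$) it is also a filter. Hofmann--Mislove then gives a saturated compact $K$ whose open neighbourhoods are exactly the $W'$ with $W \ll W'$. Since every open $W'' \ll W$ lies in this filter, $K$ contains $\bigcup_{W'' \ll W} W'' = W$, and since $W \ll V$ we have $K \subset V$. So we obtain $C \subset W \subset K \subset V$ with $W$ open and $K$ saturated compact.

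Let $\mathcal{P}$ be the collection of such pairs $(W,K)$ with $C \subset W \subset K$. Because $C$ is saturated, it is the intersection of its open neighbourhoods, hence $C = \bigcap_{(W,K)\in\mathcal{P}} K$. The collection $\mathcal{P}$ is filtered: given two pairs, $(W \cap W', K \cap K')$ is again one. Here I use that in a stably compact space the intersection of two saturated compact sets is again saturated compact, which is the finite-intersection closure of Scott open filters.

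Now I pull back along $f$. Since $f$ is perfect, each $f^{-1}(K)$ is saturated compact, as noted just before the lemma. Moreover $f^{-1}(C) = \bigcap_{\mathcal{P}} f^{-1}(K)$, and this is still a filtered intersection. Lemma \ref{reversecompactness}, applied in $X$ to the open $U \supset f^{-1}(C)$, yields a pair $(W,K) \in \mathcal{P}$ with $f^{-1}(K) \subset U$. Setting $V := W$ gives
$$ f^{-1}(C) \subset f^{-1}(V) \subset f^{-1}(K) \subset U, $$
as required.

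The only step with real content is the interpolation $C \subset W \subset K \subset V$. Its main point is producing $K$ from $W \ll V$, which is exactly where stability is used. The rest is bookkeeping with Hofmann--Mislove and Lemma \ref{reversecompactness}.
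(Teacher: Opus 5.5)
Your proof is correct and is essentially the paper's argument: write $C$ as a filtered intersection of saturated compact neighbourhoods $K$ with $C \subset W \subset K$ for some open $W$, pull back along $f$, and apply Lemma \ref{reversecompactness} in $X$. The paper takes this decomposition of $C$ for granted, and you justify it via Hofmann--Mislove and stability. One small slip: $W'' \ll W$ does not put $W''$ into the filter $\{W' \mid W \ll W'\}$. You still get $W \subset K$ for free, because every $W'$ with $W \ll W'$ already contains $W$.
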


\begin{proof}
Write $C = \bigcap_{C \ll C'} C'$, for saturated compact $C'$, where $C \ll C'$ means that there exists an open $V$ such that $C \subset V \subset C'$. Applying $f^{-1}$ we get $f^{-1}(C) = \bigcap_{C \ll C'} f^{-1}(C')$. If $U \supset f^{-1}(C)$ by Lemma \ref{reversecompactness} there thus exists $C \subset V \subset C'$ with $V$ open, $C'$ saturated compact, such that $U \supset f^{-1}(C') \supset f^{-1}(V) \supset f^{-1}(C)$.
\end{proof}

Using precomposition, we see that we have an induced functor
$$ f_+ : \Sh_\mathcal{K}( X ,\mathcal{C} ) \rightarrow  \Sh_\mathcal{K}( Y ,\mathcal{C} ), $$
given by $f_*(\mathcal{G})(K) = \mathcal{G}(f^{-1}(K))$.

\begin{proposition} \label{naturality}
Let $f : X \rightarrow Y$ be a perfect map between stably compact spaces. The equivalence provided in Theorem \ref{sheavesandksheaves} is natural in $f$, in the sense that the square
\[\begin{tikzcd}
	{\Sh( X ,\mathcal{C} )} & {\Sh_\mathcal{K}( X ,\mathcal{C} )} \\
	{\Sh( Y ,\mathcal{C} )} & {\Sh_\mathcal{K}( Y ,\mathcal{C} )}
	\arrow["\sim", from=1-1, to=1-2]
	\arrow["{f_*}"', from=1-1, to=2-1]
	\arrow["{f_+}", from=1-2, to=2-2]
	\arrow["\sim", from=2-1, to=2-2]
\end{tikzcd}\]
is commutative.
\end{proposition}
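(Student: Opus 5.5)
Both composites in the square can be written down objectwise, so the plan is to compute each and reduce their agreement to a cofinality statement between two index posets of opens. Fix $\mathcal{G} \in \Sh(X,\mathcal{C})$ and a saturated compact $K \subset Y$. Going first right and then down gives
$$ f_+(\mathcal{G}_{\mathcal{K}})(K) = \mathcal{G}_{\mathcal{K}}(f^{-1}(K)) = \colim_{U \supset f^{-1}(K),\, U \subset X \text{ open}} \mathcal{G}(U). $$
Going first down and then right gives
$$ (f_*\mathcal{G})_{\mathcal{K}}(K) = \colim_{V \supset K,\, V \subset Y \text{ open}} \mathcal{G}(f^{-1}(V)). $$
For every open $V \supset K$ we have $f^{-1}(V) \supset f^{-1}(K)$, so the assignment $V \mapsto f^{-1}(V)$ is a map of posets $\phi$ from $\{V \supset K\}^{\op}$ to $\{U \supset f^{-1}(K)\}^{\op}$. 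Restriction along $\phi$ produces a canonical comparison map from the second colimit to the first. This map is natural in $K$, which is contravariant in $Q(Y)$, and natural in $\mathcal{G}$. It therefore assembles into a natural transformation $(f_*(-))_{\mathcal{K}} \Rightarrow f_+((-)_{\mathcal{K}})$, and the task is to show that it is an equivalence.

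It suffices to show that $\phi$ is cofinal. Both index posets are filtered, since they are closed under finite intersections, and $\phi$ preserves these intersections. So it is enough to prove the following: for every open $U \supset f^{-1}(K)$ there exists an open $V \supset K$ with $f^{-1}(V) \subset U$. This is exactly Lemma \ref{inverseopencompact}. Read with the evident typo corrected (``$V \supset C$''), it states that for $C=K$ and any $U \supset f^{-1}(C)$ there is an open $V \supset C$ with $U \supset f^{-1}(V) \supset f^{-1}(C)$. With cofinality established, the comparison map is an equivalence for every $K$, so the square commutes up to this canonical natural equivalence.

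There is one point of well-definedness to settle first: $f_+$ must land in $\mathcal{K}$-sheaves. This holds because $f^{-1} : Q(Y) \to Q(X)$ preserves finite unions, arbitrary (in particular filtered) intersections and the empty set, so precomposition preserves all three $\mathcal{K}$-sheaf axioms. The same computation also shows that the pointwise formula for $f_+$ is compatible with the identification of Theorem \ref{sheavesandksheaves}.

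The main obstacle is already handled by Lemma \ref{inverseopencompact}, whose proof uses de Groot duality through Lemma \ref{reversecompactness}. In this proof the only care needed is in the bookkeeping that the comparison maps are coherently natural, i.e.\ that the colimit formula is functorial in the $\infty$-categorical sense. I would handle this by writing $(-)_{\mathcal{K}}$ as restriction followed by left Kan extension along the inclusion of $Q(X)^{\op}$ into the poset of all opens and saturated compacts, and then using that Kan extensions compose with pullback along $f^{-1}$ via the Beck–Chevalley map. That Beck–Chevalley map is invertible by the cofinality shown above.
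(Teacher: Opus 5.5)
Your proposal is correct and follows the paper's proof. Both reduce commutativity of the square to showing that the comparison map $\colim_{V \supset K} \mathcal{G}(f^{-1}(V)) \to \colim_{U \supset f^{-1}(K)} \mathcal{G}(U)$ is an equivalence, which holds by cofinality from Lemma \ref{inverseopencompact} (reading its ``$V \subset C$'' as ``$V \supset C$'', as you do). Your added checks that $f_+$ lands in $\mathcal{K}$-sheaves, and that the naturality is coherent via a Beck--Chevalley map for left Kan extensions, are details the paper leaves implicit under ``tracing through definitions''.
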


We note that the analogous statement for Verdier duality provided by Theorem \ref{verdierduality} follows immediately as well.

\begin{proof}
Tracing through definitions, we see that this amounts to the question whether for a given sheaf $\mathcal{F}$ on $X$, and $C \subset Y$ compact saturated the natural map
$$ \colim_{ V \supset C ~\mathrm{ open ~in }~Y } \mathcal{F}(f^{-1}(V))    \rightarrow \colim_{ U \supset f^{-1}(C) ~\mathrm{ open ~in }~X } \mathcal{F}(U)  $$
is an equivalence. But we see that the diagram on the left maps cofinally into the diagram on the right by Lemma \ref{inverseopencompact}.
\end{proof}

\begin{remark}
The statement of Proposition \ref{inverseopencompact} is very particular to perfect maps and fails for more general continuous maps. In the context of the six-functor formalism on locally compact Hausdorff spaces, see e.g. \cite{volpe2025operationstopology}, it corresponds to the fact that for proper maps $f$ there is an equivalence $f_! \simeq f_*$.
\end{remark}

\begin{remark} A more top down perspective on Verdier duality is to observe that it is about the statement that the $(\infty,2)$-categories $\mathrm{SC}$ and $\PR^L_{\dual}$ come equipped with dualities $(-)^\vee$, and these dualities commute with the functor that assigns to a stably compact space $X$ its $\infty$-category of sheaves with values in spectra. Both dualities are 2-functors, covariant in 1-morphisms, contravariant in 2-morphisms, and both are involutions. The proof of Verdier duality can be reduced via the theory of lax-idempotent monads to a simple computation for distributive lattices.
\end{remark}

\subsection{The patch topology} \label{patchtopology}

There is a universal way of adding open sets to the topology of a stably locally compact space $X$ to make it Hausdorff, while retaining local compactness. This is called the \emph{patch topology} of $X$. Let us give a short summary of how to work with this topology. The general reference will be the article by Escardó  \cite{ESCARDO200141}, however information can also be found in \cite{Gierz_Hofmann_Keimel_Lawson_Mislove_Scott_2003} and \cite{Goubault-Larrecq_2013}. Since Escardó's work is done purely in the language of locales, whereas older literature uses topological spaces, we will need to discuss quickly how to translate between those pictures.

\begin{definition} Let $L$ be a locale. We call a nucleus $N$ on $L$ \emph{perfect} if it preserves directed suprema.
\end{definition}

\begin{lemma} \label{perfectcharacterization}
Let $S \hookrightarrow X$ be a sublocale of a stably locally compact space $X$. Then the following are equivalent:
\begin{enumerate}
\item $S$ is a perfect sublocale.
\item The induced nucleus $N = i_* i^*$ on $X$ is perfect.
\item $S$ is obtained by a subset $S'$ of $X$, such that the induced subspace topology is stably locally compact, and the embedding $S' \hookrightarrow X$ is a perfect map.
\item The associated quotient map $p : X \amalg X \twoheadrightarrow E $ corresponding to the frame congruence of $S$ is perfect.
\end{enumerate}
In particular, the associated frame congruence $E$ is itself stably locally compact.
\end{lemma}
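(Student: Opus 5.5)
The plan is to show (1)$\Leftrightarrow$(2), then (1)$\Leftrightarrow$(4), and finally (1)$\Leftrightarrow$(3), using throughout that $X$ is stably locally compact, so its frame is spatial (Theorem \ref{spatialityoflocallycompact}).

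For (1)$\Leftrightarrow$(2), recall that $i^*$ is surjective and preserves all joins, and that $i_*$ is injective. Directed joins in $\mathcal{O}(S)$ are computed as $i^*$ of directed joins in $\mathcal{O}(X)$. Hence if $i_*$ preserves directed joins, so does $N = i_* i^*$. Conversely, suppose $N$ preserves directed joins and $V_j$ is a directed family in $\mathcal{O}(S)$. Then
$$i_*\Big(\bigvee V_j\Big) = N\Big(\bigvee i_* V_j\Big) = \bigvee N(i_* V_j) = \bigvee i_* V_j,$$
which is (1).

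For (1)$\Leftrightarrow$(4), use the explicit description of the congruence frame $\mathcal{O}(X \amalg_S X) = \{(U,V) \mid i^*U = i^*V\}$ together with the formula $p_*(U,V) = (U \wedge N(V), V \wedge N(U))$ recorded earlier. Directed joins in the congruence are computed componentwise, because the congruence is a subframe of $\mathcal{O}(X)\times\mathcal{O}(X)$. Binary meets distribute over directed joins, so $p_*$ preserves directed joins as soon as $N$ does. Conversely, evaluating $p_*$ on the elements $(U,U)$ gives $U \wedge N(U) = U$, which is uninformative. Instead, evaluate on pairs $(X, V)$ with $i^*V = 1$: we get $p_*(X,V) = (N(V), V)$, so $N$ appears as a component of $p_*$. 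I would therefore write $N(W) = \mathrm{pr}_1\, p_*(X, W \vee ?)$; finding the correct auxiliary pair is a small check. A cleaner route for (4)$\Rightarrow$(1) is to note that the codiagonal-type section $(i_1)_*(U) = (U, N(U))$ is the composite $p_* \circ (\text{something join-preserving})$. Since $p$ is a perfect quotient, the earlier proposition on partial quotients shows that $E$ is stably locally compact; this gives the final claim once (1)$\Rightarrow$(4) is known, because $X \amalg X$ is stably locally compact by the proposition on coproducts.

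For (1)$\Rightarrow$(3), combine Proposition \ref{perfectsublocales1}, which says $S$ is stably locally compact, hence spatial and sober, with the fact that points of $S$ are points of $X$. This identifies $S$ with the subset $S' = \mathrm{pts}(S) \subset X$. The main obstacle is showing that the locale topology agrees with the subspace topology, i.e.\ that $i^*$ corresponds to $U \mapsto U \cap S'$. The key point is that $i^*$ surjective and $S$ spatial together give that the opens of $S$ are exactly the traces $U \cap S'$: two opens of $S$ with the same points coincide by spatiality. Perfectness of the inclusion map $S' \hookrightarrow X$ is then (1) verbatim.

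For (3)$\Rightarrow$(1), start from a subspace $S'$ whose induced frame $\{U \cap S'\}$ is stably locally compact, so that $S'$ is sober. The map $U \mapsto U\cap S'$ is a surjective frame homomorphism, hence defines a sublocale. Its right adjoint is the pushforward, which preserves directed joins by the assumed perfectness. This completes the cycle of implications.
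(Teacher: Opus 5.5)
Your treatment of (1)$\Leftrightarrow$(2), (1)$\Rightarrow$(4) and the final claim about $E$ matches the paper's argument. Your (1)$\Leftrightarrow$(3) is a more explicit version of the paper's one-line appeal to Corollary~\ref{stablycompactframesandspaces}: identifying the opens of $S$ with the traces $U\cap S'$, via spatiality of $S$ and surjectivity of $i^*$, is exactly what that appeal leaves implicit.

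The one step you leave unfinished is (4)$\Rightarrow$(1). You write ``finding the correct auxiliary pair is a small check'' and ``something join-preserving'' but never supply either. The fix is already in your own computation, because the restriction $i^*V=1$ is superfluous. A nucleus satisfies $N(1)=1$, so
\[ p_*(1,V) = (1\wedge N(V),\, V\wedge N(1)) = (N(V),V) \quad\text{for every } V. \]
Hence $N=\mathrm{pr}_1\circ p_*\circ(1,-)$, and each factor preserves directed joins:
\begin{itemize}
\item $V\mapsto(1,V)$ does so in the product frame;
\item $p_*$ does so by (4);
\item $\mathrm{pr}_1$ preserves all joins on $\mathcal{O}(E)$, since joins in the congruence are computed componentwise.
\end{itemize}
So $N$ is perfect, giving (2) and hence (1). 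Equivalently, in your ``cleaner route'' the missing map is $U\mapsto(U,1)$, the pushforward of the coproduct inclusion $X\to X\amalg X$. Indeed $p_*(U,1)=(U,N(U))=(i_1)_*(U)$, and $N=\mathrm{pr}_2\circ(i_1)_*$.

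The paper argues (4)$\Rightarrow$(1) structurally instead. Once $p$ is a perfect quotient, $E$ is stably locally compact and the maps $i_1,i_2:X\rightrightarrows E$ are perfect. $S$ is their equalizer, so $S\hookrightarrow X$ is perfect; this implicitly uses that limits of stably locally compact spaces along perfect maps are computed in locales. Your direct computation, once completed, is more elementary and avoids that limit-preservation input, at the cost of relying on the explicit formula for $p_*$.

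A minor point on (3)$\Rightarrow$(1): sobriety of $S'$ is part of the definition of a stably locally compact space. It is not a consequence of the frame $\{U\cap S'\}$ being stably locally compact (compare the remark on $\mathbb{R}_\leq$). Your argument never uses sobriety there anyway.
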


\begin{proof} We first remark that if $S$ is a perfect sublocale of a stably locally compact space $X$, it is itself stably locally compact by Proposition \ref{perfectsublocales1}.

The equivalence of (1) and (3) is now immediate from the equivalence of the category of stably locally compact spaces and stably locally compact frames given by Corollary \ref{stablycompactframesandspaces}.

To see the equivalence of (1) and (2) is also straightforward. If $i_* $ preserves filtered suprema, then so does $i_* i^*$, as $i^*$ is left adjoint. Conversely, if the nucleus $N$ of $X$ is given, the associated sublocale $S$ is obtained as the frame of opens $\{  U \in \mathcal{O}(X) ~|~ N(U) = U \} \subset \mathcal{O}(X)$, and $i_*$ is simply given by the inclusion. We only need to show that $N$ preserving directed suprema implies that this subset is closed under directed suprema. Let $U_i, i \in I,$ be a directed set of opens such that $U_i = N(U_i)$, and $U = \bigvee_{i \in I} U_i$. Then
$$N(U) = N( \bigvee_{i \in I} U_i ) = \bigvee_{i \in I} N(U_i) = \bigvee_{i \in I} U_i = U,$$
and hence we are done.

To see how (4) is equivalent to the rest, first observe that if (4) holds, then $S \hookrightarrow L$ is obtained as an equalizer of stably locally compact spaces and perfect maps, hence itself perfect. Conversely, if $S$ is a sublocale, the quotient map $p : X \amalg X \twoheadrightarrow E $ is given by the formula
$$ p_*(U,V) = ( U \wedge N(V), V \wedge N(U) ).$$
Hence if the nucleus $N$ is perfect, this map preserves directed suprema, and hence $p$ is perfect.
\end{proof}

We now come to the central notion of this section. The patch topology associated to a stably locally compact space. Denote by $\mathrm{LCH}_p$ the full subcategory of $\mathrm{SLC}_p$ spanned by locally compact Hausdorff spaces. We will simply write $\mathrm{LCH}$ and $\mathrm{SLC}$ for the corresponding wide subcategories given by only considering globally defined perfect maps. Furthermore, for a given stably locally compact space $X$, let us denote by $\mathcal{K}(X)$ the set of subsets of $X$ obtained via perfect embeddings. Note that $\mathcal{K}(X)$ is isomorphic to the opposite of the poset of perfect nuclei on $X$.

\begin{theorem}[The patch topology, \cite{ESCARDO200141}, Theorem 5.8] \label{patchcoreflection}
The inclusion of the full subcategory of locally compact Hausdorff spaces into stably locally compact spaces has a right adjoint,
\[\begin{tikzcd}
	{\mathrm{LCH}} & {\mathrm{SLC}}
	\arrow[""{name=0, anchor=center, inner sep=0}, curve={height=-6pt}, hook, from=1-1, to=1-2]
	\arrow[""{name=1, anchor=center, inner sep=0}, "\patch", curve={height=-6pt}, two heads, from=1-2, to=1-1]
	\arrow["\dashv"{anchor=center, rotate=-90}, draw=none, from=0, to=1]
\end{tikzcd}\]
given by sending a stably locally compact space $X$ to the stably locally compact frame $\mathcal{K}(X)^{\op}$ of perfect nuclei on $X$.
\end{theorem}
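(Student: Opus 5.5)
Define $P = \mathcal{K}(X)^{\op}$, the poset of perfect nuclei on $X$ with the pointwise order. The plan is to show, in turn, that $P$ is a stably locally compact frame, that the resulting space is Hausdorff, and that it satisfies the universal property of a right adjoint.

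\textbf{$P$ is a frame.} Nuclei on $\mathcal{O}(X)$ form a frame (the opposite of the coframe of sublocales), and perfect nuclei are closed under finite meets, since these are computed pointwise. For joins, the join of a directed family of nuclei $N_i$ is the pointwise directed supremum. When every $N_i$ preserves directed suprema, this pointwise supremum is again idempotent and preserves directed suprema, so it is a perfect nucleus. Binary joins of perfect nuclei are perfect by Lemma \ref{perfectcharacterization}, because perfect sublocales are closed under binary intersection: the intersection is a pullback, and perfect maps into stably locally compact spaces are stable under pullback via the Tychonoff corollary. Hence $P$ is a subframe of the frame of nuclei, and the inclusion preserves finite meets and directed joins.

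\textbf{The generating opens and Hausdorffness.} The frame $P$ contains the open nuclei $U \to (-)$ and the closed nuclei $(-) \vee U$ for every $U \in \mathcal{O}(X)$, and I claim these generate $P$. Each open $U$ of $X$ yields the closed nucleus $(-)\vee U$, and each saturated compact $K$ yields the nucleus of the open complement $X\setminus K$ in the patch. In the spatial picture, the patch topology is generated by the opens $U$ and the cocompact sets $X \setminus K$. Hausdorffness then follows: given $x \ne y$, we may assume $x \not\le y$ in the specialization order, so some open $U$ contains $x$ but not $y$. By local compactness there is a saturated compact $K$ with $x \in \mathrm{int} K \subset K \subset U$, and then $\mathrm{int} K$ and $X \setminus K$ separate $x$ and $y$. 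For local compactness of $P$, the way-below relation is generated by relations $V \wedge (X\setminus K) \ll V' \wedge (X \setminus K')$ obtained from $V \ll V'$ in $X$ and $K' \ll K$ in $Q(X)$. These are compact because closed subsets of compact patches are compact, by Lemma \ref{reversecompactness} together with an Alexander subbasis argument. Stability is then automatic, since a locally compact Hausdorff frame is stably locally compact.

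\textbf{Universal property.} The map $X^{\patch} \to X$ corresponds to the inclusion $\mathcal{O}(X) \to P$, $U \mapsto (U\to -)$, which is perfect because it preserves $\ll$. Given a perfect map $g: Y \to X$ with $Y$ locally compact Hausdorff, I must extend $g^{-1}$ uniquely to $P$. Uniqueness holds because $P$ is generated by the open and cocompact pieces, and these are forced: $g^{-1}(K)$ is compact, hence closed in $Y$, so $g^{-1}(X \setminus K)$ is open. For existence, define the extension on nuclei by $N \mapsto g^{-1}$ of the corresponding patch-open set. Equivalently, a perfect sublocale $S$ is sent to $g^{-1}(S)$, which is compact-closed in $Y$, and the map preserves the frame operations because preimages do. This extension is perfect since it preserves compactness of patch-compact pieces, $g$ being proper.

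The main obstacle is the second step: proving that $P$, defined abstractly as a frame of perfect nuclei, coincides with the frame generated by the opens and cocompacts, which requires understanding the open complements in $P$ concretely. I would rely on Escardó's locale-theoretic argument for this and translate it to spaces using Theorem \ref{spatialityoflocallycompact}.
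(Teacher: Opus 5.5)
The paper does not prove this itself. It cites Escard\'o's Theorem 5.8, and later his Lemma 5.4 that elementary compacts $C\cup K$ generate $\mathcal{K}(X)$ under intersections. Your outline follows Escard\'o's structure, and your Step 1 (perfect nuclei form a subframe) is fine.

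\textbf{The error.} In Steps 2 and 3 you have the role of the open nuclei $U\to(-)$ wrong.
\begin{itemize}
\item They are \emph{not} in $P$ in general, because open sublocales are rarely perfect. Take $X=[0,1]$, $U=(0,1)$ and the directed family $V_n=(1/n,1-1/n)$. Then $U\to V_n=V_n$, so $\bigvee_n(U\to V_n)=U$, whereas $U\to\bigvee_nV_n=[0,1]$.
\item Your claim that open and closed nuclei together generate $P$ would force $P$ to be the whole frame of nuclei. Every nucleus satisfies $j=\bigvee_V\bigl(((-)\vee j(V))\wedge(V\to(-))\bigr)$. That frame is neither spatial nor the patch; for $X=[0,1]$ the patch frame consists of closed nuclei only.
\item The map $U\mapsto(U\to -)$ you propose as the frame map of $X^{\patch}\to X$ is order-reversing, so it is not even monotone.
\end{itemize}

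\textbf{The correct dictionary.} The patch-open $U$ is the complement of the perfect closed set $X\setminus U$. It corresponds to the closed nucleus $c(U)=(-)\vee U$, and $U\mapsto c(U)$ is the counit. The patch-open $X\setminus K$, for $K$ saturated compact, corresponds to the nucleus of the sublocale $K$:
\[
N_K(V)=\bigvee_{U\supseteq K}(U\to V).
\]
This is a directed join of open nuclei, none of which need be perfect. Showing that $N_K$ is nevertheless Scott continuous (hence idempotent, hence a perfect nucleus) is exactly where stability enters, and this step is missing from your proposal. The argument runs as follows.
\begin{itemize}
\item Let $W\ll U\to\bigvee_iV_i$ with $U\supseteq K$.
\item By Scott-openness of the filter of $K$, pick $U'\ll U$ with $U'\supseteq K$.
\item By stability of $\ll$, $W\wedge U'\ll (U\to\bigvee_iV_i)\wedge U\le\bigvee_iV_i$.
\item Hence $W\le U'\to V_i\le N_K(V_i)$ for some $i$.
\end{itemize}

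\textbf{Further gaps.} You also need that $c$ is perfect, i.e.\ that $V\ll V'$ implies $c(V)\ll c(V')$. This amounts to saturated compacts being patch-compact, which requires actually carrying out the Alexander-subbasis argument you only sketch for local compactness. With these corrections your proposal becomes an outline of Escard\'o's proof. Its central step stays deferred: that every perfect subset is an intersection of elementary compacts, so that $P$ is spatial with topology generated by opens and cocompacts. The paper defers this too, but it means your proposal is not an independent proof.
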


\begin{remark}
Since the space $\mathrm{pt}$ is a finite space, every continuous map $\mathrm{pt} \rightarrow X$ is automatically perfect. (See Example \ref{finitespaces}.) It is also Hausdorff, hence we see that
$$ X = \Map( \mathrm{pt}, X ) \cong \Map( \mathrm{pt}, X^{\patch} ), $$
and therefore the underlying set of points of $X^{\patch}$ agrees with that of $X$. The new topology on $X$ is referred to as the \emph{patch topology} of $X$. In light of Lemma \ref{perfectcharacterization} it is also immediate what this topology is. The closed subsets of $X^{\patch}$ are given exactly by the set $\mathcal{K}(X)$ of images of perfect embeddings.
\end{remark}

\begin{remark} The patch topology on a stably compact space gives a compact Hausdorff space. This is seen by observing that the perfect map $X \rightarrow \mathrm{pt}$ is sent to a perfect map $X^{\patch} \rightarrow \mathrm{pt}$.
\end{remark}

\begin{example} In the case of the directed interval $I_{\leq}$, it holds that $I_{\leq}^{\patch} = I$ is the interval equipped with the ordinary metric topology on $I$, see \cite[Example 9.1.33]{Goubault-Larrecq_2013}. More generally, since $patch$ preserves limits, if $S$ is a set we also see that for the directed Hilbert cube $I_{\leq}^S$ it holds that $(I_{\leq}^S)^{\patch} = I^S$. Even more generally, any perfect subspace $K \hookrightarrow I_{\leq}^S$ is obtained as an equalizer of perfect maps between stably compact spaces, and hence $K^{\patch}$ agrees with the induced subspace topology when considering $K$ as a subset of $I^S$.
\end{example}

\begin{example} 
If $X$ is a coherent space, corresponding to a bounded distributive lattice $D$, then $X^{\patch}$ is a profinite set, corresponding to the Boolean algebra $\mathrm{Bool}(D)$ given by Booleanization, \cite[Section 3]{ESCARDO200141}. The patch topology in this case is also referred to as the \emph{constructible topology} on $X$, see \cite[Section 3.2]{lehner2025algebraicktheorycoherentspaces}.
\end{example}

\begin{example}
Let $P$ be a poset which is compactly assembled when considered as an $\infty$-category. This notion is referred to as a \emph{domain} in \cite{Gierz_Hofmann_Keimel_Lawson_Mislove_Scott_2003}, and is also called \emph{continuous poset} by Efimov \cite{efimov2025ktheorylocalizinginvariantslarge}. There are two notable topologies on $P$, one being the Scott topology, where open sets are Scott open upwards closed subsets of $P$, \cite[Chapter II]{Gierz_Hofmann_Keimel_Lawson_Mislove_Scott_2003}, as well as the Lawson topology, which is generated from the Scott topology by adding the lower topology, \cite[Chapter III]{Gierz_Hofmann_Keimel_Lawson_Mislove_Scott_2003}. If $P$ is compact in the Lawson topology, then $P$ equipped with the Scott topology is stably compact, with its de Groot dual given by the lower topology, and its patch topology given by the Lawson topology, \cite[Proposition VI-6.24]{Gierz_Hofmann_Keimel_Lawson_Mislove_Scott_2003}. The condition for a domain $P$ to be compact can be understood in multiple different ways, see \cite[Chapter III-5]{Gierz_Hofmann_Keimel_Lawson_Mislove_Scott_2003}.
\end{example}

While the description of the patch topology in terms of perfect embeddings is elegant, we will need a concrete way on how to build it. Let us highlight two special classes of perfect embeddings for a given stably locally compact space $X$.

\begin{itemize}
\item Any \emph{closed subset} $C \subset X$ is perfect.
\item Any saturated compact subset $S \subset X$ is perfect.
\end{itemize}

\begin{example}
In the case of the directed interval $I_{\leq}$, saturated compact sets are of the form $[0,a]$ for $a \in I$, whereas closed sets are of the form $[b,1]$, for $b \in I$.
\end{example}

\begin{definition}
Let $X$ be a stably locally compact space. A subset $E \subset X$ is called \emph{elementary compact} if it is of the form $C \cup S$ for $C \subset X$ closed, and $S \subset X$ saturated compact. Write $\mathcal{E}(X) \subset \mathcal{K}(X)$ for the set of elementary compact subsets.
\end{definition}

Elementary compact subspaces are closed under finite unions. Moreover, they generate the patch topology in the following sense.

\begin{lemma}[\cite{ESCARDO200141}, Lemma 5.4]
Let $X$ be stably locally compact. The set of elementary compact subspaces $\mathcal{E}(X)$ generates $\mathcal{K}(X)$ under intersections.
\end{lemma}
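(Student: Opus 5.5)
The strategy is a point-separation argument. One inclusion is formal: every elementary compact set $C \cup S$ lies in $\mathcal{K}(X)$, and $\mathcal{K}(X)$ is closed under arbitrary intersections, because it is the set of closed subsets of $X^{\patch}$ (Theorem \ref{patchcoreflection} and the subsequent remark). If one prefers to check finite unions directly: the nucleus of a union of two sublocales is the pointwise meet $N_1 \wedge N_2$, and this preserves directed suprema since binary meets distribute over directed joins in a frame. The content of the lemma is the converse. Given $K \in \mathcal{K}(X)$ and a point $x \notin K$, I will produce $E \in \mathcal{E}(X)$ with $K \subset E$ and $x \notin E$. Then $K$ is the intersection of all elementary compact sets containing it.

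First I translate membership into the frame language. Let $N$ be the perfect nucleus of $K$ (Lemma \ref{perfectcharacterization}). A point $y$, viewed as a frame map $y^* : \mathcal{O}(X) \to [1]$, lies in the sublocale $K$ iff it factors through $\mathcal{O}(X) \to \mathrm{Fix}(N)$. This happens iff $y^*(N(U)) = y^*(U)$ for all opens $U$, that is, iff for every open $U$ we have $y \in N(U) \Rightarrow y \in U$; the converse implication is automatic since $U \le N(U)$. So $x \notin K$ yields an open $U$ with $x \in N(U) \setminus U$.

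Next I use perfectness. Since $X$ is locally compact, $U = \bigvee_{V \ll U} V$ is a directed join, and $N$ preserves directed suprema. Hence $N(U) = \bigvee_{V \ll U} N(V)$, so there is some $V \ll U$ with $x \in N(V)$. For such $V \ll U$ in a sober locally compact space, I pick a saturated compact $Q$ with $V \subset Q \subset U$. This is standard: interpolate $V \ll W \ll U$ and take a Scott open filter, via Hofmann–Mislove. Now set
$$E = (X \setminus N(V)) \cup Q,$$
which is a closed set union a saturated compact set, hence elementary compact. We have $x \notin E$, because $x \in N(V)$ and $x \notin U \supset Q$. To see $K \subset E$, take $y \in K$ with $y \notin X \setminus N(V)$, i.e.\ $y \in N(V)$. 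The point criterion above gives $y \in V \subset Q$.

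The main obstacle is the point-level description of membership in a sublocale together with the interpolation step $V \subset Q \subset U$. The rest is bookkeeping, and the only place perfectness of $K$ is used is the passage from $N(U)$ to some $N(V)$ with $V \ll U$.
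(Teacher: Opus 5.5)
Your proof is correct. The paper gives no argument of its own here. It imports the lemma from Escardó, whose treatment is point-free and phrased in terms of the frame of perfect (Scott-continuous) nuclei.

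In that language the lemma amounts to saying that every perfect nucleus $N$ is a join of nuclei $c_W \wedge k_Q$. Here $c_W = W \vee (-)$ is a closed nucleus, and $k_Q(O') = \bigcup\{O : O \cap Q \subseteq O'\}$ is the nucleus of a saturated compact $Q$. These are exactly the nuclei of the elementary compacts $(X \setminus W) \cup Q$.

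Your argument is the spatial version of this. Your $E$ is the sublocale of $c_{N(V)} \wedge k_Q$, and the inclusion $K \subseteq E$ is the inequality $c_{N(V)} \wedge k_Q \le N$. Your appeal to Scott continuity, $N(U) = \bigcup_{V \ll U} N(V)$, together with $k_Q(U) = X$ for $Q \subseteq U$, is what makes $N$ the join of these nuclei.

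The nucleus formulation works verbatim for stably locally compact locales and never mentions points. Your formulation is more concrete, and the price is identifying the subset $K$ with the points of the sublocale $\mathrm{Fix}(N)$; that is what makes separating points sufficient. The identification holds because perfect sublocales are stably locally compact and therefore spatial (Proposition~\ref{perfectsublocales1} and Theorem~\ref{spatialityoflocallycompact}). In the paper's classical setting it is therefore free.

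One small simplification of your interpolation step: you do not need an intermediate $W$. By stability and interpolation, $\{O : V \ll O\}$ is already a Scott-open filter that contains $U$ and whose members all contain $V$. Hofmann--Mislove then yields $Q$ with $V \subseteq Q \subseteq U$ directly. Stability is exactly what makes this set closed under binary intersections; in a merely locally compact space you would need a descending chain of interpolants instead.
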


In other words, when considering reverse inclusions, elementary compact sets form a basis of the frame $\mathcal{K}(X)^{\op} \cong \mathcal{O}(X^{\patch})$. From the perspective of sheaf theory, this means that---at least in principle---working with sheaves on the patch topology only requires an understanding of the poset $\mathcal{E}(X)$ and its induced Grothendieck topology.

\begin{remark} \label{deGrootdualitypatch}
In the case that $X$ is stably compact it is thus clear that the patch topology applied to the de Groot dual $X^\vee$ also in results in $X^{\patch}$. This is since de Groot duality simply interchanges the roles of closed and saturated compact subsets, result in the same sets of elementary compact subspaces.
\end{remark}

\begin{example}
For $X = I_\leq$ the directed interval, it is straightforward to see that $(I_\leq)^{\patch} = I$ is the standard real interval. Since taking the patch topology preserves limits we therefore see that for any directed Hilbert cube $I_\leq^S$ we have $(I^S_\leq)^{\patch} = I^S$. Using this we see that the stably compact versions of the Urysohn lemma produce classical versions of the Urysohn lemma for compact Hausdorff spaces.
\end{example}

As the last part of this section, observe that the Patch topology is even functorial in partial perfect maps. This is easiest to see on the level of nuclei. If $N$ is a perfect nucleus on $X$, and $f : X \rightarrow Y$ a partial perfect map, then $f_* N f^*$ is a perfect nucleus on $Y$. We claim that the adjunction given in Theorem \ref{patchcoreflection} generalizes to this functoriality as well. One could in principle retread the proof given by Escardó by analyzing what happens to partial frame homomorphisms, however, we will circumvent the trouble by using one-point compactifications. To start, let us recall the notion of the (classical) one-point compactification $X^*$ of a locally compact Hausdorff space, given by taking the topology on $X \amalg \{ * \}$ which consists of opens $U \subset X$ and complements of compact subsets $K \subset X$. The following is a standard fact about locally compact Hausdorff spaces.

\begin{proposition}[\cite{BunkeCStarLectures}, Lemma 5.2]
There exists an equivalence of categories
$$ \mathrm{LCH}_p \simeq \mathrm{CH}_{\mathrm{pt}/}, $$
given on objects by sending a locally compact Hausdorff space $X$ to its one-point compactification.
\end{proposition}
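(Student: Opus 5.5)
We construct the functor $X \mapsto X^*$ on objects and morphisms, check it lands in pointed compact Hausdorff spaces, and then exhibit an inverse given by removing the base point. On objects, for $X$ locally compact Hausdorff, the classical one-point compactification $X^* = X \amalg \{*\}$ is compact Hausdorff. Compactness holds because any open cover contains an open $X^* \setminus K \ni *$ with $K$ compact, and the rest of $K$ is covered finitely. Hausdorffness follows from local compactness: a point $x \in X$ has an open $U$ with compact closure $\overline{U}$, and $X^* \setminus \overline{U}$ separates $x$ from $*$. We point $X^*$ at $*$. In the other direction, a pointed compact Hausdorff space $(Y,y)$ goes to the open subspace $Y \setminus \{y\}$, which is locally compact Hausdorff as an open subspace of a compact Hausdorff space.

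On morphisms, a partial proper map $f : X \to Y$ has open domain $D \subset X$, and $f|_D$ is proper; by the earlier example, partial perfect maps between locally compact Hausdorff spaces are exactly the partial proper maps. We define $f^* : X^* \to Y^*$ by $f^*(x) = f(x)$ for $x \in D$ and $f^*(x) = *$ otherwise. To see continuity:
\begin{itemize}
\item For $V \subset Y$ open, $(f^*)^{-1}(V) = f|_D^{-1}(V)$ is open in $D$, hence in $X^*$.
\item For $K \subset Y$ compact, $(f^*)^{-1}(Y^* \setminus K) = X^* \setminus f|_D^{-1}(K)$. Since $f|_D$ is proper, $f|_D^{-1}(K)$ is compact in $D$, hence compact in $X$, so this set is open in $X^*$.
\end{itemize}
Functoriality is immediate, since composites of partial maps compose domains by preimage.

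Conversely, a pointed continuous map $g : (X^*,*) \to (Y^*,*)$ restricts to the partial map $X \supset g^{-1}(Y) \to Y$. Its domain is open, and it is proper: for $K \subset Y$ compact, $K$ is closed in $Y^*$, so $g^{-1}(K)$ is closed in the compact space $X^*$ and hence compact. It avoids $*$, so it is a compact subset of $g^{-1}(Y)$. It is also straightforward that the two constructions are mutually inverse:
\begin{itemize}
\item For $(Y,y)$ compact Hausdorff, $(Y\setminus\{y\})^* \cong Y$. The identity on points is a continuous bijection from compact to Hausdorff, or one can check directly that neighbourhoods of $y$ are exactly the complements of compact subsets of $Y \setminus \{y\}$.
\item For $X$ locally compact Hausdorff, $X^* \setminus \{*\} = X$.
\item On morphisms, the two assignments are inverse by construction.
\end{itemize}

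The only step that needs care is continuity of the extension at points sent to $*$, including points in $X \setminus D$. The preimage of each neighbourhood $Y^* \setminus K$ is the complement of a compact subset of $D$. That subset is also compact in $X$, and hence closed in $X^*$, so its complement is open and continuity at such points follows.
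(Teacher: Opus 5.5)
Your proposal is correct. Extending a partial proper map by the basepoint and restricting a pointed map to the preimage of $Y$ give inverse bijections on hom-sets, and $(Y\setminus\{y\})^*\cong Y$ gives essential surjectivity. The paper gives no proof of its own and only cites Bunke's Lemma 5.2; your argument is the standard direct verification behind that result.
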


\begin{corollary} \label{locallycompacthausdorffadjunction}
There exists an adjunction
\[\begin{tikzcd}
	{\mathrm{LCH}_p} & {\mathrm{CH}}
	\arrow[""{name=0, anchor=center, inner sep=0}, "{(-)^*}"', curve={height=6pt}, from=1-1, to=1-2]
	\arrow[""{name=1, anchor=center, inner sep=0}, curve={height=6pt}, from=1-2, to=1-1]
	\arrow["\dashv"{anchor=center, rotate=-90}, draw=none, from=1, to=0]
\end{tikzcd}\]
with left adjoint given by the inclusion of compact Hausdorff space and continuous maps into $\mathrm{LCH}_p$, and the right adjoint by one-point compactification.
\end{corollary}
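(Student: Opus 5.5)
The idea is to transport the equivalence $\mathrm{LCH}_p \simeq \mathrm{CH}_{\mathrm{pt}/}$ from the previous proposition along the forgetful functor $\mathrm{CH}_{\mathrm{pt}/} \to \mathrm{CH}$. Concretely, we exhibit $(-)^*$ as the composite of the equivalence $\mathrm{LCH}_p \simeq \mathrm{CH}_{\mathrm{pt}/}$ with the right adjoint of the forgetful functor. For any category $\mathcal{C}$ with a terminal object $\mathrm{pt}$, the forgetful functor $\mathcal{C}_{\mathrm{pt}/} \to \mathcal{C}$ has a left adjoint $X \mapsto X \amalg \mathrm{pt}$, provided coproducts with $\mathrm{pt}$ exist. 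That is the wrong direction for our purposes, so the adjunction has to come from how partial maps are encoded, not from the pointed-objects formalism alone.

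The plan is to write down the hom-set bijection directly. Let $K$ be compact Hausdorff and $X$ locally compact Hausdorff. A morphism $K \to X$ in $\mathrm{LCH}_p$ is a partial proper map $f$ defined on an open $U \subset K$ with $f : U \to X$ proper. Extend $f$ to $\tilde f : K \to X^*$ by sending $K \setminus U$ to $*$. I will check continuity of $\tilde f$ on the two kinds of opens of $X^*$:
\begin{itemize}
\item For $V \subset X$ open, $\tilde f^{-1}(V) = f^{-1}(V)$ is open in $U$, hence open in $K$.
\item For a compact $C \subset X$, $\tilde f^{-1}(X^* \setminus C) = K \setminus f^{-1}(C)$. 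Properness makes $f^{-1}(C)$ compact in $U$, hence closed in the Hausdorff space $K$.
\end{itemize}
Conversely, given a continuous $g : K \to X^*$, set $U = g^{-1}(X)$, which is open, and $f = g|_U$. For $C \subset X$ compact, $g^{-1}(C)$ is closed in $K$, hence compact, and it lies in $U$, so $f$ is proper. These two assignments are mutually inverse.

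Naturality in $K$ (for continuous maps) and in $X$ (for partial proper maps $X \to Y$, using the induced pointed map $X^* \to Y^*$) is immediate from the formulas. This is where the preceding proposition enters: it provides the functoriality of $(-)^*$ on partial proper maps. Equivalently, the bijection is the composite
\[
\Map_{\mathrm{LCH}_p}(K, X) \cong \Map_{\mathrm{CH}_{\mathrm{pt}/}}(K^*, X^*) \cong \Map_{\mathrm{CH}}(K, X^*).
\]
The first bijection is the equivalence. The second holds because $K$ is compact, so $K^* = K \amalg \{*\}$ is a disjoint union, and pointed maps out of $K \amalg \{*\}$ are just maps out of $K$.

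That composite is the cleanest formal route, so I would present it as follows. First, the equivalence of the previous proposition. Second, for $K$ compact, $K^* \cong K_+ := K \amalg \mathrm{pt}$, and $K_+$ is left adjoint to the forgetful functor $\mathrm{CH}_{\mathrm{pt}/} \to \mathrm{CH}$. Third, compose these. The only point needing care is the claim that $K^*$, for $K$ already compact, is the disjoint union with an isolated point; this holds because $K$ itself is a compact subset, so $\{*\}$ is open. I do not expect any step to be a genuine obstacle.
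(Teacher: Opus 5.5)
Your proof is correct and is essentially the paper's argument. The paper gives no separate proof: it treats the corollary as immediate from the equivalence $\mathrm{LCH}_p \simeq \mathrm{CH}_{\mathrm{pt}/}$. Under that equivalence the inclusion of $\mathrm{CH}$ becomes $K \mapsto K \amalg \mathrm{pt}$ and $(-)^*$ becomes the forgetful functor, so the adjunction is $(-)_+ \dashv \mathrm{forget}$ transported along the equivalence, which is exactly your final composite. Your direct hom-set verification is a valid elementary alternative that does not even need the cited proposition.

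One remark is wrong, though it does not affect the final argument. Your first paragraph says $(-)_+ \dashv \mathrm{forget}$ is ``the wrong direction.'' It is precisely the right direction, and your own last paragraph uses it that way.
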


\begin{lemma} \label{openinclusionofpatch}
Let $X$ be stably locally compact. Then the inclusion $X^{\patch} \hookrightarrow (X^+)^{\patch}$ is open. 
\end{lemma}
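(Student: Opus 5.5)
To show that $X^{\patch} \hookrightarrow (X^+)^{\patch}$ is an open inclusion, we work with closed sets of the patch topology, i.e.\ with perfect subsets. By the remark following Theorem \ref{patchcoreflection}, the underlying set of $(X^+)^{\patch}$ is $X \amalg \{+\}$, and its closed sets are the images of perfect embeddings into $X^+$; likewise the closed sets of $X^{\patch}$ are the perfect subsets of $X$. The plan is to prove two things. First, $X = X^+ \setminus \{+\}$ is open in $(X^+)^{\patch}$. Second, the subspace topology that $(X^+)^{\patch}$ induces on $X$ coincides with the patch topology of $X$.

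For openness, we observe that $\{+\}$ is a closed point of $X^+$ (Construction \ref{onepointcompact}). Closed subsets are perfect, so $\{+\}$ is closed in $(X^+)^{\patch}$, and hence its complement $X$ is open there.

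For the comparison of topologies, we use the description of the patch topology via elementary compact sets (\cite[Lemma 5.4]{ESCARDO200141}): closed sets are the intersections of sets $C \cup S$ with $C$ closed and $S$ saturated compact. It therefore suffices to match elementary compact sets of $X^+$, restricted to $X$, with elementary compact sets of $X$, up to intersections. We first describe the pieces.
\begin{itemize}
\item The closed sets of $X^+$ are exactly $C \cup \{+\}$ with $C$ closed in $X$, together with $\emptyset$.
\item By Example \ref{scottopenofonepoint}, the saturated compact sets of $X^+$ are the saturated compact sets of $X$ together with $X^+$ itself.
\end{itemize}
Consequently an elementary compact set of $X^+$ meets $X$ in a set of the form $C \cup S$, $S$, or $X$, each of which is elementary compact in $X$. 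Hence every patch-closed set of $X^+$ restricts to a patch-closed set of $X$. Conversely, an elementary compact set $C \cup S$ of $X$ equals $\bigl((C\cup\{+\})\cup S\bigr) \cap X$, where $(C\cup\{+\})\cup S$ is elementary compact in $X^+$. Since restriction to $X$ commutes with intersections, the two topologies agree.

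The step needing the most care is the classification of closed and saturated compact subsets of $X^+$. We have to check that $+$ lies in every nonempty closed set, because its only open neighbourhood is the whole space. We also have to check that a saturated compact set of $X^+$ containing $+$ must be all of $X^+$, which amounts to Example \ref{scottopenofonepoint} translated through Hofmann--Mislove. Alternatively, one can argue frame-theoretically: the open inclusion $X \hookrightarrow X^+$ is a partial perfect map, so patch functoriality gives a map of patch spaces. Perfect nuclei on $X$ extend to $X^+$ by fixing the new top element, so that map is an embedding whose image is the complement of the patch-closed point $+$. We would use the elementary-set argument as the main line.
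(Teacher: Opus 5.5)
Your proof is correct and follows essentially the same route as the paper. Closedness of $\{+\}$ in $(X^+)^{\patch}$ gives openness of $X$, and Example \ref{scottopenofonepoint} identifies the saturated compact sets of $X^+$, so the generators of the two patch topologies match on $X$. The only difference is cosmetic: the paper compares the open subbases (opens together with complements of saturated compact sets), whereas you compare the dual closed generators, the elementary compact sets $C \cup S$, which also uses your (correct) explicit description of the closed sets of $X^+$.
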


\begin{proof} First note that $\{+\}$ is closed in $(X^+)^{\patch}$, and hence $X$ as the complementary set, equipped with the induced subspace topology, is open. Recall that by Example \ref{scottopenofonepoint}, the set of saturated compact sets of $X^+$ is given by the set of saturated compact sets of $X \subset X \amalg \{ + \}$ together with the entire set $X \amalg \{ + \}$. Since the topology on $(X^+)^{\patch}$ is generated from intersections of opens on $X^+$ and complements of saturated compact sets, we see that the induced subspace topology on the set $X$ is generated from intersections of opens in $X$ and complements of saturated compact sets in $X$. But this simply agrees with the patch topology on $X$.
\end{proof}

\begin{corollary} \label{patchofonepoint}
Let $X$ be stably locally compact. Then there is a natural equivalence
$$ (X^{\patch})^* \cong (X^+)^{\patch}.$$
\end{corollary}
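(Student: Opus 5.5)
The plan is to compare both sides as topologies on the same underlying set $X \amalg \{\ast\} = X \amalg \{+\}$ and check that they agree. Both spaces are compact Hausdorff: $(X^{\patch})^*$ because $X^{\patch}$ is locally compact Hausdorff and the classical one-point compactification applies. The space $(X^+)^{\patch}$ is compact Hausdorff because $X^+$ is stably compact (Proposition \ref{adjunctionstablylocallycompact}), and the patch of a stably compact space is compact Hausdorff.

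First I construct a comparison map. Consider the identity on underlying sets $(X^+)^{\patch} \to (X^{\patch})^*$, sending $+$ to $\ast$. I will show it is continuous. Since the domain is compact and the target Hausdorff, it is then a closed continuous bijection, hence a homeomorphism.

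For continuity I check the two kinds of opens of $(X^{\patch})^*$.
\begin{itemize}
\item \emph{Opens $U \subset X$ of $X^{\patch}$.} By Lemma \ref{openinclusionofpatch}, $X^{\patch}$ is an open subspace of $(X^+)^{\patch}$. Hence any $U$ open in $X^{\patch}$ is open in $(X^+)^{\patch}$.
\item \emph{Sets $(X \setminus K) \cup \{+\}$ with $K \subset X^{\patch}$ compact.} The subspace inclusion $X^{\patch} \hookrightarrow (X^+)^{\patch}$ is continuous, so $K$ is compact in $(X^+)^{\patch}$. That space is Hausdorff, so $K$ is closed there, and its complement, which is exactly $(X\setminus K)\cup\{+\}$, is open.
\end{itemize}
So the identity map is continuous, and the compact-to-Hausdorff argument finishes the homeomorphism.

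For naturality, I take a partial perfect map $f : X \to Y$, with induced pointed map $f^+ : X^+ \to Y^+$. I must check that $(f^+)^{\patch}$ agrees with $(f^{\patch})^*$ under the identifications above. Both are given on points by $f$ on its domain and send everything else to the basepoint, and the identifications are the identity on points. So the two maps agree as functions, and hence as maps.

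The step I expect to be the main subtlety is making sense of $f^{\patch}$ for partial maps in the first place. If only the total-map functoriality of Theorem \ref{patchcoreflection} is available, I would define the partial functoriality through this very corollary, or check directly that it is given by the same underlying function. The homeomorphism on objects is the real content, and it reduces cleanly to Lemma \ref{openinclusionofpatch} plus compact-Hausdorff rigidity.
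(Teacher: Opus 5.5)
Your proof is correct and takes the same route as the paper. The paper also uses the identity-on-points map $(X^+)^{\patch} \to (X^{\patch})^*$ and concludes by compact--Hausdorff rigidity. The only difference is that the paper gets continuity from the adjunction of Corollary \ref{locallycompacthausdorffadjunction}, applied to the partial perfect map $(X^+)^{\patch} \to X^{\patch}$ that is the identity on the open domain supplied by Lemma \ref{openinclusionofpatch}; your direct check on the two kinds of opens of the one-point compactification is that universal property unpacked. Your explicit naturality check via underlying functions is more than the paper provides, since it leaves naturality implicit.
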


\begin{proof}
The partial perfect map $(X^+)^{\patch} \rightarrow X^{\patch}$ given by the identity on the open domain $X^{\patch}$ gives, by applying the adjunction from Corollary \ref{locallycompacthausdorffadjunction} the continuous map
$$ (X^+)^{\patch} \rightarrow (X^{\patch})^*.$$
This is clearly a bijection. But a continuous bijection between compact Hausdorff spaces is a homeomorphism, therefore proving the claim.
\end{proof}

\begin{corollary} \label{patchcoreflectionpartial}
The inclusion of the full subcategory of locally compact Hausdorff spaces into stably locally compact spaces and partial perfect has a right adjoint,
\[\begin{tikzcd}
	{\mathrm{LCH}_p} & {\mathrm{SLC}_p}
	\arrow[""{name=0, anchor=center, inner sep=0}, curve={height=-6pt}, hook, from=1-1, to=1-2]
	\arrow[""{name=1, anchor=center, inner sep=0}, "\patch", curve={height=-6pt}, two heads, from=1-2, to=1-1]
	\arrow["\dashv"{anchor=center, rotate=-90}, draw=none, from=0, to=1]
\end{tikzcd}\]
given on objects by $X \mapsto X^{\patch}$.
\end{corollary}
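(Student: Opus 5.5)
We reduce to the compact case via the one-point compactification adjunctions, so as to avoid redoing Escardó's argument for partial frame homomorphisms.

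\textbf{Step 1: the functor.} For a partial perfect map $f : X \rightarrow Y$ we obtain a pointed perfect map $f^+ : X^+ \rightarrow Y^+$ by Proposition \ref{adjunctionstablylocallycompact}. Applying Theorem \ref{patchcoreflection} gives a pointed continuous map $(X^+)^{\patch} \rightarrow (Y^+)^{\patch}$. Its source and target are compact Hausdorff, since the patch of a stably compact space is compact Hausdorff. By Corollary \ref{patchofonepoint} this map is a pointed map $(X^{\patch})^* \rightarrow (Y^{\patch})^*$. Under the equivalence $\mathrm{LCH}_p \simeq \mathrm{CH}_{\mathrm{pt}/}$ it corresponds to a partial proper map $X^{\patch} \rightarrow Y^{\patch}$. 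Functoriality follows because every step is functorial.

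On underlying sets this map is $f$ on its open domain. That domain is open in $X^{\patch}$, since it is open in $X$ and the patch topology is finer.

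\textbf{Step 2: the adjunction.} For $Z$ locally compact Hausdorff and $X$ stably locally compact we must show
$$\mathrm{SLC}_p(Z,X) \cong \mathrm{LCH}_p(Z, X^{\patch}),$$
naturally in both variables. We chain the bijections
$$\mathrm{SLC}_p(Z,X) \cong \mathrm{SC}_{\mathrm{pt}/}(Z^+,X^+).$$
We then compare this with $\mathrm{CH}_{\mathrm{pt}/}((Z^+)^{\patch},(X^+)^{\patch})$, using that $\patch$ is right adjoint on $\mathrm{SC}$ to the inclusion of $\mathrm{CH}$ (Theorem \ref{patchcoreflection} restricted to compact spaces). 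By Corollary \ref{patchofonepoint} and $Z^{\patch} = Z$, the latter hom-set is $\mathrm{CH}_{\mathrm{pt}/}(Z^*,(X^{\patch})^*)$. This equals $\mathrm{LCH}_p(Z,X^{\patch})$.

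\textbf{Step 3 (main obstacle).} The key point is the middle step. The adjunction $\mathrm{CH} \rightleftarrows \mathrm{SC}$ is phrased with source $Z^+$, not $(Z^+)^{\patch} = Z^*$. So we need that $Z^+ \rightarrow Z^*$ (the counit, the identity on points) induces a bijection on pointed perfect maps into $X^+$.

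We would argue this as follows. A pointed perfect map $Z^+ \rightarrow X^+$ gives, by the adjunction, a pointed continuous map $Z^* = (Z^+)^{\patch} \rightarrow (X^+)^{\patch}$. Composing with the counit $(X^+)^{\patch} \rightarrow X^+$ gives a pointed perfect map $Z^* \rightarrow X^+$; this is perfect because every map out of a compact Hausdorff space into $\mathrm{SC}$ that the adjunction produces is in $\mathrm{SC}$.

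Conversely, given a pointed perfect map $g : Z^* \rightarrow X^+$, we must check that $g$ factors through $Z^+$, i.e.\ that preimages of opens of $X^+$ other than the top element are opens of $Z$, not merely complements of compacts. Such an open lies in $X$. Its preimage lies in $Z^* \setminus \{*\} = Z$ and is open in $Z^*$, hence open in $Z$. Perfectness is preserved because the way-below relations of $Z^+$ and $Z^*$ agree below the top element. Compact opens of $Z$ are way below the top in both frames, and the top is compact in both.

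This identification of pointed maps out of $Z^+$ and $Z^*$ is where care is needed. With it in hand, naturality is formal, and the unit is the identity partial map $X^{\patch} \rightarrow X$.
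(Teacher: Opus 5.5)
Your proof is correct and is essentially the paper's argument: you reduce to the pointed compact setting via one-point compactifications, apply the patch coreflection on compact spaces, and identify $(X^+)^{\patch}$ with $(X^{\patch})^*$ (equivalently, the open complement of $+$ with $X^{\patch}$). The only difference is that the paper uses the adjunction $\mathrm{coim} \dashv (-)^+$ of Proposition \ref{adjunctionstablylocallycompact}, not just the full faithfulness of $(-)^+$; since $\mathrm{coim}(Z^*) = Z$, your hand-checked bijection in Step 3 between pointed perfect maps out of $Z^+$ and out of $Z^*$ then comes for free, and the statement becomes a formal composition of adjoints along the commutative square of left adjoints (Lemma \ref{existenceofadjoint}).
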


\begin{proof}
First note that the adjunction from Proposition \ref{adjunctionstablylocallycompact} applied to stably compact spaces and compact Hausdorff spaces lifts to an adjunction
\[\begin{tikzcd}
	{\mathrm{CH}_{\mathrm{pt}/}} & {\mathrm{SC}_{\mathrm{pt}/}}.
	\arrow[""{name=0, anchor=center, inner sep=0}, curve={height=-6pt}, hook, from=1-1, to=1-2]
	\arrow[""{name=1, anchor=center, inner sep=0}, "{(-)^{\patch}}", curve={height=-6pt}, from=1-2, to=1-1]
	\arrow["\dashv"{anchor=center, rotate=-90}, draw=none, from=0, to=1]
\end{tikzcd}\]
The commutative square of left adjoints 
\[\begin{tikzcd}
	{\mathrm{SLC}_p} & {\mathrm{SC}_{\mathrm{pt}/}} \\
	{\mathrm{LCH}_p} & {\mathrm{CH}_{\mathrm{pt}/}}
	\arrow[from=1-2, to=1-1]
	\arrow[hook, from=2-1, to=1-1]
	\arrow[hook, from=2-2, to=1-2]
	\arrow[from=2-2, to=2-1]
\end{tikzcd}\]
gives, as an application of Lemma \ref{existenceofadjoint}, the statement that the inclusion $\mathrm{LCH}_p \hookrightarrow \mathrm{SLC}_p$, has a right adjoint given by sending a stably locally compact $X$ to the open complement of $\{+\}$ in $(X^+)^{\patch}$. But this agrees with $X^{\patch}$ by Lemma \ref{openinclusionofpatch}.
\end{proof}

\begin{remark}
Note that the \emph{specialization order} equips the underlying set of a coherent space $X$ with the structure of a poset. (This is nothing more than an instance of the category of locales being naturally a $2$-category via its enrichment in posets.) In case $X$ is a stably compact space, the relation $\leq$ is closed as a subset of $X^{\patch} \times X^{\patch}$. A compact Hausdorff space equipped with such a closed poset structure is called a \emph{compact pospace}. Given a compact pospace $(Y, \leq)$, the set of downward closed opens actually forms a stably compact topology on $Y$. These two procedures result in an equivalence of categories
$$ \mathrm{SC} \simeq \mathrm{CompPO},$$
see e.g.\ \cite[Proposition 9.4.10]{Goubault-Larrecq_2013}. We have the following table of correspondences.
\begin{table}[h]
\centering
\begin{tabular}{|l|l|}
\hline
\textbf{Stably compact space} & \textbf{Compact pospace} \\
\hline
Perfect subspace      & Closed subset \\
Saturated compact     & Closed and downward closed \\
Closed                & Closed and upward closed \\
de Groot dual         & Opposite order $\leq^{\mathrm{\op}}$ \\
Patch topology        & Forget order \\
\hline
\end{tabular}
\end{table}
\end{remark}

\section{Nisnevich-type descent for localizing invariants} \label{descentlocalizinginvariants}

In this section we are concerned with statements about descent for finitary localizing invariants applied to sheaves on stably (locally) compact spaces. This is somewhat inspired by the well-known notion of Nisnevich-Descent in the context of $K$-theory of schemes. Let us fix throughout the convention that $F : \Cat^{\perf} \rightarrow \mathcal{E}$ denotes a localizing invariant. We note that all statements made about $\Sh(X,\Sp)$ in this section generalize immediately when $\Sp$ is replaced by an arbitrary dualizable $\infty$-category $\mathcal{C}$, in light of Lemma \ref{localizinginvariantreduction}, which states that $F^{\cont}( - \otimes \mathcal{C})$ is again a localizing invariant. Before we begin, we want to record the following simple lemma about pullback squares in stable categories. 

\begin{lemma} \label{pullbackdetectedbyfibers}
Let $\mathcal{E}$ be a stable $\infty$-category. Then a square
\[\begin{tikzcd}
	{A_0} & {A_1} \\
	{B_0} & {B_1}
	\arrow["{f_A}", from=1-1, to=1-2]
	\arrow[from=1-1, to=2-1]
	\arrow[from=1-2, to=2-2]
	\arrow["{f_B}", from=2-1, to=2-2]
\end{tikzcd}\]
is a pullback square iff the induced map on horizontal fibers $\mathrm{fib}(f_A) \rightarrow \mathrm{fib}(f_B)$ is an equivalence.
\end{lemma}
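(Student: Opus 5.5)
The plan is to extend the square to a map of horizontal fiber sequences and then use that, in a stable $\infty$-category, fiber sequences and cofiber sequences coincide and pullbacks coincide with pushouts. Write $K_A = \mathrm{fib}(f_A)$ and $K_B = \mathrm{fib}(f_B)$. The square induces a canonical map $K_A \rightarrow K_B$ by functoriality of fibers, so we get a diagram
\[\begin{tikzcd}
	{K_A} & {A_0} & {A_1} \\
	{K_B} & {B_0} & {B_1}
	\arrow[from=1-1, to=1-2]
	\arrow[from=1-1, to=2-1]
	\arrow["{f_A}", from=1-2, to=1-3]
	\arrow[from=1-2, to=2-2]
	\arrow[from=1-3, to=2-3]
	\arrow[from=2-1, to=2-2]
	\arrow["{f_B}", from=2-2, to=2-3]
\end{tikzcd}\]
in which both the left square, extended by zero objects, and the outer rectangles realizing the fibers are pullbacks.

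\textbf{Forward direction.} Suppose the right square is a pullback. The composite of the left square with the right square, extended by the zero object in the top row, is the pullback square computing $K_A = \mathrm{fib}(A_0 \rightarrow A_1)$. By the pasting law for pullbacks, $K_A$ is then the pullback of $0 \rightarrow B_1 \leftarrow B_0$, which is $K_B$. Hence $K_A \rightarrow K_B$ is an equivalence.

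\textbf{Converse.} Suppose $K_A \rightarrow K_B$ is an equivalence. Form the actual pullback $P = A_1 \times_{B_1} B_0$. The forward direction applied to the pullback square with corner $P$ shows that $\mathrm{fib}(P \rightarrow A_1) \simeq K_B$, and the canonical comparison $A_0 \rightarrow P$ induces on fibers over $A_1$ the map $K_A \rightarrow K_B$, which is an equivalence. We then have a map of fiber sequences over the fixed base $A_1$, given by $K_A \rightarrow A_0 \rightarrow A_1$ mapping to $K_B \rightarrow P \rightarrow A_1$, which is an equivalence on fibers and the identity on the base. Taking fibers of the comparison map $A_0 \rightarrow P$ and using the octahedral axiom, or equivalently exactness of $\mathrm{fib}$, gives $\mathrm{fib}(A_0 \rightarrow P) \simeq \mathrm{fib}(K_A \rightarrow K_B) \simeq 0$. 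In a stable category, a map with zero fiber is an equivalence. So $A_0 \simeq P$, and the square is a pullback.

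The only mildly delicate step is checking that the map induced on fibers by $A_0 \rightarrow P$ really agrees with the given map $K_A \rightarrow K_B$. This follows from the universal property of $P$ together with pasting, since both maps are induced by the same compatible data over $B_1$. Everything else is formal stable-category bookkeeping.
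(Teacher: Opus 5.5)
Your argument is correct, and the forward direction is exactly the paper's. After transposing so that $f_A$ and $f_B$ are vertical, the paper pastes the pullback square with corners $\mathrm{fib}(f_A), A_0, 0, A_1$ onto the given square along $f_A$, and the pasting law identifies $\mathrm{fib}(f_A)$ with $0 \times_{B_1} B_0 = \mathrm{fib}(f_B)$. Your sentence about which squares of your horizontal diagram get composed is a little muddled: composing its left and right squares gives the square with corners $K_A, A_1, K_B, B_1$. The pasting you actually need is the one along $f_A$ just described, and with that reading your conclusion is right.

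For the converse you take a genuinely different route. The paper reuses the same pasted diagram. If $\mathrm{fib}(f_A) \to \mathrm{fib}(f_B)$ is an equivalence, then the left square and the total square are both pullbacks, hence pushouts by stability, and the pasting lemma for pushouts forces the given square to be a pushout, hence a pullback. You instead compare $A_0$ with the actual pullback $P = A_1 \times_{B_1} B_0$. This gives a map of fiber sequences over $A_1$ that is an equivalence on fibers, so $\mathrm{fib}(A_0 \to P) \simeq 0$ by commuting fibers with fibers. You then conclude because a map with vanishing fiber in a stable $\infty$-category is an equivalence.

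The paper's version is shorter and needs no separate compatibility check, since the comparison map on fibers is built into the total square. Yours requires the bookkeeping step you flag, which does hold by factoring the map of arrows $f_A \to f_B$ through $P \to A_1$. In exchange, it stays entirely on the limit side and never invokes pushouts. It also isolates the one place stability is used, namely that $\mathrm{fib}$ detects equivalences, whereas the paper uses it to identify pullback and pushout squares.
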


\begin{proof}
Consider the diagram
\[\begin{tikzcd}
	{\mathrm{fib}(f_A)} & {A_0} & {B_0} \\
	0 & {A_1} & {B_1.}
	\arrow[from=1-1, to=1-2]
	\arrow[from=1-1, to=2-1]
	\arrow["\lrcorner"{anchor=center, pos=0.125}, draw=none, from=1-1, to=2-2]
	\arrow[from=1-2, to=1-3]
	\arrow["{f_A}", from=1-2, to=2-2]
	\arrow["{f_B}", from=1-3, to=2-3]
	\arrow[from=2-1, to=2-2]
	\arrow[from=2-2, to=2-3]
\end{tikzcd}\]
If the right hand square is a pullback, then so is the total square, and hence $\mathrm{fib}(f_A) \rightarrow \mathrm{fib}(f_B)$ is an equivalence. Conversely, if the map $\mathrm{fib}(f_A) \rightarrow \mathrm{fib}(f_B)$  is an equivalence, we see that the left and total squares are pullbacks. Using stability of $\mathcal{E}$, both squares are also pushouts, and hence we can use the pasting lemma for pushouts, \cite[Lemma 4.4.2.1]{luriehtt}, to conclude that the right square is a pushout.
\end{proof}

\begin{proposition}[Perfect Nisnevich Descent] \label{propernisn} Let $f : Y \rightarrow  X$ be a perfect map of stably locally compact spaces, and $C \subset X$ a closed subspace. Suppose that $f$ induces a homeomorphism on the open complements of $f^{-1}(C)$ and $C$. Then the square
    \[\begin{tikzcd}
	{F^{\cont}(\Sh(X,\Sp))} & {F^{\cont}(\Sh(C,\Sp))} \\
	{F^{\cont}(\Sh(Y,\Sp))} & {F^{\cont}(\Sh(f^{-1}(C),\Sp))}
	\arrow[from=1-1, to=1-2]
	\arrow["{f^*}", from=1-1, to=2-1]
	\arrow["{f^*}", from=1-2, to=2-2]
	\arrow[from=2-1, to=2-2]
\end{tikzcd}\]
is a pullback.
\end{proposition}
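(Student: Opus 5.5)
The plan is to use the open-closed decomposition (Proposition \ref{opencloseddecomposition}) on both $X$ and $Y$, and to reduce the pullback statement to an equivalence of horizontal fibers via Lemma \ref{pullbackdetectedbyfibers}. Let $U = X \setminus C$ and $V = Y \setminus f^{-1}(C) = f^{-1}(U)$. By hypothesis $f|_V : V \to U$ is a homeomorphism. Since $X$ and $Y$ are stably locally compact, $\Sh(X,\Sp)$ and $\Sh(Y,\Sp)$ are dualizable (Corollary \ref{sheavesonstablylocallycompact}). Open and closed sublocales of stably locally compact spaces are again stably locally compact: open ones by the discussion after Proposition \ref{perfectsublocales1}, closed ones since closed inclusions are perfect. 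Hence, by the corollary following Proposition \ref{opencloseddecomposition}, we get Verdier sequences of dualizable categories
\[ \Sh(U,\Sp) \xrightarrow{i_!} \Sh(X,\Sp) \xrightarrow{j^*} \Sh(C,\Sp), \qquad \Sh(V,\Sp) \xrightarrow{i'_!} \Sh(Y,\Sp) \xrightarrow{j'^*} \Sh(f^{-1}(C),\Sp). \]
Applying the localizing invariant $F^{\cont}$ turns these into fiber sequences. The horizontal fibers of the square are then identified with $F^{\cont}(\Sh(U,\Sp))$ and $F^{\cont}(\Sh(V,\Sp))$.

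The key step is to show that $f^*$ gives a map of Verdier sequences whose left component is the equivalence $(f|_V)^* : \Sh(U,\Sp) \to \Sh(V,\Sp)$. Concretely, I need three commutations.
\begin{itemize}
\item $j'^* f^* \simeq (f|_{f^{-1}(C)})^* j^*$. This holds because $f j' = j f|_{f^{-1}(C)}$ as maps of locales, and pullback functors are functorial.
\item $f^* i_! \simeq i'_! (f|_V)^*$. This is base change for the open inclusion along $f$. Here $V = U \times_X Y$, and open sublocales pull back (the paper notes this). The Beck--Chevalley map $i'_! (f|_V)^* \to f^* i_!$ is checked to be an equivalence using the identification $\Sh(U) \simeq \Sh(X)_{/y_U}$ with $i_!$ the forgetful functor. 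Then $f^*$ preserves pullbacks, and $f^* y_U = y_V$, which gives the equivalence in $\An$. Since the stable version is obtained by tensoring with $\Sp$, and $-\otimes \Sp$ is a 2-functor, the statement carries over.
\item All functors involved are strongly continuous, so that they are morphisms in $\PR^L_{\dual}$. This holds for $f^*$ and for its restrictions to perfect maps by the theorem on sheaves on stably locally compact spaces. It holds for $i_!$, $j^*$ since these are the parts of the Verdier sequences.
\end{itemize}

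Naturality of the fiber sequences produced by $F^{\cont}$ then yields a map of fiber sequences. The induced map on fibers is $F^{\cont}((f|_V)^*)$, which is an equivalence. Lemma \ref{pullbackdetectedbyfibers} concludes, since $\mathcal{E}$ is stable.

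I expect the main obstacle to be the base change $f^* i_! \simeq i'_! (f|_V)^*$ together with checking that the squares commute coherently, as a map of cofiber sequences in $\PR^L_{\dual}$, rather than merely objectwise. I would handle this by noting that the left square is determined by the right one: $\Sh(U)$ is the kernel of $j^*$, and $f^*$ carries the kernel of $j^*$ into the kernel of $j'^*$. The induced functor on kernels is then identified with $(f|_V)^*$ by restricting to the open sublocale $V$.
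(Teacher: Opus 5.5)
Your proposal is correct and follows essentially the paper's argument. The paper's proof is the one-line version: it identifies the horizontal fibers of both rows via the open-closed decomposition (Proposition \ref{opencloseddecomposition}) and concludes with Lemma \ref{pullbackdetectedbyfibers}. You additionally make explicit the base change $f^* i_! \simeq i'_!(f|_V)^*$, the strong continuity of all functors involved, and the coherence of the map of Verdier sequences, all of which the paper leaves implicit.
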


\begin{proof}
Straightforward: Observe that the horizontal fibers of both rows are equivalent by Proposition \ref{opencloseddecomposition}, then apply Lemma \ref{pullbackdetectedbyfibers}.
\end{proof}

The main application is the simple case of a proper inclusion, which follows directly from the Second Isomorphism Theorem (Proposition \ref{secondiso}).

\begin{corollary} \label{closedexcision} 
Let $X$ be stably locally compact, $K \subset X$ be a perfect inclusion, and $C \subset X$ be a closed inclusion. Then we have a pullback square
\[\begin{tikzcd}
	{F^{\cont}(\Sh(K \cup C,\Sp))} & {F^{\cont}(\Sh(C,\Sp))} \\
	{F^{\cont}(\Sh(K,\Sp))} & {F^{\cont}(\Sh(K \cap C,\Sp)).}
	\arrow[from=1-1, to=1-2]
	\arrow[from=1-1, to=2-1]
	\arrow["\lrcorner"{anchor=center, pos=0.125}, draw=none, from=1-1, to=2-2]
	\arrow[from=1-2, to=2-2]
	\arrow[from=2-1, to=2-2]
\end{tikzcd}\]
\end{corollary}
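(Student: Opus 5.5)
We apply Proposition \ref{propernisn} to the perfect inclusion $f : K \hookrightarrow K \cup C$, with $K\cup C$ as the ambient space and $C$ as the closed subspace. Then $f^{-1}(C) = K \cap C$.

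The first step is to verify the hypotheses. We must check three things:
\begin{enumerate}
\item $K \cup C$ is a stably locally compact space;
\item $C \subset K \cup C$ is closed;
\item $f$ is perfect and restricts to a homeomorphism $K \setminus (K\cap C) \to (K\cup C)\setminus C$ of open complements.
\end{enumerate}

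For (1), work in the coframe of sublocales of $X$. Since $K$ and $C$ are perfect sublocales, their nuclei $N_K$ and $N_C = (-)\vee U$ preserve directed suprema, where $U$ is the open complement of $C$. The join $K \vee C$ has nucleus $N_K \wedge ((-) \vee U)$. A pointwise binary meet of directed-join-preserving maps again preserves directed joins, by frame distributivity. So $K\vee C$ is a perfect sublocale by Lemma \ref{perfectcharacterization}, hence stably locally compact by Proposition \ref{perfectsublocales1}. We also need to identify the sublocale joins $K\vee C$ and $K\wedge C$ with the set-theoretic union and intersection. Union and intersection of the underlying subsets are perfect subsets by the same nucleus computation, and perfect sublocales are spatial, so this identification should follow from the correspondence in Lemma \ref{perfectcharacterization}(3).

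For (2) and (3), we use Proposition \ref{secondiso} with $S = K$. It gives that $C \hookrightarrow K\vee C$ and $K\wedge C \hookrightarrow K$ are closed. It also gives that $K \hookrightarrow K\vee C$ induces a homeomorphism on the open complements, which are $K\setminus (K\cap C)$ and $(K\cup C)\setminus C$. Perfectness of $f$ holds because $K \hookrightarrow X$ and $K\cup C \hookrightarrow X$ are both perfect embeddings. Since $i_{K\cup C,*}$ is injective and preserves directed joins, the right adjoint $f_*$ preserves directed joins as well. Equivalently, $f$ is an equalizer-type map between stably locally compact spaces.

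Proposition \ref{propernisn} then gives that the square with $F^{\cont}(\Sh(K\cup C,\Sp))$ in the top left is a pullback. Its horizontal maps are restriction to the closed subsets $C$ and $K\cap C$, and its vertical maps are $f^*$. This is exactly the asserted square, up to swapping rows and columns.

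The only real obstacle is the bookkeeping in step (1): identifying the sublocale joins and meets with the set-theoretic union and intersection, and confirming that they are perfect. Once that is in place, everything else is a direct invocation of Propositions \ref{secondiso} and \ref{propernisn}.
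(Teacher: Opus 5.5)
Your proof is correct and follows the paper's route: the paper obtains this corollary in one line by applying Proposition~\ref{propernisn} to the inclusion $K \hookrightarrow K \cup C$, with Proposition~\ref{secondiso} supplying the homeomorphism of open complements. Your extra verifications are side hypotheses the paper leaves implicit: that $K \cup C$ is a perfect (hence stably locally compact) sublocale via the nucleus $N_K \wedge (-\vee U)$, and that $K \hookrightarrow K \cup C$ is perfect. Note also that the square from Proposition~\ref{propernisn} already coincides with the stated one, so no swapping of rows and columns is needed.
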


Our goal is to generalize the statement of Corollary \ref{closedexcision} to the case where $C$ is allowed to be an arbitrary perfect subspace $K'$. In fact, if the formula of Theorem \ref{maintheorem} is true, it must necessarily be the case that this form of perfect descent holds. We will do this in steps. The first will be the use of Verdier duality. Note that the fact that $(-)^\vee = \mathrm{Fun}^L(-,\mathrm{Sp})$ is an autoequivalence on $\PR^L_{\dual}$ implies that it preserves Verdier sequences. This will be critical in the following observation.

\begin{proposition} \label{saturatedcompactdescent}
Let $X$ be stably compact, $K \subset X$ be a perfect inclusion, and $C \subset X$ be a saturated compact inclusion. Then we have a pullback square
\[\begin{tikzcd}
	{F^{\cont}(\Sh(K \cup C,\Sp))} & {F^{\cont}(\Sh(C,\Sp))} \\
	{F^{\cont}(\Sh(K,\Sp))} & {F^{\cont}(\Sh(K \cap C,\Sp)).}
	\arrow[from=1-1, to=1-2]
	\arrow[from=1-1, to=2-1]
	\arrow["\lrcorner"{anchor=center, pos=0.125}, draw=none, from=1-1, to=2-2]
	\arrow[from=1-2, to=2-2]
	\arrow[from=2-1, to=2-2]
\end{tikzcd}\]
\end{proposition}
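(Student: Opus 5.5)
The plan is to reduce to Corollary \ref{closedexcision} via de Groot and Verdier duality. Under de Groot duality, saturated compact subsets of $X$ are exactly the closed subsets of $X^\vee$. Perfect subspaces of $X$ correspond to perfect subspaces of $X^\vee$: Remark \ref{deGrootdualitypatch} shows both equal the patch-closed sets. Moreover, for a perfect subspace $K \subset X$ we have $K^\vee = K$ as subspaces of $X^\vee$, since $K$ is stably compact and the dual topology restricts. So in $X^\vee$ the square in question becomes the closed-excision square for the perfect subspace $K$ and the closed subspace $C$. Corollary \ref{closedexcision} (applied to $X^\vee$, and with $\Sp$ replaced by any dualizable coefficient via Lemma \ref{localizinginvariantreduction}) then gives a pullback square for $F^{\cont}(\Sh(-^\vee,\Sp))$ on $K\cup C, C, K, K\cap C$.

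Step one: transfer this to sheaves on $X$ itself. By Theorem \ref{verdierduality}, $\Sh(Z,\Sp) \simeq \Cosh(Z^\vee,\Sp) = \Fun^L(\Sh(Z^\vee,\Sp),\Sp) = \Sh(Z^\vee,\Sp)^\vee$ for each stably compact $Z \in \{K\cup C, C, K, K\cap C\}$. By Proposition \ref{naturality} this equivalence is natural in perfect maps. Under the identification, restriction $f^*$ along a perfect inclusion corresponds to the dual of the pushforward $f_*$, equivalently the dual of the right adjoint, on the $X^\vee$ side.

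Step two: use that $(-)^\vee$ is an autoequivalence of $\PR^L_{\dual}$ preserving Verdier sequences. The composite $F^{\cont}((-)^\vee)$ is then again a finitary localizing invariant, though it is contravariant, with strongly continuous left adjoints sent to the duals of their right adjoints. The argument of Proposition \ref{propernisn} only uses that the horizontal fibers agree via the open-closed decomposition of Proposition \ref{opencloseddecomposition}. So I would rerun that argument directly on $X^\vee$ with the invariant $G = F^{\cont}((-)^\vee)$.

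Concretely, the closed inclusions $C \subset K\cup C$ and $K\cap C \subset K$ in $X^\vee$ (Proposition \ref{secondiso}) have homeomorphic open complements $U$. The Verdier sequences $\Sh(U)\to \Sh(K\cup C)\to \Sh(C)$ and $\Sh(U)\to\Sh(K)\to\Sh(K\cap C)$ dualize to Verdier sequences. Their fibers under $G$ agree, and Lemma \ref{pullbackdetectedbyfibers} gives the pullback. Translating back via Verdier duality yields exactly the square for $\Sh(-,\Sp)$ on $X$ with restriction maps.

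The main obstacle is checking that the maps match. The functor $\Sh(K,\Sp)\to\Sh(K\cap C,\Sp)$ given by restriction $i^*$ on $X$ must correspond, under $\mathrm{VD}$, to the dual of $j^*$ on $X^\vee$, i.e. to $(j_*)^\vee$. This is where Proposition \ref{naturality} enters: it identifies $f_+$ on $\mathcal{K}$-sheaves with $f_*$, and dualizing gives the compatibility of $f^*$ with pullback of cosheaves. I would verify this on the $\mathcal{K}$-sheaf level, where restriction to $K\cap C$ is just precomposition with $Q(K\cap C)\to Q(K)$, $S \mapsto S$.
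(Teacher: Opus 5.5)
Your proposal is correct and is essentially the paper's proof. Via Verdier duality (natural by Proposition~\ref{naturality}), the square of sheaf categories on $X$ is the image under $(-)^\vee$, sending $g$ to $(g^R)^\vee$ and preserving Verdier sequences, of the restriction square on $X^\vee$ where $C$ is closed, so the horizontal fibers agree by Proposition~\ref{secondiso} and Lemma~\ref{pullbackdetectedbyfibers} concludes. Note, however, that your Step two does the actual work: the pullback square from Corollary~\ref{closedexcision} on $X^\vee$ does not itself transfer, and $g \mapsto (g^R)^\vee$ is covariant on $1$-morphisms, so calling $G$ ``contravariant'' is a slip.
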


\begin{proof} The proposition follows analogously to Corollary \ref{closedexcision} by applying Verdier duality (Theorem \ref{verdierduality}). The square of sheaf categories in question is equivalent to
\[\begin{tikzcd}
	{\Cosh( K^\vee \cup C^\vee; \Sp)} & {\Cosh( C^\vee; \Sp)} \\
	{\Cosh( K^\vee; \Sp)} & {\Cosh( K^\vee \cap C^\vee; \Sp)}
	\arrow[from=1-1, to=1-2]
	\arrow[from=1-1, to=2-1]
	\arrow[from=1-2, to=2-2]
	\arrow[""{name=0, anchor=center, inner sep=0}, from=2-1, to=2-2]
	\arrow[""{name=0p, anchor=center, inner sep=0}, phantom, from=2-1, to=2-2, start anchor=center, end anchor=center]
\end{tikzcd}\]
This square is obtained from the corresponding one involving sheaves of dual spaces by applying $(-)^\vee$. The saturated compact inclusion $C \rightarrow X$ is sent to a closed inclusion $C^\vee \rightarrow X^\vee$ under duality, and we can simply verify that horizontal fibers of the square of sheaf categories are equivalent by Proposition \ref{secondiso}.
\end{proof}

\begin{remark}
To be more concrete, the fiber of
\[ \Sh(K \cup C,\Sp) \rightarrow \Sh(C,\Sp) \]
is given by $\Cosh(U,\Sp)$, where $U$ is the open subset of $X^\vee$ corresponding to the Scott open filter of the compact saturated set $C$. Since $U$ is \emph{not} in general stably compact, but only stably locally compact, Verdier duality is a bit more subtle. Note that this is exactly the same issue as mentioned in Remark \ref{deGrootdualitysubtelty}.
\end{remark}

\begin{warning}
It is not true that for perfect embeddings $K,L$ of $X$ the square
\[\begin{tikzcd}
	{\Sh(K \cup L; \Sp)} & {\Sh(L; \Sp)} \\
	{\Sh(K; \Sp)} & {\Sh(K \cap L; \Sp)}
	\arrow[from=1-1, to=1-2]
	\arrow[from=1-1, to=2-1]
	\arrow[from=1-2, to=2-2]
	\arrow[from=2-1, to=2-2]
\end{tikzcd}\]
is necessarily a pullback square in $\PR^L_{\st}$, unlike in the case when $X$ is Hausdorff. As a counterexample, consider $K = [0,a]_{\leq}, L = [b,1]_{\leq}$ for $a < b$ in $I_{\leq}$. Their intersection is empty, however $\Sh( ([0,a] \cup [b,1])_{\leq}; \Sp)$ does not split as a product.
\end{warning}

To sum up, we have excision for localizing invariants for perfect embeddings if either of the embeddings is closed or saturated compact. We are left with showing that this implies proper descent. Before we do so, we will introduce a technical lemma to simplify induction of descent-type conditions.

\subsection{The sheaf condition on $D_3$}

We will need a technical lemma that allows us a reduction of a sheaf condition on three generators to a statement about pullbacks. The conceptual reason for this lemma is that the space $X = \mathbf{S}^3$ given by the product of three copies of the Sierpinski space represents the functor on locales that chooses three opens. It is somewhat tautological that the sheaf condition for the case of three opens on an arbitrary locale can always be checked by pushforward to $\mathbf{S}^3$. The space $\mathbf{S}^3$ is simply a combinatorial cube equipped with the Alexandroff topology, and its frame of opens is $D_3$, the free distributive lattice generated by three elements. 

\begin{lemma} \label{sheafconditionthree}
    Let $\mathcal{D}$ be an $\infty$-category with finite limits. Then
    \[\begin{tikzcd}
	A && {A_3} \\
	& {A_2} && {A_{23}} \\
	{A_1} && {A_{13}} \\
	& {A_{12}} && {A_{123}}
	\arrow[from=1-1, to=1-3]
	\arrow[from=1-1, to=2-2]
	\arrow[from=1-1, to=3-1]
	\arrow[from=1-3, to=2-4]
	\arrow[from=1-3, to=3-3]
	\arrow[from=2-2, to=2-4]
	\arrow[from=2-2, to=4-2]
	\arrow[from=2-4, to=4-4]
	\arrow[from=3-1, to=3-3]
	\arrow[from=3-1, to=4-2]
	\arrow[from=3-3, to=4-4]
	\arrow[from=4-2, to=4-4]
\end{tikzcd}\]
is a limit diagram, i.e.\ $A$ is the limit over the bottom part of the diagram, iff the square
\[\begin{tikzcd}
	A & {A_3} \\
	{A_1 \times_{A_{12}} A_2} & {A_{13} \times_{A_{123}} A_{23}}
	\arrow[from=1-1, to=1-2]
	\arrow[from=1-1, to=2-1]
	\arrow[from=1-2, to=2-2]
	\arrow[from=2-1, to=2-2]
\end{tikzcd}\]
is a pullback square.
\end{lemma}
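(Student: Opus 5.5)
The plan is to compute the limit over the punctured cube in two stages, using only that limits over a poset can be built by iterated right Kan extension. Let $P$ be the punctured cube, i.e.\ the poset $\{1,2,3,12,13,23,123\}$ ordered so that the arrows go from smaller to larger index sets. Split it into the face $P_{3} = \{3,13,23,123\}$ (index sets containing $3$) and the complementary part $P_{\neg 3} = \{1,2,12\}$. Then $A$ is a limit of the diagram iff the induced map $A \to \lim_P A_\bullet$ is an equivalence. The argument consists of computing $\lim_P$ as an iterated pullback.

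The first step is to observe that $P_{\neg 3}$ is a cospan $1 \to 12 \leftarrow 2$, so $\lim_{P_{\neg 3}} A_\bullet \simeq A_1 \times_{A_{12}} A_2$. Next, restrict to the subposet $Q = \{3,13,23,123\}$. The relevant limit here is not $\lim_Q$ itself (which is just $A_3$, since $3$ is initial in $Q$). What is needed is the part of the diagram through which $P_{\neg 3}$ maps into $P_3$, namely the face $\{13,23,123\}$, whose limit is $A_{13}\times_{A_{123}} A_{23}$. The maps $1\to 13$, $2\to 23$ and $12\to 123$ give a natural map $A_1\times_{A_{12}}A_2 \to A_{13}\times_{A_{123}}A_{23}$. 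Likewise $A_3 \to A_{13}\times_{A_{123}} A_{23}$ is induced by $3\to 13$ and $3\to 23$.

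The key claim is then
$$\lim_P A_\bullet \simeq A_3 \times_{A_{13}\times_{A_{123}}A_{23}} \bigl(A_1\times_{A_{12}}A_2\bigr).$$
To prove it, I would write $P$ as a pushout of posets (a union of full subposets, with the union being a sieve-type decomposition) $P = \{3,13,23,123\} \cup \{1,2,12,13,23,123\}$ glued along $\{13,23,123\}$. Since limits over a union of full subposets $P = S \cup T$ are computed as $\lim_S \times_{\lim_{S\cap T}} \lim_T$ whenever every relation between $S\setminus T$ and $T\setminus S$ factors through $S\cap T$ (true here, since no element of $\{1,2,12\}$ is comparable to $3$), the limit splits as a pullback.

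It remains to identify the two pieces. On one side, $\lim_{\{3,13,23,123\}} = A_3$, as $3$ is initial. On the other, $\lim_{\{1,2,12,13,23,123\}} \simeq A_1\times_{A_{12}}A_2$, because $\{1,2,12\}$ is an initial subposet, i.e.\ the inclusion is limit-cofinal. Checking this cofinality, or equivalently that the slice categories are weakly contractible, is the one step needing care. For instance, the elements of $\{1,2,12\}$ lying below $13$ are just $\{1\}$, which is contractible, and those below $123$ form the cospan $\{1,2,12\}$, whose initial-object-free shape is still contractible. A routine check of each element verifies the rest. This is the main (mild) obstacle.

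Combining these pieces, the canonical map from $A$ to $\lim_P$ is identified with the gap map of the square in the statement, so one is an equivalence iff the other is. This gives the equivalence claimed in Lemma \ref{sheafconditionthree}.
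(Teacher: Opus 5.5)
Your proof is correct, but it is organized differently from the paper's. The paper does not split the punctured cube. Instead it enlarges it: it right Kan extends the diagram to a poset $D_2$ with two extra vertices, $p$ (valued $A_1\times_{A_{12}}A_2$, lying below $1$ and $2$) and $q$ (valued $A_{13}\times_{A_{123}}A_{23}$, lying below $13$ and $23$). It then notes that the spine $3\to q\leftarrow p$ is limit-cofinal in $D_2$, so the limit is a single pullback. You instead cover $P$ by the two upward-closed pieces $\{3,13,23,123\}$ and $P\setminus\{3\}$, which meet in the face $\{13,23,123\}$. You apply the pullback formula for limits over such a union and identify the pieces by two cofinality checks. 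Your second check ($\{1,2,12\}$ initial in $P\setminus\{3\}$) is exactly what the paper uses tacitly to evaluate the Kan extension at $p$, since $\{d\in D_1 : p\le d\}=P\setminus\{3\}$. So the two proofs share that computation and differ only in how the outer pullback is produced.

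The paper's packaging has two advantages. The square of the statement sits literally inside one diagram, so identifying $A\to\lim_P$ with the gap map is automatic. It is also the form generalized to $n$-cubes in the appendix. Your route uses only the standard decomposition formula for limits and elementary slice checks, at the cost of a little naturality bookkeeping at the end. It also generalizes to $n$-cubes, by splitting $\mathcal{C}(N)$ into $\{A\ni i\}$ and $\mathcal{C}(N)\setminus\{i\}$.

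One caution: the general criterion you quote is not sufficient for $\infty$-categorical limits. Relations between $S\setminus T$ and $T\setminus S$ merely \emph{factoring} through $S\cap T$ does not give the pullback formula. Take $s<u_1,u_2<t$ with $S=\{s,u_1,u_2\}$ and $T=\{u_1,u_2,t\}$, and a constant diagram with value $X\in\An$. Then the pullback is the free loop space of $X$ rather than $X$. What you actually use, and what is correct, is that there are no comparabilities at all between $3$ and $\{1,2,12\}$. Hence every chain of $P$ lies in one of the pieces, and $N(P)=N(S)\cup_{N(S\cap T)}N(T)$.
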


\begin{remark}
Cubical diagrams as in Lemma \ref{sheafconditionthree} are referred to as \emph{cartesian cubes} in the context of Goodwillie calculus, see e.g. \cite{anel2018goodwillie}. We expect Lemma \ref{sheafconditionthree} to be well-known, but were not aware of a direct reference, hence included it. The obvious generalization to $n$-dimensional cubes is included in Appendix \ref{sheafarbitraryrank}.
\end{remark}

\begin{proof}
Let us define two diagram shapes, namely
\[ D_1 = \begin{tikzcd}
	&&& \bullet \\
	\bullet && \bullet && \bullet \\
	&&& \bullet \\
	&& \bullet && \bullet
	\arrow[from=1-4, to=2-5]
	\arrow[from=1-4, to=3-4]
	\arrow[from=2-1, to=3-4]
	\arrow[curve={height=12pt}, from=2-1, to=4-3]
	\arrow[from=2-3, to=2-5]
	\arrow[from=2-3, to=4-3]
	\arrow[from=2-5, to=4-5]
	\arrow[from=3-4, to=4-5]
	\arrow[from=4-3, to=4-5]
\end{tikzcd} \hspace{3ex}\text{ and }\hspace{4ex} D_2 = \begin{tikzcd}
	& \textcolor{rgb,255:red,92;green,92;blue,214}{\bullet} && \bullet \\
	\bullet && \bullet && \bullet \\
	& \textcolor{rgb,255:red,92;green,92;blue,214}{\bullet} && \bullet \\
	&& \bullet && \bullet.
	\arrow[color={rgb,255:red,92;green,92;blue,214}, from=1-2, to=1-4]
	\arrow[color={rgb,255:red,92;green,92;blue,214}, from=1-2, to=2-3]
	\arrow[color={rgb,255:red,92;green,92;blue,214}, from=1-2, to=3-2]
	\arrow[from=1-4, to=2-5]
	\arrow[from=1-4, to=3-4]
	\arrow[color={rgb,255:red,92;green,92;blue,214}, from=2-1, to=3-2]
	\arrow[from=2-1, to=3-4]
	\arrow[curve={height=12pt}, from=2-1, to=4-3]
	\arrow[from=2-3, to=2-5]
	\arrow[from=2-3, to=4-3]
	\arrow[from=2-5, to=4-5]
	\arrow[color={rgb,255:red,92;green,92;blue,214}, from=3-2, to=3-4]
	\arrow[color={rgb,255:red,92;green,92;blue,214}, from=3-2, to=4-3]
	\arrow[from=3-4, to=4-5]
	\arrow[from=4-3, to=4-5]
\end{tikzcd}\]
Both are given as realizations of posets. The poset of $D_1$ embeds fully faithfully into $D_2$ as the set of black vertices. Now, if we have the starting datum of the following diagram $A_\bullet$, indexed by $D_1$, in $\mathcal{D}$ as represented by
\[\begin{tikzcd}
	&&& {A_2} \\
	{A_3} && {A_1} && {A_{12}} \\
	&&& {A_{23}} \\
	&& {A_{13}} && {A_{123}}
	\arrow[from=1-4, to=2-5]
	\arrow[from=1-4, to=3-4]
	\arrow[from=2-1, to=3-4]
	\arrow[curve={height=12pt}, from=2-1, to=4-3]
	\arrow[from=2-3, to=2-5]
	\arrow[from=2-3, to=4-3]
	\arrow[from=2-5, to=4-5]
	\arrow[from=3-4, to=4-5]
	\arrow[from=4-3, to=4-5]
\end{tikzcd}\]
then the right Kan extension to $D_2$ is given by
\[\begin{tikzcd}
	& \textcolor{rgb,255:red,92;green,92;blue,214}{{A_1 \times_{ A_{12}} A_2}} && {A_{2}} \\
	{A_3} && {A_{1}} && {A_{12}} \\
	& \textcolor{rgb,255:red,92;green,92;blue,214}{{A_{13} \times_{A_{123}} A_{23}}} && {A_{23}} \\
	&& {A_{13}} && {A_{123}}
	\arrow[color={rgb,255:red,92;green,92;blue,214}, from=1-2, to=1-4]
	\arrow[color={rgb,255:red,92;green,92;blue,214}, from=1-2, to=2-3]
	\arrow[color={rgb,255:red,92;green,92;blue,214}, from=1-2, to=3-2]
	\arrow[from=1-4, to=2-5]
	\arrow[from=1-4, to=3-4]
	\arrow[color={rgb,255:red,92;green,92;blue,214}, from=2-1, to=3-2]
	\arrow[from=2-1, to=3-4]
	\arrow[curve={height=12pt}, from=2-1, to=4-3]
	\arrow[from=2-3, to=2-5]
	\arrow[from=2-3, to=4-3]
	\arrow[from=2-5, to=4-5]
	\arrow[color={rgb,255:red,92;green,92;blue,214}, from=3-2, to=3-4]
	\arrow[color={rgb,255:red,92;green,92;blue,214}, from=3-2, to=4-3]
	\arrow[from=3-4, to=4-5]
	\arrow[from=4-3, to=4-5]
\end{tikzcd}\]
by the general formula for right Kan extension. The limits over both diagrams agree. Finally, observe that the inclusion of the spine $S$
\[\begin{tikzcd}
	& \textcolor{rgb,255:red,92;green,92;blue,214}{\bullet} \\
	\bullet \\
	& \textcolor{rgb,255:red,92;green,92;blue,214}{\bullet}
	\arrow[color={rgb,255:red,92;green,92;blue,214}, from=1-2, to=3-2]
	\arrow[color={rgb,255:red,92;green,92;blue,214}, from=2-1, to=3-2]
\end{tikzcd}\]
is final in $D_2$, hence we obtain the formula for the limit $\lim_{D_1} A_\bullet$ as the pullback
\[\begin{tikzcd}
	{\lim_{F} A_\bullet} & \textcolor{rgb,255:red,92;green,92;blue,214}{{A_1 \times_{ A_{12}} A_2}} \\
	{A_3} & \textcolor{rgb,255:red,92;green,92;blue,214}{{A_{13} \times_{A_{123}} A_{23}}.}
	\arrow[from=1-1, to=1-2]
	\arrow[from=1-1, to=2-1]
	\arrow["\lrcorner"{anchor=center, pos=0.125}, draw=none, from=1-1, to=2-2]
	\arrow[color={rgb,255:red,92;green,92;blue,214}, from=1-2, to=2-2]
	\arrow[color={rgb,255:red,92;green,92;blue,214}, from=2-1, to=2-2]
\end{tikzcd}\]
\end{proof}

\subsection{Extending excisiveness}

With Lemma \ref{sheafconditionthree} under our belt, we can extend from closed and saturated compact descent to descent for elementary compact sets. 

\begin{proposition}[Elementary compact descent] \label{elementaryexcision}
Suppose $F : \mathrm{SC}^{\op} \rightarrow \mathcal{D}$ is a functor, with $\mathcal{D}$ an $\infty$-category with finite limits, such that for all stably compact $X$, $K$ a compact subset of $X$, $C$ a closed subset of $X$, and $S$ a saturated compact subset of $X$ the squares
    \[\begin{tikzcd}
	{F(K \cup C)} & {F(C)} & {F(K \cup S)} & {F(S)} \\
	{F(K)} & {F(K \cap C)} & {F(K)} & {F(K \cap S).}
	\arrow[from=1-1, to=1-2]
	\arrow[from=1-1, to=2-1]
	\arrow["\lrcorner"{anchor=center, pos=0.125}, draw=none, from=1-1, to=2-2]
	\arrow[from=1-2, to=2-2]
	\arrow[from=1-3, to=1-4]
	\arrow[from=1-3, to=2-3]
	\arrow["\lrcorner"{anchor=center, pos=0.125}, draw=none, from=1-3, to=2-4]
	\arrow[from=1-4, to=2-4]
	\arrow[from=2-1, to=2-2]
	\arrow[from=2-3, to=2-4]
\end{tikzcd}\]
are pullback squares. Then it also holds that for any compact subset $E \subset X$ of the form $E = S \cup C$, with $S$ being saturated compact and $C$ being closed, that the square
\[\begin{tikzcd}
	{F(K \cup E)} & {F(K)} \\
	{F(E)} & {F(K \cap E)}
	\arrow[from=1-1, to=1-2]
	\arrow[from=1-1, to=2-1]
	\arrow["\lrcorner"{anchor=center, pos=0.125}, draw=none, from=1-1, to=2-2]
	\arrow[from=1-2, to=2-2]
	\arrow[from=2-1, to=2-2]
\end{tikzcd}\]
is a pullback square.
\end{proposition}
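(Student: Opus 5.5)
We apply Lemma \ref{sheafconditionthree} to the cube of values of $F$ on the three perfect subsets $K$, $S$, $C$ of $X$. Here $K$ is the given perfect (``compact'') subset, and $E = S \cup C$ with $S$ saturated compact and $C$ closed. Throughout, unions and intersections are formed in $X$. Every set that occurs is again a perfect subset, so $F$ may be evaluated on it, and the hypotheses may be applied to it in place of $K$. Perfect subsets are closed under finite unions and intersections, since they are the closed sets of $X^{\patch}$.

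Consider the cube whose initial vertex is $F(K \cup S \cup C)$. Its three adjacent vertices are $F(K)$, $F(S)$ and $F(C)$, its pairwise vertices are the values of $F$ on the pairwise intersections, and its terminal vertex is $F(K \cap S \cap C)$. We show that this cube is cartesian in two different ways.

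For the first, we order the generators as $(S, K, C)$. By Lemma \ref{sheafconditionthree}, the cube is cartesian iff the square
\[\begin{tikzcd}
	{F(K\cup S\cup C)} & {F(C)} \\
	{F(S)\times_{F(S\cap K)} F(K)} & {F(S\cap C)\times_{F(S\cap K\cap C)} F(K\cap C)}
	\arrow[from=1-1, to=1-2]
	\arrow[from=1-1, to=2-1]
	\arrow[from=1-2, to=2-2]
	\arrow[from=2-1, to=2-2]
\end{tikzcd}\]
is a pullback. By saturated compact descent applied to $K$, the bottom left corner is $F(K \cup S)$. Saturated compact descent applied inside $C$ to the perfect subset $K \cap C$ and the saturated compact set $S \cap C$ identifies the bottom right corner with $F((K \cup S) \cap C)$. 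One subtlety is that $C$ is stably compact and $S \cap C$ is saturated compact in $C$, so the hypothesis applies with $X$ replaced by $C$. The square is then the closed descent square for the perfect set $K \cup S$ and the closed set $C$, which is a pullback by hypothesis. Hence the cube is cartesian.

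For the second, we order the generators as $(S, C, K)$. Lemma \ref{sheafconditionthree} now gives that the cube is cartesian iff
\[\begin{tikzcd}
	{F(K\cup S\cup C)} & {F(K)} \\
	{F(S)\times_{F(S\cap C)} F(C)} & {F(S\cap K)\times_{F(S\cap C\cap K)} F(C\cap K)}
	\arrow[from=1-1, to=1-2]
	\arrow[from=1-1, to=2-1]
	\arrow[from=1-2, to=2-2]
	\arrow[from=2-1, to=2-2]
\end{tikzcd}\]
is a pullback. By closed descent applied to the perfect set $S$ and the closed set $C$, the bottom left corner is $F(S \cup C) = F(E)$. Closed descent applied inside $K$, with the closed set $C \cap K$ of $K$ and the perfect set $S \cap K$, identifies the bottom right corner with $F(K \cap E)$. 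The resulting square is exactly the desired square, up to transposition. Since the first argument showed that the cube is cartesian, this square is a pullback, which proves the claim.

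The main obstacle is verifying that these identifications are compatible with the maps in the squares, that is, that the comparison maps induced by restriction agree with the canonical maps from the iterated pullbacks. This follows from functoriality of $F$ on the poset of perfect subsets, because all maps are restrictions. The second point to check is that the hypotheses may legitimately be applied to the subspaces $C$ and $K$. This holds because a closed (respectively saturated compact) subset of $X$ intersected with a perfect subspace $K$ is closed (respectively saturated compact) in $K$. For the closed case this is pullback stability. For the saturated compact case it uses that the inclusion $K \hookrightarrow X$ is perfect, so that preimages of saturated compact sets are saturated compact.
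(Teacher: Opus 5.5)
Your proof is correct and is essentially the paper's argument: both apply Lemma \ref{sheafconditionthree} twice to the cube on $K$, $S$, $C$ with two different orderings. One application reduces the desired square to cartesianness of the cube, and the other verifies cartesianness using the descent hypotheses applied inside stably compact subspaces. The only difference is cosmetic: you swap the roles of $S$ and $C$ (closed descent for the reduction and saturated compact descent inside $C$ for cartesianness, where the paper does the reverse), and you state explicitly the subspace-transfer facts that the paper leaves implicit.
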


\begin{proof}
The proof is just an iterated application of Lemma \ref{sheafconditionthree}. Observe that by assumption we have the pullbacks
\[\begin{tikzcd}
	{F(S \cup C)} & {F(C)} & {F((K \cap S) \cup (K \cap C))} & {F(K \cap C)} \\
	{F(S)} & {F( S \cap C)} & {F( K \cap S)} & {F(K \cap S \cap C)}
	\arrow[from=1-1, to=1-2]
	\arrow[from=1-1, to=2-1]
	\arrow["\lrcorner"{anchor=center, pos=0.125}, draw=none, from=1-1, to=2-2]
	\arrow[from=1-2, to=2-2]
	\arrow[from=1-3, to=1-4]
	\arrow[from=1-3, to=2-3]
	\arrow["\lrcorner"{anchor=center, pos=0.125}, draw=none, from=1-3, to=2-4]
	\arrow[from=1-4, to=2-4]
	\arrow[from=2-1, to=2-2]
	\arrow[from=2-3, to=2-4]
\end{tikzcd}\]
since $S \subset S \cup C$ and $K \cap S \subset (K \cap S) \cup (K \cap C) = K \cap ( S \cup C )$ are saturated compact subsets. Applying Lemma \ref{sheafconditionthree}, this means that the square
\[\begin{tikzcd}
	{F(K \cup S \cup C)} & {F(K)} \\
	{F(S \cup C )} & {F(K \cap (S \cup C))}
	\arrow[from=1-1, to=1-2]
	\arrow[from=1-1, to=2-1]
	\arrow[from=1-2, to=2-2]
	\arrow[from=2-1, to=2-2]
\end{tikzcd}\]
is a pullback square iff $F(K \cup S \cup C)$ is the limit of the corresponding cube built from $F(K), F(S)$ and $F(C)$, which, again by Lemma \ref{sheafconditionthree}, is the case if the squares
\[\begin{tikzcd}
	{F(K \cup C \cup S )} & {F(S)} & {F(K \cup C) } & {F(C)} & {} \\
	{F(K \cup C)} & {F( (K \cap S) \cup (C \cap S))} & {F( K)} & {F( K \cap C)}
	\arrow[from=1-1, to=1-2]
	\arrow[from=1-1, to=2-1]
	\arrow["\lrcorner"{anchor=center, pos=0.125}, draw=none, from=1-1, to=2-2]
	\arrow[from=1-2, to=2-2]
	\arrow[from=1-3, to=1-4]
	\arrow[from=1-3, to=2-3]
	\arrow["\lrcorner"{anchor=center, pos=0.125}, draw=none, from=1-3, to=2-4]
	\arrow[from=1-4, to=2-4]
	\arrow[from=2-1, to=2-2]
	\arrow[from=2-3, to=2-4]
\end{tikzcd}\]
and
\[\begin{tikzcd}
	{F((K \cap S) \cup (C \cap S))} & {F(C \cap S)} \\
	{F(K \cap S)} & {F( K \cap C \cap S)}
	\arrow[from=1-1, to=1-2]
	\arrow[from=1-1, to=2-1]
	\arrow["\lrcorner"{anchor=center, pos=0.125}, draw=none, from=1-1, to=2-2]
	\arrow[from=1-2, to=2-2]
	\arrow[from=2-1, to=2-2]
\end{tikzcd}\]
are pullbacks. This is the case for all three squares since:
\begin{itemize}
\item $S \subset K \cup C \cup S$ is saturated compact.
\item $C \subset K \cup C$ is closed.
\item $C \cap S \subset (K \cap S) \cup (C \cap S) =  (K \cup C) \cap S$ is closed.
\end{itemize}
Therefore in all of these cases, we can use excision for either closed or saturated compact inclusions, hence we conclude the claim.
\end{proof}

Now assume we have a functor $F : \mathrm{SC}^{\op} \rightarrow \mathcal{D}$ as in Proposition \ref{elementaryexcision}, and that $\mathcal{D}$ is furthermore stable. Since crucially in this context pushout squares agree with pullback squares, we see that $F$ restricted to elementary compact sets satisfies a \emph{cosheaf} condition on elementary compacts (with covers given by intersections!). We claim that this, together with compatibility with filtered intersections, is enough to produce a cosheaf on the frame $\mathcal{O}(X^{\patch}) \cong \mathcal{K}(X)^{\op}$ of all perfect subspaces. The only caveat is that $F(1) \neq 0$ in general, an issue that can be solved by taking cofibers.

\begin{proposition} \label{properexcision}
    Let $X$ be a stably compact space, let $\mathcal{D}$ be stable and closed under colimits, and let $F : \mathcal{O}(X^{\patch}) \cong \mathcal{K}(X)^{\op} \rightarrow \mathcal{D}$ be a functor satisfying the two conditions: 
    \begin{enumerate}
        \item For all $K \subset X$ compact and $E \subset X$ elementary compact we have a pushout square \[\begin{tikzcd}
	{F(K \cup E)} & {F(K)} \\
	{F(E)} & {F(E \cap K).}
	\arrow[from=1-1, to=1-2]
	\arrow[from=1-1, to=2-1]
	\arrow[from=1-2, to=2-2]
	\arrow[from=2-1, to=2-2]
	\arrow["\lrcorner"{anchor=center, pos=0.125, rotate=180}, draw=none, from=2-2, to=1-1]
\end{tikzcd}\]
        \item Whenever $K = \bigcap_{i \in I} K_i$ is a directed intersection of compacts, we have
        \[ F( K ) \simeq \colim_{i \in I} F(K_i).\]
\end{enumerate}
Then the functor $F^\#$ defined via $F^\#(K) = \mathrm{cofib}( F(1) \rightarrow F(K) ) $ is a cosheaf on $X^{\patch}$.
\end{proposition}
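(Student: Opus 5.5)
The plan is to translate the statement into the language of opens of $X^{\patch}$ and check the three clauses of the cosheaf condition (empty cover, binary covers, directed covers) separately. Under $\mathcal{O}(X^{\patch}) \cong \mathcal{K}(X)^{\op}$, an open $U$ corresponds to $K = X\setminus U$. Unions of opens go to intersections of perfect subspaces, intersections of opens go to unions, and the empty open goes to $X$.

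\emph{Empty cover and directed covers.} $F^\#(\emptyset) = \mathrm{cofib}(F(X)\to F(X)) = 0$, which is the empty-cover condition. A directed union of opens corresponds to a directed intersection $K=\bigcap_i K_i$. Condition (2), together with the fact that $\mathrm{cofib}(F(1)\to -)$ commutes with filtered colimits in the stable category $\mathcal{D}$, gives $F^\#(K)\simeq \colim_i F^\#(K_i)$.

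\emph{Binary covers.} For the binary condition we must show, for all $K,L\in\mathcal{K}(X)$, that
\[ F(K\cup L)\to F(K),\ F(L)\to F(K\cap L) \]
is a pushout square. Taking cofibers of the constant map from $F(1)$ preserves pushouts, so the same then holds for $F^\#$. By the Lemma of Escardó, every perfect subspace is an intersection of elementary compacts, and elementary compacts are closed under finite unions. Let $\mathcal{E}_n$ denote the set of intersections of at most $n$ elementary compacts. I will prove the pushout property for $K\in\mathcal{E}_m$, $L\in\mathcal{E}_n$ by induction on $m+n$. The case where either is elementary is condition (1), which applies because condition (1) allows arbitrary $K$.

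\emph{Inductive step.} Write $L=L'\cap E$ with $L'\in\mathcal{E}_{n-1}$ and $E$ elementary. Then
\[ K\cup L=(K\cup L')\cap(K\cup E), \]
and $K\cup E\in\mathcal{E}_m$, since each factor $E_i\cup E$ of $K=\bigcap E_i$ is again elementary. This allows me to set up the three-generator cube on $K$, $L'$, $E$ and use the stable (pushout) form of Lemma \ref{sheafconditionthree}, in the same way as in the proof of Proposition \ref{elementaryexcision}. Each face of the cube is one of the following:
\begin{itemize}
\item a pair involving $E$, covered by condition (1);
\item the pair $(K,L')$, or pairs such as $(K\cup E,\,K\cup L')$ or $(K\cap E,\,L'\cap E)$, all of which have smaller complexity or have an elementary member after distributing, and are covered by the inductive hypothesis.
\end{itemize}
Pasting these faces yields the desired square for $(K,\,L'\cap E)$. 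Some bookkeeping is needed so that the complexity measure genuinely decreases. I would use the lexicographic order on $(m,n)$, always peeling off a factor from the member with more factors.

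\emph{General $K,L$.} Write $L=\bigcap_j L_j$ as a directed intersection of finite intersections of elementaries, and similarly $K=\bigcap_i K_i$. Set-theoretically,
\[ K\cup L=\bigcap_{i,j}(K_i\cup L_j),\qquad K\cap L=\bigcap_{i,j}(K_i\cap L_j), \]
and both families are directed. By condition (2) each corner of the square is the filtered colimit of the corresponding finite-level corners. Filtered colimits commute with pushouts, so the square for $(K,L)$ is a pushout.

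\emph{Main obstacle.} I expect the hard step to be the finite induction: choosing the right cube and verifying that every face falls under either condition (1) or a strictly smaller case of the induction. This is where the interplay between unions and intersections of elementary compacts must be handled carefully.
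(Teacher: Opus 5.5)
Your reduction to the empty, binary and directed clauses is fine, and so is your passage to directed intersections. The route also differs from the paper's. The paper never proves binary excision for all pairs directly: it uses Lemma \ref{comparisonlemma} to get the cosheaf $\mathrm{Lan}\,\mathrm{Res}\,F^\#$ on $\mathcal{O}(X^{\patch})$ for free. It then only inducts on the number of elementary factors of a single compact, to show that the counit $\mathrm{Lan}\,\mathrm{Res}\,F^\#\to F^\#$ is an equivalence.

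However, the step you flag as the main obstacle is a genuine gap as you have set it up. An induction over pairs $(K,L)\in\mathcal{E}_m\times\mathcal{E}_n$, with complexity measured on both members, does not close. Since $K\cup L'=\bigcap_{i,j}(E_i\cup E'_j)$ needs up to $m(n-1)$ factors, the face $(K\cup E,K\cup L')$ has complexity $(m,m(n-1))$. The face $(K\cap E,L'\cap E)$ has complexity $(m+1,n)$. In general neither is smaller than $(m,n)$ in the orders you suggest, and neither contains a member that is elementary in $X$. Copying the proof of Proposition \ref{elementaryexcision} does not help either. There, the awkward face is handled because $C\cap S$ is closed \emph{relative to} the subspace $(K\cup C)\cap S$, which is legitimate since those hypotheses hold for every stably compact space. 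Condition (1) here only concerns elementary compacts of the fixed $X$.

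The missing idea is to let the first argument be arbitrary and never measure its size.
\begin{itemize}
\item Let $\mathcal{G}\subset\mathcal{K}(X)$ consist of those $L$ for which the square for $(K,L)$ is cartesian for \emph{every} $K\in\mathcal{K}(X)$.
\item Condition (1) gives $\mathcal{E}(X)\subset\mathcal{G}$.
\item Your colimit argument, approximating only $L$, shows that $\mathcal{G}$ is closed under directed intersections.
\item $\mathcal{G}$ is also closed under binary intersections. Take $L',E\in\mathcal{G}$, $L=L'\cap E$ and any $K$. Put $R(A,B)=\mathrm{fib}(F(A)\to F(B))$ for $B\subset A$, and $M=L'\cap(K\cup E)$, so that $K\cup L\supset M\supset L$ and $K\supset K\cap L'\supset K\cap L$.
\item The map $R(K\cup L,L)\to R(K,K\cap L)$ is an equivalence iff the square is cartesian (Lemma \ref{pullbackdetectedbyfibers}). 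It is a map of fiber sequences with outer components $R(K\cup L,M)\to R(K,K\cap L')$ and $R(M,L)\to R(K\cap L',K\cap L)$.
\item The first outer component is an equivalence by two-out-of-three, using the instances $(K\cup L,L')$ and $(K,L')$. Here $(K\cup L)\cap L'=M$ and $(K\cup L)\cup L'=K\cup L'$.
\item The second outer component is an equivalence in the same way, using $(M,E)$ and $(K\cap L',E)$. Here $M\cap E=L$ and $M\cup E=(K\cap L')\cup E$.
\end{itemize}
All four instances have second argument $L'$ or $E$, so no complexity bookkeeping is needed. Escard\'o's lemma then gives $\mathcal{G}=\mathcal{K}(X)$.

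With this repair your proof is a legitimate and more elementary alternative to the paper's. It avoids the comparison lemma, and it avoids the check (asserted in the paper) that $F^\#$ restricted to $\mathcal{E}(X)^{\op}$ is already a cosheaf for the induced topology.
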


Before we begin the proof, let us cite a standard lemma on the use of Grothendieck topologies.

\begin{lemma}[Comparison lemma, see \cite{hoyois_2014}, Lemma C.3] \label{comparisonlemma} Let $(P, \tau)$ be a poset equipped with a Grothendieck topology and $P_0 \subset P$ a subset of $P$ such that:
\begin{itemize}
\item Every object in $P$ can be covered by objects in $P_0$.
\item $P_0$ is closed under meets in $P$.
\end{itemize}
Let $\tau_0$ be the induced Grothendieck topology on $P_0$ by restriction. Then the restriction
$$ \Sh(P, \tau) \rightarrow \Sh(P_0, \tau_0) $$
is an equivalence of $\infty$-categories, with inverse given by right Kan extension.
\end{lemma}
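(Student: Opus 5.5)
The plan is to prove the statement by establishing three facts about the restriction functor $\mathrm{res} : \mathrm{Fun}(P^{\op},\An) \to \mathrm{Fun}(P_0^{\op},\An)$ and its right adjoint, the right Kan extension $\mathrm{ran}$. First, every $\tau$-sheaf on $P$ is right Kan extended from $P_0$ and restricts to a $\tau_0$-sheaf. Second, $\mathrm{ran}$ sends $\tau_0$-sheaves to $\tau$-sheaves. Third, $\mathrm{res}\circ \mathrm{ran}\simeq \mathrm{id}$, which holds formally because $P_0\subset P$ is fully faithful. Together these show that $\mathrm{res}$ and $\mathrm{ran}$ restrict to mutually inverse equivalences between $\Sh(P,\tau)$ and $\Sh(P_0,\tau_0)$. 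Throughout, $\tau_0$ is the induced topology: a sieve $R$ on $q\in P_0$ (inside $P_0$) covers iff the sieve $\langle R\rangle$ it generates in $P$ is $\tau$-covering. Closure of $P_0$ under meets guarantees that pullbacks of sieves stay inside $P_0$, so $\tau_0$ is indeed a Grothendieck topology.

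The key combinatorial tool will be a finality observation used repeatedly. Let $R\subset P_0$ be a sieve, and let $r\in\langle R\rangle$. Then the poset $\{q\in R : r\le q\}$ is nonempty by definition of $\langle R\rangle$. It is also closed under binary meets: $R$ is downward closed in $P_0$ and $P_0$ is closed under meets. A nonempty poset with binary meets is weakly contractible, so the inclusion $R\hookrightarrow \langle R\rangle$ is final for limits of presheaves on $P^{\op}$. The same argument shows that $P_0\cap P_{\le p}\hookrightarrow S_p$ is final, where $S_p$ is the sieve on $p$ generated by $P_0\cap P_{\le p}$. By hypothesis, $S_p$ contains a covering sieve and is therefore itself $\tau$-covering.

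For the first fact, let $G$ be a $\tau$-sheaf. The sheaf condition for the covering sieve $S_p$ together with finality gives
$$G(p)\simeq \lim_{q\in P_0,\,q\le p} G(q),$$
which is exactly the pointwise formula for $\mathrm{ran}(\mathrm{res}\,G)(p)$. Hence $G$ is right Kan extended from $P_0$. Similarly, for a $\tau_0$-covering sieve $R$ on $q\in P_0$, we have $G(q)\simeq\lim_{\langle R\rangle}G\simeq\lim_R G$ by finality, so $\mathrm{res}\,G$ is a $\tau_0$-sheaf.

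For the second fact, let $H$ be a $\tau_0$-sheaf and let $T$ be a $\tau$-covering sieve on $p\in P$. We must show that $\mathrm{ran}H(p)\to\lim_{t\in T}\mathrm{ran}H(t)$ is an equivalence. Since $\mathrm{ran}H$ is itself right Kan extended, the right-hand side can be computed as a limit of $H$ over $T\cap P_0$, by transitivity of right Kan extensions and a cofinality argument. For each $q\in P_0\cap P_{\le p}$, the pullback sieve $q^*T$ covers $q$. Its $P_0$-part $q^*T\cap P_0$ is $\tau_0$-covering: by the covering hypothesis and transitivity of $\tau$, the sieve generated by $q^*T\cap P_0$ is $\tau$-covering. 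Therefore $H(q)\simeq\lim_{q^*T\cap P_0}H$. Substituting this into the formula for $\mathrm{ran}H(p)$ gives an iterated limit $\lim_{q}\lim_{s\le q,\,s\in T\cap P_0}H(s)$. Using Fubini and the fact that $T\cap P_0\cap P_{\le p}$ is cofinal in the resulting diagram (again via meets), this collapses to $\lim_{T\cap P_0}H$, as required.

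I expect the main obstacle to be this last bookkeeping step. It requires checking carefully that the $P_0$-part of a pulled-back covering sieve is again covering, which is where both hypotheses are used. It also requires verifying that the Fubini and cofinality identifications are compatible with the canonical comparison map, rather than merely identifying abstractly equivalent limits.
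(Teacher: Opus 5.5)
Your argument is correct. It takes a genuinely different route from the paper, which gives no argument of its own and simply cites Hoyois's comparison lemma, stated for general small $\infty$-categorical sites rather than posets.

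Your proof is self-contained and specific to posets, and the meet hypothesis does the essential work. Closure of $P_0$ under meets makes each comparison poset $\{q\in R : r\le q\}$ a nonempty meet-semilattice, hence weakly contractible. So every limit over a sieve can be computed on its $P_0$-part, which gives both that $\tau$-sheaves are right Kan extended from $P_0$ and that they restrict to $\tau_0$-sheaves. Density together with local character then shows that $T\cap P_0\cap P_{\le q}$ is $\tau_0$-covering for each $q\in P_0\cap P_{\le p}$. The citation buys generality; your route buys transparency in exactly the setting the paper uses in Proposition \ref{properexcision}.

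The bookkeeping step you flag as the main obstacle closes without any Fubini or cofinality argument:
\begin{itemize}
\item By the pointwise formula, the functor $q\mapsto \lim_{s\in T\cap P_0,\, s\le q}H(s)$ on $P_0\cap P_{\le p}$ is the right Kan extension of $H|_{T\cap P_0}$.
\item The $\tau_0$-sheaf condition says precisely that the unit $H|_{P_0\cap P_{\le p}}\to \mathrm{Ran}(H|_{T\cap P_0})$ is an equivalence.
\item Taking limits over $P_0\cap P_{\le p}$, and using that the limit of a right Kan extension equals the limit of the original diagram, shows that the canonical restriction map $\lim_{P_0\cap P_{\le p}}H\to\lim_{T\cap P_0}H$ is an equivalence. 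Compatibility with the comparison map is automatic.
\end{itemize}

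A minor point: the reason $\tau_0$ is stable under pullback is the identity $\langle R\cap P_{\le q'}\rangle = q'^{*}\langle R\rangle$, and this is where meets enter. The sieve $R\cap P_{\le q'}$ lies in $P_0$ trivially.
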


Let us remark that the same statement thus follows for cosheaves, however with inverse given by left Kan extension. 

\begin{proof}[Proof of Proposition \ref{properexcision}] It is immediate that $F^\#$ restricted to the set of elementary compacts $\mathcal{E}(X)^{\op}$ is a cosheaf for the induced Grothendieck topology given by restriction from $\mathcal{O}(X^{\patch})$. Using Lemma \ref{comparisonlemma}, we are left to show that the natural map
$$ \eta : \mathrm{Lan} \mathrm{Res} F^\# \rightarrow  F^\#$$
is an equivalence, which is a statement we only need to verify objectwise for all compact $K \subset X$. We proceed in two steps:
\begin{enumerate}[label=(\alph*)]
    \item We first show that $\eta_K$ is an equivalence for $K$ of the form $K = E_1 \cap \cdots \cap E_n$ with $E_i$ elementary compact, by induction. The induction start is true since the inclusion of elementary compacts into all compacts is fully faithful. Now assume we know that $ \eta $ is an equivalence for intersections of $n-1$ many elementary compacts. Since the left Kan extension of $\mathrm{Res} F^\#$ is a cosheaf on  $\mathcal{O}(X^{\patch})$, the value of $\mathrm{Lan} \mathrm{Res} F^\#$ on $K$ is given as the pushout
    \[\begin{tikzcd}
	{F^\#( (E_1 \cap \cdots  \cap E_{n-1}) \cup E_n) } & {F^\#( E_n )} \\
	{F^\#( E_1 \cap \cdots  \cap E_{n-1}).} & {\mathrm{Lan} \mathrm{Res} F^\#( E_1 \cap \cdots  \cap E_{n-1} \cap E_n)}
	\arrow[from=1-1, to=1-2]
	\arrow[from=1-1, to=2-1]
	\arrow[from=1-2, to=2-2]
	\arrow[from=2-1, to=2-2]
	\arrow["\lrcorner"{anchor=center, pos=0.125, rotate=180}, draw=none, from=2-2, to=1-1]
\end{tikzcd}\]
    But this pushout agrees with $F^\#(E_1 \cap \cdots  \cap E_{n-1} \cap E_n)$ by assumption from point (1).
    \item For arbitrary $K$, we simply write $K$ as a directed intersection of finite intersections of elementary compacts, and see from using (a) together with assumption (2) that $\eta_K$ is also an equivalence.
\end{enumerate}
\end{proof}

\section{Descent on stably compact spaces} \label{descent}

In this section we present an extension of the following theorem due to Clausen.

\begin{theorem}[Clausen, see \cite{krause_nikolaus_puetzstueck} Theorem 3.6.15] \label{closedandprofinite} Let $\mathcal{D}$ be a compactly assembled presentable $\infty$-category and let $F : \mathrm{CH}^{\op} \rightarrow \mathcal{D}$ be a functor such that:
\begin{enumerate}
\item $F(\emptyset) = 1.$
\item \emph{Closed descent:} Whenever $K, L \subset X$ are two closed embeddings, then \[\begin{tikzcd}
	{F(K \cup L)} & {F(K)} \\
	{F(L)} & {F(K \cap L)}
	\arrow[from=1-1, to=1-2]
	\arrow[from=1-1, to=2-1]
	\arrow["\lrcorner"{anchor=center, pos=0.125}, draw=none, from=1-1, to=2-2]
	\arrow[from=1-2, to=2-2]
	\arrow[from=2-1, to=2-2]
\end{tikzcd}\]
is a pullback.
\item \emph{Profinite descent:} Whenever $X_i, i \in I,$ is a cofiltered system in $\mathrm{CH}$, then
$$F( \lim_{i \in I} X_i ) \simeq \colim_{i \in I} F(X_i).$$
\end{enumerate}
Then there exists a natural equivalence of functors
$$ F \simeq \Gamma(-, F( \mathrm{pt} ) ),$$
where for a given compact Hausdorff space $X$, the expression $\Gamma(X, F( \mathrm{pt} ) )$ refers to global sections of the constant sheaf with value $F( \mathrm{pt} ) \in  \mathcal{D}$.
\end{theorem}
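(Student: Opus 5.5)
The plan is to reduce to compact Hausdorff spaces of the simplest type (profinite sets) and then to Hilbert cubes. First I will build, for each compact Hausdorff $X$, a $\mathcal{D}$-valued sheaf $F^X$ on $X$. Restricting $F$ to closed subsets $K \subset X$ gives a functor $K \mapsto F(K)$ on closed subsets. By (1) and (2) it satisfies the finite $\mathcal{K}$-sheaf conditions. Condition (3) applied to filtered intersections $K = \bigcap K_i$, which are cofiltered limits in $\mathrm{CH}$, gives continuity. Hence it is a $\mathcal{K}$-sheaf. Since $\mathcal{D}$ is compactly assembled, filtered colimits are exact, so Theorem \ref{sheavesandksheaves} (in its Hausdorff form) provides a sheaf $F^X$ on $X$ with $\Gamma(X;F^X) \simeq F(X)$. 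By Proposition \ref{naturality}, $F^X$ is functorial in continuous maps: for $f : X \to Y$ there are maps $F^Y \to f_* F^X$, and these are compatible with composition.

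Next I will show that $F^X$ is constant with value $F(\mathrm{pt})$. The stalk at $x$ is $F(\{x\}) = F(\mathrm{pt})$ by Observation \ref{valuesofksheaf}, so there is a candidate comparison map $\underline{F(\mathrm{pt})} \to F^X$ adjoint to $F(\mathrm{pt}) \to F(X)$, induced by $X \to \mathrm{pt}$. The difficulty is that checking equivalence on stalks is not enough without hypercompleteness, so I will argue differently.

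For profinite $X = \lim X_i$ with $X_i$ finite discrete, (1) and (2) give $F(X_i) = \prod_{X_i} F(\mathrm{pt}) = \Gamma(X_i; F(\mathrm{pt}))$. Condition (3), together with the fact that global sections of constant sheaves on profinite sets commute with these cofiltered limits, gives $F(X) \simeq \Gamma(X;F(\mathrm{pt}))$. The same holds for every clopen subset, so the comparison map is an equivalence on a basis and hence an equivalence of sheaves.

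For general $X$, choose a closed embedding $X \hookrightarrow I^S$ (Urysohn), so that by closed descent and the $\mathcal{K}$-sheaf picture it suffices to treat $I^S$, and then, via $I^S = \lim_{N \text{ finite}} I^N$ and (3), to treat finite cubes $I^n$. On $I^n$ I will use the surjection $p : C \to I^n$ from a Cantor set. Its Čech nerve consists of profinite sets $C^{\times_{I^n}(k+1)}$. The key step is showing that $F^{I^n}$ satisfies descent along this proper surjection. Proper base change identifies $p_*$ with a pushforward commuting with stalks, so the descent question reduces, stalkwise and through the $\mathcal{K}$-sheaf description on closed subsets, to $F(K) \simeq \lim_\Delta F(p^{-1}(K)^{\times_K (\bullet+1)})$.

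This proper (profinite) descent is what I expect to be the main obstacle. It cannot be deduced from closed descent by finite manipulations alone. I plan to obtain it by writing $p$ as a cofiltered limit of finite closed covers $\coprod_j K_j \to K$, for which descent is exactly iterated closed descent (2), and then passing to the limit using (3) and compactness (commuting the filtered colimit with the totalization in the compactly assembled, hence finitary-friendly, $\mathcal{D}$; truncating to finite Čech stages is legitimate because the covers are finite). Granting this, both $F^{I^n}$ and the constant sheaf are determined by their pullbacks to the profinite Čech nerve, where they agree, so $F^{I^n}$ is constant.

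Finally, naturality of all identifications in $X$ comes from the functoriality of $X \mapsto F^X$ and of the comparison map, which yields the natural equivalence $F \simeq \Gamma(-,F(\mathrm{pt}))$.
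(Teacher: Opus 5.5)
Most of your setup is sound: building $F^X$ from the $\mathcal{K}$-sheaf $K\mapsto F(K)$, the comparison map $\underline{F(\mathrm{pt})}\to F^X$ and its stalkwise invertibility, and the profinite case all work. So does the reduction to finite cubes. For that last step, apply (3) to each closed $K\subset I^S$ written as $K=\lim_N p^N(K)$ with $p^N(K)\subset I^N$ closed, and use that $\Gamma(-;F(\mathrm{pt}))$ is also continuous along cofiltered limits.

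The gap is the step you flag yourself. Let $Y_k\to K$ be the finite closed covers whose limit is $p^{-1}(K)\to K$, and set $X_k^m=F(Y_k^{\times_K(m+1)})$. Closed descent gives $F(K)\simeq\lim_\Delta X_k^\bullet$ for each $k$, and (3) gives $\colim_k X_k^m\simeq F(p^{-1}(K)^{\times_K(m+1)})$. So you need
\[
\colim_k \lim_{\Delta} X_k^\bullet \longrightarrow \lim_{\Delta} \colim_k X_k^\bullet
\]
to be an equivalence. Compact assembly only makes filtered colimits commute with \emph{finite} limits, and $\lim_\Delta$ is not finite. It does not help that each single totalization reduces to a finite limit over the nerve of the $k$-th cover. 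These finite diagrams grow without bound in $k$, so there is no truncation uniform in $k$, and the limiting cosimplicial object does not come from a finite cover at all. Passing to stalks via proper base change does not help either, since stalks do not commute with $\lim_\Delta$. A telling symptom is that your argument never uses $n<\infty$. Verbatim it would apply to $I^{\mathbb N}$, which is exactly the non-hypercomplete situation where such interchanges cannot be taken for granted.

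Finite dimensionality is the missing input, and once you use it the Cantor-set descent becomes unnecessary. This is how the paper's proof of Theorem~\ref{properandcofilteredexcision} proceeds, and it specializes to the present statement. Since $I^n$ has covering dimension $n$, $\Sh(I^n,\An)$ has finite homotopy dimension and hence enough points. By the lemma of Haine quoted in Section~\ref{descent}, the stalk functors remain jointly conservative after tensoring with the compactly assembled $\mathcal{D}$. So the stalkwise equivalence $\underline{F(\mathrm{pt})}\to F^{I^n}$ you already have is an equivalence (Lemma~\ref{maintheoremforhypercomplete}). Your objection to checking on stalks is only valid when finite dimensionality is unavailable.

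The paper then handles the infinite cube $I^S$ via cofiltered descent and homotopy invariance (Lemma~\ref{homotopyinvariance}, Proposition~\ref{alphahilbertcube}). It handles an arbitrary $X$ by restriction along a closed embedding (Lemma~\ref{inheritanceembedding}). Your cofiltered-limit reduction is an acceptable substitute for this last part and is closer to Clausen's original argument. A repair that keeps your descent route would have to use finite dimensionality (for instance the bounded order of dyadic-cube covers) to control the interchange above, which is considerably more delicate.
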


This extension for stably compact spaces is obtained by switching closed descent with proper descent.

\begin{theorem} \label{properandcofilteredexcision} Let $\mathcal{D}$ be a compactly assembled $\infty$-category, and let $F : \mathrm{SC}^{\op} \rightarrow \mathcal{D}$ be a functor such that:
\begin{enumerate}
    \item $F(\emptyset) = 1.$
    \item \emph{Perfect descent:} Whenever $K, L \subset X$ are two perfect embeddings, then \[\begin{tikzcd}
	{F(K \cup L)} & {F(K)} \\
	{F(L)} & {F(K \cap L)}
	\arrow[from=1-1, to=1-2]
	\arrow[from=1-1, to=2-1]
	\arrow["\lrcorner"{anchor=center, pos=0.125}, draw=none, from=1-1, to=2-2]
	\arrow[from=1-2, to=2-2]
	\arrow[from=2-1, to=2-2]
\end{tikzcd}\]
is a pullback.
\item \emph{Cofiltered descent:} Whenever $X_i, i \in I,$ is a cofiltered system in $\mathrm{SC},$ then
$$F( \lim_{i \in I} X_i ) \simeq \colim_{i \in I} F(X_i).$$
\end{enumerate}
Then there is a natural equivalence of functors
$$F \simeq \Gamma( (-)^{\patch}; F(\mathrm{pt}) ).$$
\end{theorem}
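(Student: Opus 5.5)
Fix a stably compact space $X$. Perfect embeddings $K \subset X$ are exactly the closed subsets of the compact Hausdorff space $X^{\patch}$, as noted after Theorem \ref{patchcoreflection}. So I would restrict $F$ to the poset $\mathcal{K}(X)$ and obtain a functor $F^X : \mathcal{K}(X)^{\op} \rightarrow \mathcal{D}$ given by $K \mapsto F(K)$.

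First I check that $F^X$ is a $\mathcal{K}$-sheaf on $X^{\patch}$ in the Hausdorff sense, where compact saturated sets are just closed sets:
\begin{itemize}
\item $F^X(\emptyset) = 1$ is assumption (1).
\item The Mayer--Vietoris square is perfect descent (2).
\item Filtered intersections $K = \bigcap K_i$ of perfect subspaces are cofiltered limits in $\mathrm{SC}$, by Lemma \ref{sublocaleslimits}, so (3) gives $F(K) \simeq \colim F(K_i)$.
\end{itemize}
By Theorem \ref{sheavesandksheaves}, applied to the compact Hausdorff space $X^{\patch}$ (where we need filtered colimits in $\mathcal{D}$ to be exact, which holds for compactly assembled $\mathcal{D}$), $F^X$ corresponds to a sheaf $\mathcal{G}^X$ on $X^{\patch}$ with $\Gamma(X^{\patch};\mathcal{G}^X) = F(X)$. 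Naturality in perfect maps $f : X \rightarrow Y$ comes from the fact that $f^{-1}$ sends perfect subspaces to perfect subspaces, together with Proposition \ref{naturality} applied to $f^{\patch}$. So the assignment $X \mapsto \mathcal{G}^X$ is compatible with pushforward.

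The main step is to show that $\mathcal{G}^X$ is constant with value $F(\mathrm{pt})$. By Observation \ref{valuesofksheaf}, the stalk at $x$ is $F^X(\{x\}) = F(\mathrm{pt})$, because $\{x\}$ is patch-closed and its subspace is a point. Hence there is a canonical comparison map $\underline{F(\mathrm{pt})} \rightarrow \mathcal{G}^X$, obtained by adjunction from the natural map $F(\mathrm{pt}) \rightarrow F(X)$ induced by $X \rightarrow \mathrm{pt}$ and its restrictions $F(\mathrm{pt}) \rightarrow F(K)$. It is an isomorphism on stalks. Stalkwise detection can fail in $\infty$-topoi, so instead of relying on it I would follow Clausen's argument.

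First treat directed Hilbert cubes. By Corollary \ref{urysohncategorical} and Theorem \ref{urysohnembedding}, $X$ embeds perfectly into $I_\leq^S$, and $\mathcal{G}^X$ is the restriction of $\mathcal{G}^{I_\leq^S}$ to the closed subset $X \subset I^S$. Restrictions of constant sheaves are constant, so it suffices to treat $I_\leq^S$. Writing $I_\leq^S = \lim_{N \subset S \text{ finite}} I_\leq^N$ and using (3), one reduces to finite $N$. For finite $N$, I would reduce further to the point by a homotopy-invariance argument. For this I would use the projection $I^{N} \rightarrow I^{N-1}$ and the contractible and essential geometric morphism of Example \ref{projectionhilbertcube} together with Lemma \ref{preservationofconstantobjects}. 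Alternatively, I would cover $I_\leq$ by the elementary compacts $[0,a]$ and $[a,1]$ and use a subdivision argument: show that $F(I_\leq^N) \rightarrow F(I_\leq^{N} \times \{\text{fine grid}\})$ computes a Čech limit which converges, via (3), to cohomology of $I^N$ with constant coefficients.

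I expect this local constancy, that is, showing $\underline{F(\mathrm{pt})} \simeq \mathcal{G}^X$ beyond stalks, to be the main obstacle. The intended mechanism is Clausen's: compact assembly of $\mathcal{D}$ lets one check the equivalence on compact maps, and in $\mathcal{D}$ the filtered colimits over neighbourhoods $K$ of a point commute with the finite limits involved.

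Once the comparison is an equivalence, taking global sections gives $F(X) \simeq \Gamma(X^{\patch}; F(\mathrm{pt}))$. This is natural in $X$ by the naturality established above.
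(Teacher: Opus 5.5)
Your overall architecture is the paper's. You restrict $F$ to $\mathcal{K}(X)$ to get a $\mathcal{K}$-sheaf on $X^{\patch}$, pass to a sheaf $\mathcal{G}^X$, and compare it with $\underline{F(\mathrm{pt})}$ via a stalkwise equivalence. You then reduce to directed Hilbert cubes by a perfect embedding (Lemma \ref{inheritanceembedding}) and from there to finite-dimensional cubes. The gap is the base case: you never prove that the comparison map is an equivalence for $I^N_\leq$ with $N$ finite, or for its perfect subspaces. You explicitly set stalkwise detection aside and say you will follow Clausen instead. But the paper's argument, like Clausen's, uses stalks exactly at this step. Every closed $X^{\patch}\subset I^N$ has covering dimension at most $N$, so $\Sh(X^{\patch},\An)$ has finite homotopy dimension and hence enough points. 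Because $\mathcal{D}$ is compactly assembled, the tensored stalk functors on $\Sh(X^{\patch},\An)\otimes\mathcal{D}$ remain jointly conservative (the cited lemma of Haine). So the stalkwise equivalence $\alpha_X$ is an equivalence; this is Lemma \ref{maintheoremforhypercomplete}. Your remark that filtered colimits in $\mathcal{D}$ commute with finite limits only shows that stalks are exact. Conservativity needs the finite-dimensionality input, which you never invoke. Stalks genuinely fail only for infinite-dimensional cubes, which is why the paper switches arguments there.

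Neither substitute you offer closes the gap. The contractible and essential morphism $\Sh(I^N)\to\Sh(I^{N-1})$ together with Lemma \ref{preservationofconstantobjects} only shows that pushforward preserves constant sheaves. It says nothing about the $F$-side, namely $p_*\mathcal{G}^{I^N_\leq}\simeq\mathcal{G}^{I^{N-1}_\leq}$, which is the homotopy invariance $F(K\times I_\leq)\simeq F(K)$. Hypotheses (1)--(3) do not give this directly. In the paper, homotopy invariance (Lemma \ref{homotopyinvariance}) is deduced \emph{from} the finite-dimensional case. The contractibility argument is then used in the other direction, to pass from finite $N$ to arbitrary $S$ (Lemma \ref{homotopyinvariancehilbert} and Proposition \ref{alphahilbertcube}), using that $\Sh(I^S,\mathcal{D})$ is the limit of the $\Sh(I^N,\mathcal{D})$ along the $p^N_*$. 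Your phrase ``using (3), one reduces to finite $N$'' has to be made precise in this way, so it too rests on the missing base case. The subdivision idea has the same problem. Perfect descent writes $F(I^N_\leq)$ as a finite limit over each closed grid cover, but the pieces are again cubes. Passing to the colimit over refinements via (3) means commuting a filtered colimit with limits over \v{C}ech diagrams of unbounded size. That convergence is exactly what finite homotopy dimension provides, and you leave it open. As written, the argument is circular at the base case. Inserting the enough-points argument for finite-dimensional patch spaces, and then deriving homotopy invariance from it via (3), would complete it.
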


Let us provide the setup for the proof of Theorem \ref{properandcofilteredexcision}. For the remainder of the section, fix a functor $F : \mathrm{SC}^{\op} \rightarrow \mathcal{D}$ satisfying the conditions of Theorem \ref{properandcofilteredexcision}.  For a given stably compact space $X$, restricting $F$ to the set of perfect embeddings $K \subset X$ produces a $K$-sheaf
$$ F_X : \mathcal{K}(X)^{\op} \rightarrow \mathcal{D}.$$
on $X^{\patch}$, since the set of perfect subsets of $X$ agrees with the set of compact subsets of $X^{\patch}$. Under the equivalence of $K$-sheaves with sheaves, $F_X$ corresponds to a sheaf $F^X \in \Sh(X^{\patch},\mathcal{D})$. We observe the following, using the facts given in Observation \ref{valuesofksheaf}.
\begin{itemize}
\item We have equivalences $\Gamma( X^{\patch}, F^X ) = F^X(X) \simeq F_X(X) = F(X)$ by construction. 
\item For each point $x \in X$, the stalks compute as $ F^X_x \simeq F(\{x\}) \simeq F( \mathrm{pt} )$.
\item The unique map $X \rightarrow \mathrm{pt}$ produces a canonical map $F(\mathrm{pt}) \rightarrow F(X)$, which using the adjunction gives a natural transformation
$$ \alpha_X : \underline{F(\mathrm{pt})} \rightarrow F^X $$
of sheaves in $\Sh(X^{\patch}, \mathcal{D})$.
\item The natural transformation $\alpha_X$ is a stalkwise equivalence for all points $x \in X$.
\end{itemize}
Therefore, we arrive at the following simple conclusion.

\begin{observation}
Let $X$ be a stably compact space. If $\alpha_X$ is an equivalence, then the statement that $F(X) \simeq \Gamma( X^{\patch}; F(\mathrm{pt}) )$ holds.
\end{observation}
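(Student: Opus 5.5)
The statement to prove is the final Observation: if $\alpha_X$ is an equivalence, then $F(X) \simeq \Gamma(X^{\patch}; F(\mathrm{pt}))$. The plan is to read this off directly from the bulleted facts just established, by applying global sections to $\alpha_X$.

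First, recall the construction. $F_X$ is the restriction of $F$ to perfect embeddings $K \subset X$. Since these are exactly the compact (equivalently closed) subsets of the compact Hausdorff space $X^{\patch}$, $F_X$ is a $\mathcal{K}$-sheaf on $X^{\patch}$. It satisfies the $\mathcal{K}$-sheaf axioms by three hypotheses of Theorem \ref{properandcofilteredexcision}:
\begin{itemize}
\item the empty-set axiom comes from hypothesis (1);
\item binary excision comes from perfect descent;
\item filtered intersections come from cofiltered descent, since directed intersections of sublocales are limits in $\mathrm{Loc}$ (Lemma \ref{sublocaleslimits}) and hence in $\mathrm{SC}$.
\end{itemize}
Under Theorem \ref{sheavesandksheaves}, applied to $X^{\patch}$, this $\mathcal{K}$-sheaf corresponds to $F^X$. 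I would also note that the theorem requires exact filtered colimits in $\mathcal{D}$. This holds because $\mathcal{D}$ is compactly assembled, so filtered colimits commute with finite limits there.

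Second, apply $\Gamma(X^{\patch}; -)$ to the equivalence $\alpha_X : \underline{F(\mathrm{pt})} \to F^X$. This gives
\[\Gamma(X^{\patch}; F(\mathrm{pt})) \simeq \Gamma(X^{\patch}; F^X).\]
By Observation \ref{valuesofksheaf}, the right-hand side equals $F^X(X) = (F^X)_{\mathcal{K}}(X) = F_X(X) = F(X)$. Chaining these identifications proves the claim.

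For naturality, I would check that the identification $\Gamma(X^{\patch}; F^X) \simeq F(X)$ is compatible with perfect maps. This uses Proposition \ref{naturality}, applied to $f^{\patch}$, which is proper between compact Hausdorff spaces. The substantive difficulty is not here but in actually proving that $\alpha_X$ is an equivalence. For that step, stalkwise equivalence is not enough on its own, since $\Sh(X^{\patch}, \mathcal{D})$ need not be hypercomplete. I would reduce to the Hilbert cubes $I^S$ via Theorem \ref{urysohnembedding}, using local contractibility there (Example \ref{projectionhilbertcube}, Lemma \ref{preservationofconstantobjects}) together with cofiltered descent.
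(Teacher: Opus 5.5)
Your argument is correct and is the same as the paper's: apply $\Gamma(X^{\patch};-)$ to the equivalence $\alpha_X$, then use $\Gamma(X^{\patch};F^X)=F^X(X)\simeq F_X(X)=F(X)$ from Observation \ref{valuesofksheaf}. Your additional checks are details the paper leaves implicit: that $F_X$ satisfies the $\mathcal{K}$-sheaf axioms, and that filtered colimits are exact in a compactly assembled $\mathcal{D}$.
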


Recall that a topos $\mathcal{X}$ is said to have \emph{enough points}, if the family of stalk functors $x^* : \mathcal{X} \rightarrow \An$ for all points $x$ of $\mathcal{X}$ is jointly conservative, i.e.\ if equivalences can be detected stalkwise. To have the same statement when considering as target the compactly assembled $\infty$-category $\mathcal{D}$, we cite the following lemma.

\begin{lemma}[\cite{haine2022nonabelianbasechangebasechangecoefficients}, Lemma 2.12]
Let $\{p^*_i : T \rightarrow S_i \}_{i \in I}$ be a jointly conservative family of finite limit preserving left adjoint functors between presentable $\infty$-categories, and let $\mathcal{E}$ be a compactly assembled presentable $\infty$-category. Then the family of left adjoints $\{p^*_i \otimes \mathcal{E} : T \otimes \mathcal{E} \rightarrow S_i \otimes \mathcal{E}\}_{i \in I}$ is jointly conservative.
\end{lemma}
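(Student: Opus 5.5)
The plan is to reduce to the compactly generated case, where the statement becomes pointwise, and then pass to $\mathcal{E}$ by a retract argument.

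\emph{Step 1: realize $\mathcal{E}$ as a retract.} Since $\mathcal{E}$ is compactly assembled and presentable, it is a retract in $\PR^L$ of a compactly generated presentable $\infty$-category $\mathcal{P}$. Concretely, $\hat{y} : \mathcal{E} \rightarrow \Ind(\mathcal{E})$ is a left adjoint with $k \hat{y} \simeq \mathrm{id}$. To stay inside $\PR^L$, choose a regular cardinal $\kappa$ such that $\hat{y}$ factors through $\mathcal{P} = \Ind(\mathcal{E}^\kappa)$, and restrict $k$ to $\mathcal{P}$. This is the presentable form of Theorem \ref{closureunderretracts}; see \cite[Section 21.1.2]{lurieSAG}. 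This gives left adjoints $a : \mathcal{E} \rightarrow \mathcal{P}$ and $b : \mathcal{P} \rightarrow \mathcal{E}$ with $b a \simeq \mathrm{id}$. The same retraction is available for the topos-valued tensor products by functoriality of $\otimes$.

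\emph{Step 2: the compactly generated case.} For a presentable $\mathcal{A}$ we have
$$\mathcal{A} \otimes \mathcal{P} \simeq \Fun^R(\mathcal{P}^{\op}, \mathcal{A}) \simeq \Fun^{\mathrm{lex}}((\mathcal{P}^\omega)^{\op}, \mathcal{A}),$$
where $\Fun^{\mathrm{lex}}$ denotes finite-limit-preserving functors. Under this identification, $p_i^* \otimes \mathrm{id}_\mathcal{P}$ should be postcomposition with $p_i^*$. This is well defined precisely because $p_i^*$ preserves finite limits, which is where that hypothesis enters. I would verify this identification by checking it on the generators $t \otimes c$ of $T \otimes \mathcal{P}$.

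Once this is in place, the compactly generated case is formal. A morphism in a functor category is an equivalence iff it is so objectwise. The family $\{p_i^*\}$ is jointly conservative on $T$. Hence $\{p_i^* \circ -\}$ is jointly conservative on $\Fun^{\mathrm{lex}}((\mathcal{P}^\omega)^{\op}, T)$.

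\emph{Step 3: descend along the retract.} Let $f$ be a morphism in $T \otimes \mathcal{E}$ such that every $(p_i^* \otimes \mathcal{E})(f)$ is an equivalence. By bifunctoriality of $\otimes$ in $\PR^L$,
$$(p_i^* \otimes \mathcal{P})\bigl((T \otimes a)(f)\bigr) \simeq (S_i \otimes a)\bigl((p_i^* \otimes \mathcal{E})(f)\bigr),$$
and the right-hand side is an equivalence for every $i$. By Step 2, $(T \otimes a)(f)$ is an equivalence. Applying $T \otimes b$ and using $(T \otimes b)(T \otimes a) \simeq \mathrm{id}$, we conclude that $f$ is an equivalence.

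The main obstacle is Step 1 together with the identification in Step 2. Set-theoretically, one must ensure $\hat{y}$ lands in a small-generated $\Ind(\mathcal{E}^\kappa)$, so that the retraction really lives in $\PR^L$ and can be tensored. For this I would first try citing Lurie's characterization of compactly assembled presentable categories as $\PR^L$-retracts of compactly generated ones. For Step 2, the delicate point is confirming that $p_i^* \otimes \mathcal{P}$ is indeed postcomposition with $p_i^*$ rather than postcomposition followed by some reflection, which is exactly what left exactness of $p_i^*$ guarantees.
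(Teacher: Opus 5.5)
Your argument is correct. The $\PR^L$-retract in Step 1, the joint conservativity of $p_i^* \circ -$ on $\Fun^{\mathrm{lex}}((\mathcal{P}^\omega)^{\op},T)$, and the bifunctoriality computation in Step 3 all go through, and left exactness of $p_i^*$ enters exactly where you say.

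The paper gives no proof of its own here; it only cites Haine. Your reduction is the standard one: first the compactly generated case via the $\Fun^{\mathrm{lex}}$ model, then retracts using \cite[Section 21.1.2]{lurieSAG}. As far as I know, it is also how the cited source argues.

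One tightening for Step 2 concerns ``checking on generators $t \otimes c$''. That only gives objectwise agreement, not a natural equivalence of colimit-preserving functors. A cleaner verification goes through adjoints:
\begin{itemize}
\item Under $T \otimes \mathcal{P} \simeq \Fun^R(\mathcal{P}^{\op},T)$, the right adjoint of $p_i^* \otimes \mathcal{P}$ is postcomposition with $(p_i)_*$. This is how that identification is functorial in $T$.
\item Since $p_i^*$ is left exact, $p_i^* \circ -$ preserves $\Fun^{\mathrm{lex}}((\mathcal{P}^\omega)^{\op},-)$.
\item The adjunction $p_i^* \circ - \dashv (p_i)_* \circ -$ on full functor categories therefore restricts to these full subcategories.
\end{itemize}
Uniqueness of left adjoints then identifies $p_i^* \otimes \mathcal{P}$ with $p_i^* \circ -$. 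Without left exactness, the left adjoint of $(p_i)_* \circ -$ would instead be postcomposition followed by the left exact reflection, which is exactly the subtlety you flagged.
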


The following is immediate.

\begin{lemma} \label{maintheoremforhypercomplete}
The statement $F(X) \simeq \Gamma( X^{\patch}; F(\mathrm{pt}) )$ holds for all stably compact spaces $X$ such that $\Sh(X^{\patch},\An)$ has enough points.
\end{lemma}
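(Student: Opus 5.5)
The plan is to combine the preceding observation with the cited conservativity lemma of Haine. For a stably compact space $X$ we have already constructed the sheaf $F^X \in \Sh(X^{\patch},\mathcal{D})$ and the comparison map $\alpha_X : \underline{F(\mathrm{pt})} \rightarrow F^X$. By the Observation preceding the lemma, it suffices to show that $\alpha_X$ is an equivalence. Then taking global sections and using $\Gamma(X^{\patch}, F^X) \simeq F^X(X) \simeq F_X(X) = F(X)$ (Observation \ref{valuesofksheaf}) gives $F(X) \simeq \Gamma(X^{\patch}; F(\mathrm{pt}))$.

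To show that $\alpha_X$ is an equivalence, I would use the hypothesis that $\Sh(X^{\patch},\An)$ has enough points. This means the family of stalk functors $x^* : \Sh(X^{\patch},\An) \rightarrow \An$, indexed by the points of the topos, is jointly conservative. Each $x^*$ is a finite limit preserving left adjoint. Since $\mathcal{D}$ is compactly assembled, the cited lemma \cite[Lemma 2.12]{haine2022nonabelianbasechangebasechangecoefficients} shows that the family $x^* \otimes \mathcal{D} : \Sh(X^{\patch},\mathcal{D}) \rightarrow \mathcal{D}$ is again jointly conservative. So it suffices to check that $\alpha_X$ induces an equivalence on every stalk.

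The main point to be careful about is that the stalkwise statement recorded earlier concerns the points $x \in X$, whereas enough points refers to the points of the topos. These agree: $X^{\patch}$ is compact Hausdorff, hence sober, so the points of the localic topos $\Sh(X^{\patch},\An)$ are exactly the points of $X^{\patch}$. By the Remark following Theorem \ref{patchcoreflection}, these coincide with the points of $X$.

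It remains to identify the stalks. Each point $x$ is closed in the Hausdorff space $X^{\patch}$, hence saturated compact there. By Observation \ref{valuesofksheaf} applied to $X^{\patch}$, the stalk of $F^X$ at $x$ is the value of the $\mathcal{K}$-sheaf $F_X$ on $\{x\}$, namely $F(\{x\}) \simeq F(\mathrm{pt})$; the stalk of the constant sheaf is also $F(\mathrm{pt})$. The induced map on stalks is $F(\mathrm{pt}) \rightarrow F(\{x\})$, coming from the composite $\{x\} \hookrightarrow X \rightarrow \mathrm{pt}$. This composite is a homeomorphism, so the map is an equivalence. Hence $\alpha_X$ is a stalkwise equivalence, thus an equivalence, and the claim follows.
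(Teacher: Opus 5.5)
Your proposal is correct and is essentially the argument the paper has in mind when it calls the lemma immediate. The preceding Observation reduces the claim to $\alpha_X$ being an equivalence, and the cited lemma of Haine transfers joint conservativity of the stalk functors to $\mathcal{D}$-valued sheaves because $\mathcal{D}$ is compactly assembled. Since $\alpha_X$ is a stalkwise equivalence, it is an equivalence. You also make explicit two details the paper leaves implicit: that the points of the topos $\Sh(X^{\patch},\An)$ are the points of $X$ (by sobriety of $X^{\patch}$ and the Remark after Theorem \ref{patchcoreflection}), and the identification of the stalk map as $F$ applied to the homeomorphism $\{x\} \to \mathrm{pt}$.
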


We note that the condition that $\Sh(X^{\patch},\An)$ has enough points fails for example for $X = I^S_\leq$ a directed Hilbert cube of infinite dimension $S$. Nonetheless, we now have a large class of examples for which Theorem \ref{properandcofilteredexcision} holds, due to the following two theorems.

\begin{theorem}[\cite{luriehtt} Corollary  7.2.1.17] Let $X$ be a topological space. Suppose that $\Sh(X, \An)$ is locally of homotopy dimension $\leq n$ for some integer $n$. Then $\Sh(X, \An)$ has enough points.
\end{theorem}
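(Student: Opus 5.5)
This is the statement of \cite[Corollary 7.2.1.17]{luriehtt}. I would reprove it in two stages. First reduce to hypercompleteness. Then show that stalks at the points of $X$ detect $\infty$-connective morphisms.

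\emph{Step 1: hypercompleteness.} By \cite[Corollary 7.2.1.12]{luriehtt}, a topos that is locally of homotopy dimension $\leq n$ is hypercomplete. In a hypercomplete topos a morphism $f : A \rightarrow B$ is an equivalence iff it is $\infty$-connective. This means: $f$ is an effective epimorphism, and the diagonal maps $A \rightarrow A \times_B A$, $A \rightarrow A \times_{A \times_B A} A$, and so on, are all effective epimorphisms. Equivalently, $\tau_{\leq -1}$ of each iterated diagonal, taken in the relevant slice, is an equivalence.

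\emph{Step 2: the stalk functors preserve this data.} For a point $x \in X$, the stalk functor $x^* : \Sh(X,\An) \rightarrow \An$ is a left exact left adjoint. It therefore commutes with fiber products, with effective epimorphisms, and with $(-1)$-truncation in slices. Hence $x^*$ carries iterated diagonals to iterated diagonals. It follows that $f$ is $\infty$-connective as soon as every iterated diagonal $g : P \rightarrow Q$ of $f$ is an effective epimorphism. So it suffices to show the following: a morphism $g : P \rightarrow Q$ is an effective epimorphism as soon as each $x^*(g)$ is surjective on $\pi_0$.

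\emph{Step 3: effective epimorphisms on a topological space.} This is the step I expect to need the most care. It is the only place where we use that $X$ is a genuine topological space and not merely a locale.

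Being an effective epimorphism is local on $Q$. Choose a section $s \in Q(U)$ over an open $U$ and pull $g$ back along $s$. The claim reduces to showing that the subobject $\tau_{\leq -1}(P \times_Q U) \subset U$ is all of $U$. This subobject is an open $V \subset U$, and the stalk hypothesis says that every point $x \in U$ lies in $V$. Since $X$ is a topological space, an open set is determined by its points, so $V = U$. Hence $g$ is an effective epimorphism, and so $f$ is an equivalence.

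The remaining subtlety is that stalks detect the subobject $V$ of $1$ only because a map $\mathrm{pt} \rightarrow X$ sees the frame $\mathcal{O}(X)$ pointwise, which is spatiality. In the application one needs this for $X^{\patch}$, which is locally compact Hausdorff and therefore sober.
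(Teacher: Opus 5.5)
Your argument is correct and is essentially the proof of \cite[Corollary 7.2.1.17]{luriehtt}, which the paper cites without reproducing: hypercompleteness from local finite homotopy dimension, then detection of $\infty$-connective morphisms at points, with your Step 3 carrying out by hand (via the subterminal objects $y(U)$) the classical fact that set-valued sheaves on a topological space have enough points. Your closing remark about sobriety of $X^{\patch}$ is unnecessary, since in any topological space an open set is determined by the points it contains.
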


\begin{theorem}[\cite{luriehtt} Corollary 7.2.3.7.] Let $X$ be a paracompact topological space. The following conditions are equivalent.
\begin{enumerate}
\item  The covering dimension of $X$ is $\leq n$.
\item  The homotopy dimension of $\Sh(X, \An)$ is $\leq n$.
\item  For every closed subset $A \subset X$, every $m \geq n$, and every continuous map $f_0 : A \rightarrow S^m$, there exists $f : X \rightarrow S^m$ extending $f_0$.
\end{enumerate}
\end{theorem}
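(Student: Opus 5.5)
Since this is the cited result \cite[Corollary 7.2.3.7]{luriehtt}, I would prove it along the lines of Lurie's argument. I would split it into the classical equivalence $(1)\Leftrightarrow(3)$ and the topos-theoretic equivalence with $(2)$.

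For $(1)\Leftrightarrow(3)$ I would invoke classical dimension theory for normal spaces; paracompact Hausdorff spaces are normal. The relevant statement is the Alexandroff--Hurewicz characterization: $\dim X\le n$ iff every map $A\to S^m$ from a closed subset, $m\ge n$, extends over $X$. In one direction one takes an open cover of order $\le n+1$, maps $X$ to its nerve (an $n$-dimensional simplicial complex) by a partition of unity, and extends skeleton by skeleton. In the other direction one produces refinements of order $\le n+1$ by repeatedly extending maps to spheres and pulling back the standard cover. I would treat this step as standard input rather than reprove it.

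For $(2)\Rightarrow(3)$, fix $f_0:A\to S^m$ with $m\ge n$. I would build a sheaf $\mathcal{G}$ on $X$ of ``homotopy-coherent extensions''. Concretely, $\mathcal{G}$ is the fiber of the restriction map $\mathcal{S}\to i_*i^*\mathcal{S}$ over the section $f_0$, where $\mathcal{S}$ is the sheaf $U\mapsto \mathrm{Sing}\,\mathrm{Map}(U,S^m)$. Paracompactness is what makes $\mathcal{S}$ a sheaf and identifies it with the constant sheaf on $S^m$, via the shape theory of paracompact spaces \cite[Remark A.1.4]{lurieha}. I would then compute stalks:
\begin{itemize}
\item for $x\notin A$ the stalk is $S^m$, which is $m$-connective and hence $n$-connective;
\item for $x\in A$ the stalk is contractible, because $S^m$ is an ANR, so $f_0$ extends to a neighborhood of $x$ uniquely up to coherent homotopy.
\end{itemize}
Homotopy dimension $\le n$ then gives a global section of $\mathcal{G}$. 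This section is a map $X\to S^m$ together with a homotopy from its restriction to $A$ to $f_0$, and the homotopy extension property for the closed pair $(X,A)$ (via normality and the ANR property) straightens it to an honest extension.

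For $(3)\Rightarrow(2)$ I must show that every $n$-connective $\mathcal{F}\in\Sh(X,\An)$ has a global section.
\begin{itemize}
\item First, $(3)$ (equivalently $(1)$) gives vanishing of sheaf cohomology above degree $n$ for paracompact spaces. This holds because covers of order $\le n+1$ are cofinal, so Čech cohomology, which computes sheaf cohomology on paracompact spaces, vanishes above degree $n$.
\item Second, I would run obstruction theory up the Postnikov tower of $\mathcal{F}$. The obstructions to lifting a section from $\tau_{\le k}\mathcal{F}$ to $\tau_{\le k+1}\mathcal{F}$ live in $H^{k+2}$ with coefficients in $\pi_{k+1}$, which vanish for $k+1\ge n$. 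The lower stages are handled because $\mathcal{F}$ is $n$-connective, so $\tau_{\le n-1}\mathcal{F}$ is terminal.
\end{itemize}

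The main obstacle is this last step. One must make the sections glue through the whole inverse limit of the Postnikov tower; that is, one needs Postnikov convergence or hypercompleteness, and care with non-abelian low-degree terms and with local systems over a non-simply-connected base. I would first try to establish that $\Sh(X,\An)$ is locally of homotopy dimension $\le n$ uniformly, which holds because open subsets of $X$ with closures satisfying $(1)$ also satisfy $(1)$. I would then quote \cite[Proposition 7.2.1.10]{luriehtt} to get Postnikov completeness, so the limit of the lifted sections is a genuine global section.
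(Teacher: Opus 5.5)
The paper gives no proof of this statement; it only quotes \cite[Corollary 7.2.3.7]{luriehtt}. The implication the paper actually relies on is (1)$\Rightarrow$(2), used to get enough points when $X^{\patch}$ embeds in $[0,1]^N$. In Lurie this is \cite[Theorem 7.2.3.6]{luriehtt}, which works directly from the covering-dimension hypothesis.

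Treating (1)$\Leftrightarrow$(3) as classical input is fine. Your argument for (2)$\Rightarrow$(3) is also fine in substance, with one repair. Open subsets of a paracompact space need not be paracompact, so you cannot assert that $U\mapsto \mathrm{Sing}\,\mathrm{Map}(U,S^m)$ is a sheaf, and \cite[Remark A.1.4]{lurieha} is not the relevant input. Instead:
\begin{itemize}
\item let $\mathcal{G}$ be the fiber of the unit $\pi^*S^m\to i_*i^*\pi^*S^m$ over the global section determined by $f_0$;
\item identify global sections only on the paracompact spaces $X$ and $A$, via \cite[Theorem 7.1.0.1]{luriehtt}.
\end{itemize}
The stalks of $\mathcal{G}$ are then $S^m$ off $A$ and contractible on $A$ for formal reasons, and the rest of your argument (global section, then homotopy extension) goes through.

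The genuine gap is in (3)$\Rightarrow$(2), exactly at the step you flag. Your obstruction theory produces a global section of $\lim_k\tau_{\le k}\mathcal{F}$. To get a section of $\mathcal{F}$ you need $\Sh(X,\An)$ to be Postnikov complete, in particular hypercomplete.

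Your justification for this is circular. Being \emph{locally of homotopy dimension $\le n$} asks that a generating family of opens $U$ have $\Sh(U,\An)$ of homotopy dimension $\le n$. That is the implication under proof, now for $U$, which need not even be paracompact. Knowing $\dim U\le n$ or $\dim\overline{U}\le n$ gives nothing until (1)$\Rightarrow$(2) is already available, and your route to (1)$\Rightarrow$(2) is exactly the one that needs Postnikov completeness.

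Nor can your cohomological vanishing supply it. Abelian sheaf cohomology depends only on the hypercompletion, so it cannot rule out $\infty$-connective objects without global sections. Postnikov completeness of such sheaf topoi is normally deduced from (1)$\Rightarrow$(2) via the result you quote (\cite[Proposition 7.2.1.10]{luriehtt}), not used as an input.

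Independently, for $n\le 1$ the first lifting steps are not governed by abelian cohomology at all. For $n=0$ you need a section of the locally inhabited sheaf of sets $\tau_{\le 0}\mathcal{F}$. For $n\le 1$ you must neutralize a gerbe whose band may be non-abelian. For $n\ge 2$ your obstruction steps are fine once convergence is known: $\tau_{\le n-1}\mathcal{F}\simeq 1$, and after pulling back along the section already built each band is an abelian sheaf on $X$, so no local systems intervene.

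What is missing is an argument that produces global sections of $n$-connective sheaves directly from covers of order $\le n+1$, without presupposing hypercompleteness. That is precisely what \cite[Theorem 7.2.3.6]{luriehtt} supplies.
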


It follows that Theorem \ref{properandcofilteredexcision} holds for stably compact $X$ such that $X^{\patch}$ embeds as a closed subset of $I^N$ for some finite $N$, as $I^N$ is paracompact and has covering dimension $N$. At this point it might seem that we are stuck, since we cannot argue via stalks for arbitrary stably compact $X$. In the case of compact Hausdorff spaces, the proof argument given by Clausen proceeds by using the fact that every compact Hausdorff space is obtained as an inverse limit of finite-dimensional polyhedra, and then applying profinite descent. Our own proof will slightly diverge here from this argument, as we will instead argue directly that $\alpha_X$ is an equivalence for every stably compact space $X$. Before we do so, we should add some comments on the naturality of the assignment $X \mapsto \alpha_X$.

\begin{construction} \label{constructionofalpha}
Given a perfect map $f : X \rightarrow Y$, we see that there is a natural transformation of $K$-sheaves on $Y$,
$$ F_Y \rightarrow f_+ F_X $$
given for $K \subset Y$ perfect as the map $F(K) \xrightarrow{ F(f|_{f^{-1}(K)}) } F( f^{-1}(K) )$. Under the naturality provided by Proposition \ref{naturality}, this corresponds to a natural transformation of sheaves $ F^Y \rightarrow f_* F^X.$ The extreme case, $X \rightarrow \mathrm{pt}$, gives the map $F(\mathrm{pt}) \rightarrow X_* F^X$ that is adjoint to $\alpha_X$.

If $X \xrightarrow{f} Y \xrightarrow{g} Z$ are two perfect maps, it is clear that we have a commutative triangle of natural transformations of sheaves over $Z$,
\[\begin{tikzcd}
	{F^Z} & {g_*F^Y} \\
	& {(gf)_*F^X.}
	\arrow[from=1-1, to=1-2]
	\arrow[from=1-1, to=2-2]
	\arrow[from=1-2, to=2-2]
\end{tikzcd}\]
Adjoining $g_*$ over gives the commutative triangle
\[\begin{tikzcd}
	{g^*F^Z} & {F^Y} \\
	{} & {f_*F^X.}
	\arrow[from=1-1, to=1-2]
	\arrow[from=1-1, to=2-2]
	\arrow[from=1-2, to=2-2]
\end{tikzcd}\]
Moreover, the map $g^* F^Z \rightarrow f_*F^X$ naturally factors as the commutative triangle
\[\begin{tikzcd}
	{g^*F^Z} \\
	{f_*f^* g^* F^Z} & {f_*F^X.}
	\arrow[from=1-1, to=2-1]
	\arrow[from=1-1, to=2-2]
	\arrow[from=2-1, to=2-2]
\end{tikzcd}\]
By pasting triangles, we arrive at the natural commutative square of sheaves on $Y$,
\[\begin{tikzcd}
	{g^*F^Z} & {F^Y} \\
	{f_*f^*g^* F^X} & {f_*F^X.}
	\arrow[from=1-1, to=1-2]
	\arrow[from=1-1, to=2-1]
	\arrow[from=1-2, to=2-2]
	\arrow[from=2-1, to=2-2]
\end{tikzcd}\]
The special case of $Z = \mathrm{pt}$ gives a map of arrows from $\alpha_Y \rightarrow f_* \alpha_X$.
\end{construction}

With this understood, we can get rid of the finite dimensionality assumption by the observation that an arbitrary directed Hilbert cube $I^S_\leq \cong \mathrm{lim}_{ N \subset S \text{ finite}} I^N_\leq$ is obtained as an inverse limit of stably compact spaces with finite dimensional patch topology. This will follow from the (strong) homotopy invariance provided by the following lemma.

\begin{lemma} \label{homotopyinvariance}
Let $X$ be stably compact such that $X^{\patch}$ embeds as a closed subset of $[0,1]^M$ for some finite $M$, and let $S$ be a set of arbitrary cardinality. Then
$$ F( X ) \simeq  F(X \times I^S_\leq ) $$
via the canonical projection $X \times I^S_\leq \rightarrow X$.
\end{lemma}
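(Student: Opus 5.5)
The proof has two steps. First handle a finite index set $N$, using the fact that everything in sight is finite-dimensional. Then pass to arbitrary $S$ by cofiltered descent.

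\emph{Finite case.} Fix a finite set $N$. Since $\patch$ preserves limits and $I_\leq^{\patch} = I$, we have
$$(X \times I^N_\leq)^{\patch} \cong X^{\patch} \times I^N,$$
which embeds as a closed subset of $[0,1]^{M+N}$. Hence $\Sh(X^{\patch},\An)$ and $\Sh(X^{\patch}\times I^N,\An)$ are of finite covering and homotopy dimension, so both have enough points. By the argument of Lemma \ref{maintheoremforhypercomplete}, the maps $\alpha_X$ and $\alpha_{X \times I^N_\leq}$ are equivalences of sheaves.

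Let $p : X \times I^N_\leq \to X$ be the projection. Construction \ref{constructionofalpha} gives a commutative square of sheaves on $X^{\patch}$ relating $\alpha_X$ and $p_*\alpha_{X\times I^N_\leq}$. Its top horizontal arrow is the unit $\underline{F(\mathrm{pt})} \to p_*p^*\underline{F(\mathrm{pt})}$, and its right vertical arrow $F^X \to p_* F^{X\times I^N_\leq}$ is the map whose global sections are $F(p) : F(X) \to F(X\times I^N_\leq)$. Since both $\alpha$'s are equivalences, it suffices to show that the unit is an equivalence on constant sheaves.

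On patch topologies $p$ is $\mathrm{id}_{X^{\patch}} \times (I^N \to \mathrm{pt})$. Because $I^N$ is locally compact Hausdorff, $\Sh(X^{\patch}\times I^N) \simeq \Sh(X^{\patch})\otimes \Sh(I^N)$, and $p^*$ is $\mathrm{id}\otimes (I^N)^*$. By Example \ref{projectionhilbertcube}, $I^N\to\mathrm{pt}$ is contractible and essential. Since $\mathrm{id}_{\Sh(X^{\patch})} \otimes -$ is a $2$-functor, it follows that $p$ is contractible and essential. Lemma \ref{preservationofconstantobjects}, applied with coefficients $\mathcal{D}$, then shows that $p_*p^*\underline{E}\simeq \underline{E}$ via the unit. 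Taking global sections gives $F(X)\simeq F(X\times I^N_\leq)$ via $F(p)$.

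\emph{Arbitrary $S$.} Write $I^S_\leq \cong \lim_{N\subset S \text{ finite}} I^N_\leq$ as a cofiltered limit in $\mathrm{SC}$. Products commute with limits, so $X\times I^S_\leq \cong \lim_N X\times I^N_\leq$. Cofiltered descent then gives
$$F(X\times I^S_\leq)\simeq \colim_N F(X\times I^N_\leq).$$
Each structure map $F(X)\to F(X\times I^N_\leq)$ is induced by a projection, and these are compatible with the transition maps of the diagram. By the finite case they are all equivalences. So the colimit is a filtered colimit of a diagram of equivalences, and $F(X)$ maps isomorphically onto it via the canonical projection.

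\emph{Main obstacle.} I expect the hardest point to be the naturality bookkeeping in the finite case. One must check that the square from Construction \ref{constructionofalpha} really has the unit $\underline{F(\mathrm{pt})}\to p_*p^*\underline{F(\mathrm{pt})}$ as the relevant edge, so that Lemma \ref{preservationofconstantobjects} applies. One must also justify the identification $\Sh(X^{\patch}\times I^N)\simeq \Sh(X^{\patch})\otimes\Sh(I^N)$ that makes $p$ contractible and essential. I would handle the latter first, using local compactness of $I^N$ and exponentiability.
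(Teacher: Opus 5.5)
Your proposal is correct and follows the paper's proof. Cofiltered descent reduces to finite $N$. There, finite covering dimension of $X^{\patch}\times I^N$ gives enough points, so the $\alpha$'s are equivalences, and contractibility of $I^N$ identifies $F(X\times I^N_\leq)$ with $F(X)$. Your use of the square from Construction \ref{constructionofalpha} together with Lemma \ref{preservationofconstantobjects} just spells out what the paper compresses into ``contractibility of $I^N$'', namely that the equivalence is induced by the projection. One small slip: the edge $F^X \to p_*F^{X\times I^N_\leq}$ is opposite the unit in that square, not adjacent to it, but the two-out-of-three argument is unaffected.
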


\begin{proof}
Using cofiltered descent, we see that
$$ F(X \times I^S_\leq  ) \simeq \colim_{ N \subset S \text{ finite}} F(X \times I^N_\leq  ). $$
The spaces $X \times I^N_\leq$ also satisfy that $(X \times I^N_\leq)^{\patch}  \cong X^{\patch} \times I^N \subset I^{M \amalg N}$ is paracompact with finite covering dimension, hence Lemma \ref{maintheoremforhypercomplete} together with contractibility of $I^N$ shows that the colimit diagram is in fact the constant diagram with value $F(X)$.
\end{proof}

As a direct application we see the following.

\begin{lemma} \label{homotopyinvariancehilbert} Let $N \subset S$ be a finite subset, and denote by $p^N : I^S_\leq \rightarrow I^N_\leq$ the associated projection between directed Hilbert cubes. Then there exists a natural equivalence of sheaves
$$ F^{I^N_\leq} \simeq p^N_* F^{I^S_\leq} $$
on $I^N$.
\end{lemma}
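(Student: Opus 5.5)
We compare the map of sheaves $F^{I^N_\leq} \to p^N_* F^{I^S_\leq}$ supplied by Construction \ref{constructionofalpha} (applied to the perfect map $f = p^N$) after restricting to compact subsets. Both sides are sheaves on the compact Hausdorff space $I^N = (I^N_\leq)^{\patch}$. By Theorem \ref{sheavesandksheaves} and Proposition \ref{naturality}, it suffices to check that the corresponding map of $K$-sheaves $F_{I^N_\leq} \to p^N_+ F_{I^S_\leq}$ is an equivalence. Concretely, we must show that for every perfect subset $K \subset I^N_\leq$ the map
$$ F(K) \longrightarrow F\big( (p^N)^{-1}(K) \big) $$
induced by the restricted projection is an equivalence.

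The key observation is that $(p^N)^{-1}(K) = K \times I^{S\setminus N}_\leq$, and that the map in question is exactly the canonical projection $K \times I^{S \setminus N}_\leq \to K$. This holds because $I^S_\leq \cong I^N_\leq \times I^{S\setminus N}_\leq$ in $\mathrm{SC}$, products being computed in locales by Tychonoff's theorem, and preimages of perfect subspaces under a product projection are products. Moreover, $K$ is a perfect subspace of $I^N_\leq$, so $K$ is stably compact, and $K^{\patch}$ is a closed subset of $(I^N_\leq)^{\patch} = I^N$. The point is that perfect subsets of $X$ are exactly the closed subsets of $X^{\patch}$, and the patch of a perfect subspace is the subspace topology, as noted in the examples on directed Hilbert cubes. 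Hence $K$ satisfies the hypothesis of Lemma \ref{homotopyinvariance} with $M = |N|$, and that lemma gives $F(K) \simeq F(K \times I^{S\setminus N}_\leq)$ via the projection.

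The remaining work is bookkeeping: confirming that the map produced by Construction \ref{constructionofalpha} on the value at $K$ is indeed $F$ of this projection, which holds by the definition of $F_Y \to f_+F_X$. Naturality in $K$ is automatic because everything comes from a single natural transformation. Once the $K$-sheaf map is an objectwise equivalence, Proposition \ref{naturality} transports it to the asserted equivalence of sheaves on $I^N$.

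I expect the only real subtlety to be the identification $(p^N)^{-1}(K) \cong K \times I^{S\setminus N}_\leq$ as stably compact spaces. One has to check that the subspace structure on the preimage agrees with the product of stably compact spaces. Since pulling back the perfect sublocale $K$ along $p^N$ is a pullback in $\mathrm{Loc}$, and limits in $\mathrm{SC}$ are computed in locales, this should follow formally.
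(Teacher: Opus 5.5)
Your proposal is correct and takes essentially the same route as the paper. You pass to $K$-sheaves via Proposition \ref{naturality}, identify $(p^N)^{-1}(K)$ with $K \times I^{S\setminus N}_\leq$, and apply Lemma \ref{homotopyinvariance} to the perfect subspace $K$, whose patch is a closed subset of $I^N$. You are more explicit than the paper on two points: that the comparison map is the one from Construction \ref{constructionofalpha}, and why $K$ satisfies the finite-dimensionality hypothesis of that lemma.
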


\begin{proof} The corresponding $K$-sheaf of $F^{I^N_\leq}$ is given by $F_{I^N_\leq}$, and the corresponding $K$-sheaf of $p^N_* F^{I^S_\leq}$ is given by
$$ (K \subset I^N_\leq) \mapsto F_{I^S_\leq}( K \times I^{S \setminus N}_\rightarrow) = F( K \times I^{S \setminus N}_\rightarrow). $$
We can see that their values are naturally equivalent, as by Lemma \ref{homotopyinvariance}, we have $F( K \times I^{S \setminus N}_\rightarrow) \simeq F( K ) \simeq F_X(K)$.
\end{proof}

\begin{proposition} \label{alphahilbertcube}
The natural transformation $\alpha_X$ is an equivalence for $X = I^S_\leq$ a directed Hilbert cube for an arbitrary set $S$.
\end{proposition}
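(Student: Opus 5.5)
We want to show that $\alpha_X : \underline{F(\mathrm{pt})} \to F^X$ is an equivalence in $\Sh(I^S,\mathcal{D})$ for $X = I^S_\leq$. The plan is to use that the topos $\Sh(I^S,\An)$, although it need not have enough points, is the cofiltered limit of the finite-dimensional cubes $I^N$, $N\subset S$ finite. By Corollary \ref{cofiltereddescent}, $\Sh(I^S,\mathcal{D}) \simeq \colim_N \Sh(I^N,\mathcal{D})$ in $\PR^L$. Hence it suffices to test equivalences against the family of functors $p^N_*$, or equivalently to check that the corresponding $K$-sheaves agree on a generating family. Concretely, a basis of $\mathcal{O}(I^S)$ is given by preimages $(p^N)^{-1}(U)$ of opens $U \subset I^N$, $N$ finite. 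Since the patch topology on $I^S$ is the product topology, the cylinder sets $(p^N)^{-1}(K)$ with $K\subset I^N$ compact are cofinal among compact neighborhoods. So a map of sheaves on $I^S$ is an equivalence as soon as its pushforward along every $p^N$ is an equivalence: sections over basic opens are computed as sections of $p^N_*$.

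The key steps, in order, are as follows.

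(1) Apply Construction \ref{constructionofalpha} to the perfect map $f = p^N : I^S_\leq \to I^N_\leq$ with $Z=\mathrm{pt}$. This yields a commutative square relating $\alpha_{I^N_\leq}$ and $p^N_*\alpha_{I^S_\leq}$, with horizontal maps $\underline{F(\mathrm{pt})} \to p^N_* p^{N*}\underline{F(\mathrm{pt})}$ on the left and $F^{I^N_\leq} \to p^N_* F^{I^S_\leq}$ on the right.

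(2) The right-hand map is an equivalence by Lemma \ref{homotopyinvariancehilbert}. Here we need to check that the equivalence constructed there is the one induced by Construction \ref{constructionofalpha}; this holds because both come from $F$ applied to the projections $K \times I^{S\setminus N}_\leq \to K$.

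(3) The left-hand map is an equivalence by Lemma \ref{preservationofconstantobjects}. Indeed, by Example \ref{projectionhilbertcube} the geometric morphism $\Sh(I^S)\to\Sh(I^N)$ is contractible and essential, so $p^{N*}$ is fully faithful on $\mathcal{D}$-valued sheaves and the unit on constant sheaves is an equivalence.

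(4) The map $\alpha_{I^N_\leq}$ is an equivalence by Lemma \ref{maintheoremforhypercomplete}, since $I^N$ has finite covering dimension and therefore $\Sh(I^N,\An)$ has enough points. It follows that $p^N_*\alpha_{I^S_\leq}$ is an equivalence for every finite $N$.

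(5) Conclude that $\alpha_{I^S_\leq}$ itself is an equivalence. For every basic open $W=(p^N)^{-1}(U)$, the map $\alpha$ on sections over $W$ equals $p^N_*\alpha$ on sections over $U$, hence is an equivalence. Since the basic opens form a basis closed under finite intersections, the comparison lemma (Lemma \ref{comparisonlemma}) shows that a morphism of sheaves which is an equivalence on a basis is an equivalence.

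The main obstacle is step (2): one must verify honestly that the abstract equivalence of Lemma \ref{homotopyinvariancehilbert} is compatible with the comparison map coming from Construction \ref{constructionofalpha}. This requires tracking the naturality of Proposition \ref{naturality} at the level of $K$-sheaves, where the map is $F(K)\to F(K\times I^{S\setminus N}_\leq)$ induced by projection, exactly as in Lemma \ref{homotopyinvariance}. I would argue directly at the $K$-sheaf level to avoid this. On cylinder compacts $(p^N)^{-1}(K)$, the map $(\alpha_{I^S_\leq})_{\mathcal{K}}$ is $\Gamma(K^{\patch};F(\mathrm{pt})) \to F(K\times I^{S\setminus N}_\leq)$. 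This factors as the finite-dimensional equivalence for $K$ followed by the projection equivalence of Lemma \ref{homotopyinvariance}, since cylinder compacts $K \times I^{S\setminus N}_\leq$ with $K^{\patch}\subset I^N$ fall under its hypotheses. Cylinder compacts are closed under finite intersections and cofinal, so $\mathcal{K}$-sheaf continuity under filtered intersections then finishes the argument.
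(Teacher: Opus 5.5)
Your steps (1)--(4) are the paper's argument. The paper identifies $p^N_*\alpha_{I^S_\leq}$ with $\alpha_{I^N_\leq}$ using Lemma~\ref{homotopyinvariancehilbert} on the target and Lemma~\ref{preservationofconstantobjects} (via Example~\ref{projectionhilbertcube}) on the source, and you additionally make the compatibility explicit through the commuting square of Construction~\ref{constructionofalpha}. The only variation is the final conservativity step: the paper uses $\Sh(I^S,\mathcal{D})\simeq\colim_N\Sh(I^N,\mathcal{D})$ in $\PR^L$ (a limit along the $p^N_*$), whereas you test on the basis of cylinder opens via Lemma~\ref{comparisonlemma}, which is equivalent. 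Your closing $K$-sheaf alternative is unnecessary, and as written it silently replaces $\Gamma(K^{\patch}\times I^{S\setminus N};F(\mathrm{pt}))$ by $\Gamma(K^{\patch};F(\mathrm{pt}))$, an identification that itself needs the contractibility input.
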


\begin{proof}
Since $ I^S_\leq = \lim_{ N \subset S \text{ finite}} I^N_\leq$, taking patch topology and using the fact that taking sheaves with values in $\mathcal{D}$ sends cofiltered limits of stably compact spaces to filtered colimits in $\PR^L$, we have that
$$ \Sh( I^S , \mathcal{D} ) \simeq  \colim_{ N \subset S \text{ finite}} \Sh( I^N , \mathcal{D} ), $$
with the colimit taken in  $\PR^L$, i.e.\ as a limit of $\infty$-categories along right adjoints. Thus, the map
$$\alpha_{I^S_\leq} : \underline{F(\mathrm{pt})} \rightarrow F^{I^S_\leq}$$ corresponds to the family of maps $p^N_*(\alpha_{I^S_\leq})$ for $N \subset S$ finite. We claim that $p^N_*(\alpha_{I^S_\leq})$ is equivalent as a natural transformation to $ \alpha_{I^N_\leq} $. We have seen that $ F^{I^N_\leq} \simeq p^N_* F^{I^S_\leq} $ by Lemma \ref{homotopyinvariancehilbert}. We have also seen that the map $p^N$ induces a contractible and essential geometric morphism after applying the patch topology functor (see Example \ref{projectionhilbertcube}), hence $p^N_*$ preserves constant objects by Lemma \ref{preservationofconstantobjects}.
\end{proof}

\begin{lemma} \label{inheritanceembedding}
If $i : K \hookrightarrow X$ is a perfect embedding of stably compact spaces, and $\alpha_X$ is an equivalence, then $\alpha_K$ is an equivalence as well.
\end{lemma}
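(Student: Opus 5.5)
We will deduce that $\alpha_K$ is an equivalence by pulling the equivalence $\alpha_X$ back along the closed embedding of patch spaces. Write $j = i^{\patch} : K^{\patch} \hookrightarrow X^{\patch}$. Since $K$ is a perfect subspace of $X$, it is a closed subset of $X^{\patch}$, and Lemma \ref{perfectcharacterization} shows that the patch topology of $K$ agrees with the subspace topology. So $j$ is a closed embedding of compact Hausdorff spaces, and $j_* : \Sh(K^{\patch},\mathcal{D}) \to \Sh(X^{\patch},\mathcal{D})$ is fully faithful. It therefore suffices to show that $j_*\alpha_K$ is an equivalence, and we will identify $j_*\alpha_K$ with a map built from $\alpha_X$.

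\textbf{Step 1: identify $j_* F^K$.} The $K$-sheaf $F_K$ sends a perfect $L\subset K$ to $F(L)$. Under Proposition \ref{naturality}, $j_* F^K$ corresponds to the $K$-sheaf $i_+F_K$ on $X$, which sends $L \subset X$ perfect to $F(L\cap K)$.

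\textbf{Step 2: identify $j_*\underline{F(\mathrm{pt})}$.} By the same reasoning, $j_*\underline{F(\mathrm{pt})}$ corresponds to $L \mapsto \Gamma(L\cap K; F(\mathrm{pt}))$. Under the equivalence with sheaves, this $K$-sheaf is $j_*j^*\underline{F(\mathrm{pt})}$.

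\textbf{Step 3: compare with $\alpha_X$.} Construction \ref{constructionofalpha}, applied to $K \to X \to \mathrm{pt}$, gives a commutative square whose top map is $\alpha_X$, whose bottom map is $j_*\alpha_K$, and whose vertical maps are the units
\[
\underline{F(\mathrm{pt})} \to j_*j^*\underline{F(\mathrm{pt})} \qquad\text{and}\qquad F^X \to j_*F^K .
\]
On $K$-sheaves, the right vertical map is the restriction $F(L) \to F(L\cap K)$. Because the top map is an equivalence by hypothesis, it is enough to show that the right vertical map is the unit $F^X \to j_*j^*F^X$, i.e. that $j^*F^X \simeq F^K$ via the adjoint map.

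\textbf{Step 4: the key check.} This is the main obstacle, since $j^*$ is not simply restriction of $K$-sheaves. I would verify it on stalks. Stalks on the compact Hausdorff space $K^{\patch}$ are computed as values of $K$-sheaves at points (Observation \ref{valuesofksheaf}). For $x\in K$ we then get
\[
(j^*F^X)_x \simeq F^X_x \simeq F(\{x\}) \simeq F^K_x ,
\]
and these identifications are compatible because $\{x\}\cap K=\{x\}$.

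Stalks do not detect equivalences in general, so instead I would compare the two sides directly as $K$-sheaves. For compact Hausdorff spaces, $(j^*\mathcal{G})_{\mathcal{K}}(L) \simeq \colim_{L'\supset L} \mathcal{G}_{\mathcal{K}}(L')$ for $L\subset K$, with the colimit over compact neighbourhoods $L'$ in $X$. These $L'$ form a cofiltered family with intersection $L$, which is cofinal by Lemma \ref{reversecompactness}. The cofiltered descent hypothesis (3) of the theorem, applied to the system $L' \subset X$, gives $\colim F(L') \simeq F(L)$. Hence $j^*F^X \simeq F^K$, the right vertical map is the unit, and $j_*\alpha_K$ is an equivalence. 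Full faithfulness of $j_*$ then shows that $\alpha_K$ itself is an equivalence.
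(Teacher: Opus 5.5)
Your argument is correct and is essentially the paper's. Working with the unit of $j^*\dashv j_*$ and the full faithfulness of $j_*$ is just the adjoint form of the paper's comparison map $i^*\alpha_X\to\alpha_K$, and both reduce to two facts: $j^*$ preserves constant sheaves, and $j^*F^X\simeq F^K$. The one inaccuracy is your remark that $j^*$ is not simply restriction of $\mathcal{K}$-sheaves: under Proposition~\ref{naturality} it is exactly restriction along $\mathcal{K}(K)\cong\mathcal{K}(X)_{/K}$, which is left adjoint to $i_+$. Your colimit formula collapses to this by the continuity axiom for $\mathcal{G}_{\mathcal{K}}$, so the stalk detour is unnecessary and the appeal to hypothesis (3) only re-verifies that $F_X$ is a $\mathcal{K}$-sheaf.
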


\begin{proof}
The map of natural transformations $\alpha_X \rightarrow i_* \alpha_K$ produces by adjunction a map $ i^* \alpha_X \rightarrow \alpha_K$. We claim that this map is an equivalence of natural transformations, which is equivalent to saying that the natural transformations $i^* X^*( F(\mathrm{pt}) ) \rightarrow Y^*( F(\mathrm{pt}) )$ and $ i^*( F^X ) \rightarrow  F^K$ are equivalences. The former is clear, as $i^*$ preserves constant sheaves. The latter follows from the observation that the corresponding $K$-sheaf of $i^*( F^X )$ is obtained by restricting $F_X$ to $\mathcal{K}(K)$. This restriction is given by precomposing with the inclusion
$$ \mathcal{K}(K) \cong \mathcal{K}(X)_{/K}  \hookrightarrow \mathcal{K}(X)$$
which preserves non-empty infima and arbitrary suprema, hence precomposition preserves $K$-sheaves.
\end{proof}

We can now finish the proof of Theorem \ref{properandcofilteredexcision}.

\begin{proof}[Proof of Theorem \ref{properandcofilteredexcision}.]
If $X$ is a stably compact space, embed $X$ into a directed Hilbert cube, using Theorem \ref{urysohnembedding}. The combination of Proposition \ref{alphahilbertcube} and Lemma \ref{inheritanceembedding} implies that $\alpha_X$ is an equivalence.
\end{proof}

\begin{remark}
The only part where an argument via stalks was used during this proof was to show that $\alpha_{X}$ is an equivalence for $X = I_\leq^N$ a directed, finite-dimensional cube. Joint surjectivity of the stalk functors in this case unfortunately fails if $\mathcal{D}$ is not assumed to be compactly assembled, see the discussion in \cite{krause_nikolaus_puetzstueck}, Section 3.6, in particular Corollary 3.6.16. 
\end{remark}

\begin{remark}
We have been somewhat imprecise in the actual construction of a natural transformation $\Gamma((-)^{\patch}, F(\mathrm{pt})) \rightarrow F$. Let us address potential concerns here.\footnote{We thank Maxime Ramzi for suggesting this argument.}

Consider the functor $\mathrm{Sh}((-)^{\patch},\mathcal{D}) : \mathrm{SC} \rightarrow \widehat{\mathrm{Cat}}_\infty$, with functoriality given by $f : X \rightarrow Y$ being mapped to $f_* : \mathrm{Sh}(X^{\patch},\mathcal{D}) \rightarrow \mathrm{Sh}(Y^{\patch},\mathcal{D})$. The inclusion of the point $\mathrm{pt} \rightarrow \mathrm{SC}$ is terminal, hence there is an induced natural transformation $  \mathrm{Sh}((-)^{\patch},\mathcal{D}) \Rightarrow \underline{\mathcal{D}} $, where $\underline{\mathcal{D}} : \mathrm{SC}^{\op} \rightarrow \widehat{\mathrm{Cat}}_\infty$ is the constant functor with value $\mathcal{D}$. Valuewise it is just given by forming global sections.

Upon applying unstraightening, we get an induced functor between cartesian fibrations
\[\begin{tikzcd}
	{\int_{\mathrm{SC}^{\op}} \mathrm{Sh}} && {\int_{\mathrm{SC}^{\op}} \underline{D} \simeq \mathrm{SC}^{\op} \times \mathcal{D}} \\
	& {\mathrm{SC}^{\op}}
	\arrow["\Gamma", from=1-1, to=1-3]
	\arrow["p"', from=1-1, to=2-2]
	\arrow["q", from=1-3, to=2-2]
\end{tikzcd}\]
where the $\infty$-category $\int_{\mathrm{SC}^{\op}} \mathrm{Sh}$ is given on objects as pairs $(X \in \mathrm{SC}, \mathcal{F} \in \mathrm{Sh}(X^{\patch},\mathcal{D}))$, and with morphisms $(Y,\mathcal{G}) \rightarrow (X, \mathcal{F})$ given by a pair of a perfect map $f : X \rightarrow Y$ and a natural transformation $\mathcal{G} \Rightarrow f_*( \mathcal{F} )$ in $\mathrm{Sh}(Y^{\patch},\mathcal{D})$. The functor $\Gamma$ is given on fibers over a given stably compact space $X$ as $X_* : \mathrm{Sh}(X^{\patch},\mathcal{D}) \rightarrow \mathcal{D}$.

The functor $\Gamma$ has a left adjoint relative $\mathrm{SC}^{\op}$, as can be checked directly via \cite[Proposition 7.3.2.6]{lurieha}, with the labels $$\mathcal{E} = \mathrm{SC}^{\op}, \mathcal{C} = \mathrm{SC}^{\op} \times \mathcal{D}, \mathcal{D} = \int_{\mathrm{SC}^{\op}} \mathrm{Sh} \text{ and } G = \Gamma.$$

Upon taking sections, this leads to the adjunction
\[\begin{tikzcd}
	{\mathrm{Fun}_{/\mathrm{SC}^{\op}}( \mathrm{SC}^{\op}, \int_{\mathrm{SC}^{\op}} \mathrm{Sh})} & {\mathrm{Fun}_{/\mathrm{SC}^{\op}}(\mathrm{SC}^{\op} , \mathrm{SC}^{\op} \times \mathcal{D}) } & {\mathrm{Fun}(\mathrm{SC}^{\op}, \mathcal{D})}
	\arrow[""{name=0, anchor=center, inner sep=0}, "\Gamma"', curve={height=18pt}, from=1-1, to=1-2]
	\arrow[""{name=1, anchor=center, inner sep=0}, curve={height=18pt}, from=1-2, to=1-1]
	\arrow["{\simeq }"{description}, draw=none, from=1-2, to=1-3]
	\arrow["\dashv"{anchor=center, rotate=-90}, shift right, draw=none, from=1, to=0]
\end{tikzcd}\]
The point of Construction \ref{constructionofalpha} is that it provides a lift $F^{-}$ of $F : \mathrm{SC}^{\op} \rightarrow \mathcal{D}$ along $\Gamma$. The counit of the above adjunction applied to $F^{-}$ recovers valuewise for each stably compact space $X$ the natural transformation $\alpha_X : \underline{F(\mathrm{pt})} \rightarrow F^X$. When applying $\Gamma$, this provides the natural transformation $ \Gamma( (-)^{\patch}, F(\mathrm{pt}) ) \Rightarrow F$.
\end{remark}

\begin{remark} Theorem \ref{closedandprofinite} and Theorem \ref{properandcofilteredexcision} can be phrased more structurally as stating that there are equivalences of $\infty$-categories
\[\begin{tikzcd}
	{\mathrm{Fun}^{desc}(\mathrm{SC}^{\op}, \mathcal{D})} & {\mathrm{Fun}^{desc}(\mathrm{CH}^{\op}, \mathcal{D})} & {\mathcal{D}}
	\arrow[""{name=0, anchor=center, inner sep=0}, "res", curve={height=-12pt}, from=1-1, to=1-2]
	\arrow[""{name=1, anchor=center, inner sep=0}, "{{\patch}^*}", curve={height=-12pt}, from=1-2, to=1-1]
	\arrow[""{name=2, anchor=center, inner sep=0}, curve={height=12pt}, from=1-2, to=1-3]
	\arrow[""{name=3, anchor=center, inner sep=0}, "{\Gamma(-;-)}"', curve={height=12pt}, from=1-3, to=1-2]
	\arrow["\sim"{description}, draw=none, from=0, to=1]
	\arrow["\sim"{description}, shift left=5, draw=none, from=3, to=2]
\end{tikzcd}\]
where with $\mathrm{Fun}^{desc}$ we mean functors satisfying the descent conditions of either Theorem \ref{closedandprofinite} or Theorem \ref{properandcofilteredexcision}. In this sense, Theorem \ref{properandcofilteredexcision} actually contains Theorem \ref{closedandprofinite} as a special case, as precomposing a functor on $\mathrm{CH}$ that satisfies closed and profinite descent with the patch functor $patch : \mathrm{SC} \rightarrow \mathrm{CH}$ gives a functor that satisfies perfect and cofiltered descent.
\end{remark}

\section{The main theorem}

\begin{theorem} \label{maintheoremmain}
Let $X$ be a stably compact space, $\mathcal{C}$ a dualizable $\infty$-category and $F : \Cat^{\perf} \rightarrow \mathcal{E}$ a finitary localizing invariant with values in a dualizable $\infty$-category $\mathcal{E}$. There exists a natural equivalence
    $$ F^{\cont}( \Sh(X, \mathcal{C}) ) \simeq H^\bullet( X^{\patch}, F^{\cont}(\mathcal{C}) ),$$
where $H^\bullet( X^{\patch}, F^{\cont}(\mathcal{C}) ) = \Gamma( X^{\patch}, F^{\cont}(\mathcal{C})^{sh} )$ is the $\mathcal{E}$-valued sheaf cohomology of $X^{\patch}$ with value in the constant sheaf associated to $F^{\cont}(\mathcal{C})$.
\end{theorem}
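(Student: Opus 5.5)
The plan is to apply Theorem \ref{properandcofilteredexcision} to the functor
$$ G : \mathrm{SC}^{\op} \rightarrow \mathcal{E}, \qquad X \mapsto F^{\cont}(\Sh(X,\mathcal{C})) \simeq F^{\cont}(\Sh(X,\Sp)\otimes \mathcal{C}), $$
with functoriality given by pullback $f^*$. This is well defined because perfect maps induce strongly continuous pullbacks (Corollary \ref{sheavesonstablylocallycompact}). By Lemma \ref{localizinginvariantreduction}, $F^{\cont}(-\otimes\mathcal{C})$ is again a finitary localizing invariant, so I may take $\mathcal{C}=\Sp$ throughout. The target $\mathcal{E}$ is dualizable, hence compactly assembled and stable, which is the setting required by Theorem \ref{properandcofilteredexcision}. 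Once the three hypotheses are verified, that theorem gives $G(X)\simeq \Gamma(X^{\patch};G(\mathrm{pt}))$ with $G(\mathrm{pt})=F^{\cont}(\mathcal{C})$, which is the claimed formula.

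The normalization and continuity hypotheses are the easy ones. For $F(\emptyset)=1$: $\Sh(\emptyset,\Sp)=0$ and $F^{\cont}(0)=0$, which is the terminal object since $\mathcal{E}$ is stable. For cofiltered descent: a cofiltered limit in $\mathrm{SC}$ is computed in locales, by the Tychonoff corollary. Applying $\Sh$ to it gives a filtered colimit in $\PR^L$, by the argument of Corollary \ref{cofiltereddescent} via Aoki's adjunction (Theorem \ref{sheafspectrumadjunction}). Since $\PR^L_{\dual}\to\PR^L_{\st}$ creates colimits and the transition functors are strongly continuous, this is also a filtered colimit in $\PR^L_{\dual}$, and finitarity of $F^{\cont}$ gives the required equivalence.

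The real work is perfect descent: for perfect embeddings $K,L\subset X$, the square with corners $G(K\cup L)$, $G(K)$, $G(L)$, $G(K\cap L)$ must be a pullback. I would proceed in four steps.
\begin{enumerate}
\item For one set closed, this is Corollary \ref{closedexcision}, coming from open–closed decomposition and the Second Isomorphism Theorem. For one set saturated compact, this is Proposition \ref{saturatedcompactdescent}, via Verdier duality. Both are applied inside the stably compact space $K\cup L$, which is legitimate because perfect subspaces are stably compact.
\item Proposition \ref{elementaryexcision} upgrades this to descent whenever one of the two sets is elementary compact.
\item Since $\mathcal{E}$ is stable, pullbacks coincide with pushouts. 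Restricting $G$ to $\mathcal{K}(X)^{\op}$ then satisfies hypothesis (1) of Proposition \ref{properexcision}. Hypothesis (2), for directed intersections $K=\bigcap K_i$ of perfect subspaces, is cofiltered descent again, because directed intersections of sublocales are limits (Lemma \ref{sublocaleslimits}).
\item Proposition \ref{properexcision} then shows that $G^\#$ is a cosheaf on all of $\mathcal{O}(X^{\patch})$. The cosheaf condition for the binary cover $\{K^c,L^c\}$ of $(K\cap L)^c$ is a pushout square for $G^\#$. Taking cofibers by the constant $G(X)$ preserves pushout squares, so the same square is a pushout, equivalently a pullback, for $G$ itself on $K\cup L$, $K$, $L$, $K\cap L$.
\end{enumerate}

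I expect the main obstacle to be the bookkeeping in step 1. One has to check that the pullback squares of categories in the closed and saturated compact cases really are the squares obtained by applying $G$ to the inclusions. In particular, the Verdier duality identification must be natural in perfect maps (Proposition \ref{naturality}), so that the horizontal fibers agree as maps and not merely as objects. A second point to check is that the Verdier sequences involved consist of dualizable categories with strongly continuous functors, so that $F^{\cont}$ can be applied to them. Finally, naturality of the resulting equivalence in $X$ comes from the natural transformation constructed in the remark following the proof of Theorem \ref{properandcofilteredexcision}.
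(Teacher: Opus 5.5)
Your proposal is correct and follows the paper's proof essentially step for step: you apply Theorem \ref{properandcofilteredexcision} to $F^{\cont}(\Sh(-,\mathcal{C}))$, check the normalization and cofiltered descent via finitarity and Corollary \ref{cofiltereddescent}, and obtain perfect descent by chaining Corollary \ref{closedexcision}, Proposition \ref{saturatedcompactdescent}, Proposition \ref{elementaryexcision} and Proposition \ref{properexcision}. Your step 4 recovers the pullback square for $G$ from the cosheaf condition on $G^\#$, using that cofibers against the constant $G(X)$ preserve pushout squares; this makes explicit a step the paper leaves implicit.
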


\begin{proof}
We apply Theorem \ref{properandcofilteredexcision} to the functor $ F_{\mathcal{C}} = F^{\cont}( \Sh(-, \mathcal{C}) ) :  \mathrm{SC}^{\op} \rightarrow \mathcal{E}.$ To do so we need to check the three conditions:
\begin{enumerate}
\item The statement $F^{\cont}( \Sh(\emptyset, \mathcal{C}) ) \simeq F^{\cont}( 0 ) \simeq 0$ is clear.
\item Cofiltered descent for $F_{\mathcal{C}}$ follows by using that $F$ is assumed to be a finitary invariant together with Corollary \ref{cofiltereddescent}.
\item Perfect descent for $F_{\mathcal{C}}$ follows by stacking the following results:
\begin{itemize}
\item Corollary \ref{closedexcision} shows that closed excision holds.
\item Proposition \ref{saturatedcompactdescent} shows that saturated compact excision holds.
\item These two results together imply elementary compact descent using Proposition \ref{elementaryexcision}.
\item Finally elementary compact descent together with cofiltered descent implies perfect descent by Proposition \ref{properexcision}.
\end{itemize}
\end{enumerate}
\end{proof}

The extension of the result of Theorem \ref{maintheoremmain} to the case of stably locally compact spaces is straightforward. We recall that for a locally compact Hausdorff space $X$, and $\mathcal{E}$ some dualizable $\infty$-category the \emph{compactly supported cohomology} $H_{cs}^\bullet( X, E )$ of $X$ with value some $E \in \mathcal{E}$ is defined as the fiber of the map
$$ H^\bullet( X^*, E ) \rightarrow H^\bullet( \mathrm{pt}, E ) \simeq E,$$ 
induced by the inclusion of the point at infinity into the one-point-compactification $X^*$ of $X$. (See e.g.\ \cite[Definition 5.6.]{volpe2025operationstopology} for more information.)

\begin{theorem} \label{maintheoremmain2}
Let $X$ be a stably locally compact space, $\mathcal{C}$ a dualizable $\infty$-category and $F : \Cat^{\perf} \rightarrow \mathcal{E}$ a finitary localizing invariant with values in a dualizable $\infty$-category $\mathcal{E}$. There exists a natural equivalence
$$ F^{\cont}( \Sh(X, \mathcal{C}) ) \simeq H_{cs}^\bullet( X^{\patch}, F^{\cont}(\mathcal{C}) ).$$
\end{theorem}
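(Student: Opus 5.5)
We reduce to the stably compact case, Theorem \ref{maintheoremmain}, via the non-Hausdorff one-point compactification $X^+$ of Construction \ref{onepointcompact}. By Proposition \ref{adjunctionstablylocallycompact}, $X^+$ is stably compact and contains $X$ as the open complement of the closed point $+$. The plan is to compare the open-closed decomposition of $\Sh(X^+,\mathcal{C})$ with the defining fiber sequence of compactly supported cohomology, using Corollary \ref{patchofonepoint} to identify $(X^+)^{\patch}$ with $(X^{\patch})^*$.

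First, Proposition \ref{opencloseddecomposition}, applied to $X \subset X^+$ with closed complement $\{+\}\cong \mathrm{pt}$, gives an exact sequence
$$\Sh(X,\mathcal{C}) \xrightarrow{i_!} \Sh(X^+,\mathcal{C}) \xrightarrow{j^*} \Sh(\mathrm{pt},\mathcal{C}) \simeq \mathcal{C}.$$
Since $\Sh(X^+,\mathcal{C})$ is dualizable, the corollary following Corollary \ref{closedinclusionperfect} shows this is a Verdier sequence of dualizable categories. Applying $F^{\cont}$ then yields a fiber sequence
$$F^{\cont}(\Sh(X,\mathcal{C})) \to F^{\cont}(\Sh(X^+,\mathcal{C})) \to F^{\cont}(\mathcal{C}).$$

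Second, Theorem \ref{maintheoremmain} identifies the middle term with $H^\bullet((X^+)^{\patch},F^{\cont}(\mathcal{C}))$. Corollary \ref{patchofonepoint} rewrites this as $H^\bullet((X^{\patch})^*,F^{\cont}(\mathcal{C}))$. The right term is $H^\bullet(\mathrm{pt},F^{\cont}(\mathcal{C}))$, which is also the case $X=\mathrm{pt}$ of Theorem \ref{maintheoremmain}.

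Third, we must check that under these identifications the map $j^*$ corresponds to restriction along the inclusion of the point at infinity $+\in (X^{\patch})^*$. Here $j:\mathrm{pt}\to X^+$ is a perfect map, being a closed inclusion of a finite space (Example \ref{finitespaces}), and its patch is exactly this point inclusion. So it suffices that the equivalence of Theorem \ref{maintheoremmain} is natural in perfect maps of stably compact spaces. That naturality comes from Theorem \ref{properandcofilteredexcision}, which gives a natural equivalence of functors $F\simeq \Gamma((-)^{\patch};F(\mathrm{pt}))$ on $\mathrm{SC}^{\op}$. Taking fibers of the two compatible maps then gives
$$F^{\cont}(\Sh(X,\mathcal{C}))\simeq \mathrm{fib}\bigl(H^\bullet((X^{\patch})^*,F^{\cont}(\mathcal{C}))\to F^{\cont}(\mathcal{C})\bigr)= H^\bullet_{cs}(X^{\patch},F^{\cont}(\mathcal{C})).$$

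The main obstacle is naturality in $X$ with respect to partial perfect maps, which the statement implicitly demands. A partial perfect map $X\to Y$ corresponds to a pointed perfect map $X^+\to Y^+$ via the adjunction of Proposition \ref{adjunctionstablylocallycompact}. Naturality of the stably compact equivalence in pointed maps then induces naturality of the fibers. One must also verify that the extension by zero $i_!$ corresponds to the partial-map functoriality of $\Sh$. I expect this to be routine bookkeeping once the naturality of Theorem \ref{properandcofilteredexcision}, made precise in the remark following its proof, is accepted.
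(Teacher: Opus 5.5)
Your proposal is correct and follows the paper's proof. The paper uses the open--closed decomposition of $X^+$ into $X$ and $\{+\}$ to get a fiber sequence after applying $F^{\cont}$, identifies the middle term with $H^\bullet((X^{\patch})^*, F^{\cont}(\mathcal{C}))$ via Theorem \ref{maintheoremmain} and Corollary \ref{patchofonepoint}, and takes fibers. You are in fact more careful than the paper, which leaves implicit that the map to $F^{\cont}(\mathcal{C})$ corresponds to restriction at the point at infinity; you derive this from the naturality of Theorem \ref{properandcofilteredexcision} applied to the perfect map $\mathrm{pt} \to X^+$.
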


\begin{proof}
The open-closed decomposition of $X^+$ into the open subspace $X$ and its closed complement $\{\infty\} \cong \mathrm{pt}$ gives the fiber sequence
\[\begin{tikzcd}
	{F^{\cont}(\Sh(X, \mathcal{C}))} & {F^{\cont}(\Sh(X^+,\mathcal{C}))} & {F^{\cont}( \mathcal{C}).}
	\arrow[from=1-1, to=1-2]
	\arrow[from=1-2, to=1-3]
\end{tikzcd}\]
Now, Theorem \ref{maintheoremmain} together with Corollary \ref{patchofonepoint} imply that the middle term computes as $F^{\cont}(\Sh(X^+,\mathcal{C})) \simeq H^\bullet( (X^{\patch})^*, F^{\cont}(\mathcal{C}) )$, and hence we conclude that
$$ F^{\cont}(\Sh(X, \mathcal{C})) \simeq H_{cs}^\bullet( X^{\patch}, F^{\cont}(\mathcal{C}) ).$$
\end{proof}

\begin{corollary} \label{sheavesequalcosheaves}
Let $X$ be stably locally compact, let $F$ be a finitary localizing invariant $ \Cat^{\perf} \rightarrow  \mathcal{E}$ with $\mathcal{E}$ presentable stable, and let $\mathcal{C}$ be a dualizable $\infty$-category. Then
$$F^{\cont}(\Sh(X,\mathcal{C})) \simeq F^{\cont}(\Cosh(X,\mathcal{C})).$$
\end{corollary}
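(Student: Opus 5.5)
The plan is to reduce $\Cosh(X,\mathcal{C})$ to sheaves on another stably locally compact space with the same patch topology, and then apply Theorem \ref{maintheoremmain2} to both sides.

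First, I would rewrite cosheaves as a dual. Since $\Sh(X,\Sp)$ is dualizable (Corollary \ref{sheavesonstablylocallycompact}), we have
$$\Cosh(X,\mathcal{C}) = \mathrm{Fun}^L(\Sh(X,\Sp),\mathcal{C}) \simeq \Sh(X,\Sp)^\vee \otimes \mathcal{C},$$
and this category is again dualizable.

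Second, I would identify $\Sh(X,\Sp)^\vee$. When $X$ is stably compact, Verdier duality (Theorem \ref{verdierduality}, with $X$ and $X^\vee$ exchanged via $(X^\vee)^\vee = X$) gives
$$\Sh(X^\vee,\Sp)\simeq \Cosh(X,\Sp)=\Sh(X,\Sp)^\vee.$$
Tensoring with $\mathcal{C}$ then yields $\Cosh(X,\mathcal{C})\simeq \Sh(X^\vee,\mathcal{C})$. Theorem \ref{maintheoremmain} applies to both $X$ and $X^\vee$, and Remark \ref{deGrootdualitypatch} gives $(X^\vee)^{\patch}=X^{\patch}$. So both sides equal $H^\bullet(X^{\patch},F^{\cont}(\mathcal{C}))$.

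The case where $\mathcal{E}$ is only presentable stable rather than dualizable needs a separate step. Here I would use the remark in the introduction: $F$ factors through the universal invariant $\mathcal{U}$, whose target $\mathrm{Mot}$ is dualizable. The equivalence $\Cosh(X,\mathcal{C})\simeq \Sh(X^\vee,\mathcal{C})$ is already an equivalence of categories, so in the compact case no such reduction is actually needed.

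For $X$ merely stably locally compact, I would pass to the one-point compactification $X^+$. Dualizing the open-closed Verdier sequence
$$\Sh(X,\Sp)\to\Sh(X^+,\Sp)\to\Sp,$$
using that $(-)^\vee$ preserves Verdier sequences, gives a Verdier sequence
$$\Sp\to \Cosh(X^+,\Sp)\to \Cosh(X,\Sp),$$
up to orientation. Tensoring with $\mathcal{C}$ and applying $F^{\cont}$, which is valid because $F^{\cont}(-\otimes\mathcal{C})$ is a localizing invariant by Lemma \ref{localizinginvariantreduction}, yields
$$F^{\cont}(\Cosh(X,\mathcal{C}))\simeq \mathrm{cofib}\bigl(F^{\cont}(\mathcal{C})\to F^{\cont}(\Cosh(X^+,\mathcal{C}))\bigr).$$
By the compact case, $F^{\cont}(\Cosh(X^+,\mathcal{C}))\simeq F^{\cont}(\Sh(X^+,\mathcal{C}))$. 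The sheaf side is the fiber of $F^{\cont}(\Sh(X^+,\mathcal{C}))\to F^{\cont}(\mathcal{C})$, as in the proof of Theorem \ref{maintheoremmain2}.

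The main obstacle is matching these two sequences. The compact-case equivalence for $X^+$ has to be compatible with the point $+$: the map $F^{\cont}(\mathcal{C})\to F^{\cont}(\Cosh(X^+,\mathcal{C}))$ induced by dualizing $j^*$ must correspond to a splitting of $F^{\cont}(\Sh(X^+,\mathcal{C}))\to F^{\cont}(\mathcal{C})$. I would get this from naturality of Verdier duality in perfect maps (Proposition \ref{naturality}) applied to $X^+\to\mathrm{pt}$ and $\mathrm{pt}\to X^+$. Both of these are perfect maps, and the composite $\mathrm{pt}\to X^+\to\mathrm{pt}$ is the identity, which identifies the relevant maps on $F^{\cont}(\mathcal{C})$.

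With that compatibility in hand, both $F^{\cont}(\Sh(X,\mathcal{C}))$ and $F^{\cont}(\Cosh(X,\mathcal{C}))$ are identified with the complementary summand of $F^{\cont}(\mathcal{C})$ in $F^{\cont}(\Sh(X^+,\mathcal{C}))$. Each therefore equals $H^\bullet_{cs}(X^{\patch},F^{\cont}(\mathcal{C}))$, which proves the claim.
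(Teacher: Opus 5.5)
Your proposal is correct and is essentially the paper's proof. The compact case goes through Verdier duality, Theorem \ref{maintheoremmain} and $(X^\vee)^{\patch}=X^{\patch}$; the general case uses the open--closed sequence for $X^+$ and its dual; and non-dualizable $\mathcal{E}$ is handled by factoring through $\mathcal{U}$.

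The only real difference is at the point $+$. The paper dualizes covariantly, getting $\Cosh(X,\mathcal{C})\to\Cosh(X^+,\mathcal{C})\to\mathcal{C}$. It then takes fibers and matches the two maps to $F^{\cont}(\mathcal{C})$ by naturality of the zigzag along $\mathrm{pt}\to X^+$, so no splitting argument is needed. Your reversed sequence and splitting argument also work, but "$(-)^\vee$ preserves Verdier sequences" alone does not justify them. They rely on $j^*\simeq p_*$ on $\Sh(X^+,\Sp)$, which holds because the closed point $+$ has $X^+$ as its only open neighbourhood. This is what makes your sequence the dual of $\Sp\xrightarrow{p^*}\Sh(X^+,\Sp)\to\Sh(X,\Sp)$. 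It also identifies $(j^*)^\vee$ with pullback along $(X^+)^\vee\to\mathrm{pt}$, so you should state it.

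One aside is wrong, though harmless since you perform the reduction anyway. The reduction to $\mathcal{U}$ is needed in the compact case too. Only $\Cosh(X,\mathcal{C})\simeq\Sh(X^\vee,\mathcal{C})$ is an equivalence of categories. Comparing $F^{\cont}(\Sh(X^\vee,\mathcal{C}))$ with $F^{\cont}(\Sh(X,\mathcal{C}))$ goes through Theorem \ref{maintheoremmain}, which assumes $\mathcal{E}$ dualizable.
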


\begin{proof} The statement for $\mathcal{E}$ being dualizable follows directly for stably compact spaces $X$ by using Verdier duality (Theorem \ref{verdierduality}), as we have the zigzag
$$ \Sh(X,\mathcal{C}) \rightarrow  \Sh(X^{\patch},\mathcal{C}) \leftarrow  \Sh(X^{\vee},\mathcal{C}) \simeq \Cosh(X,\mathcal{C}), $$
where all maps are equivalences on finitary localizing invariants by Theorem \ref{maintheoremmain}. (The patch topology of the de Groot dual $X^{\vee}$ agrees with $X^{\patch}$, see Remark \ref{deGrootdualitypatch}.) Note that this zig-zag itself is natural in proper maps of stably compact spaces. Therefore for $X$ only stably locally compact, we have
$$ \begin{array}{c}
F^{\cont}(\Sh(X,\mathcal{C})) \simeq  \mathrm{fib}( F^{\cont}(\Sh(X^+,\mathcal{C})) \rightarrow F^{\cont}( \mathcal{C} ) ) \\
\simeq \mathrm{fib}( F^{\cont}(\Cosh(X^+,\mathcal{C})) \rightarrow F^{\cont}( \mathcal{C} ) ) \simeq  F^{\cont}(\Cosh(X,\mathcal{C})),
\end{array} $$
finishing the claim. For $\mathcal{E}$ not necessarily dualizable, we can use the fact that $F$ factors through the universal finitary localizing invariant $\mathcal{U} : \Cat^{\perf} \rightarrow  \mathrm{Mot}$, using the result by Efimov that states that $\mathrm{Mot}$ is dualizable \cite{efimov2025rigiditycategorylocalizingmotives}.
\end{proof}

\appendix

\section{The sheaf condition over free distributive lattices of arbitrary finite rank} \label{sheafarbitraryrank}

While ultimately not needed for the purposes of this article, it can be useful to have a generalization of Lemma \ref{sheafconditionthree} to the case of cubical diagrams of arbitrary dimension. The main argument is exactly the same, but we need to set up a few definitions. Let us define the following posets:
\begin{itemize}
\item We define the \emph{spine} $S = [1] \amalg_{[0]} [1]$ to be the shape
\[\begin{tikzcd}
	& b \\
	a & c
	\arrow[from=1-2, to=2-2]
	\arrow[from=2-1, to=2-2]
\end{tikzcd}\]
obtained by glueing two copies of $[1]$ along the top element. We denote its elements by $a,b,c$ as labelled in the diagram.
\item Let $N$ be a set with $n$ elements. Denote by $\mathcal{C}(N) = \mathcal{P}(N) \setminus \{\emptyset\}$, a cube with the bottom corner removed, ordered via inclusion of subsets. Observe that $[1]^n \cong \mathcal{P}(N)$ is the cone of $\mathcal{C}(N)$. Fix an index $i \in N$. Then there exist two copies of $\mathcal{C}(N \setminus \{i\})$ in $\mathcal{C}(N)$ given as
\[\begin{array}{rcl}
 \mathcal{C}_0 &=& \{ A \subset N ~|~ A \neq \emptyset \text{ and } A \cap \{i\} = \emptyset \} \\
 \mathcal{C}_1 &=& \{ A \subset N ~|~ A \neq \emptyset, A \neq \{i\} \text{ and } A \cap \{i\} = \{i\} \}.
\end{array}\]
The full subposet given by the union of $\mathcal{C}_0$ and $\mathcal{C}_1$ is isomorphic to $\mathcal{C}(N \setminus \{i\} ) \times [1]$, and agrees with to $\mathcal{C}(N) \backslash \{i\}$.
\item Furthermore, recall that the \emph{join} $P \star P'$  of two posets $(P,{\leq_P})$ and $(P',\leq_{P'})$ is given by the set $P \amalg P'$ equipped with the order $\leq_\star$, which is the smallest order which agrees with the orders $\leq_P$ and $\leq_{P'}$ when restricted to $P$ and $P'$ respectively, and satisfies $p \leq_\star p'$ for all $p \in P$ and $p' \in P'$.\footnote{This agrees with the notion of \emph{join of simplicial sets} in the sense of Lurie \cite[1.2.8]{luriehtt}, in the sense that the simplicial join of the nerves of $P$ and $P'$ is given by the nerve of the posetal join  $P \star P'$.}
\end{itemize} 

\begin{lemma}
Let $\mathcal{D}$ be a category with finite limits, and let $F : [1]^n \rightarrow \mathcal{D}$ be a diagram. Then $F$ is a limit diagram iff the square
\[\begin{tikzcd}
	{F( \emptyset)} & {F( \{i\})} \\
	{\lim_{ \mathcal{C}_0 } F|_{ \mathcal{C}_0 }} & {\lim_{ \mathcal{C}_1 } F|_{ \mathcal{C}_1 }}
	\arrow[from=1-1, to=1-2]
	\arrow[from=1-1, to=2-1]
	\arrow[from=1-2, to=2-2]
	\arrow[from=2-1, to=2-2]
\end{tikzcd}\]
is a pullback square.
\end{lemma}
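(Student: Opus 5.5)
The plan is to run the argument of Lemma \ref{sheafconditionthree} in arbitrary dimension, using right Kan extension to an enlarged poset followed by a finality argument. Write $P = \mathcal{C}(N)$, so that $[1]^n \cong P^{\triangleleft}$. Saying that $F$ is a limit diagram means precisely that $F(\emptyset) \to \lim_P F|_P$ is an equivalence. The target therefore reduces to establishing a canonical equivalence
$$\lim_P F|_P \simeq F(\{i\}) \times_{\lim_{\mathcal{C}_1} F|_{\mathcal{C}_1}} \lim_{\mathcal{C}_0} F|_{\mathcal{C}_0},$$
compatible with the maps from $F(\emptyset)$. Once we have it, $F(\emptyset)$ is the limit exactly when the displayed square is a pullback.

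First, decompose $P$ as $\{\{i\}\} \amalg \mathcal{C}_0 \amalg \mathcal{C}_1$. For $A \in \mathcal{C}_0$ the set $A \cup \{i\}$ lies in $\mathcal{C}_1$, while $\{i\} \le B$ holds for every $B \in \mathcal{C}_1$. I will build a larger poset
$$Q = P \cup \{b, c\},$$
where $b$ and $c$ are new elements, adjoined as cone points. Concretely, $b \le A$ for all $A \in \mathcal{C}_0$, then $c \le B$ for all $B \in \mathcal{C}_1$, and finally $b \le c$ and $\{i\} \le c$. One checks that this relation is transitive and that $P$ sits inside $Q$ as a full subposet. Right Kan extend $F|_P$ along $P \hookrightarrow Q$ to obtain a functor $G$.

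Next, compute $G$ at the new points. The slice $P_{b/} = \mathcal{C}_0 \cup \mathcal{C}_1$, and by the remark in the statement this is $\mathcal{C}(N\setminus\{i\}) \times [1]$. The inclusion $\mathcal{C}_0 \hookrightarrow \mathcal{C}_0 \cup \mathcal{C}_1$ is initial for limits: for $B \in \mathcal{C}_1$, the category of $A \in \mathcal{C}_0$ with $A \le B$ has terminal-like minimum $B \setminus \{i\}$, and is in particular contractible. Hence $G(b) = \lim_{\mathcal{C}_0} F$. Similarly $P_{c/} = \mathcal{C}_1$, so $G(c) = \lim_{\mathcal{C}_1} F$. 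The map $b \le c$ induces the restriction $\lim_{\mathcal{C}_0} F \to \lim_{\mathcal{C}_1} F$, and under the identification $\mathcal{C}_0 \cup \mathcal{C}_1 \cong \mathcal{C}(N \setminus \{i\}) \times [1]$ this is the map appearing in the square.

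Then, since a right Kan extension along a full inclusion does not change limits, $\lim_P F \simeq \lim_Q G$. The crux of the proof is showing that the spine $S = \{\{i\}, b, c\}$, arranged as $\{i\} \to c \leftarrow b$, is initial in $Q$ (the "final" of the paper's convention for limits). This amounts to checking that for each $q \in Q$ the poset $S_{/q} = \{ s \in S : s \le q\}$ is weakly contractible. For $q \in \mathcal{C}_0$ it is $\{b\}$. For $q \in \mathcal{C}_1$ it is $\{\{i\}, b, c\}$, which has maximum $c$. For $q = \{i\}$, $b$ or $c$, the poset has a maximum as well. This is the step I expect to be the main obstacle: one has to choose the order relations on $Q$ correctly, in particular ensuring $b \le B$ for $B \in \mathcal{C}_1$ through transitivity via $c$, and must verify that the element $\{i\}$ is not below anything in $\mathcal{C}_0$.

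Granting that, $\lim_Q G$ is the pullback $F(\{i\}) \times_{G(c)} G(b)$, which gives the claimed formula. Naturality in the cone point $F(\emptyset)$ is automatic, since all the identifications are induced by restriction maps.
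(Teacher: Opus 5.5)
Your proof is correct and follows the same route as the paper. You right Kan extend $F|_{\mathcal{C}(N)}$ to an enlarged poset containing a cospan with values $F(\{i\}) \to \lim_{\mathcal{C}_1}F \leftarrow \lim_{\mathcal{C}_0}F$, note that this does not change the limit, and use that the spine is initial (the paper's ``final'') to read off the pullback.

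Your explicit poset $Q$ is in fact a more careful version of the paper's $S \star (\mathcal{C}(N)\setminus\{i\})$: in $Q$, $c$ lies only below $\mathcal{C}_1$ and $\{i\}$ is not below $\mathcal{C}_0$. It is the $n$-dimensional analogue of $D_2$ in Lemma \ref{sheafconditionthree}. The only slip is that $B\setminus\{i\}$ is the maximum (terminal object) of $\{A \in \mathcal{C}_0 : A \subseteq B\}$, not the minimum.
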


\begin{proof}
Consider the inclusion of $\mathcal{C}(N)$ into $S \star ~(\mathcal{C}(N) \backslash \{i\})$, by sending $\{i\}$ to the element $b \in S$. We now right Kan extend $F$ from $\mathcal{C}(N)$ along this inclusion. The entries at $S$ are then given by 
\[\begin{tikzcd}
	 & {F( \{i\})} \\
	{\lim_{ \mathcal{C}_0 } F|_{ \mathcal{C}_0 }} & {\lim_{ \mathcal{C}_1 } F|_{ \mathcal{C}_1 }}
	\arrow[from=1-2, to=2-2]
	\arrow[from=2-1, to=2-2]
\end{tikzcd}\]
Since right Kan extension preserves limits, the limit of the right Kan extended diagram agrees with $\lim_{ \mathcal{C}(N)} F $. The inclusion of $S$ into $S \star ~(\mathcal{C}(N) \backslash \{i\})$ is final, and hence we obtain the pullback square
\[\begin{tikzcd}
	{\lim_{ \mathcal{C}(N)} F } & {F( \{i\})} \\
	{\lim_{ \mathcal{C}_0 } F|_{ \mathcal{C}_0 }} & {\lim_{ \mathcal{C}_1 } F|_{ \mathcal{C}_1 },}
	\arrow[from=1-1, to=1-2]
	\arrow[from=1-1, to=2-1]
	\arrow["\lrcorner"{anchor=center, pos=0.125}, draw=none, from=1-1, to=2-2]
	\arrow[from=1-2, to=2-2]
	\arrow[from=2-1, to=2-2]
\end{tikzcd}\]
showing the claim.
\end{proof}

\begingroup
\setlength{\emergencystretch}{8em}
\printbibliography
\endgroup

\end{document}